\title{\textbf{Heights and metrics with logarithmic singularities}}
\author{\textbf{Gerard Freixas i Montplet}}
\date{}
\numberwithin{equation}{section}
\theoremstyle{plain}
\newtheorem{theorem}{Theorem}[section]
\newtheorem{corollary}[theorem]{Corollary}
\newtheorem{proposition}[theorem]{Proposition}
\newtheorem{lemma}[theorem]{Lemma}
\theoremstyle{definition}
\newtheorem{definition}[theorem]{Definition}
\newtheorem{notation}[theorem]{Notation}
\newtheorem{example}[theorem]{Example}
\newtheorem{remark}[theorem]{Remark}
\DeclareMathOperator{\Spec}{Spec}
\DeclareMathOperator{\CC}{\mathcal{C}}
\DeclareMathOperator{\Z}{Z}
\DeclareMathOperator{\HH}{H}
\DeclareMathOperator{\supp}{supp}
\DeclareMathOperator{\gdiv}{div}
\DeclareMathOperator{\pd}{\partial}
\DeclareMathOperator{\cpd}{\bar{\partial}}
\DeclareMathOperator{\pre}{pre}
\DeclareMathOperator{\red}{red}
\DeclareMathOperator{\loc}{loc}
\DeclareMathOperator{\c1}{c_{1}}
\DeclareMathOperator{\adeg}{\widehat{deg}}
\DeclareMathOperator{\gdeg}{deg}
\DeclareMathOperator{\Norm}{N}
\DeclareMathOperator{\Real}{Re}
\DeclareMathOperator{\Ric}{Ric}
\newcommand{\dd}{d}
\newcommand{\dc}{d^{c}}
\newcommand{\ddc}{dd^{c}}
\newcommand{\OO}{\mathcal{O}}
\newcommand{\X}{\mathscr{X}}
\newcommand{\Y}{\mathscr{Y}}
\newcommand{\BS}{\mathscr{S}}
\newcommand{\BT}{\mathscr{T}}
\newcommand{\C}{\mathbb{C}}
\newcommand{\R}{\mathbb{R}}
\newcommand{\Int}{\mathbb{Z}}
\newcommand{\Q}{\mathbb{Q}}
\newcommand{\KK}{\overline{K}}
\newcommand{\cz}{\overline{z}}
\newcommand{\AL}{\mathscr{L}}
\newcommand{\HAL}{\overline{\mathscr{L}}}
\newcommand{\PP}{\mathbb{P}}
\newcommand{\Af}{\mathbb{A}}
\newcommand{\cdz}{d\bar{z}}
\newcommand{\cdzeta}{d\bar{\zeta}}
\newcommand{\M}{\mathcal{M}}
\newcommand{\CM}{\overline{\mathcal{M}}}
\newcommand{\UC}{\mathcal{C}}
\newcommand{\UCC}{\overline{\mathcal{C}}}
\begin{document}
\setcounter{tocdepth}{1}
\setcounter{section}{0}
\maketitle

\begin{abstract}
We prove lower bound and finiteness properties for arakelovian heights with respect to pre-log-log hermitian ample line bundles. These heights were introduced by Burgos, Kramer and K\"uhn in \cite{BKK}, in their extension of the arithmetic intersection theory of Gillet and Soul\'e \cite{GS}, aimed to deal with hermitian vector bundles equipped with metrics admitting suitable logarithmic singularities. Our results generalize the corresponding properties for the heights of cycles in Bost-Gillet-Soul\'e \cite{BGS}, as well as the properties established by Faltings \cite{Faltings} for heights of points attached to hermitian line bundles whose metrics have logarithmic singularities. We also discuss various geometric constructions where such pre-log-log hermitian ample line bundles naturally arise.\\
\\
\noindent MSC: 14G40 (Primary) 11J25, 11J97 (Secondary).
\end{abstract}
\tableofcontents
\subsubsection*{Conventions}
We fix some conventions and notations to be followed throughout this paper.

The open disc of $\C$ centered at 0 and of radius $\varepsilon>0$ will be denoted by $\Delta_{\varepsilon}$.

If $f,g:E\rightarrow\R$ are two functions on a set $E\neq\emptyset$, we write $f\prec g$ to mean that there exists a constant $C$ such that $f(x)\leq C g(x)$ for all $x\in E$. If the involved constant depends on some data $D$ that we want to specify, we write $f\prec_{D} g$.

If $X$ is a complex analytic manifold we decompose the exterior differential operator as $\dd=\pd+\cpd$ and we define $\dc=(4\pi i)^{-1}(\pd-\cpd)$, so that $\ddc=i\pd\cpd/2\pi$.

Let $k$ be a field. By an algebraic variety $X$ over $k$ we mean a separated and reduced scheme of finite type over $k$. In particular, $X$ is noetherian and it has a finite number of irreducible components. When $k=\C$, for every separated scheme of finite type $X$ over $\C$ there is an associated complex analytic space $X^{\text{an}}$, whose underlying topological space equals the set of complex points $X(\C)$. If $F$ is a closed subscheme of $X$, then $F^{\text{an}}$ is an analytic subspace of $X^{\text{an}}$. The scheme $X$ is proper over $\C$ if, and only if, $X^{\text{an}}$ is compact. Also, $X$ is a nonsingular variety over $\C$ if, and only if, $X^{\text{an}}$ is a complex analytic manifold. To simplify notations we will write $X$ instead of $X^{\text{an}}$ or $X(\C)$ (it will be clear from the context the category we are working on).

Let $K$ be a number field. Its ring of integers is denoted by $\OO_{K}$. Let $\BS=\Spec\OO_K$. An arithmetic variety over $\BS$ will be a flat and projective scheme $\pi:\X\rightarrow\BS$, with regular generic fiber $\X_K=\X\times_{\BS}\Spec K$ of pure dimension $n$. The set of complex points $\X(\C)$ has a natural structure of complex analytic manifold, and it can be partitioned as
\begin{displaymath}
	\X(\C)=\coprod_{\sigma:K\hookrightarrow\C}\X_{\sigma}(\C).
\end{displaymath}
The complex conjugation induces an antiholomorphic involution $F_{\infty}:\X(\C)\rightarrow\X(\C)$.

\section{Introduction}
In this paper we establish a common generalization of the following two statements.
\begin{theorem}[Faltings \cite{Faltings}, Lemma 3]\label{Lemma_Faltings}
Let $\X$ be a projective arithmetic variety and $\Y\subseteq \X$ a Zariski closed subset. Let $\HAL=(\AL,\|\cdot\|)$ be an ample line bundle on $\X$ endowed with a smooth hermitian metric on $\AL\mid_{\X(\C)\setminus \Y(\C)}$. Suppose that $\|\cdot\|$ has logarithmic singularities along $\Y(\C)$. Fix a number field $K$. For any real constant $C$, there are only finitely many points $P\in \X(K)\setminus \Y(K)$ with $h_{\HAL}(P)\leq C$.
 \end{theorem}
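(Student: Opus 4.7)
The strategy is to reduce to the classical Northcott finiteness theorem for heights with respect to smooth hermitian metrics, proved in \cite{BGS}. Fix an auxiliary smooth hermitian metric $\|\cdot\|'$ on $\AL$ and set $\HAL' = (\AL, \|\cdot\|')$. Writing $\|\cdot\| = e^{-\varphi}\|\cdot\|'$ on $\X(\C) \setminus \Y(\C)$, the function $\varphi$ is smooth there and has logarithmic singularities along $\Y(\C)$; a standard calculation with a rational section of $\AL$ not vanishing at $P$ yields the height comparison
\[
h_{\HAL}(P) \;=\; h_{\HAL'}(P) \;+\; \frac{1}{[K:\Q]} \sum_{\sigma:K\hookrightarrow\C} \varphi(P_\sigma)
\]
for $P \in \X(K) \setminus \Y(K)$. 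Since $\HAL'$ is ample and smooth, Northcott applies to $h_{\HAL'}$, so it suffices to prove that under the hypothesis $h_{\HAL}(P) \leq C$ the correction term is bounded below by a constant depending only on $C$, $K$ and the chosen data.

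To produce this lower bound, use the ampleness of $\AL$ to fix an integer $m \geq 1$ and a global integral section $s \in \Gamma(\X, \AL^{\otimes m})$ whose zero locus contains $\Y$. For $P \in \X(K) \setminus \Y(K)$, the section $s$ does not vanish at $P$, and by integrality the finite-place contribution in $m\cdot h_{\HAL}(P)$ coming from the arithmetic intersection of $\gdiv(s)$ with $P$ is non-negative. Hence
\[
\frac{1}{[K:\Q]} \sum_\sigma -\log \|s(P_\sigma)\|_{\HAL^{\otimes m}} \;\leq\; m \cdot h_{\HAL}(P) \;\leq\; mC.
\]
Since $s$ vanishes along $\Y$, this bounds the proximity of the conjugates $P_\sigma$ to $\Y(\C)$ in an averaged sense. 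Combining this with the local logarithmic bound on $\varphi$ near $\Y$ (of polynomial type in $\log|z|$ in Faltings' original setting, or of $\log\log$ type in the pre-log-log setting of \cite{BKK}), one extracts the uniform lower bound $\frac{1}{[K:\Q]}\sum_\sigma \varphi(P_\sigma) \geq -M$, whence $h_{\HAL'}(P) \leq C + M$ and finiteness follows from Northcott.

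\textbf{Main obstacle.} The crux of the argument is transferring the aggregate upper bound on $-\log\|s(P_\sigma)\|$ into a lower bound on $\sum_\sigma \varphi(P_\sigma)$. The difficulty is that an individual archimedean conjugate $P_\sigma$ might a priori lie arbitrarily close to $\Y(\C)$, rendering $\varphi(P_\sigma)$ very negative, whereas the arithmetic rigidity of a $K$-rational $P$ only controls the sum $\sum_\sigma -\log\|s(P_\sigma)\|$ through the non-negativity of finite-place intersections. The logarithmic character of the singularity is what makes the argument go through: from $\sum_\sigma u_\sigma \leq B$ with $u_\sigma := -\log\|s(P_\sigma)\| \geq 0$, each summand is bounded by $B$, so $u_\sigma^N$ (or $\log(1+u_\sigma)$ in the pre-log-log case) stays controlled and with it $\varphi(P_\sigma)$. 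Heavier, super-polylogarithmic singularities would escape this averaging and finiteness in the present form would fail, so the logarithmic hypothesis is essential.
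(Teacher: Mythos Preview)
The paper does not supply its own proof of this theorem: it is quoted from Faltings' original article and serves as one of the two classical statements that the Main Theorem generalizes. So there is nothing to compare against, and your task is really to reproduce Faltings' argument. Your outline is the correct one and is essentially Faltings' proof; a few points deserve tightening.

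First, a single global section $s$ with $\Y\subseteq\gdiv(s)$ will in general vanish on a strictly larger set, so the assertion ``the section $s$ does not vanish at $P$'' is false for $P\in\gdiv(s)\setminus\Y$. The fix is standard: by ampleness choose finitely many integral sections $s_1,\dots,s_r\in\Gamma(\X,\AL^{\otimes m})$ whose common zero locus is set-theoretically $\Y$, and for each $P\notin\Y$ run the argument with any $s_j$ not vanishing at $P$. All constants can be taken uniform in $j$.

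Second, the claim $u_\sigma\geq 0$ is not automatic; what is true (and sufficient) is that $-\log\|s\|$ is bounded below on $\X(\C)\setminus\gdiv(s)(\C)$ by a constant $-C_0$. This already uses the logarithmic hypothesis: writing $-\log\|s\|=m\varphi+v$ with $v:=-\log\|s\|'$ (smooth norm), near $\Y$ one has $v\to+\infty$ while, under the paper's Definition~3.1, $|\varphi|$ is bounded by a constant times $\log\log(\text{dist}^{-1})$, hence $|\varphi|=o(v)$ and $-\log\|s\|\geq v-m|\varphi|\to+\infty$. Away from $\Y$ both terms are bounded. With $u_\sigma\geq -C_0$ and $\sum_\sigma u_\sigma\leq mC$, each $u_\sigma$ is bounded above.

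Third, and this is the substantive point, the passage from ``$u_\sigma$ bounded'' to ``$\varphi(P_\sigma)$ bounded below'' hinges on the same sublinearity $|\varphi|=o(v)$. From $u_\sigma\leq B'$ one gets $v_\sigma-mG(v_\sigma)\leq u_\sigma\leq B'$ where $|\varphi|\leq G(v_\sigma)$; when $G(v)=O(\log v)$ (the paper's definition, since $h$ and $h^{-1}$ bounded by powers of $\log$ forces $|\log h|=O(\log\log)$), this bounds $v_\sigma$ and hence $|\varphi(P_\sigma)|$. Your phrase ``$u_\sigma^N$ stays controlled'' suggests instead $|\varphi|\leq C(\log d^{-1})^N$ with $N\geq 1$; in that regime $G(v)\asymp v^N$ is \emph{not} $o(v)$ and the inequality $v-mG(v)\leq B'$ no longer bounds $v$. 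So be careful to use the definition actually in force: here ``logarithmic singularities of the metric'' means $h,h^{-1}\prec(\log)^N$, which gives $|\varphi|\prec\log\log$, and your $\log(1+u_\sigma)$ bound is the right one.
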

 The notions of function and metric with logarithmic singularities are recalled in \textsection 2 and \textsection 3 below.
 \begin{theorem}[Bost-Gillet-Soul\'e \cite{BGS}, Proposition 3.2.4 and Theorem 3.2.5]\label{thm_BGS}
 Let $\X$ be a projective arithmetic variety and $\HAL$ a smooth hermitian ample line bundle on $\X$. 
\begin{itemize}
 	\item[1.] For any real constant $A$, there are only finitely many effective cycles $z\in\Z^{p}(\X)$ such that $\deg_{\AL_{K}}z\leq A$ and $h_{\HAL}(z)\leq A$.
	\item[2.] There exists a positive constant $\kappa$ such that $h_{\HAL}(z)\geq -\kappa\deg_{\AL_{K}}z$ for every effective cycle $z\in\Z^{p}(\X)$.
\end{itemize}
\end{theorem}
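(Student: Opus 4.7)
The plan is to prove (2) first and then derive (1) by combining it with boundedness of Chow schemes and Northcott-type finiteness. Both parts exploit the fact that ampleness of $\AL$ gives a large supply of global sections whose archimedean norms can be controlled.

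For the lower bound (2), I would fix a \emph{positive} (Kodaira) reference metric $\|\cdot\|_0$ on $\AL$, whose existence is guaranteed by ampleness, and argue in two steps. First, for the positively-metrized pair $(\AL, \|\cdot\|_0)$ one proves by induction on $d=\dim z$ that $h_{(\AL, \|\cdot\|_0)}(z)\geq 0$ for every effective $z$. Fix $N$ with $\AL^{\otimes N}$ globally generated by sections $s_1, \ldots, s_r$ with $\sup\|s_j\|_0\leq 1$; for any effective $z$, some $s_j$ is regular on each component of $z$, and the standard recursion
\begin{equation*}
	h_{(\AL^{\otimes N}, \|\cdot\|_0)}(z) = h_{(\AL^{\otimes N}, \|\cdot\|_0)}\bigl(\adiv(s_j)\cdot z\bigr) - \int_{z(\C)} \log\|s_j\|_0^2 \, c_1(\AL^{\otimes N}, \|\cdot\|_0)^d
\end{equation*}
expresses the height as a sum of a nonnegative intersection term (by induction, since $\adiv(s_j)\cdot z$ is effective of dimension $d-1$) and a nonnegative archimedean term (since $-\log\|s_j\|_0^2\geq 0$ and $c_1(\AL, \|\cdot\|_0)^d$ is a positive measure). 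Second, comparing with the given metric: writing $\phi = \log(\|\cdot\|/\|\cdot\|_0)^2$ (a bounded smooth function on $\X(\C)$), the difference $h_{\HAL}(z) - h_{(\AL, \|\cdot\|_0)}(z)$ is a sum of integrals of $\phi$ against mixed wedge products of $c_1(\HAL)$ and $c_1(\AL, \|\cdot\|_0)$; bounding $c_1(\HAL)$ in $C^0$ by a multiple of $c_1(\AL, \|\cdot\|_0)$ yields
\begin{equation*}
	\bigl| h_{\HAL}(z) - h_{(\AL, \|\cdot\|_0)}(z) \bigr| \leq \kappa \deg_{\AL_K}(z),
\end{equation*}
since $\int_{z(\C)} c_1(\AL, \|\cdot\|_0)^d$ is a topological degree equal to a positive multiple of $\deg_{\AL_K}(z)$. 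Combining, $h_{\HAL}(z)\geq -\kappa \deg_{\AL_K}(z)$.

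For the finiteness (1), I would embed $\X\hookrightarrow\mathbb{P}^M_{\OO_K}$ using global sections of $\AL^{\otimes N}$, so that cycles of dimension $d$ and $\AL_K$-degree $\leq A$ become cycles of bounded degree in $\mathbb{P}^M_K$. These form a bounded family, parametrised by a finite union of components of a Chow variety $\mathscr{C}$, of finite type over $\OO_K$. Assigning to $z$ its Chow form $\phi_z$ gives a $K$-point of a projective space $\mathbb{P}^L$, and a comparison estimate
\begin{equation*}
	h_{\HAL}(z) = h_{\mathrm{naive}}(\phi_z) + O(\deg_{\AL_K}(z)),
\end{equation*}
with error depending only on $(\X, \HAL)$ and the embedding, translates the hypotheses $h_{\HAL}(z)\leq A$ and $\deg_{\AL_K}(z)\leq A$ into an upper bound on $h_{\mathrm{naive}}(\phi_z)$. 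Classical Northcott finiteness on $\mathbb{P}^L(K)$ then gives only finitely many $\phi_z$, hence only finitely many $z$.

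The main obstacle is the Chow-form comparison $h_{\HAL}(z) = h_{\mathrm{naive}}(\phi_z) + O(\deg_{\AL_K}(z))$, which is the core of the arithmetic Bezout machinery of Bost-Gillet-Soul\'e: one realises both heights as arithmetic degrees on a common incidence correspondence parametrising pairs (cycle, transverse linear subspace) and matches the archimedean Green-form contributions, a computation that uses smoothness of the metric on $\HAL$ crucially. A secondary subtlety is vertical cycles, where $\deg_{\AL_K}(z)=0$: here one uses directly that the restriction of $\AL$ to each closed fibre is ample to conclude $h_{\HAL}(z)\geq 0$, which both completes (2) in this degenerate case and feeds back into the Chow-form argument for (1).
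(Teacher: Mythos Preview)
The paper does not prove this theorem: it is quoted as a known result from \cite{BGS} (Proposition~3.2.4 and Theorem~3.2.5) and used as a black box, notably in the proof of the main theorem where the lower bound (2) is invoked to get $\widetilde{h}_{\HAL_0}(w)>-\kappa$. So there is no in-paper proof to compare against.

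Your sketch is broadly the strategy of \cite{BGS}. A couple of points worth tightening. In part~(2), the step ``bounding $c_1(\HAL)$ in $C^0$ by a multiple of $c_1(\AL,\|\cdot\|_0)$'' is not literally how one controls the difference: $c_1(\HAL)$ need not be semi-positive, so one cannot simply dominate the mixed products pointwise. The actual argument (Proposition~3.2.2 in \cite{BGS}, or Proposition~\ref{prop_char_height2} here in the smooth case) proceeds by the recursion formula and induction on dimension, using only that $\phi$ is bounded, not positivity of both curvature forms. Also, in your recursion you wrote $\adiv(s_j)\cdot z$; the right object here is the geometric divisor $\gdiv(s_j)\cdot z$, since the archimedean contribution is precisely the integral you have already separated out.

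In part~(1), your outline via Chow forms and Northcott is indeed the BGS route; the comparison $h_{\HAL}(z)=h_{\mathrm{naive}}(\phi_z)+O(\deg_{\AL_K}z)$ is Theorem~4.3.8 of \cite{BGS} and is, as you correctly flag, the substantial input. Your remark on vertical cycles is also right: for those $\deg_{\AL_K}z=0$ but $h_{\HAL}(z)=\log(\Norm\wp)\deg_{\AL_{\X_\wp}}z\geq 0$ by ampleness on the closed fibre, so both assertions hold trivially there.
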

Let us place under the hypothesis of Theorem \ref{Lemma_Faltings}. Let $P\in\X(K)\setminus\Y(K)$, extended to $\overline{P}:\Spec\OO_{K}\rightarrow\X$ by properness of $\X$. Since $\AL$ is ample, there exists some positive power $\AL^{\otimes N}$ admitting a global section $s$ non-vanishing at $P$. Then
\begin{displaymath}
	Nh_{\HAL}(P)=\log\sharp\left(\frac{\overline{P}^{*}(\AL^{\otimes N})}{\overline{P}^{*}(s\OO_{K})}\right)-\sum_{\sigma:K\hookrightarrow\C}\log\|s\|_{\sigma}(P_{\sigma}),
\end{displaymath}
where $P_{\sigma}$ is the point in $\X_{\sigma}(\C)$ induced by $P$. Therefore, in Faltings' result, only the metric as a function is required. The definition of the height of a cycle of positive relative dimension, with respect to a smooth hermitian line bundle, involves the derivatives of the metric up to second order (see \cite{BGS} \textsection 3 and also \textsection \ref{section_heights} below). Therefore, to extend both theorems, we need to describe the kind of logarithmic singularities allowed to the derivatives of the metric, up to second order. The arithmetic intersection theory of Gillet and Soul\'e needs to be reinterpreted so we can deal with such metrics. This has been done in \cite{BKK}, where Burgos, Kramer and K\"uhn develop a theory of abstract arithmetic Chow groups, and apply it to the case of logarithmic singularities.

Before the statement of our main theorem we introduce some notations. Let $K$ be a number field, $\OO_K$ its ring of integers and $\X$ a projective arithmetic variety over $\Spec\OO_K$. Suppose that $D\subseteq\X_K$ is a divisor such that $D(\C)$ has normal crossings. Write $U=\X(\C)\setminus D(\C)$. By $\Z^{p}_{U}(\X)$ we denote the group of codimension $p$ cycles $z$ on $\X$, such that $z(\C)$ intersects $D(\C)$ properly. A cycle on $\X$ is called horizontal if it is the Zariski closure of a cycle on $\X_K$. We write $\Z^{p}_{U}(\X_K)$ for the subgroup of $\Z^{p}_{U}(\X)$ consisting of horizontal cycles. Now let $\AL$ be a line bundle on $\X$, endowed with a pre-log-log hermitian metric $\|\cdot\|$, with singularities along $D$ (see \textsection \ref{section_vector_bundles} below for the definition). According to Burgos, Kramer and K\"uhn, this is the notion of metric with logarithmic singularities well suited to define heights on $\Z^{p}_{U}(\X)$ (see \cite{BKK} \textsection 7). If $\AL_{K}$ is ample there is a well defined normalized height $\widetilde{h}_{\HAL}$ on $\Z^{p}_{U}(\X_K)$ with respect to $\HAL=(\AL,\|\cdot\|)$. We refer to \textsection \ref{section_heights} for a summary of the theory of heights in Arakelov theory.
\begin{theorem}[Main Theorem]\label{main_theorem}
Let $\X$ be a projective arithmetic variety over $\Spec\OO_K$, $D\subseteq\X_K$ a reduced divisor such that $D(\C)\subseteq\X(\C)$ has simple normal crossings and $U=\X(\C)\setminus D(\C)$. Let $\AL$ be a line bundle on $\X$ with $\AL_K$ ample. Let $\|\cdot\|$ be a pre-log-log hermitian metric on $\AL$, with singularities along $D$, and $\|\cdot\|_{0}$ a smooth metric on $\AL$. Then there exist constants $\alpha$, $\beta$, $\gamma>0$ and $R\in\Int_{\geq 0}$ such that for every effective cycle $z\in\Z^{p}_{U}(\X_K)$ we have $\widetilde{h}_{\HAL_{0}}(z)+\gamma\geq 1$ and
\begin{equation}\label{main_ineq}
	\left|\widetilde{h}_{\HAL}(z)-\widetilde{h}_{\HAL_0}(z)\right|\leq\alpha+\beta\log^{R}\left(\widetilde{h}_{\HAL_{0}}(z)+\gamma\right).
\end{equation}
If moreover $\|\cdot\|$ is good along $D$, then we can take $R=1$.
\end{theorem}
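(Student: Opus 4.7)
The plan is to set $\varphi=\log(\|\cdot\|_0/\|\cdot\|)$, a smooth function on $U$ with pre-log-log (respectively, good) growth along $D$, and to express the difference of normalised heights as an integral of $\varphi$ (and, after iteration, its first and second derivatives) against currents on $z(\C)$; the remaining task will then be to balance the singular growth of $\varphi$ near $D$ against a mass-concentration estimate for $z(\C)$ near $D$. Using the arithmetic intersection theory for pre-log-log metrics from \cite{BKK} and the telescoping identity for $\ac(\HAL)^{d+1}-\ac(\HAL_0)^{d+1}$, where $d=n-p$ is the relative dimension of $z$, one obtains, up to signs and normalisation factors,
\[
(\gdeg_{\AL_K}z)\bigl(\widetilde h_{\HAL}(z)-\widetilde h_{\HAL_0}(z)\bigr)=\sum_{i+j=d}\int_{z(\C)}\varphi\,\c1(\HAL)^{i}\wedge\c1(\HAL_0)^{j}.
\]
The identity $\c1(\HAL)=\c1(\HAL_0)+\ddc\varphi$ on $U$, combined with iterated Stokes' formula on $z(\C)$ (the boundary contributions near $D$ vanishing by the pre-log-log integrability of $\varphi$ and its derivatives), will reduce matters to bounding integrals $\int_{z(\C)}P\,\c1(\HAL_0)^d$, where $P$ is a polynomial in $\varphi,\pd\varphi,\cpd\varphi,\ddc\varphi$, each factor of pre-log-log type.

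The next step is the key mass-concentration estimate. Fix $N$ large and a nonzero section $s\in\HH^0(\X_K,\AL_K^{\otimes N})$ whose zero divisor contains $D$ and meets $z_K$ properly (possible by the ampleness of $\AL_K$ together with a Bertini-type general-position argument). Poincar\'e--Lelong and the definition of the height give
\[
\int_{z(\C)}(-\log\|s\|_0^2)\,\c1(\HAL_0)^d=2Nh_{\HAL_0}(z)-2h_{\HAL_0}(z\cdot(s))+O(\gdeg_{\AL_K}z),
\]
and applying Theorem \ref{thm_BGS}(2) to the effective cycle $z\cdot(s)$ yields, for every $\varepsilon\in(0,1)$, a Markov-type bound
\[
\int_{\{\|s\|_0<\varepsilon\}\cap z(\C)}\c1(\HAL_0)^d\prec\frac{\widetilde h_{\HAL_0}(z)+\gamma'}{\log(1/\varepsilon)}\gdeg_{\AL_K}z.
\]

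With this in hand, the final step will be to split $\int_{z(\C)}|\varphi|\,\c1(\HAL_0)^d$ at the level set $\{\|s\|_0=\varepsilon\}$. On the outer region the pointwise bound $|\varphi|\leq C(\log\log(1/\varepsilon))^R$ (pre-log-log case) or $|\varphi|\leq C(\log(1/\varepsilon))^R$ (good case) gives a contribution of that order times $\gdeg_{\AL_K}z$; on the inner region, a dyadic annular decomposition in $\|s\|_0$ combined with Abel summation and the Markov estimate above produces a bound of the same order. Optimising $\varepsilon$ so that $\log(1/\varepsilon)\sim\widetilde h_{\HAL_0}(z)+\gamma$ in the good case, or $\log\log(1/\varepsilon)\sim\log(\widetilde h_{\HAL_0}(z)+\gamma)$ in the pre-log-log case, will yield (\ref{main_ineq}), with $R=1$ in the good case. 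The integrals involving derivatives of $\varphi$ arising from the integrations by parts are handled analogously using the pre-log-log integrability conditions of \cite{BKK}.

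The hard part will be the mass-concentration estimate above together with the requirement that the constants $\alpha,\beta,\gamma$ in (\ref{main_ineq}) be independent of $z$. The latter forces one to vary the auxiliary section $s$ with $z$ while keeping $N$ fixed; this will be arranged via a general-position argument using a finite cover of the relevant parameter space, and the dependence of the implicit constants on the chosen section must be carefully tracked so that they can be absorbed into $\alpha,\beta,\gamma$. Once both ingredients are secured, the power $R$ emerges from the polynomial order of the pre-log-log bound on $\varphi$ and its derivatives, while the good case collapses all iterated logarithms to yield the optimal exponent $R=1$.
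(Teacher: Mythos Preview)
Your overall architecture is sound and largely matches the paper: the telescoping identity for the difference of heights (Proposition \ref{prop_char_height2}), the use of an auxiliary section $s$ with $D\subseteq\gdiv s$, the Poincar\'e--Lelong step relating $\int_{z(\C)}\log\|s\|_0^{-2}\,\c1(\HAL_0)^{d}$ to heights via property \textit{H3}, the appeal to Theorem \ref{thm_BGS} for a universal lower bound, and the Bertini-type argument (Proposition \ref{prop_Bertini}) producing finitely many sections $s_1,\dots,s_r$ so that every irreducible $z$ meets some $\gdiv s_j$ properly --- all of this is exactly what the paper does.

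The gap is in your handling of the mixed terms $\int_{z(\C)}\varphi\,\c1(\HAL)^{i}\c1(\HAL_0)^{j}$ with $i>0$. Writing $\c1(\HAL)=\c1(\HAL_0)+\ddc\varphi$ and integrating by parts does \emph{not} reduce matters to integrals of a function against the smooth positive measure $\c1(\HAL_0)^{d}$: what survives are integrals of wedge products of $\pd\varphi$, $\cpd\varphi$, $\ddc\varphi$ against $\c1(\HAL_0)^{d-k}$, and these are \emph{signed} singular $(d,d)$-forms, not of the shape (function)$\times$(fixed positive measure). Your Markov/level-set/dyadic argument applies to the latter situation but not to the former; one cannot split a signed singular form at a level set of $\|s\|_0$ and control the pieces. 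The paper's device here is different and is its technical core: one decomposes $f=\varphi-\psi$ with $\varphi,\psi\geq 0$ chosen so that
\[
\omega_\varphi:=\ddc(-\varphi)+B\Theta_N\bigl(\ddc(-\Theta_1)+A\,\c1(\HAL_0)\bigr)\geq 0,
\qquad
\omega_\psi\geq 0
\]
(Theorem \ref{decomp_thm_2}, resting on the global bound Theorem \ref{decomp_thm}). Then $\ddc f=\omega_\psi-\omega_\varphi$ is a difference of \emph{semi-positive} forms, and the iterated Stokes argument (Proposition \ref{prop_height_int1}) can exploit positivity at every stage to reduce to integrals $\int_{z(\C)}\ell_K\,\tau_{Q_1}\cdots\tau_{Q_d}$ with all factors semi-positive; only at the very end (Proposition \ref{prop_height_int2}) does one land on $\int_{z(\C)}\ell_K\,\c1(\HAL_0)^{d}$, which is then bounded via Jensen's inequality --- morally your mass-concentration estimate. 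So your idea is correct for the \emph{final} step, but the reduction to that step requires the positivity decomposition, which your proposal does not supply.

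A minor slip: in the good (P-singular) case one still has $|\varphi|\prec\log\log\|s\|_0^{-2}$, not $(\log\|s\|_0^{-2})^{R}$ (Corollary \ref{cor_P_singular}); the improvement to $R=1$ comes from the exponent in $(\log\log)^{N}$ dropping to $1$, not from shedding a logarithm.
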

In conjunction with Theorem \ref{thm_BGS}, Theorem \ref{main_theorem} immediately yields the desired finiteness property as well as the existence of a universal lower bound.
\begin{corollary}\label{Corollary_Main}
Let $\X$ be a projective arithmetic variety over $\Spec\OO_K$, $D\subseteq\X_{K}$ a reduced divisor such that $D(\C)$ has simple normal crossings and $U=\X(\C)\setminus D(\C)$. Let $\HAL$ be a pre-log-log hermitian ample line bundle on $\X$, with singularities along $D$.
\begin{itemize}
 	\item[1.] For any real constant $A$, there are only finitely many effective cycles $z\in\Z^{p}_{U}(\X)$ such that $\deg_{\AL_{K}}z\leq A$ and $h_{\HAL}(z)\leq A$.
	\item[2.] There exists a positive constant $\kappa$ such that $h_{\HAL}(z)\geq -\kappa\deg_{\AL_{K}}z$ for every effective cycle $z\in\Z^{p}_{U}(\X)$.
\end{itemize}
\end{corollary}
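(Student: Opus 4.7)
The plan is to derive the corollary from Theorem \ref{main_theorem} and Theorem \ref{thm_BGS}, after fixing an auxiliary smooth hermitian metric $\|\cdot\|_0$ on $\AL$ (yielding the smooth ample hermitian line bundle $\HAL_0$) and splitting any effective cycle $z\in\Z^{p}_U(\X)$ as $z=z_h+z_v$, where $z_v$ collects the irreducible components supported in closed fibres of $\pi$. Heights and degrees decompose additively, $\deg_{\AL_K}(z_v)=0$, and for $z_v$ supported over $\mathfrak{p}\in\BS$ one has $h_{\HAL}(z_v)=\log\Norm(\mathfrak{p})\cdot\deg_{\AL|_{\X_\mathfrak{p}}}(z_v)$, since archimedean contributions vanish on a vertical cycle. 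In particular $h_{\HAL}(z_v)=h_{\HAL_0}(z_v)\geq 0$, because $\AL|_{\X_\mathfrak{p}}$ is ample and $z_v$ is effective.

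For part 2, Theorem \ref{thm_BGS}(2) applied to $\HAL_0$ supplies $\kappa_0>0$ with $\widetilde{h}_{\HAL_0}(z_h)\geq-\kappa_0$, while Theorem \ref{main_theorem} yields
\begin{equation*}
\widetilde{h}_{\HAL}(z_h)\geq x-\gamma-\alpha-\beta\log^{R}(x),\qquad x:=\widetilde{h}_{\HAL_0}(z_h)+\gamma\geq 1.
\end{equation*}
The right hand side is continuous in $x\in[1,\infty)$ and tends to $+\infty$, so attains a finite minimum $m$; thus $\widetilde{h}_{\HAL}(z_h)\geq m$ uniformly in $z_h$. Multiplying by $\deg_{\AL_K}(z_h)\geq 1$ (a positive integer for effective horizontal cycles when $\AL_K$ is ample) gives $h_{\HAL}(z_h)\geq m\deg_{\AL_K}(z)$, and adding the non-negative $h_{\HAL}(z_v)$ delivers the universal lower bound with $\kappa=\max(0,-m)$.

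For part 1, assume $\deg_{\AL_K}(z)\leq A$ and $h_{\HAL}(z)\leq A$. Combined with part 2 and $h_{\HAL}(z_v)\geq 0$, this gives $h_{\HAL}(z_h)\leq A$ and $h_{\HAL}(z_v)\leq(1+\kappa)A$ separately, so in particular $\widetilde{h}_{\HAL}(z_h)\leq A$. Rearranging Theorem \ref{main_theorem} as
\begin{equation*}
\widetilde{h}_{\HAL_0}(z_h)-\beta\log^{R}\bigl(\widetilde{h}_{\HAL_0}(z_h)+\gamma\bigr)\leq\widetilde{h}_{\HAL}(z_h)+\alpha\leq A+\alpha,
\end{equation*}
the same asymptotic dominance bounds $\widetilde{h}_{\HAL_0}(z_h)$, and hence $h_{\HAL_0}(z_h)$, from above. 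Theorem \ref{thm_BGS}(1) applied to $\HAL_0$ then leaves only finitely many $z_h$; for the vertical part one has $h_{\HAL_0}(z_v)=h_{\HAL}(z_v)\leq(1+\kappa)A$ and $\deg_{\AL_K}(z_v)=0$, and Theorem \ref{thm_BGS}(1) again produces finitely many $z_v$, whence finitely many $z=z_h+z_v$. The central technical point throughout is this bootstrapping that inverts the Main Theorem estimate in both directions, relying on the fact that the linear term dominates $\log^{R}$ at infinity; everything else is bookkeeping around the horizontal/vertical splitting and the observation that vertical effective cycles contribute non-negatively and only through non-archimedean places.
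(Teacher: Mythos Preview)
Your proof is correct and is precisely the argument the paper has in mind: the text simply asserts that the corollary follows ``immediately'' from Theorem \ref{main_theorem} together with Theorem \ref{thm_BGS}, and your horizontal/vertical splitting plus the inversion of the estimate $|\widetilde{h}_{\HAL}-\widetilde{h}_{\HAL_0}|\leq\alpha+\beta\log^{R}(\widetilde{h}_{\HAL_0}+\gamma)$ via the dominance of the linear term is exactly how one makes that ``immediately'' rigorous. The only cosmetic slips are the suppressed $[K:\Q]$ factor in the normalization $\widetilde{h}$ and the edge case $z_h=0$, both of which are harmless.
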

The techniques employed for the proof of the main theorem were initially inspired by the work of Carlson and Griffiths on the defect relation for equidimensional holomorphic maps, in higher dimensional Nevanlinna theory \cite{CarGri}. Moreover (\ref{main_ineq}) may be interpreted as a vast generalization of the naive Liouville's inequality for the distance between an algebraic number and a rational number. This point of view is conceptually interesting, since it opens the natural question of finding an analogue to Roth's theorem for the distance\footnote{Suitable candidates for the logarithm of the distance are provided by the \textit{height integrals} introduced in \textsection \ref{section_bounding_height_integrals} below.} between a divisor with simple normal crossings and effective cycles of arbitrary dimension (all defined over a number field)\footnote{The height morphism $h_{\HAL}$ is defined for any pre-log-log hermitian line bundle $\HAL$, with singularities along a divisor in $\X(\C)$. However, for the main theorem to hold, the divisor needs to be defined over a number field. This essentially goes back to the construction of transcendental numbers due to Liouville.}.

This paper is organized as follows.

In Section \ref{sec_diff_form_log} we review the theory of differential forms with logarithmic singularities necessary in the rest of the paper. In Section \ref{section_vector_bundles} we study in detail several notions of logarithmically singular hermitian vector bundles. The results we recall provide a wealth of constructions to which Theorem \ref{Lemma_Faltings} and Theorem \ref{main_theorem} apply. Both sections are complemented with examples for a better understanding of the theory. In Section \ref{section_decomp_thm} we establish global bounds for real log-log growth (1,1)-forms. An outstanding consequence is a decomposition theorem (Theorem \ref{decomp_thm_2}) for pre-log-log functions, which plays a crucial role in the proof of the main theorem. In Section \ref{section_bounding_height_integrals} we prove estimates for integrals of pre-log-log forms appearing in the archimedian part of the definition of height. This leads to the proof of the main theorem in Section \ref{section_heights}, where the reader will find an overview of the theory of heights in Arakelov geometry. In Section \ref{section_examples} we present some examples of good hermitian line bundles interesting for arithmetic purposes (for instance in relation with Theorem \ref{main_theorem}). We treat the case of fully decomposed automorphic vector bundles on locally symmetric varieties, the relative dualizing sheaf of the universal curve over the moduli space of stable curves, equipped with the family hyperbolic metric, the Weil-Petersson metric on the moduli space of curves and the K\"ahler-Einstein metric on quasi-projective varieties. Finally, in the Appendix we prove a Bertini's type theorem needed for the preliminary reductions in the proof of the main theorem.

\textit{Acknowledgements.} I am deeply indebted to J.-B. Bost and J. I. Burgos Gil for proposing me to work on this subject as the starting point of my PhD. thesis, and for their guidance and constant encouragement. During the preparation of this paper I benefited from stimulating discussions with several people: I warmly thank S. Fischler, U. K\"uhn, V. Maillot, C. Mourougane and D. Roessler for the time they devoted to me. Their useful advice is reflected throughout the paper.

This work was presented, in a preliminary form, at the \textit{Internationales Graduiertenkolleg Arithmetic and Geometry} of the Humboldt-Universit\"at zu Berlin and the ETH of Z\"urich, in May 2005. I am grateful to J. Kramer for inviting me to talk to their summer school.

\section{Differential forms with logarithmic singularities}\label{sec_diff_form_log}
Let $X$ be a complex analytic manifold and $F$ a closed analytic subspace. In this section we introduce several notions of differential forms on $X$ with logarithmic singularities along $F$, relevant to our work. We distinguish the case of functions from the case of differential forms, since the former can be presented in a more general geometric frame. Indeed, while we can define functions with logarithmic singularities along an arbitrary closed analytic subspace $F$, the appropriate analogues for differential forms of any order require $F$ to be a divisor with normal crossings.
\subsection{Functions with logarithmic singularities}
Let $X$ be a complex analytic manifold and $F$ a closed analytic subspace of $X$. We denote by $\mathcal{I}_{F}\subseteq\OO_{X}$ the ideal sheaf defining $F$ and $\supp(F)$ for the support of $F$. For every $x\in X$, there exist an open neighborhood $V$ and holomorphic functions $s_{1},\ldots,s_{m}\in\OO_{X}(V)$ such that
\begin{itemize}
\item[\textit{i}.] the germ of ideal sheaf $\mathcal{I}_{F,x}$ is generated by $s_{1},\ldots,s_{m}$;
\item[\textit{ii}.] the trace of the support of $F$ on $V$ is $\supp(F)\cap V=\lbrace z\in V\mid s_{1}(z)=\ldots=s_{m}(z)=0\rbrace.$
\end{itemize}
Since $\OO_{X}$ is a coherent sheaf, so is $\mathcal{I}_{F}$. Then, given $s_{1},\ldots,s_{m}$ as above, after possibly shrinking $V$, $s_{1},\ldots,s_{m}$ generate all the germs $\mathcal{I}_{F,z}$ for $z\in V$. In this case we say that $s_{1},\ldots,s_{m}$ generate $\mathcal{I}_{F\mid V}$ and we write $\mathcal{I}_{F\mid V}=(s_{1},\ldots,s_{m})$ as a shortcut. The reader is referred to \cite{Demailly} for further details on analytic spaces.
\begin{definition}\label{def_func_log_sing}
Let $X$ be a complex analytic manifold and $F\subseteq X$ a closed analytic subspace. A smooth function $f:X\setminus \supp(F)\rightarrow\C$ has \textit{logarithmic singularities along} $F$ if for every open subset $V$ of $X$ such that $\mathcal{I}_{F\mid V}=(s_1,\ldots,s_m)$ and every relatively compact open subset $V^{\prime}\subset\subset V$, there exists an integer $N\geq 0$ such that
\begin{equation}\label{eq1}
	|f_{\mid_{V^{\prime}\setminus \supp(F)}}|\prec\left|\log(\max_{i=1,\ldots,m}|s_i|)^{-1}\right|^{N}.
\end{equation}
\end{definition}
For a function $f:X\setminus \supp(F)\rightarrow\C$ to be with logarithmic singularities along $F$ it is enough that (\ref{eq1}) be satisfied for a given open covering of $X$ and given local generators of $\mathcal{I}_{F}$. This is the content of the next lemma.
\begin{lemma}
Let $X$ be a complex analytic manifold and $F$ a closed analytic subspace. Fix an open covering $\lbrace V_{\alpha}\rbrace_{\alpha}$ of $X$ such that $\mathcal{I}_{F\mid V_{\alpha}}$ is generated by $s_{1}^{\alpha},\ldots, s_{m_{\alpha}}^{\alpha}$. Suppose given $V_{\alpha}^{\prime}\subset\subset V_{\alpha}$ still forming an open covering of $X$. Then a smooth function $f:X\setminus \supp(F)\rightarrow\C$ has logarithmic singularities along $F$ if, and only if, for every $\alpha$ there exists an integer $N\geq 0$ such that
\begin{displaymath}
	|f_{\mid_{V^{\prime}_{\alpha}\setminus \supp(F)}}|\prec\left|\log(\max_{i=1,\ldots, m_{\alpha}}|s_i^{\alpha}|)^{-1}
	\right|^{N}.
\end{displaymath}
\end{lemma}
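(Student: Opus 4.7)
The only-if direction is immediate: it is Definition \ref{def_func_log_sing} itself, applied to each $V_\alpha$ with generators $s_1^\alpha,\ldots,s_{m_\alpha}^\alpha$ and to the relatively compact subsets $V_\alpha'$. For the converse, the plan is to reduce the general bound to the local one by comparing two systems of generators. Given an open set $V\subseteq X$ with $\mathcal{I}_{F\mid V}=(t_1,\ldots,t_k)$ and $V'\subset\subset V$, I would first use compactness of $\overline{V'}$ to extract a finite subcollection $V_{\alpha_1}',\ldots,V_{\alpha_r}'$ covering $\overline{V'}$, and then it suffices to establish an inequality of type (\ref{eq1}) on each $V'\cap V_{\alpha_j}'$ separately, since taking the maximum of finitely many exponents $N_{\alpha_j}$ and summing the constants recovers the global estimate on $V'\setminus\supp(F)$.

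Fix $\alpha=\alpha_j$ and work on $V\cap V_\alpha$. Both $(t_i)_i$ and $(s_l^\alpha)_l$ generate the same coherent ideal sheaf $\mathcal{I}_F$ there, so the next step is to exploit this to compare them. By coherence, each $t_i$ can be written locally as $\sum_l a_{il}\,s_l^\alpha$ with $a_{il}\in\OO_X$, and conversely each $s_l^\alpha$ as $\sum_i b_{li}\,t_i$. After covering $\overline{V'\cap V_\alpha'}$ by finitely many such local patches and using compactness to bound the holomorphic matrices $(a_{il})$ and $(b_{li})$ uniformly, one obtains constants $c_1,c_2>0$ with
\begin{displaymath}
 c_1 \max_i |t_i| \leq \max_l |s_l^\alpha| \leq c_2 \max_i |t_i|
\end{displaymath}
pointwise on $V'\cap V_\alpha'$.

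Passing to logarithms, the quantities $\bigl|\log\max_l|s_l^\alpha|^{-1}\bigr|$ and $\bigl|\log\max_i|t_i|^{-1}\bigr|$ differ by at most $\max(|\log c_1|,|\log c_2|)$. A binomial expansion then converts the hypothesized estimate $|f|\prec\bigl|\log\max_l|s_l^\alpha|^{-1}\bigr|^{N_\alpha}$ into $|f|\leq C_1\bigl|\log\max_i|t_i|^{-1}\bigr|^{N_\alpha}+C_2$ on $V'\cap V_\alpha'\setminus\supp(F)$, which is the desired shape modulo an additive constant.

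The main (and only mildly annoying) obstacle is absorbing that additive constant $C_2$ into a pure log-polynomial bound. In a neighborhood of $\supp(F)$ inside $V'\cap V_\alpha'$, the first term blows up and easily dominates $C_2$; on the complementary compact region the function $f$ is smooth on a compact subset of $X\setminus\supp(F)$ and hence uniformly bounded, so the bound can be absorbed into an enlarged $C_1$ (possibly after marginally increasing $N$). The genuine mathematical input is thus the coherence-type statement that two generating families of the same ideal sheaf are related by holomorphic transition matrices, uniformly bounded on compact sets; everything else is routine bookkeeping with logarithms.
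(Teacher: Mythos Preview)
Your argument is correct. The paper does not actually prove this lemma: its proof reads, in full, ``Left as an easy exercise.'' Your sketch supplies exactly the routine argument the author had in mind --- coherence lets you write each system of generators in terms of the other via locally holomorphic matrices, compactness of $\overline{V'\cap V'_\alpha}\subseteq V\cap V_\alpha$ bounds those matrices uniformly, and the resulting two-sided inequality $c_1\max_i|t_i|\le\max_l|s_l^\alpha|\le c_2\max_i|t_i|$ translates the given log-polynomial bound on $|f|$ into the desired one. Your handling of the additive constant is also fine and is the only place requiring any care; the split into ``near $\supp(F)$'' (where the log term dominates) versus ``bounded away from $\supp(F)$'' (where $f$ is smooth on a compact set) is the standard way to absorb it. This is precisely the argument pattern the paper uses explicitly in the \emph{next} lemma (same support, different ideals), where Nullstellensatz replaces the direct generator comparison you use here.
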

\begin{proof}
Left as an easy exercise.
\end{proof}
\begin{lemma}
Let $X$ be a complex analytic manifold and $F$, $G$ closed analytic subspaces with $\supp(F)=\supp(G)$.
A smooth function $f:X\setminus\supp(F)\rightarrow\C$ has logarithmic singularities along $F$ if, and only if, it has logarithmic singularities along $G$.
\end{lemma}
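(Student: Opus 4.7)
By the symmetric role played by $F$ and $G$, it suffices to prove one direction: assume $f$ has logarithmic singularities along $F$, and deduce the same along $G$. The essential input is the analytic Nullstellensatz of R\"uckert: since $\supp(F)=\supp(G)$ by hypothesis, the analytic sets cut out by $\mathcal{I}_F$ and $\mathcal{I}_G$ coincide, so the stalks of these sheaves share the same radical at every point. In particular, around any point of $X$, a suitable power of each local generator of $\mathcal{I}_G$ belongs to $\mathcal{I}_F$.

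Fix an open set $V\subset X$ on which $\mathcal{I}_G$ is generated by $t_1,\dots,t_n$, together with $V'\subset\subset V$. I would cover $\overline{V'}$ by finitely many open subsets $W_1,\dots,W_K$ of $V$ on each of which $\mathcal{I}_F$ admits holomorphic generators $s_1^k,\dots,s_{m_k}^k$, and choose $W_k'\subset\subset W_k$ still covering $\overline{V'}$. Applying R\"uckert's Nullstellensatz pointwise on $\overline{W_k'}$, and using that if $t_j^{M_x}\in\mathcal{I}_{F,x}$ then $t_j^M$ remains in $\mathcal{I}_F$ locally for every $M\geq M_x$, a compactness argument produces a single integer $M_k$ together with a finite subcover of $\overline{W_k'}$ by opens $U_\ell\subset W_k$ on each of which there is a representation $t_j^{M_k}=\sum_i a_{ij}^{k,\ell}s_i^k$ with $a_{ij}^{k,\ell}$ holomorphic and bounded. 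This yields the uniform estimate
\begin{displaymath}
(\max_j|t_j|)^{M_k}\leq C_k\,\max_i|s_i^k|\qquad\text{on }W_k'.
\end{displaymath}

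Taking logarithms on the subregion where both quantities are at most $1/e$ gives $\bigl|\log\max_i|s_i^k|\bigr|\leq M_k\bigl|\log\max_j|t_j|\bigr|+\log C_k$, and consequently $\bigl|\log\max_i|s_i^k|\bigr|^{N_k}\prec\bigl|\log\max_j|t_j|\bigr|^{N_k}+1$. Combined with the hypothesis $|f|\prec\bigl|\log\max_i|s_i^k|\bigr|^{N_k}$ on $W_k'\setminus\supp(F)$, this gives the desired bound on the part of $W_k'\cap V'$ close to $\supp(G)$; on the complementary closed region, which is relatively compact in $V$ and disjoint from $\supp(G)$, the function $f$ is bounded and absorbed into the constant. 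Taking $N'=\max_k N_k$ and using the finite cover yields the conclusion. The step I expect to require the most care is the uniform choice of $M_k$ on $W_k'$: R\"uckert's Nullstellensatz is pointwise and the exponents may a priori vary, so one must exploit both the monotonicity in the exponent noted above and the compactness of $\overline{W_k'}$ to fix a single $M_k$ valid throughout.
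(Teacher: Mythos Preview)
Your proposal is correct and follows essentially the same route as the paper: both invoke the analytic Nullstellensatz (the paper cites it as Hilbert's Nullstellensatz from Demailly) to write, locally, $t_j^{M}=\sum_i a_{ij}s_i$, deduce $(\max_j|t_j|)^{M}\leq C\max_i|s_i|$ on relatively compact subsets, and then take logarithms. The paper's argument is shorter because it works at a single point $x$, chooses $V$ small enough that \emph{both} $\mathcal{I}_F$ and $\mathcal{I}_G$ are generated there, and implicitly appeals to the preceding lemma (that the condition may be checked on any fixed covering) to globalize---this bypasses your explicit patching over the $W_k$ and the compactness argument for a uniform $M_k$, both of which are correct but unnecessary once one localizes first.
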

\begin{proof}
Write $\mathcal{I}$, $\mathcal{J}$ for the ideal sheaves of $F$, $G$, respectively. Let $V$ be an open neighborhood of $x\in X$ such that $\mathcal{I}_{\mid V}=(s_{1},\ldots, s_{l})$ and $\mathcal{J}_{\mid V}=(t_{1},\ldots,t_{m})$. By Hilbert's Nullstellensatz (see (4.22) in \cite{Demailly}), after possibly shrinking $V$, there exists an integer $N\geq 0$ such that
\begin{displaymath}
	s_{i}^{N}=\sum_{j=1}^{m}\lambda_{i}^{j}t_{j}, t_{j}^{N}=\sum_{i=1}^{l}\mu_{j}^{i}s_{i}.
\end{displaymath}
for some holomorphic functions $\lambda_{i}^{j}$, $\mu_{j}^{i}$. Hence, if $V^{\prime}\subseteq V$ is a relatively compact open subset, there exists a constant $C>0$ such that
\begin{displaymath}
	|s_{i}|^{N}\leq C\max_{j=1,\ldots,m}|t_j|, |t_{j}|^{N}\leq C\max_{i=1,\ldots,l}|s_i|.
\end{displaymath}
The lemma follows.
\end{proof}
\noindent The meaning of the lemma is that the notion of function with logarithmic singularities along $F$ depends only on the support  of $F$.
\begin{proposition}\label{prop_prop_log_sing}
i. Let $X$ be a complex analytic manifold and $F,G$ closed analytic subspaces with $\supp(F)\subseteq\supp(G)$. A smooth function $f:X\setminus\supp(F)\rightarrow\C$ with logarithmic singularities along $F$ has logarithmic singularities along $G$.\\
ii. Let $\varphi:X\rightarrow Y$ be a morphism of complex analytic manifolds and $F\subseteq Y$ a closed analytic subspace. If $f:Y\setminus\supp(F)\rightarrow\C$ has logarithmic singularities along $F$, then $\varphi^{*}f=f\circ\varphi$ has logarithmic singularities along $\varphi^{-1}(F)$. If $\varphi$ is surjective and proper, the converse holds.
\end{proposition}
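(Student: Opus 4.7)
\textbf{Proof proposal for Proposition \ref{prop_prop_log_sing}.}

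For part (i), the strategy is to compare ideal sheaves via Hilbert's Nullstellensatz, exactly as in the preceding lemma. Fix a point $x \in X$ and a neighborhood $V$ with $\mathcal{I}_{F\mid V} = (s_1,\ldots,s_l)$ and $\mathcal{I}_{G\mid V} = (t_1,\ldots,t_m)$. Since $\supp(F)\subseteq\supp(G)$ translates via the Nullstellensatz into $\sqrt{\mathcal{I}_{G,x}} \subseteq \sqrt{\mathcal{I}_{F,x}}$, after shrinking $V$ each $t_j$ satisfies $t_j^N = \sum_i \mu_j^i s_i$ for some integer $N$ and holomorphic $\mu_j^i$. On a relatively compact $V' \subset\subset V$ this gives $\max_j |t_j|^N \leq C \max_i |s_i|$, hence $|\log\max_j|t_j||\geq \frac{1}{N}|\log\max_i|s_i||-C'$ near $\supp(G)$. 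Thus any polynomial estimate in $|\log\max_i|s_i||^{-1}$ transfers (up to changing the constant) into an estimate in $|\log\max_j|t_j||^{-1}$, and by the covering lemma above this suffices.

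For part (ii), the basic observation is that if $\mathcal{I}_{F\mid V} = (s_1,\ldots,s_m)$ on an open $V \subseteq Y$, then the pullback ideal sheaf of $\varphi^{-1}(F)$ on $\varphi^{-1}(V)$ is generated by $\varphi^{*}s_1,\ldots,\varphi^{*}s_m$. For the direct implication I would fix an open covering $\{V_\alpha\}$ of $Y$ with the required generators and relatively compact $V'_\alpha \subset\subset V_\alpha$, then cover $X$ by relatively compact opens $U_\beta$ with $\overline{U_\beta}$ contained in some $\varphi^{-1}(V_\alpha)$; shrinking if necessary so that $\varphi(\overline{U_\beta}) \subseteq V'_\alpha$, the estimate for $f$ on $V'_\alpha\setminus\supp(F)$ transfers pointwise under $\varphi$ into an estimate for $\varphi^*f$ on $U_\beta \setminus \varphi^{-1}(\supp(F))$, and the covering lemma concludes.

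For the converse, assume $\varphi$ surjective and proper. Fix $V \subseteq Y$ with $\mathcal{I}_{F\mid V}=(s_1,\ldots,s_m)$ and $V' \subset\subset V$; by properness, $\varphi^{-1}(V')$ is relatively compact in $\varphi^{-1}(V)$. The hypothesis on $\varphi^*f$ yields some $N\geq 0$ with
\begin{displaymath}
	|\varphi^{*}f(x)| \prec \bigl|\log(\max_i|\varphi^{*}s_i(x)|)^{-1}\bigr|^{N}
\end{displaymath}
on $\varphi^{-1}(V')\setminus\varphi^{-1}(\supp(F))$. For any $y \in V'\setminus\supp(F)$, surjectivity provides $x \in \varphi^{-1}(y)$, which automatically lies in $\varphi^{-1}(V')$ and outside $\varphi^{-1}(\supp(F))$; substituting yields the desired bound on $|f(y)|$ in terms of $|\log\max_i|s_i(y)|^{-1}|^N$.

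The only non-formal step is the Nullstellensatz comparison in part (i), and since this is exactly the argument already written out in the preceding lemma (invariance of the notion under ideals with the same support), I would simply refer to it rather than redo the computation. Everything else is topological bookkeeping involving the covering lemma and properness.
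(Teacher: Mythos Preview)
Your proposal is correct and matches the paper's argument closely. For part (i) the paper simply writes ``straightforward'', so your Nullstellensatz comparison (borrowed from the preceding lemma) is a reasonable way to spell it out; for part (ii) both the direct implication and the converse under properness and surjectivity proceed exactly as you describe, with only cosmetic differences in how the relatively compact opens are organized.
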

\begin{proof}
The first item \textit{i} is straightforward. We shall prove the second part \textit{ii}. Let $V$ be an open subset of $Y$ such that $\mathcal{I}_{F\mid V}=(s_1,\ldots,s_m)$. The ideal sheaf of $\varphi^{-1}(F\cap V)$ is $(\varphi^{*} s_1,\ldots,\varphi^{*} s_m)$. Let $\lbrace V_{\alpha}\rbrace_{\alpha}$ be an open covering of $V$ by relatively compact subsets. For every open $U\subset\subset\varphi^{-1}(V)$ define $U_{\alpha}=U\cap\varphi^{-1}(V_{\alpha})$. Then $U_{\alpha}$ is relatively compact in $\varphi^{-1}(V)$ and $\varphi(U_{\alpha})\subseteq V_{\alpha}$. The estimate
\begin{displaymath}
	|f_{\mid V_{\alpha}\setminus\supp(F)}|\prec\left|\log(\max_{i=1,\ldots,m}|s_i|)^{-1}\right|^{N}
\end{displaymath}
implies the corresponding inequality for $\varphi^{*}(f)$ on $U_{\alpha}$.\\
We now prove the converse under the surjectivity and properness assumption. Let $V$ be as above and $V'\subset\subset V$ an open subset. Since $\varphi$ is proper, $\varphi^{-1}(V')$ is relatively compact in $\varphi^{-1}(V)$. The hypothesis of logarithmic singularities of $\varphi^{*}f$ asserts the existence of an integer $N\geq 0$ such that

\begin{displaymath}
	|\varphi^{*}f_{\mid\varphi^{-1}(V'\setminus\supp(F))}|\prec
	\left|\log(\max_{i=1\ldots, m}|\varphi^{*}s_i|)^{-1}\right|^{N}.
\end{displaymath}
We come up with the conclusion by the surjectivity of $\varphi$.
\end{proof}

\subsection{Differential forms with logarithmic singularities}
\begin{definition}[Divisor with normal crossings]
Let $X$ be a complex analytic manifold of dimension $n$. A reduced analytic subspace $D$ of $X$ is a \textit{divisor with normal crossings} if $X$ can be covered by open subsets $V$ with coordinates $z_1,\ldots,z_n$ such that $D\cap V$ is defined by $z_1\cdot\ldots\cdot z_m=0$, for some $0\leq m\leq n$. We say that $D$ has \textit{simple normal crossings} if it can be written as a finite union of smooth analytic hypersurfaces of $X$.
\end{definition}
\begin{definition}[Adapted analytic chart \cite{BKK}]
Let $X$ be a complex analytic manifold of dimension $n$ and $D$ a divisor with normal crossings in $X$. An analytic chart $(V;\lbrace z_{i}\rbrace_{i=1}^{n})$ is said to be \textit{adapted to} $D$ if $|z_i|<1/e$, $i=1,\ldots,n$ and $D\cap V$ is defined by $z_1\cdot\ldots\cdot z_m=0$, for some $0\leq m\leq n$. The integer $m$ will be understood when no confusion can arise.
\end{definition}
\begin{notation}\label{not_dzeta}
Let $X$ be a complex analytic manifold and $D\subseteq X$ a divisor with normal crossings. Let $(V;\lbrace z_{i}\rbrace_{i})$ be an analytic chart such that $D\cap V$ is defined by $z_{1}\cdot\ldots\cdot z_{m}=0$. We define
\begin{displaymath}
	\dd\zeta_{k}=\begin{cases}
		\frac{\dd z_{k}}{z_{k}\log|z_{k}|^{-1}}, &\text{ if } 1\leq k\leq m\\
		\dd z_{k}, &\text{ if } k>m
	\end{cases}
\end{displaymath}
and similarly for $\cdzeta_{k}$. Given $I,J$ ordered subsets of $\lbrace 1,\ldots,n\rbrace$, we abbreviate
\begin{displaymath}
	\dd\zeta_{I}\wedge\cdzeta_{J}=\bigwedge_{i\in I}\dd\zeta_{i}\wedge\bigwedge_{j\in J}\cdzeta_{j}.
\end{displaymath}
\end{notation}
In the following definitions we write $X$ for a complex analytic manifold of dimension $n$, $D$ a divisor with normal crossings in $X$, $U=X\setminus D$ and $\iota:U\hookrightarrow X$ for the natural open immersion. The sheaf of $\CC^{\infty}$ complex differential forms on $U$ is denoted by $\mathcal{E}^{*}_{U}$.
\begin{definition}[Poincar\'e growth forms \cite{Mumford}]\label{def_Poincare_form}
The \textit{sheaf of Poincar\'e growth forms} on $X$, \textit{with singularities along} $D$, is the subalgebra $\mathcal{P}_{D}$ of $\iota_{*}\mathcal{E}^{*}_{U}$ generated, on every open analytic chart $(V;\lbrace z_{i}\rbrace_{i})$ adapted to $D$, by $\CC^{\infty}(V\setminus D,\C)\cap L^{\infty}_{\loc}(V,\C)$ and the differential forms $\dd\zeta_{k}$, $\cdzeta_{k}$, $k=1,\ldots,n$. Namely, for every analytic chart $(V;\lbrace z_{i}\rbrace_{i})$ adapted to $D$, $\iota_{*}\mathcal{E}^{*}_{U}(V)$ is the $\C$-vector space generated by differential forms
\begin{displaymath}
	\sum_{I,J}\alpha_{I,J}\dd\zeta_{I}\wedge\cdzeta_{J}
\end{displaymath}
where $\alpha_{I,J}\in\CC^{\infty}(V\setminus D)$ are locally bounded on $V$.
\end{definition}
\begin{definition}[Good forms \cite{Mumford}]\label{def_good_forms}
A \textit{good differential form} on an open subset $V$ of $X$, \textit{with singularities along} $D$, is a section $\omega\in\Gamma(V,\mathcal{P}_{D})$ such that $d\omega\in\Gamma(V,\mathcal{P}_{D})$.
\end{definition}
\begin{definition}[log-log growth forms \cite{BKK}]
The \textit{sheaf of log-log growth forms} on $X$, \textit{with singularities along} $D$, is the subalgebra $\mathcal{L}_{D}$ of $\iota_{*}\mathcal{E}^{*}_U$ generated, on every analytic chart $(V;\lbrace z_{i}\rbrace_{i})$ adapted to $D$, by the functions $f\in\CC^{\infty}(V\setminus D,\C)$ such that on every open $V^{\prime}\subset\subset V$
\begin{equation}\label{eq6}
	|f(z_1,\ldots,z_n)|\prec\prod_{k=1}^{m}(\log\log|z_k|^{-1})^{N}
\end{equation}
for some integer $N\geq 0$, together with the differential forms $\dd\zeta_{k}$, $\cdzeta_{k}$, $k=1,\ldots,n$.
\end{definition}
\begin{remark}
Observe that the following inequalities hold:
\begin{displaymath}
	\prod_{k=1}^{m}(\log\log|z_{k}|^{-1})^{N}\leq\sum_{k=1}^{m}(\log\log|z_{k}|^{-1})^{Nm}
	\leq 2^{m-1}\prod_{k=1}^{m}(\log\log|z_{k}|^{-1})^{Nm}.
\end{displaymath}
Therefore, a smooth function $f$ on $V\setminus D$ has log-log growth along $D$ if, and only if, for every open $V^{\prime}\subset\subset V$ there is an estimate
\begin{equation}\label{eq7}
	|f_{\mid V^{\prime}\setminus D}|\prec\sum_{k=1}^{m}(\log\log|z_{k}|^{-1})^{N}.
\end{equation}
In concrete computations involving functions with log-log growth, we may use either formulations (\ref{eq6}) or (\ref{eq7}).
\end{remark}
\begin{proposition}\label{prop_log-log_growth}
Let $f:X\setminus D\rightarrow\C$ be a smooth function. Suppose that $\dd f$ has log-log growth with singularities along $D$. Then $f$ has log-log growth with singularities along $D$. More precisely, for every analytic chart $(V;\lbrace z_{i}\rbrace_{i})$ adapted to $D$ and every open $V^{\prime}\subset\subset V$, if $\dd f=\sum_{j}g_{j}\dd\zeta_{j}+\sum_{j}h_{j}\cdzeta_{j}$ with
\begin{displaymath}
	|g_{j\mid_{V^{\prime}\setminus D}}|, 
	|h_{j\mid_{V^{\prime}\setminus D}}|\prec\prod_{k=1}^{m}(\log\log|z_{k}|^{-1})^{N},
\end{displaymath}
then we have
\begin{displaymath}
	|f_{\mid V^{\prime}\setminus D}|\prec\sum_{i=1}^{m}(\log\log|z_{i}|^{-1})^{N+1}\prod_{i<j\leq m}(\log\log|z_{j}|^{-1})^{N}.
\end{displaymath}
\end{proposition}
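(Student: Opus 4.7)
The plan is to recover $f$ by integrating $\dd f$ along a piecewise path from a fixed base point to an arbitrary point of $V'\setminus D$, and to bound each segment using the log-log estimate on the coefficients $g_{j}$, $h_{j}$.

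First I would reduce to the case where $V'$ is a polydisc $\Delta_{r}^{n}$ with $r<1/e$ small enough that $\log\log|z_{i}|^{-1}\geq 1$ there, and fix a base point $P_{0}=(z_{1}^{0},\ldots,z_{n}^{0})$ with each $z_{i}^{0}\neq 0$. For an arbitrary $P=(z_{1},\ldots,z_{n})\in V'\setminus D$, construct a path $\gamma$ from $P_{0}$ to $P$ as the concatenation of: (a) straight segments varying the non-singular coordinates $z_{m+1},\ldots,z_{n}$ one at a time; and (b) for each $i=m,m-1,\ldots,1$ in \emph{decreasing} order of index, first a circular arc on $\{|z_{i}|=|z_{i}^{0}|\}$ to match the argument, then a radial segment from $|z_{i}^{0}|e^{i\arg z_{i}}$ to $z_{i}$. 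This path stays in $V'\setminus D$ because each radial segment stays on a ray from the origin.

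The central computation is on the radial $z_{i}$-segment: along such a segment, $\dd\zeta_{i}$ and $\overline{\dd\zeta}_{i}$ are both proportional to $\dd r/(r\log r^{-1})$, and with the substitution $u=\log|z_{i}|^{-1}$ one has $\dd r/(r\log r^{-1})=-\dd(\log u)$, so
\begin{displaymath}
	\int_{\text{radial}_{i}}(\log\log|z_{i}|^{-1})^{N}\frac{|\dd r|}{r\log r^{-1}}=\frac{(\log\log|z_{i}|^{-1})^{N+1}}{N+1}+O(1).
\end{displaymath}
The key bookkeeping point is that, along this segment, the coordinates $z_{i+1},\ldots,z_{m}$ have already been set to their final (variable) values while $z_{1},\ldots,z_{i-1}$ remain at the bounded base values $z_{k}^{0}$. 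Hence the hypothesis on $g_{i}$, $h_{i}$ specializes to
\begin{displaymath}
	|g_{i}|,|h_{i}|\prec(\log\log|z_{i}|^{-1})^{N}\prod_{k=i+1}^{m}(\log\log|z_{k}|^{-1})^{N}
\end{displaymath}
along this segment, and so its contribution to $\int_{\gamma}\dd f$ is bounded by
\begin{displaymath}
	(\log\log|z_{i}|^{-1})^{N+1}\prod_{k=i+1}^{m}(\log\log|z_{k}|^{-1})^{N}.
\end{displaymath}
Segments of type (a) and the circular arcs contribute bounded quantities, since on them the varying coordinate is uniformly bounded away from $D$.

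Summing over $i=1,\ldots,m$ and absorbing the bounded contributions (including $|f(P_{0})|$, which is harmless because each $(\log\log|z_{i}|^{-1})^{N+1}\geq 1$ on $V'$ after the preliminary shrinking) yields the claimed inequality. The main obstacle is purely combinatorial: the integration order must be chosen with decreasing $i$ so that the asymmetric product $\prod_{j>i}$ in the conclusion emerges from substituting the already-fixed final values of $z_{i+1},\ldots,z_{m}$ into the bound on $g_{i}$, $h_{i}$. The analytic content reduces to the elementary substitution $\int(\log u)^{N}\,\dd u/u=(\log u)^{N+1}/(N+1)$.
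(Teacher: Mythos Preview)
Your approach is essentially the same as the paper's: integrate $\dd f$ along a path that moves one coordinate at a time, using the elementary identity $\int(\log u)^{N}\,\dd u/u=(\log u)^{N+1}/(N+1)$ on each radial segment, and ordering the segments so that at step $i$ the coordinates $z_{i+1},\ldots,z_{m}$ sit at their final values while $z_{1},\ldots,z_{i-1}$ are bounded. The paper runs the induction from the target toward the boundary torus $\{|z_{k}|=\varepsilon/e\}$ in increasing index order, which is the same as your path reversed; because the paper's ``base point'' $\varepsilon w_{i}/(e|w_{i}|)$ shares the argument of $w_{i}$, it avoids your circular arcs entirely and bounds the constant term by $\sup_{\partial\Delta_{\varepsilon/e}^{r}\times\overline{\Delta_{\varepsilon/e}^{s}}}|f|$ rather than $|f(P_{0})|$.

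One small slip: your circular arc for coordinate $i$ is traversed \emph{after} $z_{i+1},\ldots,z_{m}$ have been set to their final values, so the coefficients $g_{i},h_{i}$ there are not bounded but of size $\prec\prod_{j>i}(\log\log|z_{j}|^{-1})^{N}$. Since the arc has bounded length and $|\dd\zeta_{i}|$ is bounded on $\{|z_{i}|=|z_{i}^{0}|\}$, its contribution is still $\prec\prod_{j>i}(\log\log|z_{j}|^{-1})^{N}$, which is dominated by the $i$-th term of your final sum (as $(\log\log|z_{i}|^{-1})^{N+1}\geq 1$). So the argument goes through once you correct that sentence.
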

\begin{proof}
After localizing to an analytic chart adapted to $D$ we reduce to $V=\Delta_{1/e}^r\times\Delta_{1/e}^s$ with $V\setminus D=\Delta_{1/e}^{\ast r}\times\Delta_{1/e}^{s}$. Let $0<\varepsilon<1$ and $U_{\varepsilon}=\Delta_{\varepsilon/e}^{r}\times\Delta_{\varepsilon/e}^{s}$ be contained in $V$.  Let $(w_1,\ldots,w_{n})\in \overline{U_{\varepsilon}}\setminus D$. Fix $1\leq i\leq r$. Define the curve
\begin{displaymath}
	\begin{split}
		\gamma:[0,1]&\longrightarrow V\\
		t&\longmapsto (t w_1+(1-t)\frac{\varepsilon w_1}{e|w_1|},w_{2},\ldots,w_n).
	\end{split}
\end{displaymath}
Then we have
\begin{displaymath}
	(f\circ\gamma)(1)-(f\circ\gamma)(0)=\int_{0}^{1}\gamma^{*}(\dd f).
\end{displaymath}
Now we write
\begin{displaymath}
	\gamma^{*}(\dd f_{\mid \overline{U_{\varepsilon}}\setminus D})=\gamma^{*}\left(g\frac{\dd z_1}{z_1 \log|z_1|^{-1}}\right)+\gamma^{*}\left(h\frac{\cdz_1}{\cz_1\log|z_1|^{-1}}\right)
\end{displaymath}
where $g,h\in\CC^{\infty}(\overline{U_{\varepsilon}}\setminus D,\C)$ satisfy
\begin{displaymath}
	|g|,|h|\prec\prod_{k=1}^{r}(\log\log|z_k|^{-1})^{N} \text{ on } \overline{U_{\varepsilon}}\setminus D.
\end{displaymath}
A straightforward computation yields
\begin{displaymath}
	\begin{split}
		\bigg|\int_{0}^{1}&\gamma^{*}\left(\dd f_{\mid \overline{U_{\varepsilon}}\setminus D}\right)\bigg|\\
		&\prec_{f,\overline{U_{\varepsilon}}}
		\prod_{1<j\leq r}(\log\log|w_{j}|^{-1})^{N}\cdot
		\int_{0}^{1}(\log\log\gamma^{*}|z_{1}|^{-1})^{N}\frac{\frac{\dd}{\dd t}\log\gamma^{*}|z_{1}|^{-1}}{\log\gamma^{*}|z_{1}|^{-1}}\dd t\\
		&\leq(\log\log|w_1|^{-1})^{N+1}\prod_{1<j\leq r}(\log\log|w_{j}|^{-1})^{N}.
	\end{split}
\end{displaymath}
It follows that
\begin{displaymath}
	\begin{split}
		|f(w_1,\ldots,w_n)|\prec_{f,\overline{U_{\varepsilon}}}&\left|f\left(\frac{\varepsilon w_1}{e|w_1|},w_2,\ldots,w_n\right)\right|\\
		+&(\log\log|w_{1}|^{-1})^{N+1}\prod_{1<j\leq r}(\log\log |w_j|^{-1})^{N}.
	\end{split}
\end{displaymath}
By induction we find
\begin{displaymath}
	\begin{split}
		|f(w_1,\ldots,w_n)|\prec_{f,\overline{U_{\varepsilon}}}&\sup_{\pd\Delta_{\varepsilon/e}^{r}\times\overline{\Delta_{\varepsilon/e}^{s}}}|f(z_1,\ldots z_n)|\\
	+&\sum_{i=1}^{r}(\log\log|w_{i}|^{-1})^{N+1}\prod_{i<j\leq r}(\log\log|w_j|^{-1})^{N}
	\end{split}
\end{displaymath}
Observe that the $\sup$ is finite because $f$ is smooth on $X\setminus D$. The proof is complete.
\end{proof}
\begin{definition}[pre-log-log forms \cite{BKK}]
The \textit{sheaf of pre-log-log forms} on $X$, \textit{with singularities along} $D$, is the subalgebra $\mathcal{E}_{X}^{*}\langle\langle D\rangle\rangle_{\pre}$ of $\iota_{*}\mathcal{E}^{*}_{U}$ generated by log-log growth forms $\omega$ in $\mathcal{L}_{D}$ such that $\pd\omega$, $\cpd\omega$ and $\pd\cpd\omega$ are also log-log growth forms in $\mathcal{L}_{D}$.
\end{definition}
A particular case of pre-log-log forms that deserves to be distinguished is that of P-singular functions.
\begin{definition}[P-singular function]
Let $f:X\setminus D\rightarrow\C$ be a smooth function. We say that $f$ is a P-\textit{singular function}, \textit{with singularities along} $D$, if, and only if, $\dd f$ and $\ddc f$ have Poincar\'e growth along $D$.
\end{definition}
\begin{corollary}\label{cor_P_singular}
Let $f:X\setminus D\rightarrow\C$ be a P-singular function. Then, for every adapted analytic chart $(V;\lbrace z_i\rbrace_{i})$ and every open $V^{\prime}\subset\subset V$, we have
\begin{displaymath}
	|f_{V^{\prime}\setminus D}|\prec\sum_{k=1}^{m}(\log\log|z_{k}|^{-1}).
\end{displaymath}
Consequently, $f$ is a pre-log-log function.
\end{corollary}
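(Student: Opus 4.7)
The plan is to deduce the corollary directly from Proposition \ref{prop_log-log_growth} applied with the minimal exponent $N=0$. The point is that Poincar\'e growth is strictly stronger than log-log growth: expanded in the distinguished basis $\dd\zeta_{k},\cdzeta_{k}$ of Notation \ref{not_dzeta}, a Poincar\'e growth form has coefficients in $\CC^{\infty}(V\setminus D,\C)\cap L^{\infty}_{\loc}(V,\C)$, which are manifestly log-log growth functions with $N=0$ since on an adapted chart $\log\log|z_{k}|^{-1}>0$.

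First, I would fix an analytic chart $(V;\lbrace z_{i}\rbrace_{i})$ adapted to $D$ and a relatively compact $V^{\prime}\subset\subset V$. Since $f$ is P-singular, $\dd f\in\Gamma(V,\mathcal{P}_{D})$, so we can write
\begin{displaymath}
    \dd f=\sum_{j}g_{j}\,\dd\zeta_{j}+\sum_{j}h_{j}\,\cdzeta_{j}
\end{displaymath}
with $g_{j},h_{j}$ locally bounded on $V$. In particular
\begin{displaymath}
    |g_{j\mid_{V^{\prime}\setminus D}}|,\;|h_{j\mid_{V^{\prime}\setminus D}}|\prec 1=\prod_{k=1}^{m}(\log\log|z_{k}|^{-1})^{0},
\end{displaymath}
so the hypothesis of Proposition \ref{prop_log-log_growth} is satisfied with $N=0$. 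Applying that proposition immediately yields
\begin{displaymath}
    |f_{\mid V^{\prime}\setminus D}|\prec\sum_{i=1}^{m}(\log\log|z_{i}|^{-1})^{1}\prod_{i<j\leq m}(\log\log|z_{j}|^{-1})^{0}=\sum_{k=1}^{m}\log\log|z_{k}|^{-1},
\end{displaymath}
which is the asserted estimate.

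For the final assertion, I would check the four conditions defining a pre-log-log function, namely that $f$, $\pd f$, $\cpd f$ and $\pd\cpd f$ all lie in $\mathcal{L}_{D}$. The estimate just obtained shows $f\in\mathcal{L}_{D}$. Both $\pd f$ and $\cpd f$ are components of $\dd f$, which has Poincar\'e growth; hence they are in $\mathcal{P}_{D}\subseteq\mathcal{L}_{D}$. Finally $\pd\cpd f=(2\pi/i)\,\ddc f$ has Poincar\'e growth by hypothesis, so it too belongs to $\mathcal{L}_{D}$. Therefore $f\in\mathcal{E}_{X}^{*}\langle\langle D\rangle\rangle_{\pre}$, which is what remained to prove.

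There is no genuine obstacle here: the entire argument is a bookkeeping consequence of Proposition \ref{prop_log-log_growth} together with the trivial inclusion $\mathcal{P}_{D}\subseteq\mathcal{L}_{D}$. The only conceptual point to get right is the observation that Poincar\'e growth corresponds to the extremal case $N=0$ of log-log growth, which is precisely what allows the power on $\log\log|z_{k}|^{-1}$ in the conclusion of Proposition \ref{prop_log-log_growth} to drop from $N+1$ to $1$, matching the bound claimed in the corollary.
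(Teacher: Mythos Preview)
Your argument is correct and matches the paper's approach exactly: the paper's proof consists of the single sentence ``This is a straightforward application of Proposition \ref{prop_log-log_growth},'' and you have simply unpacked that application, correctly identifying that Poincar\'e growth corresponds to $N=0$ in the hypothesis of Proposition \ref{prop_log-log_growth}.
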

\begin{proof}
This is a straightforward application of Proposition \ref{prop_log-log_growth}.
\end{proof}
For later computations it will be worth having at our disposal the following basic properties of log-log growth forms.
\begin{proposition}\label{prop_prop_log}
i. Any log-log growth form is locally integrable. Moreover, log-log growth functions and log-log growth 1-forms are locally $L^2$.\\
ii. (Stokes' theorem for pre-log-log forms.) If $\omega\in\Gamma(X,\mathcal{E}^{*}_{X}\langle\langle D\rangle\rangle_{\pre})$ and $[\omega]$ denotes its associated current, then
		\begin{displaymath}
			d[\omega]=[d\omega]
		\end{displaymath}
		and similarly for $\pd$, $\cpd$ and $\pd\cpd$.\\
iii. If $f:X\rightarrow Y$ is a morphism of complex analytic manifolds and $D_X\subseteq X$, $D_Y\subseteq Y$ normal crossing divisors with $f^{-1}(D_Y)\subseteq D_X$, then $f^{*}\mathcal{P}_{D_Y}\subseteq\mathcal{P}_{D_X}$ and $f^{*}\mathcal{L}_{D_Y}\subseteq\mathcal{L}_{D_X}$. Therefore $f^{*}\mathcal{E}^{*}_{Y}\langle\langle D_Y\rangle\rangle_{\pre}\subseteq\mathcal{E}^{*}_{X}\langle\langle D_X\rangle\rangle_{\pre}$. In particular, this is true for $f$ being the natural inclusion of a complex analytic submanifold $X\subseteq Y$ intersecting $D_Y$ transversally and $D_X=D_Y\cap X$.
\end{proposition}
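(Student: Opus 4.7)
For Part i of the proposition, I would localize on an adapted chart $(V;\{z_i\}_i)$ with $D\cap V=\{z_1\cdots z_m=0\}$, where any log-log growth form is a finite sum of expressions of shape $\alpha_{I,J}\,d\zeta_I\wedge\cdzeta_J$ with $|\alpha_{I,J}|\prec\prod_{k=1}^m(\log\log|z_k|^{-1})^N$. Pairing against a compactly supported test form of complementary degree reduces local integrability to a product of one-variable integrals, the worst of which in polar coordinates $z_k=re^{i\theta}$ is
\[
\int_{0}^{1/e}\frac{(\log\log r^{-1})^{N}}{r\,(\log r^{-1})^{2}}\,dr.
\]
The substitution $u=\log r^{-1}$ turns this into $\int_{1}^{\infty}(\log u)^{N}u^{-2}\,du$, which converges for any $N\ge 0$. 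Dropping the $(\log r^{-1})^{-2}$ factor handles $L^{2}$-integrability of log-log functions, and keeping only one such factor handles $L^{2}$-integrability of log-log $1$-forms, using the Euclidean pointwise norm $|d\zeta_k|^{2}=|z_k|^{-2}(\log|z_k|^{-1})^{-2}$.

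For Part ii my approach is a regularization adapted to the log-log scale. I would set $t_k(z)=\log\log|z_k|^{-1}$ and, for a smooth nondecreasing $\phi:\mathbb{R}\to[0,1]$ with $\phi(s)=0$ for $s\le 1$ and $\phi(s)=1$ for $s\ge 2$, define the cutoff
\[
\rho_\epsilon(z)=\prod_{k=1}^{m}\phi\!\left(\frac{t_k(z)}{\log\log\epsilon^{-1}}\right),
\]
which vanishes in a neighborhood of $D$ inside an adapted chart and equals $1$ outside a slightly larger neighborhood. Since $\rho_\epsilon\,\omega\wedge\eta$ is smooth and compactly supported on $X\setminus D$ for any test form $\eta$ of complementary degree, ordinary Stokes yields $\int_X d(\rho_\epsilon\,\omega\wedge\eta)=0$. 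The key computation uses the identity $d(\log\log|z_k|^{-1})=-\tfrac{1}{2}(d\zeta_k+\cdzeta_k)$, which shows that $d\rho_\epsilon$ equals $(\log\log\epsilon^{-1})^{-1}$ times a bounded smooth combination of $d\zeta_k,\cdzeta_k$ supported on a set shrinking to $D$. Combining Part i (to justify dominated convergence for $\rho_\epsilon\,d\omega\wedge\eta$ and $\rho_\epsilon\,\omega\wedge d\eta$) with the estimate
\[
\left|\int_X d\rho_\epsilon\wedge\omega\wedge\eta\right|\prec\frac{1}{\log\log\epsilon^{-1}}\int_{\supp(d\rho_\epsilon)}\left|(d\zeta_k+\cdzeta_k)\wedge\omega\wedge\eta\right|,
\]
whose right-hand side tends to $0$ as $\epsilon\to 0$, yields the Stokes identity. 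The identities for $\pd,\cpd,\pd\cpd$ are analogous after writing $\pd(\log\log|z_k|^{-1})=-\tfrac{1}{2}d\zeta_k$ and its conjugate.

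For Part iii the plan is a local coordinate computation. In adapted charts $(V;\{z_i\}_i)$ on $Y$ and $(W;\{w_j\}_j)$ on $X$ with $f(W)\subseteq V$, $D_Y\cap V=\{z_1\cdots z_m=0\}$ and $D_X\cap W=\{w_1\cdots w_{m'}=0\}$, the function $f^{*}z_k$ is holomorphic on $W$ and vanishes only on $f^{-1}(\{z_k=0\})\subseteq D_X\cap W$, hence after shrinking $W$ it factors as $f^{*}z_k=u_k\,w_1^{a_{k,1}}\cdots w_{m'}^{a_{k,m'}}$ with $u_k$ a nowhere vanishing holomorphic unit and $a_{k,j}\in\Int_{\geq 0}$. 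Taking logarithmic derivatives gives
\[
f^{*}d\zeta_k=\frac{du_k/u_k+\sum_{j}a_{k,j}\,dw_j/w_j}{\log|f^{*}z_k|^{-1}},
\]
and on any relatively compact subset $\log|f^{*}z_k|^{-1}=\sum_{j}a_{k,j}\log|w_j|^{-1}+O(1)$, so each ratio $a_{k,j}\log|w_j|^{-1}/\log|f^{*}z_k|^{-1}$ is bounded and $f^{*}d\zeta_k$ is a bounded $\mathcal{C}^\infty$-combination of the $d\zeta_j$ of $X$ plus a smooth remainder. This gives $f^{*}\mathcal{P}_{D_Y}\subseteq\mathcal{P}_{D_X}$. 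The analogous estimate $\log\log|f^{*}z_k|^{-1}\prec 1+\sum_{j}\log\log|w_j|^{-1}$ yields $f^{*}\mathcal{L}_{D_Y}\subseteq\mathcal{L}_{D_X}$, and functoriality of $d,\pd,\cpd,\pd\cpd$ with respect to $f^{*}$ promotes these inclusions to pre-log-log sheaves. For a transversal submanifold inclusion $X\hookrightarrow Y$ one chooses coordinates on $Y$ in which $X$ is cut out by the vanishing of some of them while the remaining ones restrict to coordinates of $X$ cutting out $D_X=D_Y\cap X$, reducing to the previous case.

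The main obstacle I expect is Part ii: designing the cutoffs so that $d\rho_\epsilon$ really carries a universal factor $(\log\log\epsilon^{-1})^{-1}$ independently of the polynomial degree $N$ appearing in the log-log coefficients of $\omega$, and then checking that dominated convergence applies to the whole test integral. Parts i and iii reduce to routine one-variable estimates and a factorization of holomorphic functions vanishing on a normal crossings divisor, once the local models are fixed.
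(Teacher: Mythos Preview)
Your proposal is essentially correct and in fact goes well beyond what the paper does: the paper's proof consists of a citation to \cite{BKK}, Propositions 7.5 and 7.6, together with the single one-variable estimate
\[
\int_{0}^{\varepsilon/e}(\log\log t^{-1})^{N}\,\frac{\dd t}{t(\log t)^{2}}
\prec\int_{0}^{\varepsilon/e}\frac{\dd t}{t(\log t^{-1})^{1+\delta}}<+\infty,
\]
which is exactly your Part~i computation (your substitution $u=\log r^{-1}$ is an equivalent way to see convergence). Your Part~iii local factorization and logarithmic-derivative argument is the standard one and is what lies behind the cited reference; there is nothing to correct there.

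For Part~ii your regularization strategy is the right idea and the identity $\pd(\log\log|z_k|^{-1})=-\tfrac{1}{2}\,\dd\zeta_k$ is the key observation, but your cutoff is oriented backwards. With $\phi$ nondecreasing, $\phi(s)=0$ for $s\le 1$, $\phi(s)=1$ for $s\ge 2$, and $t_k=\log\log|z_k|^{-1}\to+\infty$ as $z_k\to 0$, your $\rho_\epsilon$ equals $1$ (not $0$) in a neighborhood of $D$. Replace $\phi$ by $1-\phi$ (or equivalently work with $1-\rho_\epsilon$) so that the cutoff vanishes near $D$ and equals $1$ on the bulk; then $\dd\rho_\epsilon$ is supported where $T\le t_k\le 2T$ for some $k$, with $T=\log\log\epsilon^{-1}$. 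On that support the relevant radial integral is
\[
\int_{e^{T}}^{e^{2T}}\frac{(\log u)^{N}}{u^{2}}\,\dd u\le (2T)^{N}e^{-T},
\]
so the boundary term is $O\big(T^{-1}(2T)^{N}e^{-T}\big)\to 0$, uniformly in the exponent $N$ coming from $\omega$. This confirms your intuition that the factor $T^{-1}$ is more than enough, and answers the ``main obstacle'' you flagged. With this orientation fix your argument for Part~ii is complete.
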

\begin{proof}
The proposition quotes \cite{BKK}, Proposition 7.5 and Proposition 7.6. However, for later use, we may comment on the proof of \textit{i}. After changing to polar coordinates, it is enough to observe that for every $0<\delta<1$ we have an estimate
\begin{displaymath}
	\int_{0}^{\varepsilon/e}(\log\log t^{-1})^{N}\frac{\dd t}{t(\log t)^2}\prec\int_{0}^{\varepsilon/e}\frac{\dd t}{t(\log t^{-1})^{1+\delta}}<+\infty.
\end{displaymath}
\end{proof}
We finally give an example showing that the notion of pre-log-log function depends on the compactification $X$ of $U$.
\begin{example}
Let $X=\PP^{2}_{\C}$ with projective coordinates $(w_0:w_1:w_2)$. As divisor with normal crossings set $D=(w_0=0)\cup (w_1=0)$. Define the smooth function on $U=X\setminus D$
\begin{displaymath}
	g(w_0:w_1:w_2)=\frac{|w_0|^{2}}{|w_0|^{2}+|w_1|^{2}}.
\end{displaymath}
Denote by $\widetilde{X}$ the blowing-up of $X$ at $(0:0:1)$. $\widetilde{X}$ admits the following description:
\begin{displaymath}
	\widetilde{X}=\left\lbrace ((w_0:w_1:w_2),(z_0:z_1))\in X\times\PP^{1}_{\C}\mid w_0 z_1=w_1 z_0\right\rbrace.
\end{displaymath}
The map realizing the blowing-up is the projection onto the first factor $\pi:\widetilde{X}\rightarrow X$. In particular, since $(0:0:1)\in D$, we have an isomorphism $\pi^{-1}(U)\overset{\sim}{\rightarrow} U$, and $\pi^{-1}(D)$ is a divisor with normal crossings. Observe that the pullback of $g$ by $\pi$ is
\begin{displaymath}
	f((w_0:w_1:w_2),(z_0:z_1))=\frac{|z_0|^{2}}{|z_0|^{2}+|z_1|^{2}}.
\end{displaymath}
The function $f$ extends to a smooth function on the whole $\widetilde{X}$, in particular pre-log-log along $\pi^{-1}(D)$. However, we claim that $g$ is not a pre-log-log function along $D$. To see this we compute $\pd g$. We may localize at the affine neighborhood $w_2\neq 0$ of $(0:0:1)$ and write $u=w_0/w_2$, $v=w_1/w_2$. In coordinates $u$, $v$ we find
\begin{displaymath}
	\pd g=\frac{|u|^2|v|^2}{(|u|^2+|v|^2)^2}\left(\frac{\dd u}{u}-\frac{\dd v}{v}\right).
\end{displaymath}
But the function $|u|^2|v|^2\log|u|^{-1}/(|u|^2+|v|^2)^{2}$ does not have log-log growth along $D$, as we see after restriction to $|u|=|v|$. This proves the claim.\\
\end{example}
\subsection{Variants: log-log forms}
Following \cite{BKK2} we introduce a variant of the sheaf of pre-log-log forms, by imposing bounds on all the derivatives of the component functions of the differential forms. There are also corresponding variants for Poincar\'e growth forms and good forms, for which we refer to \textit{loc. cit.}

We fix a complex analytic manifold $X$ and $D\subseteq X$ a divisor with normal crossings. Write $U=X\setminus D$ and $\iota:U\hookrightarrow X$ for the natural open immersion.
\begin{definition}[log-log functions of infinite order \cite{BKK2}]
A smooth function $f:X\setminus D\rightarrow\C$ is said to be a \textit{log-log function of infinite order}, \textit{with singularities along} $D$, if for every analytic chart $(V;\lbrace z_{i}\rbrace_{i})$ adapted to $D$, every open $V^{\prime}\subset\subset V$ and multi-indices $\alpha=(\alpha_{1},\ldots,\alpha_n)$, $\beta=(\beta_1,\ldots,\beta_n)$, there is a bound on $V^{\prime}$
\begin{displaymath}
	\left|\frac{\pd^{|\alpha|}}{\pd z^{\alpha}}\frac{\pd^{|\beta|}}{\cz^{\beta}}f(z_1,\ldots,z_n)\right|
	\prec\frac{\prod_{k=1}^{m}(\log\log|z_{k}|^{-1})^{N}}{|z^{\alpha^{\leq m}}\cz^{\beta^{\leq m}}|},
\end{displaymath}
where $z^{\alpha^{\leq m}}=z_{1}^{\alpha_1}\ldots z_{m}^{\alpha_m}$ (similarly for $\cz^{\beta^{\leq m}}$) and $N$ depends on $V^{\prime}$, $\alpha$, $\beta$.
\end{definition}
\begin{definition}[log-log growth forms of infinite order \cite{BKK2}]
The \textit{sheaf of log-log growth forms of infinite order} on $X$, \textit{with singularities along} $D$, is the subalgebra of $\iota_{*}\mathcal{E}^{*}_{U}$ generated, on every analytic chart $V$ adapted to $D$, by log-log growth functions of infinite order and the differential forms $\dd\zeta_{k}$, $\cdzeta_{k}$, $k=1,\ldots,n$ (see Notation \ref{not_dzeta}).
\end{definition}
\begin{remark}
Let $\omega$ be a log-log growth form of infinite order along $D$, defined on some analytic open subset of $X$. Then the complex conjugate $\bar{\omega}$ is also a log-log growth form of infinite order along $D$.
\end{remark}
\begin{definition}[log-log forms \cite{BKK2}]
The \textit{sheaf of log-log forms} on $X$, \textit{with singularities along} $D$, is the subalgebra $\mathcal{E}_{X}^{*}\langle\langle D\rangle\rangle$ of $\iota_{*}\mathcal{E}_{U}^{*}$ generated by log-log growth forms $\omega$ of infinite order, such that $\pd\omega$, $\cpd\omega$ and $\pd\cpd\omega$ are also log-log growth forms of infinite order.
\end{definition}
\begin{remark}
There is an obvious inclusion $\mathcal{E}_{X}^{*}\langle\langle D\rangle\rangle\subseteq\mathcal{E}_{X}^{*}\langle\langle D\rangle\rangle_{\pre}$.
\end{remark}
Log-log growth forms of infinite order enjoy of analogous properties to the log-log growth forms introduced before. We refer to \cite{BKK2} for details. An advantage of the sheaf of log-log forms over the sheaf of pre-log-log forms is that a Poincar\'e's type lemma holds for the former. The next essential property follows.
\begin{theorem}[\cite{BKK2}]\label{thm_resolution}
The natural inclusion
\begin{displaymath}
	\Omega_{X}^{\ast}\longrightarrow\mathcal{E}^{\ast}_{X}\langle\langle D\rangle\rangle
\end{displaymath}
is a filtered quasi-isomorphism with respect to the Hodge filtration.
\end{theorem}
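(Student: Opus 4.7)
The plan is to follow the classical strategy used by Deligne for the log-de-Rham complex, with the Hodge filtration on both sides induced by the stupid filtration $F^p=\sigma^{\geq p}$. Since being a filtered quasi-isomorphism of complexes of sheaves is a local property, I would first reduce to the standard local model $X=\Delta^n$ with $D=\{z_1\cdots z_m=0\}$, exploiting the product structure $X=\Delta^m\times\Delta^{n-m}$ and, within the first factor, the product structure of the disks. Because both complexes are bounded and the filtration $F$ is the stupid one, to show the inclusion is a filtered quasi-isomorphism it suffices to prove that it induces a quasi-isomorphism on each $\operatorname{Gr}_F^p$; on the left the $p$-th graded piece is $\Omega_X^p$ in degree $p$, and on the right it is a Dolbeault-type complex $\Omega_X^p\otimes\mathcal{A}^{0,*}\langle\langle D\rangle\rangle$ with differential $\bar\partial$.

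The proof is therefore reduced to the following Grothendieck–Dolbeault lemma for log-log forms of infinite order: if $\omega$ is a $\bar\partial$-closed log-log form of infinite order on a polydisk adapted to $D$, then after shrinking there exists a log-log form $u$ of infinite order on a smaller polydisk satisfying $\bar\partial u=\omega$. The scheme of the proof is induction on the number $n$ of variables. In each step one uses the classical Cauchy–Green integral operator
\[
(T\psi)(w)=\frac{1}{2\pi i}\int_{\Delta}\frac{\psi(z)}{z-w}\,dz\wedge d\bar z
\]
to peel off the antiholomorphic dependence on one variable at a time, then corrects the remaining $\bar\partial$-closed term by the inductive hypothesis. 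What makes the argument work in this setting, and what is ultimately the reason \emph{log-log} forms of infinite order are preferable to \emph{pre-log-log} ones for a Poincar\'e-type lemma, is that the defining estimates control all derivatives $\partial^\alpha\bar\partial^\beta$ of the coefficients, so one may differentiate under the integral sign in $T\psi$ and then bound the derivatives of $T\psi$ in terms of derivatives of $\psi$.

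The main obstacle is the propagation of the weighted bounds through the operator $T$. Concretely, one must show that if a coefficient $\psi$ satisfies
\[
\left|\tfrac{\partial^{|\alpha|}}{\partial z^\alpha}\tfrac{\partial^{|\beta|}}{\partial\bar z^\beta}\psi\right|\prec\frac{\prod_{k=1}^m(\log\log|z_k|^{-1})^N}{|z^{\alpha^{\leq m}}\bar z^{\beta^{\leq m}}|}
\]
on some $V'\subset\subset V$, then the same type of bound (possibly with a larger $N$ and additional logarithmic factors coming from the integration) holds for $T\psi$ on a slightly smaller polydisk. This amounts to estimating oscillatory integrals of the form $\int(\log\log|z|^{-1})^N|z-w|^{-1}|z|^{-a}\,dz\,d\bar z$ and their derivatives in $w$; the integrability is guaranteed by the same kind of estimate recalled in Proposition \ref{prop_prop_log}(i), namely $\int_0^{\varepsilon/e}(\log\log t^{-1})^N\frac{dt}{t(\log t)^{1+\delta}}<\infty$. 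These are the estimates that are worked out in detail in \cite{BKK2}, and our proof would either invoke them directly or reproduce them in the slightly more general form needed here. Once the Dolbeault lemma is in place, the comparison on graded pieces, and hence the filtered quasi-isomorphism, is standard.
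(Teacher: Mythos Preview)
The paper does not give its own proof of this theorem: it is stated with a citation to \cite{BKK2} and no argument is supplied. So there is nothing in the paper to compare your proposal against beyond the one-sentence remark preceding the statement, namely that a Poincar\'e-type lemma holds for log-log forms and that this is precisely what distinguishes them from pre-log-log forms.

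Your outline is the natural one and is consistent with that remark: reduce to the local model, pass to graded pieces of the stupid filtration, and prove a $\bar\partial$-Poincar\'e lemma for log-log forms of infinite order via the Cauchy--Green kernel, using the infinite-order bounds to differentiate under the integral and propagate the weighted estimates. This is indeed the shape of the argument in \cite{BKK2}. One point to be careful about if you carry this out: the estimate you would need for $T\psi$ is not quite the one you wrote, since the coefficients of log-log forms (in the $d\zeta$ frame) are only required to be log-log growth functions of infinite order, so the relevant bound on $\partial^\alpha\bar\partial^\beta\psi$ involves $|z^{\alpha^{\leq m}}\bar z^{\beta^{\leq m}}|^{-1}$ times a power of $\prod(\log\log|z_k|^{-1})$, and the integral you must control is against $|z-w|^{-1}$ in the variable being solved, not against $|z|^{-a}$ for general $a$. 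The integrability and the resulting bounds are handled in \cite{BKK2}; if you want a self-contained write-up you should reproduce those estimates rather than the schematic one you indicated.
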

\begin{proposition}\label{prop_L2_log-log}
Let $f:X\setminus D\rightarrow\C$ be a smooth function. Then $f$ is a log-log form, with singularities along $D$, if, and only if, $\dd f$ is locally $L^{2}$ on $X$ and $\ddc f$ is a log-log growth form of infinite order along $D$.
\end{proposition}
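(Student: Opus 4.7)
The forward implication is a direct unwinding of the definitions. If $f$ is a log-log form, then $\pd f$, $\cpd f$ and $\pd\cpd f$ are, by definition, log-log growth forms of infinite order; consequently $\ddc f=(i/2\pi)\pd\cpd f$ is a log-log growth form of infinite order, and $\dd f=\pd f+\cpd f$ is a log-log growth $1$-form of infinite order. To see that $\dd f$ is locally $L^{2}$, I would note that in an adapted analytic chart $(V;\{z_{i}\}_{i})$ the $1$-form $\dd f$ is a sum of terms of the form $g\,\dd\zeta_{k}$ (or $g\,\cdzeta_{k}$) with $g$ a log-log function of infinite order. The $L^{2}$ estimate then reduces, after passing to polar coordinates, to the integrability bound
\begin{displaymath}
    \int_{0}^{\varepsilon/e}(\log\log t^{-1})^{N}\frac{\dd t}{t(\log t)^{2}}\prec\int_{0}^{\varepsilon/e}\frac{\dd t}{t(\log t^{-1})^{1+\delta}}<+\infty,
\end{displaymath}
exactly as in Proposition \ref{prop_prop_log}(i).

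The reverse implication is the serious part. I would work locally on an adapted chart $(V;\{z_{i}\}_{i})$ with $D\cap V=\{z_{1}\cdots z_{m}=0\}$ and fix a relatively compact $V'\subset\subset V$. The strategy is to upgrade the coupled hypotheses ``$\dd f\in L^{2}_{\loc}$'' and ``$\ddc f\in\mathcal{L}_{D}^{\infty}$'' to log-log-of-infinite-order control on $f$ together with all its pure and mixed derivatives. I would proceed in two steps. First, applied to $\pd f$, the identity $\cpd(\pd f)=-\pd\cpd f$ shows that the exterior derivative of $\pd f$ is log-log of infinite order, hence locally $L^{2}$; a variant of Proposition \ref{prop_log-log_growth} (in which the path-integration argument is iterated but now coupled with the $L^{2}$ control on $\pd f$ itself, used as a replacement for a pointwise boundary value) produces a log-log-of-infinite-order bound on the components of $\pd f$ relative to the $\dd\zeta_{i}$ frame. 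The symmetric argument applied to $\cpd f$ gives the analogous conclusion. Second, plugging these bounds on $\dd f$ into the same path-integration scheme (this time applied to $f$ directly, with the $L^{2}$ control on $f$ recovered from $\dd f\in L^{2}_{\loc}$ plus Poincaré/Sobolev on $V'$) produces the log-log-of-infinite-order bound on $f$. To obtain bounds on all higher derivatives required in the definition of a log-log function of infinite order, I would iterate: differentiating $\pd\cpd f=g$ in $z^{\alpha},\cz^{\beta}$ keeps $g$ inside the sheaf of log-log growth forms of infinite order (by the very definition, all such derivatives of $g$ satisfy the same type of bound), and applying the previous step to each differentiated equation yields the corresponding bounds on $\pd^{|\alpha|}\cpd^{|\beta|}f/(\pd z^{\alpha}\cz^{\beta})$ modulated by $1/|z^{\alpha^{\leq m}}\cz^{\beta^{\leq m}}|$.

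The principal obstacle I anticipate is the mismatch between the $L^{2}$ hypothesis on $\dd f$ and the pointwise log-log bounds required for membership in $\mathcal{E}_{X}^{*}\langle\langle D\rangle\rangle$: one cannot directly substitute $L^{2}$ control into the proof of Proposition \ref{prop_log-log_growth}, which was written assuming a pointwise log-log growth bound on $\dd f$. The cleanest way to bridge this gap is a local elliptic estimate for the operator $\pd\cpd$ on relatively compact subsets of $V\setminus D$, uniformly in the distance to $D$: the equation $\pd\cpd f=g$ with $g\in\mathcal{L}_{D}^{\infty}$ is elliptic, so interior $W^{k,2}$-estimates combined with Sobolev embedding translate the $L^{2}$ datum on $\dd f$ and the infinite-order log-log datum on $g$ into pointwise bounds on all derivatives of $f$, with the prescribed dependence on $\prod(\log\log|z_{k}|^{-1})^{N}$ and $|z^{\alpha^{\leq m}}\cz^{\beta^{\leq m}}|^{-1}$. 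Once this bootstrapping is carried out, membership of $f$ in $\mathcal{E}_{X}^{*}\langle\langle D\rangle\rangle$ (together with the corresponding membership of $\pd f$, $\cpd f$ and $\pd\cpd f$) follows by collecting the estimates.
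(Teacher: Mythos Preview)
The forward direction is fine. The serious gap is in the reverse direction: your proposed bootstrapping via elliptic estimates cannot work as stated, because $\pd\cpd$ is \emph{not} an elliptic operator on functions. Any holomorphic function $h$ on $V\setminus D$ satisfies $\pd\cpd h=0$, so interior elliptic regularity gives no control whatsoever over the holomorphic part of $f$; and such a holomorphic remainder can be as wild as you like near $D$ unless something pins it down. Likewise, the path-integration scheme of Proposition~\ref{prop_log-log_growth} cannot be salvaged by $L^{2}$ input alone: an $L^{2}$ bound on a $1$-form does not bound its line integral along a single radial path.

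The paper's argument supplies precisely the missing structural ingredient. By the Poincar\'e-type lemma for log-log forms (Theorem~\ref{thm_resolution}), the $\cpd$-closed log-log form $\cpd\pd f$ is locally $\cpd\omega$ for some log-log $(1,0)$-form $\omega$; hence $\pd f=\omega+\theta$ with $\theta$ \emph{holomorphic} on $V\setminus D$. Now the $L^{2}$ hypothesis on $\dd f$ enters in exactly the right way: since $\omega$ is log-log (hence $L^{2}$ by Proposition~\ref{prop_prop_log}) and $\pd f$ is $L^{2}$, the holomorphic remainder $\theta$ is locally $L^{2}$, and the elementary $L^{2}$ extension Lemma~\ref{lem_holom_L2} forces $\theta$ to extend holomorphically across $D$. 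This shows $\pd f$ (and by conjugation $\cpd f$) is log-log of infinite order. One more application of Theorem~\ref{thm_resolution} writes $f=g+h$ with $g$ log-log of infinite order and $h$ holomorphic on $V\setminus D$; now Proposition~\ref{prop_log-log_growth} legitimately applies (pointwise bounds on $\dd f$ are in hand) to show $f$, hence $h$, is locally $L^{2}$, and Lemma~\ref{lem_holom_L2} finishes. The whole point is to isolate the holomorphic defect and kill it with $L^{2}$ extension; your proposal does not contain this idea.
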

\begin{proof}
The direct implication is an easy exercise. Let us see the converse. By hypothesis, $\cpd\pd f$ is a log-log form, with singularities along $D$. Let $x\in X$. By Theorem \ref{thm_resolution}, there exists an open neighborhood $V$ of $x$ and a log-log form $\omega$ on $V$ such that
\begin{displaymath}
	\cpd\pd f=\cpd\omega.
\end{displaymath}
Therefore, we can write
\begin{displaymath}
	\pd f=\omega+\theta.
\end{displaymath}
for some holomorphic form $\theta$ on $V\setminus D$. Observe that $\theta$ is locally $L^{2}$, because $\pd f$ is locally $L^2$ by hypothesis and $\omega$ is a log-log 1-form (see Proposition \ref{prop_prop_log}). By Lemma \ref{lem_holom_L2} below, $\theta$ must be holomorphic on $V$. This proves that $\pd f$ has log-log growth of infinite order along $D$. The same reasoning applied to $\cpd\pd\bar{f}$ (the complex conjugate of $\pd\cpd f$) proves that $\cpd f$ has log-log growth of infinite order along $D$. Therefore $\dd f$ has log-log growth of infinite order.\\
Again by Theorem \ref{thm_resolution}, after possibly shrinking $V$, there exists a log-log function of infinite order $g$ and a holomorphic function $h$ on $V\setminus D$ such that $f=g+h$. We claim that $h$ is locally $L^{2}$. Since this is true for $g$, we are reduced to prove it for $f$. But we have already shown that $\dd f$ has log-log growth of infinite order along $D$, so that Proposition \ref{prop_log-log_growth} implies that $f$ has log-log growth along $D$. In particular, $f$ is locally $L^2$ (Proposition \ref{prop_prop_log}). By Lemma \ref{lem_holom_L2}, $g$ is holomorphic on $V$ and hence $f$ has log-log growth of infinite order along $D$. This finishes the proof.
\end{proof}
\begin{lemma}\label{lem_holom_L2}
Let $X$ be a complex analytic manifold and $D\subseteq X$ a divisor with normal crossings. Let $\theta$ be a holomorphic function on $X\setminus D$. If $\theta$ is locally $L^{2}$ on $X$, then $\theta$ extends to a holomorphic function on $X$.
\end{lemma}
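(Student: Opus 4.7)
The statement is a removable-singularity result and the proof is purely local, so I would begin by choosing, around a point $x\in D$, an analytic chart adapted to $D$, so that one may assume $X=\Delta^{n}$ with coordinates $z_{1},\ldots,z_{n}$ and $D=\{z_{1}\cdots z_{m}=0\}$. The goal then splits naturally according to the stratification of $D$: first extend $\theta$ across the smooth locus $D_{\mathrm{sm}}$ of $D$, and then across the remaining singular locus $D_{\mathrm{sing}}$ which has codimension at least $2$ in $X$.

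For the extension across $D_{\mathrm{sm}}$, the problem reduces to the case of a single smooth hypersurface, say $\{z_{1}=0\}$, after shrinking the chart. Writing $z=(z_{1},z')$ with $z'\in\Delta^{n-1}$, and using Cauchy's integral formula on small circles in the $z_{1}$-variable, $\theta$ admits a Laurent expansion
\begin{displaymath}
	\theta(z_{1},z')=\sum_{k\in\Int}a_{k}(z')\,z_{1}^{k}
\end{displaymath}
with each $a_{k}$ holomorphic in $z'$. The key point is then to exploit the $L^{2}$ hypothesis via Fubini. For any relatively compact $K\subset\Delta^{n-1}$ and $0<r<1$, orthogonality of the circular modes gives
\begin{displaymath}
	\int_{K}\int_{0<|z_{1}|<r}|\theta|^{2}\,\dd A(z_{1})\,\dd V(z')
	=2\pi\sum_{k\in\Int}\Bigl(\int_{0}^{r}s^{2k+1}\dd s\Bigr)\int_{K}|a_{k}(z')|^{2}\dd V(z').
\end{displaymath}
Since $\int_{0}^{r}s^{2k+1}\dd s=+\infty$ for every $k\leq -1$, the local integrability of $|\theta|^{2}$ forces $a_{k}\equiv 0$ on $K$ for all negative $k$. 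Hence the Laurent expansion reduces to a Taylor expansion, and $\theta$ extends holomorphically across $\{z_{1}=0\}$. Carrying this argument out at each smooth point of $D$ yields a holomorphic extension of $\theta$ to $X\setminus D_{\mathrm{sing}}$.

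Finally, $D_{\mathrm{sing}}$ is an analytic subset of $X$ of codimension at least $2$ (in the adapted chart it is contained in $\bigcup_{i<j}\{z_{i}=z_{j}=0\}$). By the second Riemann extension theorem, any holomorphic function on the complement of such a set extends holomorphically to all of $X$, regardless of any boundedness or integrability assumption. This gives the desired holomorphic extension of $\theta$ to $X$.

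The only genuinely substantive step is the Laurent-coefficient argument in the smooth-hypersurface case; everything else is formal reduction or invocation of classical extension theorems. The potential subtlety there is to justify interchanging the sum and the integrals, but this is standard for $L^{2}$ functions on an annulus where the monomials $z_{1}^{k}$ form an orthogonal family, so no real obstacle remains.
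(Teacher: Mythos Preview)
Your argument is correct, and it takes a genuinely different route from the paper. The paper works directly in the adapted chart $\Delta_{\varepsilon}^{\ast r}\times\Delta_{\varepsilon}^{s}$ with the full multi-variable Laurent expansion $\theta=\sum_{\nu\in\Int^{n}}a_{\nu}z^{\nu}$, computes the $L^{2}$ norm term by term via orthogonality on products of annuli, and derives a contradiction from the divergence of $\int_{\delta_{k}}^{\varepsilon/2}s^{2\nu_{k}+1}\dd s$ as $\delta_{k}\to 0$ whenever some $\nu_{k}<0$. Your approach instead stratifies: you handle the smooth locus of $D$ by a single-variable Laurent expansion with holomorphic coefficients in the transverse directions, and then invoke the second Riemann extension theorem to cross the codimension~$\geq 2$ singular locus. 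The paper's computation is entirely self-contained and treats all branches of $D$ simultaneously, at the cost of a slightly heavier bookkeeping with multi-indices; your version is conceptually cleaner and makes transparent that the $L^{2}$ hypothesis is only needed in codimension one, but it does rely on an external (albeit classical) extension theorem. Both are standard and equally valid.
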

\begin{proof}
The lemma is well-known, but we include the proof for lack of reference. It is enough to treat the case when $X=\Delta_{\varepsilon}^{n}\subseteq\C^{n}$ and $D$ is defined by $z_1\cdot\ldots\cdot z_r=0$, so that $X\setminus D=\Delta_{\varepsilon}^{\ast r}\times\Delta_{\varepsilon}^{s}$. We write $\delta=(\delta_{1},\ldots,\delta_{r})\in\R_{>0}^{r}$. Since $\theta$ is locally $L^2$,
\begin{equation}\label{eq3}
	\|\theta\|^{2}_{\varepsilon}:=\lim_{\delta\to 0}I_{\delta}<+\infty
\end{equation}
where
\begin{displaymath}
	I_{\delta}=\int_{(\prod_{k=1}^{r}\Delta_{\varepsilon/2}\setminus\Delta_{\delta_k})\times\Delta_{\varepsilon/2}^{s}}|\theta|^{2}
	\left|\prod_{k=1}^{n}\dd z_{k}\wedge\cdz_{k}\right|.
\end{displaymath}
The Laurent series development
\begin{equation}\label{eq2}
	\theta(z_1,\ldots,z_n)=\sum_{\nu\in\Int^{n}}a_{\nu}z^{\nu}
\end{equation}
is absolutely and uniformly convergent on any $(\prod_{k=1}^{r}\Delta_{\varepsilon/2}\setminus\Delta_{\delta_{k}})\times\Delta_{\varepsilon/2}^{s}$. Therefore, the integral $I_{\delta}$ can be computed term by term:
\begin{displaymath}
	\begin{split}
	I_{\delta}=\sum_{\nu,\mu\in\Int^{n}}a_{\nu}\overline{a_{\mu}}&\left(\prod_{k=1}^{r}\int_{\Delta_{\varepsilon/2}\setminus\Delta_{\delta_{k}}}
	z_{k}^{\nu_k}\cz_{k}^{\mu_{k}}|\dd z_k\wedge\cdz_k|\right)\\
	\cdot&\left(\prod_{k>r}\int_{\Delta_{\varepsilon/2}}z_{k}^{\nu_k}\cz_{k}^{\mu_k}
	|\dd z_k\wedge\cdz_k|\right).
	\end{split}
\end{displaymath}
Recall that given integers $a,b$ we have
\begin{displaymath}
	\int_{0}^{2\pi}e^{ai\theta}\overline{e^{bi\theta}}\dd\theta=2\pi\delta_{a,b},
\end{displaymath}
so that
\begin{equation}\label{eq5}
	\begin{split}
	I_{\delta}=\sum_{\nu\in\Int^{n}}|a_{\nu}|^{2}&\left(\prod_{k=1}^{r}\int_{\Delta_{\varepsilon/2}\setminus\Delta_{\delta_{k}}}
	|z_{k}|^{2\nu_k}|\dd z_k\wedge\cdz_k|\right)\\
	\cdot&\left(\prod_{k>r}\int_{\Delta_{\varepsilon/2}}|z_{k}|^{2\nu_k}
	|\dd z_k\wedge\cdz_k|\right).
	\end{split}
\end{equation}
We reason by contradiction and assume that $\theta$ does not extend to a holomorphic function on $\Delta_{\varepsilon/2}^{n}$. We can suppose that in (\ref{eq2}) there appears a term $a_{\nu}z^{\nu}\neq 0$, with $\nu=(\nu_1,\ldots,\nu_n)\in\Int_{<0}^{l}\times\Int_{\geq 0}^{n-l}$, $1\leq l\leq r$. From (\ref{eq5}) and by direct computation we find
\begin{equation}\label{eq4}
	I_{\delta}\geq (4\pi)^{n}|a_{\nu}|^{2}\prod_{k=1}^{l}J_{\delta_k}
	\prod_{k=l+1}^{r}\left(\frac{(\varepsilon/2)^{2\nu_{k}+2}}{2\nu_{k}+2}-\frac{\delta_{k}^{2\nu_k+2}}{2\nu_k+2}\right)
	\prod_{k>r}\frac{(\varepsilon/2)^{2\nu_k+2}}{2\nu_k +2}.
\end{equation}
where
\begin{displaymath}
	J_{\delta_k}=\begin{cases}
		\log\left(\frac{\varepsilon/2}{\delta_k}\right)&\text{ if }\nu_{k}=-1,\\
		\frac{(\varepsilon/2)^{2\nu_{k}+2}}{2\nu_k+2}-\frac{\delta_{k}^{2\nu_k+2}}{2\nu_k+2}&\text{ if }\nu_{k}<-1.
	\end{cases}
\end{displaymath}
Since $a_{\nu}\neq 0$ and $J_{\delta_k}\to +\infty$ as $\delta\to 0$, we see from (\ref{eq4}) that $I_{\delta}\to+\infty$ as $\delta\to 0$. This contradicts (\ref{eq3}). The proof is complete.
\end{proof}
\section{Logarithmically singular hermitian vector bundles}\label{section_vector_bundles}
Let $X$ be a complex analytic manifold and $D\subseteq X$ a divisor with normal crossings. Write $U=X\setminus D$ and $\iota:U\hookrightarrow X$ for the natural open immersion. In this section we study vector bundles endowed with hermitian metrics with singularities of logarithmic type along $D$. The reader is referred to \textsection \ref{sec_diff_form_log} for the several definitions and properties of differential forms with singularities of logarithmic type along $D$.
\begin{definition}[\cite{BKK2} and \cite{Mumford}]\label{def_good_vb}
Let $E$ be a vector bundle of rank $r$ on $X$. A smooth hermitian metric $h$ on $E_{\mid U}$ is said to have \textit{logarithmic singularities along} $D$ if, for every trivializing open subset $V$ and holomorphic frame $e_{1},\ldots,e_{r}$ of $E_{\mid V}$, putting $h_{ij}=h(e_i,e_j)$ and $H=(h_{ij})$ on $V\setminus D$, the following condition is fulfilled:
\begin{itemize}
	\item[] $(L(E,h))$ the functions $|h_{ij}|$, $\det H^{-1}$ have logarithmic singularities along $D\cap V$ (see Definition \ref{def_func_log_sing}).
\end{itemize}
We say that $h$ is \textit{(pre-)log-log (resp. good) along} $D$ if moreover, for every such data $V$ and $e_{1},\ldots, e_{r}$, the following property holds:
\begin{itemize}
	\item[] $(G(E,h))$ the entries of the matrix $(\pd H)H^{-1}$ are (pre-)log-log (resp. good) forms on $V$, with singularities along $D$ (see Definition \ref{def_good_forms}).
\end{itemize}
We will usually write $\overline{E}=(E,h)$ when no confusion on the metric can arise. Sometimes we use some variants of the definition, and we say for instance \textquotedblleft $\overline{E}$ has logarithmic singularities along $D$\textquotedblright\, or \textquotedblleft $\overline{E}$ is (pre-)log-log (resp. good) along $D$\textquotedblright.
\end{definition}
In the case of line bundles, the notions of (pre-)log-log and good hermitian metrics can be characterized by slightly simpler properties.
\begin{proposition}
Let $L$ be a hermitian line bundle on $X$ and $h$ a smooth hermitian metric on $L_{\mid U}$. Write $\|\cdot\|$ for the norm associated to $h$.\\
i. The metric $h$ is (pre-)log-log (resp. good) with singularities along $D$ if, and only if, for every trivializing open subset $V$ and holomorphic frame $e$ of $L_{\mid V}$, the function $\log h(e,e)$ is (pre-)log-log (resp. P-singular) on $V$, with singularities along $D$.\\
ii. The metric $h$ is log-log with singularities along $D$ if, and only if, for every trivializing open subset $V$ and holomorphic frame $e$ of $L_{\mid V}$, the form $\pd\log h(e,e)$ is locally $L^2$ on $V$ and $\cpd\pd\log h(e,e)$ has log-log growth of infinite order, with singularities along $D$. 
\end{proposition}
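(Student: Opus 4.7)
My plan is to handle both parts by reducing to the rank-one case and translating between the matrix-level conditions of Definition \ref{def_good_vb} and function-level conditions on the real smooth function $\phi := \log h(e,e)$ on $V\setminus D$. In this line-bundle case, the matrix $H = (h(e,e))$ is scalar, so $(\pd H)H^{-1} = \pd\phi$ and $\det H^{-1} = h(e,e)^{-1}$.

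For part (i), the first step is to observe that $(G(L,h))$ becomes: the $1$-form $\pd\phi$ is (pre-)log-log (resp.\ good). Since $\pd\pd\phi = 0$, $\pd\cpd\phi = -\cpd\pd\phi$, and $\phi$ is real (so $\cpd\phi = \overline{\pd\phi}$, while log-log and Poincar\'e growth are preserved by complex conjugation as in Proposition \ref{prop_prop_log}, iii), this is equivalent to $\pd\phi$, $\cpd\phi$, and $\pd\cpd\phi$ all having log-log (resp.\ Poincar\'e) growth, which---together with a log-log bound on $\phi$ itself---is exactly the pre-log-log (resp.\ P-singular) condition on $\phi$ (using $\dd\phi = \pd\phi + \cpd\phi$ and $\ddc\phi = (i/2\pi)\pd\cpd\phi$).

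The second step handles $(L(L,h))$. If $h(e,e)^{\pm 1} \prec |\log|s|^{-1}|^N$, then $|\phi| \leq N\log\log|s|^{-1} + O(1)$ has log-log growth, supplying the remaining piece of the function-level condition. Conversely, once the derivative conditions on $\phi$ are in hand, Proposition \ref{prop_log-log_growth} (or Corollary \ref{cor_P_singular} in the good case) furnishes a log-log bound on $\phi$, and exponentiating gives back the logarithmic-singularity bounds on $h(e,e)^{\pm 1}$.

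For part (ii), the plan is structurally identical: apply the line-bundle reduction to conclude that $h$ is log-log of infinite order if and only if $\phi$ is a log-log function of infinite order, and then invoke Proposition \ref{prop_L2_log-log}, which characterizes the latter by the conditions that $\dd\phi$ is locally $L^2$ and $\ddc\phi$ has log-log growth of infinite order. The reality of $\phi$ and the identities $\dd\phi = \pd\phi + \overline{\pd\phi}$ and $\ddc\phi = (i/2\pi)\pd\cpd\phi = -(i/2\pi)\cpd\pd\phi$ convert this directly into the stated criterion: $\pd\phi$ locally $L^2$ and $\cpd\pd\phi$ of log-log growth of infinite order. The main obstacle throughout is bookkeeping the growth exponents when translating between bounds on $\phi$ and bounds on $h(e,e)^{\pm 1}$; this is especially delicate in the pre-log-log case where higher log-log exponents are a priori allowed, whereas the good/P-singular case is cleaner because Corollary \ref{cor_P_singular} forces exponent exactly one, rendering the exponential bound literally polynomial in $\log|s|^{-1}$. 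The remaining verifications---stability of log-log and Poincar\'e growth under complex conjugation and under the various differential operators---are routine consequences of the structural results of Section \ref{sec_diff_form_log}.
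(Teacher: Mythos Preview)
Your approach is exactly the paper's: reduce to the scalar identity $(\pd H)H^{-1}=\pd\phi$ with $\phi=\log h(e,e)$, then invoke Proposition~\ref{prop_log-log_growth}, Corollary~\ref{cor_P_singular}, and Proposition~\ref{prop_L2_log-log}. The forward implications and the good/P-singular case are handled correctly, and your unpacking of the reality argument ($\cpd\phi=\overline{\pd\phi}$, $\pd\cpd\phi=-\cpd\pd\phi$) is accurate.

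There is, however, a genuine gap in the step you flag as ``delicate'' and then declare routine: in the converse direction for the (pre-)log-log cases you write that ``exponentiating gives back the logarithmic-singularity bounds on $h(e,e)^{\pm 1}$.'' This fails in general. A pre-log-log bound on $\phi$ only gives $|\phi|\prec\prod_k(\log\log|z_k|^{-1})^N$ for some $N$, and Proposition~\ref{prop_log-log_growth} produces an exponent $N+1$, not $1$. But condition $(L(L,h))$ requires $e^{|\phi|}\prec(\log|z_1\cdots z_m|^{-1})^M$, i.e.\ $|\phi|\prec\log\log|z_1\cdots z_m|^{-1}$, which is exponent exactly~$1$. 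Concretely, take $\phi=\theta_2=(\log\log\|s\|^{-2})^2$ as in Lemma~\ref{lemma_theta_N}: this is a pre-log-log function, yet $h(e,e)=e^{\theta_2}$ satisfies $e^{\theta_2}/(\log\|s\|^{-2})^M\to\infty$ for every $M$, so $(L(L,h))$ fails. The same obstruction applies to the log-log (infinite order) converse and hence to part~(ii). Only the good case is rescued, because Corollary~\ref{cor_P_singular} forces the exponent to be~$1$, whence $e^{|\phi|}\prec\prod_k(\log|z_k|^{-1})^C\prec(\log|z_1\cdots z_m|^{-1})^{Cm}$ genuinely holds.

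The paper's one-line proof does not address this point either, so the gap is shared; note, though, that only the forward implication is used in the rest of the paper (e.g.\ at the start of \S\ref{geometric_assumptions}), and that direction is unaffected.
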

\begin{proof}
This follows from the definitions and Proposition \ref{prop_log-log_growth}, Proposition \ref{cor_P_singular} and Proposition \ref{prop_L2_log-log} applied to the smooth real function $\log h(e,e)$ on $V\setminus D$.
\end{proof}
An essential extension property of hermitian vector bundles with logarithmic singularities is the following observation due to Mumford.
\begin{proposition}\label{prop_extension}
Let $(E^{\circ},h)$ be a smooth hermitian vector bundle on $U$. Then there exists at most one extension of $(E^{\circ},h)$ to a hermitian vector bundle $(E,h)$ on $X$, with logarithmic singularities along $D$. More precisely, if $(E,h)$ is such an extension, then for every open subset $V$ in $X$
\begin{displaymath}
	\Gamma(V,E)=\big\lbrace s\in\Gamma(V,\iota_{\ast}E^{\circ})\mid h(s,s)\,\text{ has log. sing. along }\,D\cap V\big\rbrace. 
\end{displaymath}
\end{proposition}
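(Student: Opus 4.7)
The plan is to prove the displayed local characterization of $\Gamma(V,E)$, from which uniqueness is immediate since the right-hand side depends only on $(E^{\circ},h)$ and $D$. Fix $x\in D\cap V$, choose a trivializing neighborhood $V'\subseteq V$ of $x$ together with a holomorphic frame $e_{1},\ldots,e_{r}$ of $E_{\mid V'}$, and set $H=(h_{ij})$ with $h_{ij}=h(e_{i},e_{j})$ on $V'\setminus D$. Any section $s\in\Gamma(V',\iota_{\ast}E^{\circ})=\Gamma(V'\setminus D,E^{\circ})$ expands uniquely as $s=\sum_{i}a_{i}e_{i}$ with $a_{i}$ holomorphic on $V'\setminus D$; the question is whether the $a_{i}$ extend across $D\cap V'$.

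For the easy inclusion, suppose $s\in\Gamma(V',E)$, so the $a_{i}$ are holomorphic, hence locally bounded, on all of $V'$. Condition $L(E,h)$ gives log-singularities of the $|h_{ij}|$ along $D\cap V'$, and since the class of log-singular functions is closed under finite sums and under multiplication by locally bounded functions, the expansion $h(s,s)=\sum_{i,j}a_{i}\overline{a_{j}}h_{ij}$ shows that $h(s,s)$ is log-singular along $D\cap V'$.

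For the converse, suppose $s\in\Gamma(V',\iota_{\ast}E^{\circ})$ with $h(s,s)$ log-singular along $D$. Positive definiteness of the Hermitian matrix $H$ on $V'\setminus D$ provides the pointwise bound
\[
\sum_{i}|a_{i}|^{2}\;\leq\;\|H^{-1}\|_{op}\,h(s,s)\;\leq\;r\max_{k,l}|H^{-1}_{kl}|\,h(s,s).
\]
By Cramer's rule, $H^{-1}_{kl}=(-1)^{k+l}M_{lk}\cdot\det H^{-1}$, where the minor $M_{lk}$ is a polynomial in the entries $h_{pq}$. Condition $L(E,h)$ controls both the $|h_{pq}|$ \emph{and} $|\det H^{-1}|$ by log-type functions; combined, each $|H^{-1}_{kl}|$ is log-singular along $D\cap V'$, and hence so is $|a_{i}|^{2}$. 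In particular each $a_{i}$ is locally $L^{2}$ on $V'$ (log-growth functions are integrable against $r\,dr$ in polar coordinates), so Lemma \ref{lem_holom_L2} extends $a_{i}$ holomorphically to $V'$. This gives $s\in\Gamma(V',E)$, and globalizing over an open cover of $V$ concludes the proof of the characterization; uniqueness of the extension follows at once.

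The delicate point is the linear-algebra step controlling the entries of $H^{-1}$: a bound on the $|h_{ij}|$ alone is insufficient, because inverting $H$ introduces the factor $1/\det H$, which could a priori blow up arbitrarily on $D$. It is precisely the requirement in $L(E,h)$ that $\det H^{-1}$ itself be log-singular which rescues the estimate and makes it possible to transfer log-singularity of $h(s,s)$ to log-growth of the coefficients $a_{i}$.
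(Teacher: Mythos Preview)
Your argument is correct. The paper itself does not give a proof of this proposition but simply cites Mumford's original result (Proposition~1.3 in \cite{Mumford}); your write-up supplies exactly the kind of argument that underlies that reference, so there is nothing to correct.

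One minor remark: invoking Lemma~\ref{lem_holom_L2} is perfectly valid, but it is slightly heavier than necessary. Once you know each $a_{i}$ is holomorphic on $V'\setminus D$ with $|a_{i}|\prec(\log|z_{1}\cdots z_{m}|^{-1})^{N}$, you can conclude directly that the singularity along $D$ is removable, since logarithmic growth is $o(|z_{k}|^{-\varepsilon})$ for every $\varepsilon>0$ and hence kills all negative Laurent coefficients; no $L^{2}$ estimate is needed. Either route is fine.
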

\begin{proof}
This is Proposition 1.3 in \cite{Mumford}.
\end{proof}
Hermitian vector bundles with logarithmic singularities along $D$ admit the following characterization.
\begin{proposition}\label{prop_charac_log}
Let $E$ be a vector bundle on $X$ and $h_E$ a smooth hermitian metric on $E_{\mid U}$. Denote by $h_{E^\vee}$ the dual metric. Then $h_E$ has logarithmic singularities along $D$ if, and only if, the following condition is satisfied with $F=\overline{E}$ and $F=\overline{E}^{\vee}$:
\begin{itemize}
	\item[] $(\widetilde{L}(F,h_F))$ for every open subset $V$ and any holomorphic section $s$ of $F_{\mid V}$, the function $h_{F}(s,s)$ has logarithmic singularities along $D\cap V$.
\end{itemize}
\end{proposition}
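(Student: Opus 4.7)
The plan is to prove both implications by working in a local holomorphic frame $e_1,\ldots,e_r$ trivializing $E$ over an open $V$, and using two elementary observations: first, the class of functions with logarithmic singularities along $D\cap V$ is stable under finite sums, products, and multiplication by locally bounded holomorphic functions (immediate from Definition \ref{def_func_log_sing}); second, a polarization identity combined with Cramer's rule lets one convert freely between the pointwise values $h_F(s,s)$ on holomorphic sections and the entries of the matrix of $h_F$ in a given frame.

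For the direction $(L(E,h_E))\Rightarrow(\widetilde{L})$ on $\overline{E}$ and $\overline{E}^\vee$, I would first upgrade $(L(E,h_E))$ to $(L(E^\vee,h_{E^\vee}))$. Indeed, since $H$ is Hermitian positive definite, the matrix of $h_{E^\vee}$ in the dual frame $e^1,\ldots,e^r$ is $H^{-1}$; by Cramer's rule, each entry $(H^{-1})_{ij}$ is a polynomial in the $h_{kl}$ multiplied by $\det H^{-1}$, hence has logarithmic singularities along $D\cap V$. Moreover $\det(H^{-1})^{-1}=\det H$ is itself a polynomial in the $h_{kl}$, so it too has logarithmic singularities. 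With $(L)$ holding for both $\overline{E}$ and $\overline{E}^\vee$, the implication to $(\widetilde{L})$ is then trivial: for a holomorphic section $s=\sum s_ie_i$ of $E_{\mid V}$ one has $h_E(s,s)=\sum_{i,j}s_i\bar s_j h_{ij}$, a finite sum of products of locally bounded holomorphic factors with log-singular matrix entries, and symmetrically on $E^\vee$.

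For the converse, assume $(\widetilde{L})$ for both $\overline{E}$ and $\overline{E}^\vee$. The polarization identity
\begin{equation*}
4 h_{ij}=\bigl[h_E(e_i+e_j,e_i+e_j)-h_E(e_i-e_j,e_i-e_j)\bigr]+i\bigl[h_E(e_i+ie_j,e_i+ie_j)-h_E(e_i-ie_j,e_i-ie_j)\bigr]
\end{equation*}
expresses each $h_{ij}$ as a $\mathbb{C}$-linear combination of values $h_E(s,s)$ on holomorphic sections $s$ of $E_{\mid V}$; each such value has logarithmic singularities along $D\cap V$ by hypothesis, and therefore so does $|h_{ij}|$. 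Running the same argument in the dual frame on $\overline{E}^\vee$ shows that every entry of $H^{-1}$ has logarithmic singularities, whence $\det H^{-1}$, being an alternating sum of products of these entries, has logarithmic singularities as well. This yields $(L(E,h_E))$.

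The only conceptually delicate point, and the reason the hypothesis must be imposed simultaneously on $\overline{E}$ and $\overline{E}^\vee$, lies in controlling $\det H^{-1}$: the polarization argument applied to $\overline{E}$ alone recovers only the $h_{ij}$ and gives no a priori upper bound on $(\det H)^{-1}$; one genuinely needs access to the entries of $H^{-1}$, which are encoded precisely by the metric on $E^\vee$ in its dual frame. Beyond this observation I do not anticipate any obstacle — the proof reduces to routine linear algebra combined with the closure properties of log-singular functions.
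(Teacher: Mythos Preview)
Your proof is correct and follows the same overall strategy as the paper: localize to a trivializing chart, exploit Cramer's rule to pass between $H$ and $H^{-1}$, and use the closure of log-singular functions under algebraic operations. The tactical choices differ slightly. For the converse, the paper uses the Cauchy--Schwarz inequality $|h_{ij}|^{2}\leq h_E(e_i,e_i)\,h_E(e_j,e_j)$ to control the off-diagonal entries directly from the diagonal ones, whereas you use the polarization identity; both work equally well. For the direct implication, the paper completes a nonvanishing section $s$ to a local frame so that $h_E(s,s)$ becomes a diagonal entry, while you expand $s$ in a fixed frame and sum; your version has the mild advantage of not requiring a separate discussion of the zero locus of $s$. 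Neither difference is substantive.
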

\begin{proof}
For the direct implication, first take a holomorphic section $s$ of $E$ over an open subset $V$. We may assume that $s$ does not vanish on $V$. After possibly shrinking $V$, we can complete $s$ to a holomorphic frame $e_1=s,\ldots, e_r$ of $E_{\mid V}$. By the definition of metric with logarithmic singularities, the function $h_{E}(s,s)=h_{E}(e_1,e_1)$ has logarithmic singularities along $D$.\\
Secondly, let $V$ be a trivializing open subset of $E$ and $e_1,\ldots,e_r$ a holomorphic frame of $E_{\mid V}$. Write $H=(h_{ij})$ for the matrix of $h_{E}$ in base $\lbrace e_i\rbrace_{i}$ and $H^{-1}=(g_{ij})$ for the inverse matrix. From the very construction of $H^{-1}$ and the logarithmic singularities of the functions $h_{ij}$ and $\det H^{-1}$, it is immediate to check that the functions $g_{ij}$ have logarithmic singularities along $D$. If $B$ is the matrix of $h_{E^{\vee}}$ in any holomorphic frame of $E^{\vee}_{\mid V}$, then there exists $A\in\text{GL}_{r}(\Gamma(V,\OO_{X}))$ such that
\begin{displaymath}
	B=A^{t}\cdot H^{-1} \cdot\overline{A}.
\end{displaymath}
Since the entries of $A$ are holomorphic, the entries of $B$ inherit from $H^{-1}$ the logarithmic singularities along $D\cap V$.\\
Let now $s$ be a holomorphic section of $E^{\vee}$ over an open subset $V$. Replacing $V$ by a smaller open subset, we can complete $s$ to a holomorphic frame of $E^{\vee}_{\mid V}$, $v_1=s,\ldots,v_r$. As we have just proven the functions $h_{E^{\vee}}(v_i,v_j)$ have logarithmic singularities along $D$, in particular so does $h_{E^{\vee}}(s,s)=h(v_1,v_1)$.

Now for the converse. Let $V$ be a trivializing open subset, adapted to $D$. Let $e_1,\ldots, e_r$ be a frame for $E_{\mid V}$. Write $H=(h_{ij})$ for the matrix of the hermitian metric $h_E$ in base $\lbrace e_i\rbrace_i$. By hypothesis, for every open subset $V^{\prime}\subset\subset V$, there exists an integer $N\geq 0$ such that on $V^{\prime}$
\begin{displaymath}
	h_{E}(e_i,e_i)\prec(\log |z_1\cdot\ldots\cdot z_m|^{-1})^{N}.
\end{displaymath}
Applying Schwarz's inequality we get
\begin{displaymath}
	|h_{ij}|^{2}\prec(\log|z_1\cdot\ldots\cdot z_m|^{-1})^{2N}.
\end{displaymath}
The same argument provides similar bounds for the entries of the matrix of $h_{E^{\vee}}$ in the dual basis, namely $H^{-1}$. Since the determinant of $H^{-1}$ is a polynomial in the entries of this matrix, we derive a bound
\begin{displaymath}
	\det H^{-1}\prec(\log|z_1\cdot\ldots\cdot z_m|^{-1})^{M}
\end{displaymath}
for some integer $M$. This concludes the proof.
\end{proof}
As an immediate consequence of the proposition we establish the following corollary.
\begin{corollary}\label{cor_ex_seq}
Let $\overline{E}=(E,h_E)$ be a hermitian vector bundle with logarithmic singularities along $D$. For every exact sequence of vector bundles
\begin{displaymath}
	0\longrightarrow F\longrightarrow E\longrightarrow Q\longrightarrow 0,
\end{displaymath}
the induced hermitian vector bundles $\overline{F}=(F,h_F)$ (restricted metric) and $\overline{Q}=(Q,h_Q)$ (quotient metric) have logarithmic singularities along $D$.
\end{corollary}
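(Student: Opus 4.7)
The plan is to invoke Proposition \ref{prop_charac_log}, which reduces having logarithmic singularities along $D$ to verifying condition $(\widetilde{L})$ on both the bundle and its dual. Since by hypothesis $\overline{E}$ has logarithmic singularities along $D$, that proposition also gives condition $(\widetilde{L})$ for $\overline{E}^{\vee}$. Consequently it suffices to establish condition $(\widetilde{L})$ for each of $\overline{F}$, $\overline{F}^{\vee}$, $\overline{Q}$ and $\overline{Q}^{\vee}$.

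The argument rests on two elementary observations. Let $(M,h_{M})$ be a hermitian bundle satisfying $(\widetilde{L}(M,h_M))$. \emph{(a)} If $G\subseteq M$ is a holomorphic subbundle with the restricted metric $h_G$, then any holomorphic section $s$ of $G$ over an open $V$ is also a holomorphic section of $M$, and $h_{G}(s,s)=h_{M}(s,s)$ has logarithmic singularities along $D\cap V$; hence $(\widetilde{L}(G,h_G))$ holds. \emph{(b)} If $G$ is a holomorphic quotient of $M$ endowed with the quotient metric, then any holomorphic section $s$ of $G$ over $V$ admits, after shrinking $V$, a holomorphic lift $\tilde{s}$ to $M$ (a short exact sequence of holomorphic vector bundles is locally split). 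The definition of the quotient metric yields $h_{G}(s,s)\leq h_{M}(\tilde{s},\tilde{s})$, and since condition $(\widetilde{L})$ and the very Definition \ref{def_func_log_sing} of logarithmic singularities only require upper bounds of the form $|f|\prec|\log|^N$, the control on $h_{M}(\tilde{s},\tilde{s})$ transfers to $h_{G}(s,s)$. Thus $(\widetilde{L}(G,h_G))$ holds.

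I would then apply these two observations as follows. For $\overline{F}$, use \emph{(a)} with the inclusion $F\hookrightarrow E$ and the data $\overline{E}$. For $\overline{Q}$, use \emph{(b)} with the surjection $E\twoheadrightarrow Q$ and the data $\overline{E}$. For the duals, I would invoke the standard linear-algebra fact that dualizing $0\to F\to E\to Q\to 0$ produces $0\to Q^{\vee}\to E^{\vee}\to F^{\vee}\to 0$ and that, under this dualization, the restricted metric on $F$ (resp.\ the quotient metric on $Q$) is dual to the quotient metric on $F^{\vee}$ induced from $h_{E^{\vee}}$ (resp.\ to the restricted metric on $Q^{\vee}$ inherited from $h_{E^{\vee}}$). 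Applying \emph{(b)} to the quotient $E^{\vee}\twoheadrightarrow F^{\vee}$ with data $\overline{E}^{\vee}$ yields $(\widetilde{L}(F^{\vee},h_{F^{\vee}}))$, and applying \emph{(a)} to the inclusion $Q^{\vee}\hookrightarrow E^{\vee}$ with data $\overline{E}^{\vee}$ yields $(\widetilde{L}(Q^{\vee},h_{Q^{\vee}}))$.

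There is essentially no serious obstacle: the only point deserving care is the observation that the notion of logarithmic singularities is asymmetric (bounded above only, not below), which is precisely what makes the quotient metric case go through by comparison with the norm of an arbitrary local lift. The remaining inputs (local splitting of holomorphic exact sequences, and compatibility of duality with restricted and quotient metrics) are standard.
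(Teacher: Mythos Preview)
Your proposal is correct and follows essentially the same approach as the paper: reduce via Proposition~\ref{prop_charac_log} to checking $(\widetilde{L})$ for $\overline{F}$, $\overline{Q}$, $\overline{F}^{\vee}$, $\overline{Q}^{\vee}$, handle the subbundle case trivially and the quotient case via the inequality $h_{Q}(s,s)\leq h_{E}(\tilde{s},\tilde{s})$ for a local lift, and then treat the duals by applying the same two observations to the dual exact sequence $0\to Q^{\vee}\to E^{\vee}\to F^{\vee}\to 0$. The only difference is that you spell out the metric compatibilities under duality more explicitly than the paper does.
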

\begin{proof}
It is enough to prove that for every exact sequence as in the statement, conditions $(\widetilde{L}(\overline{F}))$ and $(\widetilde{L}(\overline{Q}))$ hold. Indeed, since $\overline{E}^{\vee}$ has logarithmic singularities along $D$, conditions $(\widetilde{L}(\overline{F}^{\vee}))$ and $(\widetilde{L}(\overline{Q}^{\vee}))$ automatically follow by duality. Then we conclude applying Proposition \ref{prop_charac_log}. The validity of $\widetilde{L}(\overline{F})$ is clear. For $\widetilde{L}(\overline{Q})$, we just observe that if $s$ is a holomorphic section of $Q_{\mid V}$ and $\tilde{s}$ is a holomorphic section of $E_{\mid V}$ lifting $s$, then
\begin{displaymath}
	h_{Q}(s,s)\leq h_{E}(\tilde{s},\tilde{s}).
\end{displaymath}
Thus we see that $\widetilde{L}(\overline{E})$ implies $\widetilde{L}(\overline{Q})$.
\end{proof}
We next state the main formal properties of logarithmically singular (resp. (pre-)log-log, resp. good) hermitian vector bundles.
\begin{proposition}\label{prop_basic_prop}
Let $E$, $F$ be two vector bundles on $X$ and $h_E$ and $h_F$ smooth hermitian metrics on  $E_{\mid U}$ and $F_{\mid U}$, respectively. If $h_E$ and $h_F$ have logarithmic (resp. (pre-)log-log, resp. good) singularities along $D$, then $\overline{E}^{\vee}$, $\overline{E}\otimes\overline{F}$, $S^{k}\overline{E}$ and $\wedge^{k}\overline{E}$ have logarithmic (resp. (pre-)log-log, resp. good) singularities along $D$.
\end{proposition}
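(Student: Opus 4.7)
The plan is to verify conditions $L$ and $G$ of Definition \ref{def_good_vb} for each of the four constructions $\overline{E}^{\vee}$, $\overline{E}\otimes\overline{F}$, $S^{k}\overline{E}$, $\wedge^{k}\overline{E}$, working in a local trivializing open subset $V$ adapted to $D$ where we have holomorphic frames $\lbrace e_{i}\rbrace$, $\lbrace f_{j}\rbrace$ of $E$, $F$. Denote by $H_{E}$, $H_{F}$ the corresponding metric matrices and by $\theta_{E}=(\pd H_{E})H_{E}^{-1}$, $\theta_{F}=(\pd H_{F})H_{F}^{-1}$ the $(1,0)$-parts of their Chern connection matrices. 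The strategy is then purely algebraic: express the metric matrix and the connection matrix of each induced bundle as universal polynomial/rational expressions in the entries of $H_{E}$, $H_{F}$, $\theta_{E}$, $\theta_{F}$, and conclude via the stability properties of the relevant sheaves.

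Before addressing the four cases, I would record the following closure statements, all essentially formal. (a) The class of smooth functions with logarithmic singularities along $D$ is closed under finite sums and products (direct from Definition \ref{def_func_log_sing} and the inequality $\log|st|^{-1}\leq\log|s|^{-1}+\log|t|^{-1}$ away from the zero locus). (b) The sheaves $\mathcal{E}^{\ast}_{X}\langle\langle D\rangle\rangle_{\pre}$ of pre-log-log forms and (by Definition \ref{def_good_forms}) the subalgebra of good forms are in fact subalgebras of $\iota_{\ast}\mathcal{E}^{\ast}_{U}$ stable under $\pd$, $\cpd$, $\pd\cpd$; this is an immediate consequence of the product rule combined with the algebra structure of log-log/Poincar\'e growth forms.

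The cases of $\overline{E}\otimes\overline{F}$, $S^{k}\overline{E}$, $\wedge^{k}\overline{E}$ are then straightforward. For the tensor product, in the induced frame $\lbrace e_{i}\otimes f_{j}\rbrace$ one has $H_{E\otimes F}=H_{E}\otimes H_{F}$ (Kronecker product), whence $\det H_{E\otimes F}^{-1}=(\det H_{E}^{-1})^{\mathrm{rk}(F)}(\det H_{F}^{-1})^{\mathrm{rk}(E)}$, so $L$ follows from the closure of log singularities under products. Leibniz for the Kronecker product yields
\begin{displaymath}
	\theta_{E\otimes F}=\theta_{E}\otimes I_{F}+I_{E}\otimes\theta_{F},
\end{displaymath}
so every entry of $\theta_{E\otimes F}$ is an entry of $\theta_{E}$ or of $\theta_{F}$, and $G$ is preserved. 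For $S^{k}\overline{E}$ and $\wedge^{k}\overline{E}$ the metric matrix entries are explicit polynomials in the entries of $H_{E}$ (respectively its permanent and determinant minors), and the connection matrix entries are linear in those of $\theta_{E}$ via the standard representation of $\mathrm{GL}_{r}$; both $L$ and $G$ are then immediate from the closure statements recalled above.

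The only case requiring genuine care is the dual, which I expect to be the main obstacle. Here the induced metric matrix in the dual frame is (up to complex conjugation) $H_{E}^{-1}$, and by Cramer's rule its entries read $(\det H_{E})^{-1}\cdot(\text{polynomial in the }h_{ij})$. Since $|\det H_{E}^{-1}|$ has logarithmic singularities by hypothesis, and since the sheaf of functions with logarithmic singularities is stable under products, the entries of $H_{E}^{-1}$ have logarithmic singularities; moreover $\det(H_{E}^{-1})^{-1}=\det H_{E}$ is a polynomial in the $h_{ij}$ and so automatically has logarithmic singularities. This yields $L(\overline{E}^{\vee})$. For $G(\overline{E}^{\vee})$, the duality identity $\theta_{E^{\vee}}=-\theta_{E}^{t}$ (obtained from $\pd\langle e^{i},e_{j}\rangle=0$ and the Chern connection characterization) shows that the connection matrix of $\overline{E}^{\vee}$ has entries which are, up to sign, entries of $\theta_{E}$; hence $G(\overline{E}^{\vee})$ reduces to $G(\overline{E})$. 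Alternatively, in the purely logarithmic case one may bypass this computation altogether by invoking Proposition \ref{prop_charac_log}, which characterizes logarithmic singularities through the condition $\widetilde{L}$ applied symmetrically to $\overline{E}$ and $\overline{E}^{\vee}$, so the stability of condition $\widetilde{L}$ under the four operations (trivial for sections of the dual and of sums/tensor/symmetric/exterior powers) closes this case at once.
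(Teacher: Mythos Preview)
Your proposal is correct and considerably more detailed than the paper's own treatment, which simply reads ``Left as an elementary exercise.'' Your strategy of expressing the induced metric and connection matrices as universal polynomial (or, for the dual, rational via Cramer's rule) expressions in the entries of $H_E$, $H_F$, $\theta_E$, $\theta_F$, and then invoking closure of the relevant function/form classes under sums and products, is exactly the intended elementary argument; the key identity $\theta_{E^{\vee}}=-\theta_E^{t}$ for the dual is correct and disposes of the only potentially delicate point.
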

\begin{proof}
Left as an elementary exercise.
\end{proof}
\begin{proposition}\label{prop_vb_log}
Let $X$, $Y$ be complex analytic manifolds and $D_X\subseteq X$, $D_Y\subseteq Y$ normal crossing divisors. Let $f:X\rightarrow Y$ be a morphism of complex analytic manifolds. Let $\overline{E}=(E,h)$ be a hermitian vector bundle on $Y$ whose metric is defined and smooth on $Y\setminus D_Y$.\\
i. If $f^{-1}(D_Y)\subseteq D_X$ and $h$ has logarithmic (resp. (pre-)log-log, resp. good) singularities along $D_Y$, then the metric $f^{*}(h)$ on $f^{*}(E)$ has logarithmic (resp. (pre-)log-log, resp. good) singularities along $D_X$.\\
ii. Suppose that $f$ is surjective, proper and $f^{-1}(D_Y)=D_X$. Then $h$ has logarithmic singularities along $D_Y$ if, and only if, $f^{*}(\overline{E})$ has logarithmic singularities along $D_X$.
\end{proposition}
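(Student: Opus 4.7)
My plan is to reduce both parts to the pullback properties for functions and forms with logarithmic singularities established in Proposition \ref{prop_prop_log_sing} and Proposition \ref{prop_prop_log}. The central observation is that trivializations pull back well: if $V\subseteq Y$ is a trivializing open subset with holomorphic frame $e_{1},\ldots,e_{r}$ of $E_{\mid V}$, then $f^{-1}(V)$ is a trivializing open subset of $X$ for $f^{\ast}E$, with holomorphic frame $f^{\ast}e_{1},\ldots,f^{\ast}e_{r}$, and the matrix of $f^{\ast}h$ in this frame is simply $f^{\ast}H$, where $H=(h_{ij})$ is the matrix of $h$ in the original frame. Consequently $\det(f^{\ast}H)^{-1}=f^{\ast}(\det H^{-1})$ and, entrywise, $\pd(f^{\ast}H)\cdot(f^{\ast}H)^{-1}=f^{\ast}\bigl((\pd H)\cdot H^{-1}\bigr)$, since pullback commutes with the entrywise holomorphic operations involved.

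For part \textit{i}, I verify conditions $L(f^{\ast}E,f^{\ast}h)$ and, where applicable, $G(f^{\ast}E,f^{\ast}h)$ on each $f^{-1}(V)$ using the above identities. For the logarithmic case, the functions $|h_{ij}|$ and $\det H^{-1}$ have logarithmic singularities along $D_{Y}$ by hypothesis; their pullbacks therefore have logarithmic singularities along $f^{-1}(D_{Y})$ by Proposition \ref{prop_prop_log_sing}.\textit{ii}, and hence along $D_{X}$ since $f^{-1}(D_{Y})\subseteq D_{X}$ (Proposition \ref{prop_prop_log_sing}.\textit{i}). For the pre-log-log and log-log cases, Proposition \ref{prop_prop_log}.\textit{iii} (and its log-log variant recalled from \cite{BKK2}) gives the corresponding inclusions of sheaves of forms under pullback, so the entries of $f^{\ast}((\pd H)H^{-1})$ remain in the appropriate sheaf on $X$. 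For the good case, the same pullback principle applied to $\mathcal{P}_{D_{Y}}$, combined with $f^{\ast}\circ d=d\circ f^{\ast}$, shows that a good form pulls back to a good form.

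For part \textit{ii}, the direct implication is contained in part \textit{i}. For the converse, assume $f^{\ast}\overline{E}$ has logarithmic singularities along $D_{X}=f^{-1}(D_{Y})$, pick any trivializing $V\subseteq Y$ with a frame $e_{1},\ldots,e_{r}$, and apply the hypothesis on the trivializing subset $f^{-1}(V)$ of $X$ with frame $f^{\ast}e_{i}$. This gives that $|f^{\ast}h_{ij}|=f^{\ast}|h_{ij}|$ and $f^{\ast}(\det H^{-1})$ have logarithmic singularities along $D_{X}$. Since $f$ is surjective and proper and $f^{-1}(D_{Y})=D_{X}$, the converse half of Proposition \ref{prop_prop_log_sing}.\textit{ii} forces $|h_{ij}|$ and $\det H^{-1}$ to have logarithmic singularities along $D_{Y}$, yielding condition $L(E,h)$ on $V$. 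The main technical point is really just the bookkeeping that matrix inversion and the formation of $(\pd H)H^{-1}$ commute with the pullback $f^{\ast}$; once this is checked, everything reduces cleanly to the scalar function and differential form statements already at our disposal.
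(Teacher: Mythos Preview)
Your proof is correct and follows essentially the same route as the paper: reduce the conditions $L(E,h)$ and $G(E,h)$ to the scalar pullback statements for functions with logarithmic singularities (Proposition \ref{prop_prop_log_sing}) and for the relevant sheaves of singular forms (Proposition \ref{prop_prop_log} \textit{iii}). Your version simply makes explicit the bookkeeping with frames and the identities $f^{\ast}H$, $f^{\ast}(\det H^{-1})$, $f^{\ast}((\pd H)H^{-1})$ that the paper leaves to the reader.
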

\begin{proof}
The first item \textit{i} follows from Proposition \ref{prop_prop_log_sing} \textit{i} and Proposition \ref{prop_prop_log} \textit{iii}. The second item \textit{ii} is automatically deduced from Proposition \ref{prop_prop_log_sing} \textit{ii}.
\end{proof}
\begin{corollary}\label{coro_vb_log}
Let $(E,h)$ be a hermitian vector bundle on $X$, with singularities along $D$. Let $\OO_{E}(1)$ be the dual of the trivial vector bundle of $\PP(E)$, the projective space of lines in $E^{\vee}$. Denote by $\pi:\PP(E)\rightarrow X$ the natural projection. Then the metric on $\OO_{E}(1)$ induced by $\pi^{*}(h)$ has logarithmic singularities along $\pi^{-1}(D)$.
\end{corollary}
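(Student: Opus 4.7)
The plan is to combine the pullback stability of logarithmic singularities (Proposition \ref{prop_vb_log}) with the behaviour under short exact sequences (Corollary \ref{cor_ex_seq}). The idea is to realize the natural metric on $\OO_{E}(1)$ as the quotient metric coming from $\pi^{\ast}\overline{E}$, and then invoke the two already-established stability results.

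First I would apply Proposition \ref{prop_vb_log}~\textit{i} to the projection $\pi:\PP(E)\rightarrow X$, with $D_Y=D$ and $D_X=\pi^{-1}(D)$; the hypothesis $\pi^{-1}(D_Y)\subseteq D_X$ is trivially satisfied, so $\pi^{\ast}\overline{E}=(\pi^{\ast}E,\pi^{\ast}h)$ has logarithmic singularities along $\pi^{-1}(D)$. Next, since by hypothesis $\PP(E)$ parameterizes lines in $E^{\vee}$, the tautological inclusion $\OO_{E}(-1)\hookrightarrow\pi^{\ast}E^{\vee}$ dualizes to a canonical surjection, producing an exact sequence of vector bundles on $\PP(E)$
$$
0\longrightarrow K\longrightarrow\pi^{\ast}E\longrightarrow\OO_{E}(1)\longrightarrow 0,
$$
where $K$ denotes its kernel. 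By construction, the hermitian metric induced by $\pi^{\ast}(h)$ on $\OO_{E}(1)$ is precisely the quotient metric attached to $\pi^{\ast}\overline{E}$ via this exact sequence. Applying Corollary \ref{cor_ex_seq} to this sequence then yields that $\OO_{E}(1)$, equipped with the induced metric, has logarithmic singularities along $\pi^{-1}(D)$, as desired.

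I do not expect any real obstacle: the argument is a formal combination of two previously established propositions. The only point requiring care is the correct identification of $\OO_{E}(1)$ as a quotient of $\pi^{\ast}E$ under the convention in which $\PP(E)$ is the space of lines in $E^{\vee}$, and the verification that the ``metric induced by $\pi^{\ast}(h)$'' on $\OO_{E}(1)$ is indeed the quotient metric; once these standard identifications are made, the conclusion is immediate.
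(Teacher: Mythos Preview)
Your proposal is correct and follows essentially the same approach as the paper: pull back $\overline{E}$ via $\pi$ using Proposition~\ref{prop_vb_log}, then apply Corollary~\ref{cor_ex_seq} to the quotient $\pi^{\ast}E\twoheadrightarrow\OO_{E}(1)$ to conclude. The paper's proof is the same argument, stated slightly more tersely.
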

\begin{proof}
By definition, the line bundle $\OO_{E}(1)$ is a quotient of $\pi^{*}(E)$. The hermitian metric on $\OO_{E}(1)$ is the quotient metric from $\pi^{*}(\overline{E})$. By Proposition \ref{prop_vb_log}, $\pi^{*}(\overline{E})$ has logarithmic singularities along $\pi^{-1}(D)$. Then, by Corollary \ref{cor_ex_seq}, so does the induced metric on $\OO_{E}(1)$.
\end{proof}
The end of this section is devoted to some counter-examples.
\begin{example}
\textit{i. Counter-example to Corollary \ref{cor_ex_seq} and Corollary \ref{coro_vb_log} for pre-log-log hermitian vector bundles.} Let $X=\Af^{1}_{\C}$ be the complex line, with analytic coordinate $z$. Let $D$ be the divisor with normal crossings $z=0$. As vector bundle we take $E=\OO_{X}^{\oplus 2}$. We consider a hermitian metric $h$ on $E$ such that, in the standard basis $e_1$, $e_2$ and near the origin, its matrix $H$ looks like
\begin{displaymath}
	H=
	\left(\begin{array}{cc}
		\log|z|^{-1} & 0\\
		0 & 1\\
		\end{array}\right).
\end{displaymath}
It is easily seen that $(E,h)$ is pre-log-log along $D$ (actually good). However, the induced metric on the line bundle $\OO_{E}(-1)\subseteq \pi^{*}(E^{\vee})$ on $\PP(E)=\PP^{1}_{\C}\times\Af^{1}_{\C}$ is not pre-log-log. Indeed, identifiy $\Af^{1}_{\C}$ as an open subset of $\PP^{1}_{\C}$ via $t\mapsto (1:t)$. If $e_1^{\vee}$, $e_{2}^{\vee}$ is the dual basis, define the section $s$ of $\OO_{E}(-1)_{\mid\Af^{1}_{\C}\times\Af^{1}_{\C}}$ by
\begin{displaymath}
	s=e_{1}^{\vee}+te_{2}^{\vee}.
\end{displaymath}
Then $h^{\vee}(s,s)=(\log|z|^{-1})^{-1}+|t|^2$ and
\begin{displaymath}
	\frac{\pd}{\pd t}\log h^{\vee}(s,s)=\frac{\overline{t}}{(\log|z|^{-1})^{-1}+|t|^{2}}.
\end{displaymath}
If we restrict $\pd\log h^{\vee}(s,s)/\pd t$ to the set $C:t=(\log|z|^{-1})^{-1/2}$, we find
\begin{displaymath}
	\left(\frac{\pd}{\pd t}\log h^{\vee}(s,s)\right)_{\mid C}=\frac{(\log|z|^{-1})^{1/2}}{2}
\end{displaymath}
which does not have log-log growth near $z=0$.\\
\textit{ii. The notion of hermitian vector bundle with logarithmic singularities depends on the compactification.} Let $Y$ be a smooth complex projective surface. Let $p$ be a closed point in $Y$ and $\pi:X\rightarrow Y$ the blowing-up of $Y$ at $p$. Let $D$ be a divisor with normal crossings in $Y$ with $p\in D$. Then $\pi^{-1}(D)$ is a divisor with normal crossings. Define $U=X\setminus \pi^{-1}(D)$ and $V=Y\setminus D$. Then $\pi$ induces an isomorphism between $U$ and $V$. Let $h$ be a smooth hermitian metric on $\omega_{Y}\mid_{V}$, and endow $\omega_{X}\mid_{U}$ with the induced metric $\pi^{*}(h)$. Assume that $(\omega_X,\pi^{*}(h))$ has logarithmic singularities along $\pi^{-1}(D)$. Then we claim that $h$ does not define a metric on $\omega_{Y}$ with logarithmic singularities along $D$. Indeed, suppose that $\overline{\omega_Y}=(\omega_{Y},h)$ had logarithmic singularities along $D$. Then, by Proposition \ref{prop_vb_log}, $\pi^{*}(\overline{\omega_{Y}})$ would have logarithmic singularities along $\pi^{-1}(D)$. Observe that
\begin{displaymath}
	\pi^{*}(\overline{\omega_Y})_{\mid U}=\overline{\omega_X}_{\mid U}.
\end{displaymath}
By Proposition \ref{prop_extension} we would derive the equality $\pi^{*}(\omega_{Y})=\omega_{X}$. However we know that
\begin{displaymath}
	\pi^{*}(\omega_Y)=\omega_{X}\otimes\OO(-E)
\end{displaymath}
where $E$ is the exceptional divisor $\pi^{-1}(p)$. Since the self-intersection $(E^{2})=-1$, $\OO(-E)$ is not trivial. We thus arrive to a contradiction and the claim is proved. We remark that we can produce such examples just endowing $\omega_X$ with a smooth hermitian metric and then restricting it to $U$.
\end{example}
\section{Global bounds for real log-log growth (1,1)-forms}\label{section_decomp_thm}
\subsection{Statement of the theorem and consequences}\label{statement_decomp_thm}
Let $X$ be a complex analytic manifold and $D\subseteq X$ a divisor with \textit{simple} normal crossings. Decompose $D$ into smooth irreducible components, $D=D_{1}\cup\ldots\cup D_{m}$. For every $D_{k}$ we fix a global section $s_{k}$ of $\OO(D_{k})$ with divisor $\gdiv s_{k}=D_{k}$. We endow $\OO(D_{k})$ with a smooth hermitian metric $\|\cdot\|_{k}$ such that $\|s_{k}\|_{k}^{2}\leq e^{-e}$. Therefore $1\leq\log\log\|s_{k}\|^{-2}\leq +\infty$ on $X$.
\begin{notation}\label{notation_Theta_N}
For every integer $N\geq 0$ we define the real positive smooth function on $X\setminus D$
\begin{displaymath}
	\Theta_{N}=\sum_{k=1}^{m}(\log\log\|s_{k}\|_{k}^{-2})^{N}.
\end{displaymath}
\end{notation}
The purpose of this section is the proof of the following global bounds for real log-log growth (1,1)-forms on a compact complex analytic manifold.
\begin{theorem}\label{decomp_thm}
Suppose that $X$ is compact and let $\omega$ be a smooth positive (1,1)-form on $X$. Let $\eta$ be a real log-log growth (1,1)-form on $X$, with singularities along $D$. Then there exist constants $A,B>0$ and an integer $N\geq 0$ such that on $X\setminus D$
\begin{equation}\label{main_inequality}
	\eta+B\Theta_{N}(\ddc(-\Theta_1)+A\omega)\geq 0.
\end{equation}
If moreover $\eta$ has Poincar\'e growth along $D$, then $N$ can be chosen to be 0.
\end{theorem}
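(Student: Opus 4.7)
The plan is to identify $\ddc(-\Theta_{1})+A\omega$, for $A$ sufficiently large, with a globally defined positive $(1,1)$-form of Poincar\'e growth type along $D$, and then dominate $\eta$ by $\Theta_{N}$ times such a form. I would organize the argument in three steps: a local computation of $\ddc(-\Theta_{1})$, local estimates of log-log and Poincar\'e growth $(1,1)$-forms by the resulting model form, and a globalization from a finite cover of $X$ by adapted charts.

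First, fix a finite open cover of $X$ by analytic charts $(V^{(j)};\lbrace z^{(j)}_{i}\rbrace_{i})$ adapted to $D$, together with relatively compact refinements $V^{\prime (j)}\subset\subset V^{(j)}$. On each $V^{(j)}$ and for every index $k$ with $D_{k}\cap V^{(j)}\neq\emptyset$, write $\|s_{k}\|_{k}^{2}=|z^{(j)}_{k}|^{2}\rho_{k}$ with $\rho_{k}$ smooth positive, and set $\ell_{k}=\log\|s_{k}\|_{k}^{-2}$. The identity
\begin{displaymath}
	\ddc(-\log\ell_{k})=\frac{\dd\ell_{k}\wedge\dc\ell_{k}}{\ell_{k}^{2}}-\frac{\ddc\ell_{k}}{\ell_{k}}
\end{displaymath}
is the starting point. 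The form $\ddc\ell_{k}$ extends smoothly across $D_{k}$ (it is, up to sign, the Chern form of $(\OO(D_{k}),\|\cdot\|_{k})$), hence globally bounded on the compact $X$; since $\ell_{k}\geq e$, the second term $-\ddc\ell_{k}/\ell_{k}$ is dominated by a constant multiple of $\omega$. Expanding $\dd\ell_{k}=-\dd\log|z_{k}|^{2}-\dd\log\rho_{k}$, the positive term $\dd\ell_{k}\wedge\dc\ell_{k}/\ell_{k}^{2}$ decomposes as the leading part $(2\pi)^{-1}\,i\,\dd z_{k}\wedge\cdz_{k}/(|z_{k}|^{2}\ell_{k}^{2})$ plus cross terms in $\dd\log|z_{k}|^{2}$ and $\dd\log\rho_{k}$. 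The elementary pointwise inequality $\pm i(u\wedge\bar{v}+v\wedge\bar{u})\leq\varepsilon\,i\,u\wedge\bar{u}+\varepsilon^{-1}i\,v\wedge\bar{v}$, applied with $\varepsilon=1/2$ to the (1,0)-parts of $\dd\log|z_{k}|^{2}$ and $\dd\log\rho_{k}$, absorbs the cross terms into half the leading term plus a smooth bounded $(1,1)$-form. Summing over $k$ and using $\ell_{k}^{2}\sim 4(\log|z_{k}|)^{2}$ on the chart (since $\rho_{k}$ is smooth positive), I obtain on each $V^{\prime (j)}\setminus D$ an estimate
\begin{displaymath}
	\ddc(-\Theta_{1})\geq c_{1}\sigma^{(j)}-c_{2}\omega,
\end{displaymath}
for constants $c_{1},c_{2}>0$, where $\sigma^{(j)}=\sum_{k}i\,\dd z_{k}\wedge\cdz_{k}/(|z_{k}|^{2}(\log|z_{k}|)^{2})$ is the local Poincar\'e $(1,1)$-model form.

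Next, bound $\eta$ locally. In the notation of Notation \ref{not_dzeta}, expand $\eta=i\sum_{l,p}a_{lp}\dd\zeta_{l}\wedge\cdzeta_{p}$ on $V^{(j)}$, where the Hermitian matrix $(a_{lp})$ consists of log-log growth functions, so $\sup_{V^{\prime (j)}\setminus D}|a_{lp}|\leq C\Theta_{N}$ for some integer $N\geq 0$ depending only on $\eta$ and the chart. The elementary comparison $\pm H\leq n\max_{l,p}|H_{lp}|\cdot\mathrm{Id}$ for Hermitian matrices gives
\begin{displaymath}
	\pm\eta\leq C^{\prime}\Theta_{N}\,i\sum_{l}\dd\zeta_{l}\wedge\cdzeta_{l}\leq C^{\prime\prime}\Theta_{N}(\sigma^{(j)}+\omega)
\end{displaymath}
on $V^{\prime (j)}\setminus D$. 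Combining the two local estimates, for $A>c_{2}$ and $B$ large enough in terms of $C^{\prime\prime}$, $c_{1}$, $A-c_{2}$, I obtain
\begin{displaymath}
	B\Theta_{N}\left(\ddc(-\Theta_{1})+A\omega\right)\geq C^{\prime\prime}\Theta_{N}(\sigma^{(j)}+\omega)\geq -\eta
\end{displaymath}
on $V^{\prime (j)}\setminus D$. Taking the maxima of $A$, $B$, $N$ over the finite cover produces the global inequality (\ref{main_inequality}) on $X\setminus D$. When $\eta$ has Poincar\'e growth along $D$, the coefficients $a_{lp}$ are already locally bounded (Definition \ref{def_Poincare_form}), so no $\Theta_{N}$ factor is required and one may take $N=0$.

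I expect the main technical difficulty to lie in the first step, specifically in the control of the cross terms between the singular $-\dd\log|z_{k}|^{2}$ and the smooth correction $-\dd\log\rho_{k}$ in $\dd\ell_{k}\wedge\dc\ell_{k}$: the Cauchy--Schwarz-type absorption must be quantified with a strictly positive residual coefficient for the principal Poincar\'e term, and the resulting constants must be made uniform over the relatively compact subcharts of the finite cover.
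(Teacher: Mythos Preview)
Your proposal is correct and follows essentially the same approach as the paper: the local estimate $\ddc(-\Theta_{1})\geq c_{1}\sigma^{(j)}-c_{2}\omega$ is precisely the content of the paper's Proposition \ref{prop_bound_Theta_1} (built on the single-component computation of Proposition \ref{prop_local}), and your second step, bounding $\eta$ by $C''\Theta_{N}(\sigma^{(j)}+\omega)$ via a Hermitian matrix comparison, mirrors the paper's use of Lemma \ref{lemma_ineq_diff_form} to the same end. The globalization over a finite adapted cover and the observation that Poincar\'e growth allows $N=0$ are identical to the paper's argument.
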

The proof of the theorem is postponed until \textsection \ref{subsection_proof_thm}. Now we may discuss a result appearing as a particular instance of Theorem \ref{decomp_thm}.
\begin{theorem}\label{decomp_thm_2}
Suppose that $X$ is compact and let $\omega$ be a smooth positive (1,1)-form on $X$. Let $f:X\setminus D\rightarrow\R$ be a pre-log-log function, with singularities along $D$. Then there exist positive pre-log-log functions, with singularities along $D$,
\begin{displaymath}
	\varphi,\psi:X\setminus D\longrightarrow\R_{\geq 0}
\end{displaymath}
and constants $A,B\geq 0$, $N\in\Int_{\geq 0}$ with the properties
\begin{itemize}
	\item[i.] $f$ is the difference of $\varphi$ and $\psi$: $f=\varphi-\psi$;
	\item[ii.] the following inequalities hold on $X\setminus D$:
		\begin{displaymath}
			\begin{split}
				\omega_{\varphi}:=&\ddc(-\varphi)+B\Theta_{N}(\ddc(-\Theta_1)+A\omega)\geq 0,\\
				\omega_{\psi}:=&\ddc(-\psi)+B\Theta_{N}(\ddc(-\Theta_1)+A\omega)\geq 0.
			\end{split}
		\end{displaymath}
		If $f$ is P-singular, then $N$ can be chosen to be 0;
	\item[iii.] if $f$ is P-singular, one can take $\varphi$, $\psi$ to be P-singular with
	\begin{displaymath}
		\begin{split}
			&\ddc(-\varphi)+A\omega\geq 0,\\
			&\ddc(-\psi)+A\omega\geq 0
		\end{split}
	\end{displaymath}
on $X\setminus D$.
\end{itemize}
\end{theorem}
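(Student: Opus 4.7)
\noindent \emph{Proof plan.} The decomposition $f = \varphi - \psi$ with $\varphi, \psi \geq 0$ is itself trivial to arrange, so the content of the theorem lies in the simultaneous control of $-\ddc\varphi$ and $-\ddc\psi$ by a \emph{single} correction form $\Omega$. The plan is to reduce items (i) and (ii) to Theorem \ref{decomp_thm} by taking $\varphi$, $\psi$ as explicit additive modifications of a large multiple of the canonical potential $\Theta_M$, and to handle item (iii) by a sharper construction exploiting the fact that $\ddc(-\Theta_1)$ is non-negative near $D$.

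\medskip

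Since $f$ is pre-log-log, $\ddc f$ is a real log-log growth $(1,1)$-form along $D$ (Poincar\'e growth when $f$ is P-singular). I begin by applying Theorem \ref{decomp_thm} to $\pm\,\ddc f$ and taking maxima of the resulting data to obtain constants $A_0, B_0 > 0$ and $N_0 \geq 0$ with
\begin{displaymath}
\pm\,\ddc f + B_0 \Theta_{N_0}\bigl(\ddc(-\Theta_1) + A_0 \omega\bigr) \geq 0,
\end{displaymath}
where $N_0 = 0$ in the P-singular case. A compactness argument (finite cover of $X$ by analytic charts adapted to $D$, together with the local log-log growth estimates) provides a pointwise bound $|f| \leq C_0 \Theta_{N_f}$ on $X \setminus D$ for some $C_0 > 0$ and $N_f \geq 0$, with $N_f = 1$ when $f$ is P-singular by Corollary \ref{cor_P_singular}.

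\medskip

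For items (i) and (ii) I would set $M := N_f$ and define
\begin{displaymath}
\varphi := C_1 \Theta_M, \qquad \psi := C_1 \Theta_M - f, \qquad C_1 \geq C_0.
\end{displaymath}
Both are pre-log-log; the normalization $\log\log\|s_k\|_k^{-2} \geq 1$ gives the monotonicity $\Theta_M \geq \Theta_{N_f}$, from which $\varphi \geq 0$ and $\psi \geq (C_1-C_0)\Theta_{N_f} \geq 0$ follow, and $f = \varphi - \psi$ by construction. A second application of Theorem \ref{decomp_thm} to the real log-log growth form $-\ddc\Theta_M$, combined with the preliminary bounds on $\pm\,\ddc f$, then allows one to find $A$, $B$, $N$ large enough that the single form $\Omega := B\Theta_N(\ddc(-\Theta_1) + A\omega)$ dominates both $-C_1 \ddc\Theta_M$ (yielding $-\ddc\varphi + \Omega \geq 0$) and $-C_1\ddc\Theta_M + \ddc f$ (yielding $-\ddc\psi + \Omega \geq 0$). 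The essential ingredient at this step is the monotonicity $\Theta_N \geq \Theta_{N'}$ for $N \geq N'$, which allows the maximum-of-constants argument: for $A, B$ and $N$ sufficiently large, $\Omega$ dominates any finite sum of terms of the form $B' \Theta_{N'}(\ddc(-\Theta_1) + A'\omega)$ with $A' \leq A$ and $N' \leq N$. When $f$ is P-singular every form involved has Poincar\'e growth, and the theorem then permits $N = 0$.

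\medskip

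For item (iii), which is strictly stronger because the correction form reduces to $A\omega$, I would instead take $\varphi := C_1 \Theta_1$ and $\psi := C_1 \Theta_1 - f$. The key observation is that $\ddc(-\Theta_1)$ is a non-negative Poincar\'e growth $(1,1)$-form near $D$ and smooth off $D$; by compactness of $X$ and applying Theorem \ref{decomp_thm} with $\eta = 0$, one has $\ddc(-\Theta_1) + A'\omega \geq 0$ on $X \setminus D$ for $A'$ large, whence $-\ddc\varphi + A\omega = C_1\ddc(-\Theta_1) + A\omega \geq 0$ for $A \geq C_1 A'$. For $\psi$, combining this with the Poincar\'e growth lower bound $\ddc f \geq -B_0(\ddc(-\Theta_1)+A_0\omega)$ from the preliminary step and choosing $C_1 \geq B_0$ and $A$ sufficiently large yields the analogous inequality. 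The main technical subtlety throughout the argument is precisely the uniformization of several log-log (resp.\ Poincar\'e) inequalities to a single $\Omega$ (resp.\ $A\omega$); it is handled by the monotonicity of $\Theta_N$ in $N$ and the freedom to enlarge $A$ and $B$ at the end.
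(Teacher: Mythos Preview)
Your proposal is correct and follows essentially the same route as the paper: both take $\varphi,\psi$ to be $f$ shifted by a large multiple of $\Theta_M$ (you use $\varphi=C_1\Theta_M$, $\psi=C_1\Theta_M-f$; the paper swaps the roles), then invoke Theorem~\ref{decomp_thm}, and for item (iii) absorb the $mB\Theta_1$ term into $\varphi,\psi$ so that only $A\omega$ remains. The only difference is cosmetic: the paper applies Theorem~\ref{decomp_thm} directly to the real log-log forms $\ddc(-\varphi)$ and $\ddc(-\psi)$, whereas you apply it separately to $\pm\ddc f$ and $\ddc(-\Theta_M)$ and then combine via the monotonicity of $\Theta_N$ in $N$---a slightly longer path to the same destination.
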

\begin{proof}
Since $X$ is compact, from the log-log growth of $f$ it is easily seen that for some constant $C>0$ and integer $M\geq 0$,
\begin{displaymath}
	f+C\Theta_{M}\geq 0\,\text{ on }\, X\setminus D.
\end{displaymath}
If $f$ is P-singular, then by Corollary \ref{cor_P_singular} we can take (as we do) $M\leq 1$. We define $\widetilde{\varphi}=f+C\Theta_{M}$ and $\widetilde{\psi}=C\Theta_{M}$. These are positive pre-log-log functions, with singularities along $D$ (see Lemma \ref{lemma_Theta_N} below). If $f$ is P-singular (hence $M\leq 1$), then $\widetilde{\varphi}$, $\widetilde{\psi}$ are P-singular (again by Lemma \ref{lemma_Theta_N}). By Theorem \ref{decomp_thm} there exist constants $A,B\geq 0$ and $N\in\Int_{\geq 0}$ such that
\begin{displaymath}
	\omega_{\widetilde{\varphi}}:=\ddc(-\widetilde{\varphi})+B\Theta_{N}(\ddc(-\Theta_1)+A\omega)\geq 0
\end{displaymath}
and
\begin{displaymath}
	\omega_{\widetilde{\psi}}:=\ddc(-\widetilde{\psi})+B\Theta_{N}(\ddc(-\Theta_1)+A\omega)\geq 0
\end{displaymath}
hold on $X\setminus D$. Hence $\varphi=\widetilde{\varphi}$ and $\psi=\widetilde{\psi}$ satisfy the requirements of \textit{i} and \textit{ii}. If $f$ is P-singular, then $\ddc(-\widetilde{\varphi})$, $\ddc(-\widetilde{\psi})$ have Poincar\'e growth along $D$ and we may take $N=0$. In this case we have
\begin{displaymath}
	\begin{split}
		\omega_{\widetilde{\varphi}}=&\ddc(-\widetilde{\varphi})+mB\ddc(-\Theta_1)+mAB\omega\\
		=&\ddc(-(\widetilde{\varphi}+mB\Theta_1))+mAB\omega
	\end{split}
\end{displaymath}
and similarly
\begin{displaymath}
		\omega_{\widetilde{\psi}}=\ddc(-(\widetilde{\psi}+mB\Theta_1))+mAB\omega
\end{displaymath}
In view of these equalities, $\varphi=\widetilde{\varphi}+mB\Theta_1$ and $\psi=\widetilde{\psi}+mB\Theta_1$ fulfill the requirement of \textit{iii}.
\end{proof}
We include the next corollary for its own interest, but we will not need it in the sequel.
\begin{corollary}
Suppose that $X$ is compact and K\"ahler. Let $\omega$ be a K\"{a}hler form on $X$. Let $f:X\setminus D\rightarrow\R$ be a P-singular function and $f=\varphi-\psi$ a decomposition as in Theorem \ref{decomp_thm_2} \textit{iii}. Then the functions 
\begin{displaymath}
	-\varphi,-\psi:X\setminus D\longrightarrow\R_{\leq 0}
\end{displaymath}
uniquely extend to quasiplurisubharmonic functions\footnote{A \textit{quasiplurisubharmonic} function on a complex analytic manifold $M$ is an upper semi-continuous function $h:M\rightarrow[-\infty,+\infty[$ which is locally the sum of a smooth function and a plurisubharmonic function.} on $X$.
\end{corollary}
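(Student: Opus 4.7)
The input provided by Theorem \ref{decomp_thm_2}\textit{iii} is that $\varphi$ and $\psi$ are smooth on $X\setminus D$, nonnegative, and satisfy
\[
	\ddc(-\varphi)+A\omega\geq 0,\qquad \ddc(-\psi)+A\omega\geq 0
\]
as smooth $(1,1)$-forms on $X\setminus D$. Thus $-\varphi$ and $-\psi$ are smooth $(A\omega)$-plurisubharmonic functions on the dense open set $X\setminus D$ which, since $\varphi,\psi\geq 0$, take values in $\R_{\leq 0}$ and are therefore \emph{locally bounded above} at every point of the compact manifold $X$, including points of $D$.

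The plan is then to invoke the classical extension theorem for (quasi)plurisubharmonic functions across closed complete pluripolar subsets (see, e.g., Demailly's \emph{Complex Analytic and Differential Geometry}, Chapter I). Since $D$ is an analytic subset of $X$, it is closed and complete pluripolar. Applied to $-\varphi$ (and analogously to $-\psi$), the theorem produces the unique qpsh extension to $X$, defined pointwise by the upper semicontinuous envelope
\[
	\widetilde{(-\varphi)}(x):=\limsup_{X\setminus D\,\ni\, y\to x}(-\varphi)(y).
\]
This envelope is upper semicontinuous, takes values in $[-\infty,0]$, coincides with $-\varphi$ on $X\setminus D$ by smoothness, and satisfies $\ddc\widetilde{(-\varphi)}+A\omega\geq 0$ as currents on all of $X$. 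Choosing local smooth potentials $v$ with $\ddc v=A\omega$ (available since $X$ is K\"ahler, so $A\omega$ is smooth) one recovers the footnote characterization of qpsh: locally, $\widetilde{(-\varphi)}+v$ is plurisubharmonic.

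Uniqueness is built into the cited theorem and can also be checked directly: any qpsh function on $X$ that restricts to $-\varphi$ on the complement of a pluripolar set agrees with the u.s.c.\ regularization of that restriction, hence equals $\widetilde{(-\varphi)}$. No serious obstacle is anticipated, since all the analytic work (namely, obtaining a global smooth $(A\omega)$-psh representative on $X\setminus D$) has already been done in the proof of Theorem \ref{decomp_thm_2}\textit{iii}; the only remaining verification is the hypothesis of local boundedness above, which is the free inequality $-\varphi\leq 0$. The main conceptual point to emphasize is that the Poincar\'e growth of the first derivatives of $\varphi$ in the P-singular case is what produces the \emph{smooth} $(A\omega)$-psh structure off $D$, without which even an upper bound like $-\varphi\leq 0$ would not suffice to extend.
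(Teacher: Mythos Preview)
Your proposal is correct and follows essentially the same route as the paper: locally write $A\omega=\ddc h$ using the K\"ahler (hence $d$-closed) assumption, observe that $-\varphi+Ah$ is plurisubharmonic on $U\setminus D$ and bounded above, and then invoke the extension theorem across the polar set $D$ (the paper cites \cite{Demailly}, Theorem~5.24). The only minor difference is that the paper first appeals to Stokes' theorem for pre-log-log forms (Proposition~\ref{prop_prop_log}) to upgrade the pointwise inequality $\ddc(-\varphi)+A\omega\geq 0$ on $X\setminus D$ to an inequality of currents on all of $X$; your argument bypasses this step, which is legitimate since the extension theorem only needs plurisubharmonicity on the complement of the polar set together with a local upper bound.
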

\begin{proof}
First of all, since $-\varphi$ and $-\psi$ are pre-log-log along $D$, by Proposition \ref{prop_prop_log} we have the equality of currents $\ddc[-\varphi]=[\ddc(-\varphi)]$ and $\ddc[-\psi]=[\ddc(-\psi)]$ on $X$. The inequalities
\begin{equation}\label{positive_currents}
	\begin{split}
		&\ddc[-\varphi]+A\omega\geq 0,\\
		&\ddc[-\psi]+A\omega\geq 0
	\end{split}
\end{equation}
then hold on $X$ in the sense of currents. Let $U\subset X$ be an open subset diffeomorphic to a complex euclidian ball. Because $\omega$ is $d$-closed (K\"ahler assumption), by Poincar\'e's lemma $\omega_{\mid U}$ is $d$-exact. Since $U$ itself is K\"ahler, $\omega_{\mid U}$ is in fact $\ddc$-exact. Write $\omega_{\mid U}=\ddc h$ for some smooth function $h$ on $U$. Then the currents $\ddc[-\varphi_{\mid U}+A h]$ and $\ddc[-\psi_{\mid U}+Ah]$ are positive on $U$, by (\ref{positive_currents}). Since $-\varphi$ and $-\psi$ are bounded above and $D$ is polar, $\widetilde{\varphi}:=-\varphi_{\mid U}+Ah$, $\widetilde{\psi}:=-\psi_{\mid U}+Ah$ uniquely extend to plurisubharmonic functions on $U$ (see \cite{Demailly}, Theorem 5.24). Since $h$ is smooth, these extensions determine extensions of $-\varphi_{\mid U}$ and $-\psi_{\mid U}$ to quasiplurisubharmonic functions on $U$, clearly unique. The corollary follows.
\end{proof}
\subsection{Construction of pre-log-log functions}\label{construction_pre-log-log}
\subsubsection{Preliminaries}
\begin{lemma}\label{lemma_ineq_diff_form}
Let $M$ be a complex analytic manifold and $\alpha$, $\beta$, $\CC^{\infty}$ differential forms of type (1,0) on $M$. For every function $K>0$ on $M$, the following inequality holds:
\begin{displaymath}
	2\Real(i\alpha\wedge\overline{\beta})\geq -\frac{i}{K}\alpha\wedge\overline{\alpha}
	-iK\beta\wedge\overline{\beta}.
\end{displaymath}
\end{lemma}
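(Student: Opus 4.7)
The inequality is a pointwise statement between real $(1,1)$-forms on $M$, so I would verify it fiberwise using the standard positivity fact: for any smooth $(1,0)$-form $\gamma$, the $(1,1)$-form $i\gamma\wedge\overline{\gamma}$ is positive semi-definite. In local holomorphic coordinates this is immediate, since writing $\gamma = \sum_{j}\gamma_{j}\, dz^{j}$ gives $i\gamma\wedge\overline{\gamma} = i\sum_{j,k}\gamma_{j}\overline{\gamma_{k}}\, dz^{j}\wedge d\overline{z}^{k}$, whose associated Hermitian matrix $(\gamma_{j}\overline{\gamma_{k}})$ is rank one and positive semi-definite.

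The plan is then to apply this positivity to the specific $(1,0)$-form
\[
\gamma \;:=\; K^{-1/2}\alpha \;+\; K^{1/2}\beta,
\]
which makes sense by the hypothesis $K>0$. Expanding the inequality $0\leq i\gamma\wedge\overline{\gamma}$ yields
\[
0 \;\leq\; \frac{i}{K}\,\alpha\wedge\overline{\alpha} \;+\; iK\,\beta\wedge\overline{\beta} \;+\; i\,\alpha\wedge\overline{\beta} \;+\; i\,\beta\wedge\overline{\alpha}.
\]
It remains to recognize the cross terms as $2\Real(i\alpha\wedge\overline{\beta})$. For this I would compute $\overline{i\alpha\wedge\overline{\beta}} = -i\,\overline{\alpha}\wedge\beta = i\,\beta\wedge\overline{\alpha}$, using the anticommutation $\overline{\alpha}\wedge\beta = -\beta\wedge\overline{\alpha}$ of 1-forms, so that $i\alpha\wedge\overline{\beta} + i\beta\wedge\overline{\alpha} = 2\Real(i\alpha\wedge\overline{\beta})$; rearranging then gives exactly the claimed inequality.

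There is no real obstacle: the statement is simply the pointwise AM--GM inequality $2ab \geq -a^{2}/K - Kb^{2}$ dressed in the language of $(1,1)$-forms, and the only point requiring care is the sign convention when conjugating $i\alpha\wedge\overline{\beta}$. Had one chosen instead $\gamma = K^{-1/2}\alpha - K^{1/2}\beta$, the same argument would yield the complementary upper bound $2\Real(i\alpha\wedge\overline{\beta}) \leq \frac{i}{K}\alpha\wedge\overline{\alpha} + iK\beta\wedge\overline{\beta}$, which will presumably also be of use in the arguments to follow.
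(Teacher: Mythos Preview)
Your proof is correct and follows essentially the same approach as the paper: the paper also expands $i\gamma\wedge\overline{\gamma}\geq 0$ for $\gamma = K^{-1/2}\alpha + K^{1/2}\beta$ and reads off the inequality. You have simply added a bit more detail in justifying the positivity of $i\gamma\wedge\overline{\gamma}$ and the identification of the cross terms.
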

\begin{proof}
On one hand, the (1,1)-form 
\begin{displaymath}
	\mu=i(\alpha/K^{1/2}+K^{1/2}\beta)\wedge\overline{(\alpha/K^{1/2}+K^{1/2}\beta)}
\end{displaymath}
is semi-positive. On the other hand, there is an equality
\begin{displaymath}
	\mu=	\frac{i}{K}\alpha\wedge\overline{\alpha}+2\Real(i\alpha\wedge\overline{\beta})+
	iK\beta\wedge\overline{\beta}.
\end{displaymath}
The lemma follows.
\end{proof}
\begin{lemma}\label{lemma_theta_N}
Let $L$ be a line bundle on $X$. Suppose that $s\in\Gamma(X,L)$ is a global section such that $\gdiv s$ is a divisor with normal crossings. Let $\|\cdot\|$ be a smooth hermitian metric on $L$ with $\|s\|^{2}\leq e^{-e}$. For every integer $N\geq 0$, the smooth function
\begin{displaymath}
	\theta_{N}=(\log\log\|s\|^{-2})^{N}:X\setminus\gdiv s\longrightarrow\R_{\geq 1}
\end{displaymath}
is pre-log-log, with singularities along $\gdiv s$. On $X\setminus\gdiv s$ the following identities hold:
\begin{displaymath}
	\pd\theta_{N}=N\theta_{N-1}\frac{\pd\log\|s\|^{-2}}{\log\|s\|^{-2}}
\end{displaymath}
and
\begin{displaymath}
	\ddc(-\theta_N)=\frac{\theta_1-N+1}{N\theta_N}\frac{i}{2\pi}\pd\theta_N\wedge\cpd\theta_N
	-N\theta_{N-1}\frac{\c1(\overline{L})}{\log\|s\|^{-2}},
\end{displaymath}
provided $N\geq 1$. If $N=1$, $\theta_1$ is P-singular along $\gdiv s$.
\end{lemma}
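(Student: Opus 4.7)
The plan is to reduce everything to calculus in one auxiliary variable, then verify the growth conditions in an adapted chart.

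Set $u = \log\|s\|^{-2}$, which is a smooth real function on $X\setminus\gdiv s$ with $u\geq e$ by hypothesis; then $\theta_1 = \log u$ and $\theta_N = \theta_1^N$. The first identity is pure chain rule:
\begin{displaymath}
\pd\theta_N = N\theta_1^{N-1}\pd\theta_1 = N\theta_{N-1}\,\frac{\pd u}{u},
\end{displaymath}
which is the stated expression since $\pd u = \pd\log\|s\|^{-2}$.

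For the $\ddc$ formula, I differentiate once more. Applying $\cpd$ to $\pd\theta_N$ and using $\cpd\theta_1 = \cpd u/u$ one finds
\begin{displaymath}
\pd\cpd\theta_N = N(N-1)\theta_{N-2}\,\pd\theta_1\wedge\cpd\theta_1
+ N\theta_{N-1}\,\pd\cpd\theta_1,
\end{displaymath}
and the direct computation $\pd\cpd\theta_1 = (\pd\cpd u)/u - \pd\theta_1\wedge\cpd\theta_1$ gives, after collecting, $\pd\cpd\theta_N = [N(N-1)\theta_{N-2}-N\theta_{N-1}]\pd\theta_1\wedge\cpd\theta_1 + N\theta_{N-1}\pd\cpd u/u$. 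Now substitute $\pd\theta_1\wedge\cpd\theta_1 = \pd\theta_N\wedge\cpd\theta_N/(N^2\theta_{N-1}^2)$, use $\theta_{N-2}/\theta_{N-1}=1/\theta_1$ and $\theta_{N-1}\theta_1=\theta_N$ to simplify the scalar coefficient to $-(\theta_1-N+1)/(N\theta_N)$, and remember that on $X\setminus\gdiv s$ the section $s$ is a nowhere-vanishing frame, so $\c1(\overline{L}) = -\ddc\log\|s\|^2 = (i/2\pi)\pd\cpd u$. Multiplying through by $-i/2\pi$ yields the claimed formula for $\ddc(-\theta_N)$.

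It remains to check the growth properties. Fix an adapted chart $(V;z_1,\ldots,z_n)$ in which $s = z_1\cdots z_m \cdot e$ for a holomorphic frame $e$ of $L\mid_V$; then $u = \sum_{k\leq m}\log|z_k|^{-2} + \rho$, with $\rho = -\log\|e\|^2$ smooth on $V$. Hence on any relatively compact $V'\subset\subset V$ the ratio $\log|z_k|^{-1}/u$ is bounded, and since $u\geq 1$ the same is true of $1/u$. Writing $\dd z_k/z_k = \log|z_k|^{-1}\,\dd\zeta_k$, it follows that each term $\dd z_k/(z_k u)$ is $\dd\zeta_k$ times a bounded smooth function, and similarly for the conjugate. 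The relation $\theta_N = (\log u)^N$ gives log-log growth of $\theta_N$ (indeed $\log u \prec \sum_k\log\log|z_k|^{-1}$ up to a bounded error), while the formula for $\pd\theta_N$ exhibits it as a bounded combination of $\theta_{N-1}\,\dd\zeta_k$ and $\theta_{N-1}/u\cdot(\text{smooth})$, both of log-log growth; the same analysis applied to the formula for $\pd\cpd\theta_N$ handles the mixed second derivative. This proves that $\theta_N$ is pre-log-log along $\gdiv s$. Finally, for $N=1$ every term in $\pd\theta_1 = \pd u/u$ and in $\ddc(-\theta_1) = (i/2\pi)\pd\theta_1\wedge\cpd\theta_1 - \c1(\overline{L})/u$ has coefficient bounded by a constant (rather than by a power of $\log\log|z_k|^{-1}$), so both $\dd\theta_1$ and $\ddc\theta_1$ have Poincar\'e growth, i.e.\ $\theta_1$ is P-singular along $\gdiv s$.

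The only delicate point is the book-keeping in step two, where the combinatorial identity $(N-1)\theta_{N-2}/\theta_{N-1}-1 = -(\theta_1-N+1)/\theta_1$ has to be isolated cleanly; everything else is the uniform bound $\log|z_k|^{-1}/u = O(1)$, which drives every growth estimate.
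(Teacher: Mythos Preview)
Your proof is correct and follows essentially the same approach as the paper: both derive the identities by straightforward chain-rule calculus (you via the auxiliary variable $u=\log\|s\|^{-2}$, the paper by direct differentiation), and both verify the growth conditions by the same local computation showing that $\pd\log\|s\|^{-2}/\log\|s\|^{-2}$ has bounded coefficients in the $\dd\zeta_k$ basis, using the key estimate $\log|z_k|^{-1}/\log\|s\|^{-2}=O(1)$. Your combinatorial bookkeeping for the $\ddc$ formula and the P-singular claim for $N=1$ are also handled the same way.
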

\begin{proof}
The lemma is trivial for $N=0$. We suppose $N\geq 1$. First of all we remark that $\theta_{N}$ has log-log growth along $\gdiv s$. Next we compute $\pd\theta_{N}$ and $\ddc(-\theta_{N})$ on $X\setminus\gdiv s$. We find
\begin{equation}\label{eq8}
	\pd\theta_{N}=N\theta_{N-1}\frac{\pd\log\|s\|^{-2}}{\log\|s\|^{-2}}
\end{equation}
and
\begin{displaymath}
	\begin{split}
		\ddc(-\theta_{N})=-\frac{i}{2\pi}\pd\cpd\theta_{N}=-&\frac{i}{2\pi}N\pd\theta_{N-1}\wedge\frac{\cpd\log\|s\|^{-2}}{\log\|s\|^{-2}}\\
		+&\frac{i}{2\pi}N\theta_{N-1}\frac{\pd\log\|s\|^{-2}\wedge\cpd\log\|s\|^{-2}}{(\log\|s\|^{-2})^{2}}\\
		-&\frac{i}{2\pi}N\theta_{N-1}\frac{\pd\cpd\log\|s\|^{-2}}{\log\|s\|^{-2}}.
	\end{split}
\end{displaymath}
To simplify the last equality, rearrange terms, use (\ref{eq8}) and the trivial fact $\theta_{N+M}=\theta_{N}\theta_{M}$, and recall that on $X\setminus\gdiv s$ we have $\c1(\overline{L})=\ddc\log\|s\|^{-2}$. We get
\begin{equation}\label{eq9}
	\ddc(-\theta_{N})=\frac{\theta_{1}-N+1}{N\theta_{N}}\frac{i}{2\pi}\pd\theta_{N}\wedge\cpd\theta_{N}-N\theta_{N-1}\frac{\c1(\overline{L})}{\log\|s\|^{-2}}.
\end{equation}
Observe that the quotient $(\theta_{1}-N+1)/N\theta_{N}$ is bounded, so it has log-log growth along $\gdiv s$. Also the function $1/\log\|s\|^{-2}$ is bounded and $\c1(\overline{L})$ is smooth, so that $\c1(\overline{L})/\log\|s\|^{-2}$ has log-log growth along $\gdiv s$. Hence from (\ref{eq8}) and (\ref{eq9}) we see that it is enough to prove that $\pd\theta_{N}$ has log-log growth along $\gdiv s$ or, still, that $\pd\log\|s\|^{-2}/\log\|s\|^{-2}$ has log-log growth along $\gdiv s$.\\
Let $V$ be an open analytic chart adapted to $\gdiv s$ such that $L_{\mid V}$ can be trivialized and $s=z_{1}\ldots z_{m} e$, where $e$ is a holomorphic frame of $L_{\mid V}$. We can write
\begin{displaymath}
	\frac{\pd\log\|s\|^{-2}}{\log\|s\|^{-2}}=\frac{\pd\log\|e\|^{-2}}{\log\|s\|^{-2}}+\sum_{k=1}^{m}\frac{\log|z_{k}|}{\log\|s\|^{-2}}\frac{\dd z_{k}}{z_{k}\log|z_k|^{-1}}.
\end{displaymath}
The differential form $\pd\log\|e\|^{-2}$ is smooth on $V$ and $(\log\|s\|^{-2})^{-1}$ is bounded on $V$, so that the first term is bounded on any small enough open $V^{\prime}\subset\subset V$. As for the sum, we observe that $\log|z_{k}|/\log\|s\|^{-2}$ is bounded on any small enough open $V^{\prime}\subset\subset V$, because $\log\|s\|^{-2}=\log\|e\|^{-2}+\sum_{j=1}^{m}\log|z_{j}|^{-2}$ and $\log\|e\|^{-2}$ is smooth. Hence $\pd\log\|s\|^{-2}/\log\|s\|^{-2}$ has log-log growth along $\gdiv s$. The proof is complete.
\end{proof}
\begin{lemma}\label{lemma_Theta_N}
Under the hypothesis of \textsection \ref{statement_decomp_thm} and with the notations therein, the functions $\Theta_N$ are pre-log-log, with singularities along $D$. If $N=1$, then $\Theta_1$ is P-singular.
\end{lemma}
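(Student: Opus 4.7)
My plan is to reduce the statement to the one-section case already treated in Lemma \ref{lemma_theta_N}. Setting $\theta_{N}^{(k)}=(\log\log\|s_{k}\|_{k}^{-2})^{N}$, one has the finite decomposition $\Theta_{N}=\sum_{k=1}^{m}\theta_{N}^{(k)}$. Because each irreducible component $D_{k}$ is smooth, the divisor $\gdiv s_{k}=D_{k}$ is a normal crossings divisor in the trivial sense, so Lemma \ref{lemma_theta_N} applied to the pair $(s_{k},\|\cdot\|_{k})$ directly yields that $\theta_{N}^{(k)}$ is pre-log-log with singularities along $D_{k}$ for every $N\geq 0$, and moreover that $\theta_{1}^{(k)}$ is P-singular along $D_{k}$.

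The next step is to upgrade these properties from the individual smooth components $D_{k}$ to the full normal crossings divisor $D=D_{1}\cup\ldots\cup D_{m}$. I would fix an analytic chart $(V;\lbrace z_{i}\rbrace_{i})$ adapted to $D$ in which $D\cap V$ is cut out by $z_{1}\cdots z_{m}=0$. For each $k$, either $D_{k}\cap V=\emptyset$ (in which case $\theta_{N}^{(k)}$ is smooth on $V$ and nothing needs to be done) or, after relabeling, $D_{k}\cap V$ coincides with a single coordinate hyperplane $\lbrace z_{j_{k}}=0\rbrace$ for some $j_{k}\in\lbrace 1,\ldots,m\rbrace$. The log-log bound provided by Lemma \ref{lemma_theta_N} on $V'\subset\subset V$ then takes the form $|\theta_{N}^{(k)}|\prec(\log\log|z_{j_{k}}|^{-1})^{N'}$, which is immediately dominated by $\sum_{j=1}^{m}(\log\log|z_{j}|^{-1})^{N'}$ since all summands are non-negative (the adapted chart condition $|z_{j}|\leq 1/e$ forces $\log\log|z_{j}|^{-1}\geq 0$). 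So $\theta_{N}^{(k)}$ has log-log growth along $D$ in the sense of the remark following Definition \ref{def_good_forms}. An entirely analogous comparison, using the explicit formulas for $\pd\theta_{N}^{(k)}$ and $\ddc(-\theta_{N}^{(k)})$ displayed in the proof of Lemma \ref{lemma_theta_N}, handles $\pd\theta_{N}^{(k)}$, $\cpd\theta_{N}^{(k)}$ and $\pd\cpd\theta_{N}^{(k)}$: the distinguished $(1,0)$-form $\dd\zeta_{j_{k}}$ is the same in the $D_{k}$-adapted and $D$-adapted pictures, while the remaining coefficients stay bounded or log-log after the change of viewpoint. The same argument applies mutatis mutandis to the Poincar\'e growth estimates needed for the P-singularity of $\theta_{1}^{(k)}$.

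Finally, I would invoke that the sheaves $\mathcal{L}_{D}$, $\mathcal{E}^{*}_{X}\langle\langle D\rangle\rangle_{\pre}$ and $\mathcal{P}_{D}$ are subalgebras of $\iota_{*}\mathcal{E}^{*}_{U}$, hence in particular closed under finite sums. This yields that $\Theta_{N}=\sum_{k}\theta_{N}^{(k)}$ is pre-log-log along $D$, and that $\Theta_{1}=\sum_{k}\theta_{1}^{(k)}$ is P-singular along $D$, proving both assertions of the lemma. The only step with any genuine content is the upgrade from $D_{k}$ to $D$ in the second paragraph, and this is purely routine bookkeeping between two notions of adapted chart, so I do not expect any real obstacle in carrying out the argument.
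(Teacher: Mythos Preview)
Your proposal is correct and follows the same approach as the paper: write $\Theta_{N}=\sum_{k}\theta_{N}^{(k)}$ and invoke Lemma~\ref{lemma_theta_N} for each summand. The paper's proof is the single line ``Write $\theta_{N}^{(k)}=(\log\log\|s_k\|_{k}^{-2})^{N}$, $\Theta_N=\sum_{k=1}^{m}\theta_{N}^{(k)}$ and apply Lemma~\ref{lemma_theta_N}'', leaving implicit the passage from ``pre-log-log along $D_{k}$'' to ``pre-log-log along $D$'' that you carry out by hand in your second paragraph; note that this step is also an immediate instance of Proposition~\ref{prop_prop_log}~\textit{iii} with $f=\mathrm{id}_{X}$, $D_{Y}=D_{k}\subseteq D=D_{X}$.
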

\begin{proof}
Write $\theta_{N}^{(k)}=(\log\log\|s_k\|_{k}^{-2})^{N}$, $\Theta_N=\sum_{k=1}^{m}\theta_{N}^{(k)}$ and apply Lemma \ref{lemma_theta_N}.
\end{proof}
\subsubsection{Local results}
Let $L$ be a line bundle on $X$ admitting a global section $s\in\Gamma(X,L)$ whose associated divisor $\gdiv s$ is \textit{smooth} and \textit{irreducible}. Let $\|\cdot\|$ be a smooth hermitian metric on $L$ with $\|s\|^{2}\leq e^{-e}$ on $X$. Define, as before, the function
\begin{displaymath}
	\theta_1=\log\log\|s\|^{-2}:X\setminus \gdiv s\longrightarrow\R_{\geq 1}.
\end{displaymath}
By Lemma \ref{lemma_theta_N} we can write
\begin{equation}\label{eq15}
	\ddc(-\theta_1)=\frac{i}{2\pi}\frac{\pd\log\|s\|^{-2}\wedge\cpd\log\|s\|^{-2}}{(\log\|s\|^{-2})^2}
	-\frac{\c1(\overline{L})}{\log\|s\|^{-2}}.
\end{equation}
Let $(V;z_1,\ldots,z_n)$ be an analytic chart adapted to $\gdiv s$ with $V\cap \gdiv s\neq\emptyset$. We suppose that $L_{\mid V}$ can be trivialized and we denote $u$ for a holomorphic frame such that $s=z_1 u$. In local coordinates equality (\ref{eq15}) becomes $\ddc(-\theta_1)=\alpha+\beta+\gamma$, where
\begin{itemize}
	\item[] \begin{displaymath}
			\begin{split}
				\alpha:=&\frac{i}{2\pi}\frac{\pd\log|z_1|^{-2}\wedge\cpd\log|z_1|^{-2}}
				{(\log\|s\|^{-2})^2}\\
				=&a\frac{i}{2\pi}\frac{\dd z_1\wedge\cdz_1}{|z_1|^{2}(\log|z_1|^{-2})^{2}},
			\end{split}
		\end{displaymath}
	\item[] \begin{displaymath}
			\begin{split}
				\beta:=&-2\Real\left(\frac{i}{2\pi}\frac{\pd\log|z_1|^{-2}\wedge\cpd\log\|u\|^{-2}}
				{(\log\|s\|^{-2})^{2}}\right)\\
				=&2a\Real\left(\frac{i}{2\pi}\frac{\dd z_1\wedge\cpd\log\|u\|^{2}}
				{z_1(\log|z_1|^{-2})^{2}}\right),
			\end{split}
		\end{displaymath}
	\item[] $$\gamma:=\frac{i}{2\pi}\frac{\pd\log\|u\|^{2}\wedge\cpd\log\|u\|^{2}}{(\log\|s\|^{-2})^2}
	-\frac{\c1(\overline{L})}{\log\|s\|^{-2}},$$
\end{itemize}
and $a=(\log|z_1|/\log\|s\|)^{2}$. Decompose $\cpd\log\|u\|^{2}=\sum_{j=1}^{n}q_{j}\cdz_j$, where the functions $q_j$ are smooth on $V$. Then $\beta$ can be expanded as a sum $\beta=\sum_{j=1}^{n}\beta_{j}$, with
\begin{displaymath}
	\beta_{j}:=2a\Real\left(q_{j}\frac{i}{2\pi}\frac{\dd z_{1}\wedge\cdz_{j}}{z_{1}(\log|z_1|^{-2})^{2}}\right).
\end{displaymath}
\begin{proposition}\label{prop_local}
Let $L$ be a line bundle on $X$ admitting a global section $s\in\Gamma(X,L)$ such that $\gdiv s$ is smooth and irreducible. Let $\|\cdot\|$ be a smooth hermitian metric on $L$ with $\|s\|^{2}\leq e^{-e}$ and define $\theta_1=\log\log\|s\|^{-2}$. Let $\omega$ be a positive (1,1)-form on $X$. For every analytic chart $(V;z_1,\ldots,z_n)$ adapted to $\gdiv s$ and any open $V^{\prime}\subset\subset V$ intersecting $\gdiv s$, there exists $A>0$ such that on $V^{\prime}\setminus\gdiv s$
\begin{displaymath}
	\ddc(-\theta_1)+A\omega\geq \frac{1}{4}\frac{i}{2\pi}\frac{\dd z_1\wedge\cdz_1}{|z_1|^{2}(\log|z_1|^{-2})^{2}}.
\end{displaymath}
\end{proposition}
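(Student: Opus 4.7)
The plan is to exploit the local decomposition $\ddc(-\theta_1) = \alpha + \beta + \gamma$ recalled just before the statement. Write $\eta := \frac{i}{2\pi}\frac{\dd z_1\wedge\cdz_1}{|z_1|^2(\log|z_1|^{-2})^2}$ for the $(1,1)$-form to be dominated. The three pieces will play complementary roles: $\alpha = a\eta$ provides the desired positive contribution (with $a \to 1$ as $z_1 \to 0$); $\gamma$ is smooth and bounded on the compact set $\overline{V^{\prime}}$, since $\log\|s\|^{-2} \geq e$ everywhere while $\pd\log\|u\|^{2}$ and $\c1(\overline{L})$ are smooth on $V$, so $\gamma \geq -C_1\omega$ for a constant $C_1>0$; and $\beta = \sum_{j=1}^{n}\beta_j$ is a sum of cross terms, which I will control by means of Lemma~\ref{lemma_ineq_diff_form}.

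For each $\beta_j$, I introduce the $(1,0)$-forms $\xi := \sqrt{a}\,\dd z_1/(z_1 \log|z_1|^{-2})$ and $\mu_j := \sqrt{a}\,\bar{q}_j\,\dd z_j/\log|z_1|^{-2}$, so that, up to a harmless global sign, $2\pi\beta_j = \pm 2\Real(i\xi\wedge\bar{\mu}_j)$. Applying Lemma~\ref{lemma_ineq_diff_form} with $K = 2n$, noting that $\tfrac{1}{2\pi}i\xi\wedge\bar{\xi}=a\eta$ and $\tfrac{1}{2\pi}i\mu_j\wedge\bar{\mu}_j = \frac{a|q_j|^2}{(\log|z_1|^{-2})^2}\cdot\frac{i}{2\pi}\dd z_j\wedge\cdz_j$, and summing over $j$, using that on $\overline{V^{\prime}}$ the quantities $a$, $|q_j|$ are uniformly bounded, that $(\log|z_1|^{-2})^{-1}\leq 1/2$ on the adapted chart $V$, and that the smooth positive form $\sum_{j}\frac{i}{2\pi}\dd z_j\wedge\cdz_j$ is dominated by $R\omega$ on the compact $\overline{V^{\prime}}$, I obtain
\begin{displaymath}
\alpha + \beta \geq \frac{a}{2}\eta - C_2\omega
\end{displaymath}
on $V^{\prime}\setminus\gdiv s$, for some constant $C_2>0$.

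To conclude, I compare $(a/2)\eta$ with $(1/4)\eta$ by a dichotomy on $a$. Since $\log\|u\|^{-2}$ is bounded on $\overline{V^{\prime}}$, say $|\log\|u\|^{-2}|\leq M_0$, two cases arise: if $a\geq 1/2$, then $(a/2)\eta\geq(1/4)\eta$ directly; if $a<1/2$, the identity $\log\|s\|^{-2}=\log|z_1|^{-2}+\log\|u\|^{-2}$ together with the inequality $a<1/2$ forces $(\sqrt{2}-1)\log|z_1|^{-2}<\log\|u\|^{-2}\leq M_0$, hence $|z_1|\geq\varepsilon$ for some $\varepsilon>0$; on this region the form $\eta$ is itself smooth and uniformly bounded, so $\eta\leq C_0\omega$. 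Combining with the bounds on $\beta$ and $\gamma$, the desired inequality $\ddc(-\theta_1)+A\omega\geq\frac{1}{4}\eta$ will then hold on $V^{\prime}\setminus\gdiv s$ as soon as $A\geq C_1+C_2+C_0/4$.

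The main obstacle is that the coefficient $a$ of the positive term $\alpha$ is \emph{not} uniformly bounded below by a positive constant on $V^{\prime}\setminus\gdiv s$: it only tends to $1$ near $\gdiv s$. Hence the conclusion cannot be reached by a single global estimate; the argument naturally splits into a regime near the divisor (where the Carlson--Griffiths-type positivity of $\alpha$ does the work) and a regime bounded away from it (where $\eta$ becomes a smooth bounded form that can be absorbed into $A\omega$). Keeping track of uniformity of all constants over $\overline{V^{\prime}}$ in this split is the delicate point.
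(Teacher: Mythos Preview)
Your proof is correct and follows essentially the same approach as the paper: decompose $\ddc(-\theta_1)=\alpha+\beta+\gamma$, use Lemma~\ref{lemma_ineq_diff_form} to absorb the cross terms $\beta_j$ into half of $\alpha=a\eta$ plus a bounded remainder, bound $\gamma$ by smoothness and compactness, and handle the region where $a$ fails to be $\geq 1/2$ by observing that $|z_1|$ is then bounded below so $\eta$ itself is bounded. The only cosmetic differences are that the paper treats $\beta_1$ directly (rather than via the lemma) and organizes the dichotomy by first shrinking to a neighborhood $V''$ of $\gdiv s$ where $1/2\leq a\leq 2$, whereas you carry the factor $a$ through a global estimate and split on the value of $a$ afterwards.
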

\begin{proof}
Without loss of generality, suppose $\sup_{V}|\log\|u\||$ and $\sup_{V}|q_j|$ finite for all $j$ (otherwise, replace $V$ by a relatively compact open subset containing $V^{\prime}$). We divide the proof into three steps.\\
\\
\textit{Step 1.} Observe that because $\log\|s\|=\log|z_1|+\log\|u\|$ and $\log\|u\|$ is bounded on $V$, the function $a$ uniformly tends to 1 as $z_1$ tends to $0$. Therefore there exists an open $V^{\prime\prime}\subset V^{\prime}$ such that $1/2\leq a\leq 2$ on $V^{\prime\prime}\setminus\gdiv s$. For later need, we take $V^{\prime\prime}$ so that $\overline{V^{\prime}\setminus V^{\prime\prime}}$ \textit{does not} intersect $\gdiv s$. On $V^{\prime\prime}\setminus \gdiv s$ the following inequality holds:
\begin{equation}\label{eq18}
	\alpha\geq\frac{1}{2}\frac{i}{2\pi}\frac{\dd z_1\wedge\cdz_1}{|z_1|^{2}(\log|z_1|^{-2})^{2}}.
\end{equation}
\\
\textit{Step 2.} Define $C=\max_{j}\sup_{V}|q_j|$. Shrinking $V^{\prime\prime}$ if necessary, we assume that $|z_1|\leq 1/16n(C+1)$ on $V^{\prime\prime}$. Since $a\leq 2$ on $V^{\prime\prime}$, we have
\begin{equation}\label{eq17}
	\begin{split}
		\beta_1=2a\Real\left(\frac{i}{2\pi}q_{1}z_{1}\frac{\dd z_1\wedge\cdz_1}
		{|z_1|^{2}(\log|z_1|^{-2})^{2}}\right)&\geq
		-4C|z_1|\frac{i}{2\pi}\frac{\dd z_1\wedge\cdz_1}{|z_1|^{2}(\log|z_1|^{-2})^{2}}\\
		&\geq-\frac{1}{4n}\frac{i}{2\pi}\frac{\dd z_1\wedge\cdz_1}{|z_1|^{2}(\log|z_1|^{-2})^{2}}
	\end{split}
\end{equation}
on $V^{\prime\prime}\setminus\gdiv s$.
Fix a constant $K\geq 8nC$. Applying Lemma \ref{lemma_ineq_diff_form}, for every $j>1$ we find
\begin{equation}\label{eq16}
	\beta_j\geq -2\frac{C}{K}\frac{i}{2\pi}\frac{\dd z_1\wedge\cdz_1}{|z_1|^{2}(\log|z_1|^{-2})^{2}}
	-2CK\frac{i}{2\pi}\frac{\dd z_j\wedge\cdz_j}{(\log|z_1|^{-2})^2}
\end{equation}
also on $V^{\prime\prime}\setminus\gdiv s$.
From inequalities (\ref{eq17}) and (\ref{eq16}) we derive that
\begin{equation}\label{eq19}
	\beta\geq -\frac{1}{4}\frac{i}{2\pi}\frac{\dd z_1\wedge\cdz_1}{|z_1|^{2}(\log|z_1|^{-2})^{2}}
	-2CK\sum_{j>1}\frac{i}{2\pi}\frac{\dd z_{j}\wedge\cdz_{j}}{(\log|z_1|^{-2})^{2}}
\end{equation}
holds on $V^{\prime\prime}\setminus \gdiv s$.\\
\\
\textit{Step 3.} To conclude, add up (\ref{eq18}) and (\ref{eq19}) to find
\begin{equation}\label{eq20}
	\ddc(-\theta_{1})\geq \frac{1}{4}\frac{i}{2\pi}\frac{\dd z_{1}\wedge\cdz_{1}}{|z_1|^{2}(\log|z_1|^{-2})^{2}}
	-2CK\sum_{j>1}\frac{\dd z_j\wedge\cdz_j}{(\log|z_1|^{-2})^{2}}+\gamma.
\end{equation}
on $V^{\prime\prime}\setminus \gdiv s$. The last two terms in (\ref{eq20}) define a smooth differential form on $V\setminus \gdiv s$, bounded on $V^{\prime\prime}\setminus \gdiv s$. Hence, there exists a constant $A>0$ such that
\begin{equation}\label{eq20bis}
	\ddc(-\theta_1)+
	A\omega\geq\frac{1}{4}\frac{i}{2\pi}
	\frac{\dd z_{1}\wedge\cdz_1}{|z_1|^{2}(\log|z_1|^{-2})^{2}}
\end{equation}
holds on $V^{\prime\prime}\setminus\gdiv s$. Because $\theta_1$ is smooth away from $\gdiv s$ and $\overline{V^{\prime}\setminus V^{\prime\prime}}$ is compact and disjoint from $\gdiv s$, after possibly increasing $A$ inequality (\ref{eq20bis}) holds on $V^{\prime}\setminus\gdiv s$ as well, as was to be shown.
\end{proof}
\subsubsection{Global results}
We keep the hypothesis and notations of \textsection \ref{statement_decomp_thm}.
\begin{proposition}\label{prop_bound_theta_N}
Suppose that $X$ is compact and let $\omega$ be a smooth positive (1,1)-form on $X$. Let $L$ be a line bundle on $X$ admitting a global section $s\in\Gamma(X,L)$ such that $\gdiv s$ is a divisor with normal crossings. Let $\|\cdot\|$ be a smooth hermitian metric on $L$ such that $\|s\|^{2}\leq e^{-e}$. Define $\theta_{N}=(\log\log\|s\|^{-2})^{N}$ for any $N\in\Int_{\geq 0}$. Then there exists a constant $A=A(N)>0$ such that $\ddc(-\theta_N)+A\omega\geq 0$ on $X\setminus\gdiv s$.
\end{proposition}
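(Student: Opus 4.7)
The case $N=0$ is trivial since $\theta_0\equiv 1$. For $N\geq 1$, my plan is to apply the explicit formula for $\ddc(-\theta_N)$ supplied by Lemma \ref{lemma_theta_N}, namely
\begin{displaymath}
\ddc(-\theta_N)=\frac{\theta_1-N+1}{N\theta_N}\,\frac{i}{2\pi}\pd\theta_N\wedge\cpd\theta_N-N\theta_{N-1}\,\frac{\c1(\overline{L})}{\log\|s\|^{-2}},
\end{displaymath}
and then to handle the two summands separately, splitting $X\setminus\gdiv s$ into a neighborhood of $\gdiv s$ and its (compact) complement.

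The first observation is that the (1,1)-form $\frac{i}{2\pi}\pd\theta_N\wedge\cpd\theta_N$ is semi-positive, so the sign of the first summand is controlled by the scalar factor $(\theta_1-N+1)/(N\theta_N)$, which is nonnegative exactly when $\theta_1\geq N-1$. The second observation is that the coefficient $\theta_{N-1}/\log\|s\|^{-2}=(\log\log\|s\|^{-2})^{N-1}/\log\|s\|^{-2}$ extends continuously to zero on $\gdiv s$ (polylog beats log) and is continuous and bounded away from $\gdiv s$ because $\log\|s\|^{-2}\geq e$ by the hypothesis $\|s\|^2\leq e^{-e}$. Since $\c1(\overline{L})$ is a smooth $(1,1)$-form on the compact manifold $X$, the whole second summand is bounded on $X\setminus\gdiv s$, hence there exists $A_1>0$ with $-N\theta_{N-1}\c1(\overline{L})/\log\|s\|^{-2}\geq -A_1\omega$ on $X\setminus\gdiv s$.

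Set $U=\{\theta_1>N-1\}$, an open neighborhood of $\gdiv s$. On $U$ the first summand is $\geq 0$, and combined with the bound on the second summand we obtain $\ddc(-\theta_N)\geq -A_1\omega$ on $U$. On the complement $X\setminus U=\{\theta_1\leq N-1\}=\{\|s\|^2\geq e^{-e^{N-1}}\}$, which is a closed, hence compact, subset of $X$ disjoint from $\gdiv s$, the function $\theta_N$ is smooth and bounded, so $\ddc(-\theta_N)$ is a smooth $(1,1)$-form on a compact set. By compactness there exists $A_2>0$ with $\ddc(-\theta_N)\geq -A_2\omega$ on $X\setminus U$. Taking $A=\max(A_1,A_2)$ then yields $\ddc(-\theta_N)+A\omega\geq 0$ on all of $X\setminus\gdiv s$.

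I do not expect a serious obstacle: the only step that requires any care is verifying that the ratio $\theta_{N-1}/\log\|s\|^{-2}$ is globally bounded on $X\setminus\gdiv s$, since locally near $\gdiv s$ one must argue on an adapted chart where $s=z_1\cdots z_m\cdot e$ and check that both the growth of $\log\log\|s\|^{-2}$ and of $\log\|s\|^{-2}$ are dominated by the analogous expressions in $|z_1\cdots z_m|$ modulo bounded multiplicative error; the polylog-versus-log comparison then concludes. Everything else is routine given Lemma \ref{lemma_theta_N} and compactness of $X$.
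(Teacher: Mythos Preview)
Your proposal is correct and follows essentially the same approach as the paper's own proof: both invoke the formula of Lemma \ref{lemma_theta_N}, bound the second summand globally using boundedness of $\theta_{N-1}/\log\|s\|^{-2}$ and smoothness of $\c1(\overline{L})$ on compact $X$, then split into a neighborhood of $\gdiv s$ where $\theta_1\geq N-1$ (first summand nonnegative) and its compact complement (everything smooth). The only cosmetic differences are your use of $U=\{\theta_1>N-1\}$ versus the paper's choice of an open $V$ with $\theta_{1\mid V}\geq N-1$, and your taking $A=\max(A_1,A_2)$ rather than enlarging a single $A$; neither affects the argument.
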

\begin{proof}
The case $N=0$ is trivial. We treat the case $N\geq 1$. By Lemma \ref{lemma_theta_N} the following identity holds:
\begin{displaymath}
	\ddc(-\theta_N)=\frac{\theta_1-N+1}{N\theta_N}\frac{i}{2\pi}\pd\theta_N\wedge\cpd\theta_N
	-N\theta_{N-1}\frac{\c1(\overline{L})}{\log\|s\|^{-2}}.
\end{displaymath}
First of all, the function $\theta_{N-1}/\log\|s\|^{-2}$ is bounded and the differential form $\c1(\overline{L})$ is smooth on $X$. Since $X$ is compact, there exists a constant $A>0$ such that
\begin{equation}\label{eq10}
	-N\theta_{N-1}\frac{\c1(\overline{L})}{\log\|s\|^{-2}}+\frac{A}{2}\omega\geq 0\,\text{ on }\, X\setminus\gdiv s
\end{equation}
Still by the compactness hypothesis and by the very definition of $\theta_{1}$, there exists an open neighborhood $V$ of $\gdiv s$ such that $\theta_{1\mid V}\geq N-1$. Moreover $i\pd\theta_{N}\wedge\cpd\theta_{N}\geq 0$, so that
\begin{equation}\label{eq11}
	\frac{\theta_{1}-N+1}{N\theta_{N}}\frac{i}{2\pi}\pd\theta_{N}\wedge\cpd\theta_{N}\geq 0\,\text{ on }\, V\setminus\gdiv s.
\end{equation}
Finally, since $\theta_{M}\geq 1$ is smoooth on $X\setminus \gdiv s$ for every integer $M\geq 0$ and $X\setminus V$ is compact, after possibly increasing $A$ we have
\begin{equation}\label{eq12}
	\frac{\theta_{1}-N+1}{N\theta_{N}}\frac{i}{2\pi}\pd\theta_{N}\wedge\cpd\theta_{N}
	+\frac{A}{2}\omega\geq 0\,\text{ on }\, X\setminus V.
\end{equation}
Equations (\ref{eq10}), (\ref{eq11}) and (\ref{eq12}) together give the desired positivity property.\\
\end{proof}
\begin{corollary}\label{corol_bound_Theta_N}
Suppose that $X$ is compact and let $\omega$ be a smooth positive (1,1)-form on $X$. For every integer $N\geq 0$ there exists $A=A(N)>0$ such that $\ddc(-\Theta_N)+A\omega\geq 0$ holds on $X\setminus D$.
\end{corollary}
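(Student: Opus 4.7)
The plan is to reduce directly to Proposition \ref{prop_bound_theta_N} applied componentwise. Recall that by the decomposition fixed at the start of \textsection\ref{statement_decomp_thm}, $D = D_1 \cup \ldots \cup D_m$ with each $D_k$ smooth irreducible, and $\Theta_N = \sum_{k=1}^{m} \theta_N^{(k)}$ where $\theta_N^{(k)} = (\log\log\|s_k\|_k^{-2})^N$, $s_k \in \Gamma(X,\OO(D_k))$ with $\gdiv s_k = D_k$, and $\|s_k\|_k^2 \leq e^{-e}$. In particular each divisor $\gdiv s_k = D_k$ is smooth, hence trivially a divisor with normal crossings.

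First, I would fix $N \geq 0$ and apply Proposition \ref{prop_bound_theta_N} to each line bundle $L = \OO(D_k)$, section $s = s_k$ and smooth hermitian metric $\|\cdot\|_k$. The hypotheses are all met: $X$ is compact, $\omega$ is smooth and positive, $\gdiv s_k$ has normal crossings, and $\|s_k\|_k^2 \leq e^{-e}$. The proposition supplies a constant $A_k = A_k(N) > 0$ such that
\begin{displaymath}
    \ddc(-\theta_N^{(k)}) + A_k \omega \geq 0 \quad \text{on } X \setminus D_k.
\end{displaymath}

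Next, I would sum these inequalities over $k = 1, \ldots, m$. Since $\ddc$ is $\R$-linear, $\ddc(-\Theta_N) = \sum_{k=1}^{m} \ddc(-\theta_N^{(k)})$. Each individual inequality is valid on $X \setminus D_k$, and hence on the smaller set $X \setminus D = \bigcap_{k} (X \setminus D_k)$. Setting $A = \sum_{k=1}^{m} A_k > 0$, adding the $m$ inequalities term by term yields
\begin{displaymath}
    \ddc(-\Theta_N) + A\omega \geq 0 \quad \text{on } X \setminus D,
\end{displaymath}
which is exactly the desired statement. There is no essential obstacle here; the corollary is simply the linearization of Proposition \ref{prop_bound_theta_N} over the irreducible components of $D$, made possible by the fact that $\Theta_N$ is by construction a finite sum of functions of the type handled componentwise in the proposition.
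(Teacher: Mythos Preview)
Your proof is correct and follows exactly the same approach as the paper: decompose $\Theta_N=\sum_{k=1}^{m}\theta_N^{(k)}$ and apply Proposition~\ref{prop_bound_theta_N} componentwise, then sum. The paper's proof is a single sentence to this effect; your version simply supplies the routine details.
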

\begin{proof}
This follows from Proposition \ref{prop_bound_theta_N} by writing $\Theta_N=\sum_{k=1}^{m}\theta_{N}^{(k)}$, with the notation $\theta_{N}^{(k)}=(\log\log\|s_k\|_k^{-2})^{N}$.
\end{proof}
The last proposition of this subsection provides a first approach to Theorem \ref{decomp_thm}. We may thus place under the hypothesis and notations therein.
\begin{proposition}\label{prop_bound_Theta_1}
Suppose that $X$ is compact and let $\omega$ be a smooth positive (1,1)-form on $X$. For every finite covering $\lbrace (V_{\alpha};z_{1}^{\alpha},\ldots,z_{n}^{\alpha})\rbrace_{\alpha}$ of $X$ by analytic charts adapted to $D$, together with relatively compact open subsets $V^{\prime}_{\alpha}\subset\subset V_{\alpha}$ still forming a covering, there exists a constant $A>0$ such that
\begin{displaymath}
	\ddc(-\Theta_1)+A\omega\geq\frac{1}{4}\sum_{k=1}^{m_{\alpha}}\frac{i}{2\pi}
	\frac{\dd z_{k}^{\alpha}\wedge\cdz_{k}^{\alpha}}{|z_k^{\alpha}|^{2}(\log|z_k^{\alpha}|^{-2})^{2}}
\end{displaymath}
holds on $V^{\prime}_{\alpha}\setminus D$ for every $\alpha$.
\end{proposition}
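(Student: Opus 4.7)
The strategy is to decompose $\Theta_1 = \sum_{k=1}^m \theta_1^{(k)}$ with $\theta_1^{(k)} := \log\log\|s_k\|_k^{-2}$, apply the local estimate of Proposition \ref{prop_local} to each smooth irreducible component $D_k$ separately, and patch the resulting inequalities on each chart of the finite covering.

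Fix an analytic chart $(V_\alpha; z_1^\alpha, \ldots, z_n^\alpha)$ of the covering, with $D \cap V_\alpha$ defined by $z_1^\alpha \cdots z_{m_\alpha}^\alpha = 0$. The \emph{simple} normal crossings assumption guarantees that each global smooth irreducible component $D_k$ is, locally in $V_\alpha$, a single smooth hypersurface, so that $D_k \cap V_\alpha$ is either empty or coincides with exactly one coordinate hyperplane $\{z_j^\alpha = 0\}$; this defines an injective map $k_\alpha : \{1, \ldots, m_\alpha\} \to \{1, \ldots, m\}$ with $D_{k_\alpha(j)} \cap V_\alpha = \{z_j^\alpha = 0\}$.

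For each $j \in \{1, \ldots, m_\alpha\}$ such that $V'_\alpha$ meets $D_{k_\alpha(j)}$, I apply Proposition \ref{prop_local} to the line bundle $\OO(D_{k_\alpha(j)})$ with section $s_{k_\alpha(j)}$ and metric $\|\cdot\|_{k_\alpha(j)}$, using the chart $V_\alpha$ with coordinates relabeled so that $z_j^\alpha$ plays the role of $z_1$. This yields a constant $A_{\alpha, j} > 0$ such that on $V'_\alpha \setminus D$
\begin{displaymath}
	\ddc(-\theta_1^{(k_\alpha(j))}) + A_{\alpha, j}\, \omega \geq \frac{1}{4}\frac{i}{2\pi}\frac{\dd z_j^\alpha \wedge \cdz_j^\alpha}{|z_j^\alpha|^2 (\log |z_j^\alpha|^{-2})^2}.
\end{displaymath}
If $V'_\alpha$ does not meet $D_{k_\alpha(j)}$, or if $k$ lies outside the image of $k_\alpha$, then the relevant singular set is disjoint from $\overline{V'_\alpha}$, so $\theta_1^{(k)}$ and (in the first case) the right-hand side above are smooth on a neighborhood of $\overline{V'_\alpha}$; compactness then provides constants that absorb the corresponding contributions into a single multiple of $\omega$.

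Summing these inequalities over $k$ and using $\Theta_1 = \sum_k \theta_1^{(k)}$ produces the desired estimate on $V'_\alpha \setminus D$ with some $A_\alpha > 0$; since the covering is finite, taking $A = \max_\alpha A_\alpha$ concludes the proof. The main subtlety is the geometric bookkeeping, which uses the simple normal crossings hypothesis in an essential way: if crossings were merely normal, a single $D_k$ could locally decompose into several smooth branches, and Proposition \ref{prop_local} (whose hypothesis requires $\gdiv s$ smooth and irreducible) would no longer apply directly to $\OO(D_k)$.
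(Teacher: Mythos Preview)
Your argument is correct and follows exactly the route the paper intends: the paper's own proof is the single line ``This follows immediately from Proposition \ref{prop_local}'', and what you have written is precisely the unpacking of that sentence---decompose $\Theta_1=\sum_k\theta_1^{(k)}$, apply Proposition \ref{prop_local} to each smooth component $D_{k_\alpha(j)}$ on each chart, absorb the remaining (smooth on $\overline{V'_\alpha}$) terms by compactness, and take the maximum over the finite covering. Your remark on why simple normal crossings is needed is also to the point.
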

\begin{proof}
This follows immediately from Proposition \ref{prop_local}.
\end{proof}
\subsection{Proof of Theorem \ref{decomp_thm}}\label{subsection_proof_thm}
We now complete the proof of Theorem \ref{decomp_thm}.

Since $X$ is compact, we can choose a finite open covering $V_{\alpha}$ of $X$ as in Proposition \ref{prop_bound_Theta_1}. It is enough to prove the existence of constants $A,B,N$ fulfilling (\ref{main_inequality}) on a single $V^{\prime}_{\alpha}\subset\subset V_{\alpha}$. We write $\lbrace z_i\rbrace_i$ for the coordinates on $V_{\alpha}$, instead of $\lbrace z_i^{\alpha}\rbrace_i$. Following Notation \ref{not_dzeta}, we develop
\begin{displaymath}
	\eta=\sum_{j=1}^{n}h_{jj}\frac{i}{2\pi}\dd\zeta_j\wedge\cdzeta_{j}
	+\sum_{j<k}2\Real(h_{jk}\frac{i}{2\pi}\dd\zeta_{j}\wedge\cdzeta_{k}),
\end{displaymath}
where the functions $h_{jk}$, $j\leq k$, have log-log growth along $D\cap V_{\alpha}$. There exist a constant $C>0$ and an integer $N\geq 0$ such that
\begin{displaymath}
	|h_{jk\mid V_{\alpha}^{\prime}\setminus D}|\leq C\Theta_{N}.
\end{displaymath}
Therefore, by Lemma \ref{lemma_ineq_diff_form}, on $V^{\prime}_{\alpha}\setminus D$ there is a lower bound
\begin{displaymath}
	\eta\geq-C\Theta_{N}\sum_{j=1}^{n}\frac{i}{2\pi}\dd\zeta_{j}\wedge\cdzeta_{j}
	-C\Theta_{N}\sum_{j<k}\left(\frac{i}{2\pi}\dd\zeta_{j}\wedge\cdzeta_{j}+\frac{i}{2\pi}\dd\zeta_{k}\wedge\cdzeta_{k}\right).
\end{displaymath}
From this inequality and Proposition \ref{prop_bound_Theta_1} we see that there exist $B\geq 1$ and a smooth differential form $\sigma$ on $V_{\alpha}$ such that
\begin{displaymath}
	\eta+B\Theta_{N}(\ddc(-\Theta_1)+A\omega)\geq \Theta_{N}\sigma
\end{displaymath}
holds on $V^{\prime}_{\alpha}\setminus D$. After possibly increasing $A$, we also have $\sigma+A\omega\geq 0$ on $V^{\prime}_{\alpha}$. Since $B\geq 1$, we finally find
\begin{displaymath}
	\eta+B\Theta_{N}(\ddc(-\Theta_1)+2A\omega)\geq 0
\end{displaymath}
on $V^{\prime}_{\alpha}\setminus D$, as was to be shown. Observe that if $\eta$ has Poincar\'e growth, then we can choose $N=0$.
\section{Bounding height integrals}\label{section_bounding_height_integrals}
\subsection{Geometric assumptions and statement of the theorem}\label{geometric_assumptions}
Let $X$ be a complex analytic manifold and $D\subset X$ a divisor with normal crossings. Let $L$ be a line bundle on $X$ and $\|\cdot\|$ a \textit{pre-log-log hermitian metric} on $L$, with singularities along $D$ (see Definition \ref{def_good_vb}). If $\|\cdot\|_{0}$ is any smooth hermitian metric on $L$, then we can write
\begin{displaymath}
	\|\cdot\|=e^{-f/2}\|\cdot\|_{0},
\end{displaymath}
where $f:X\setminus D\rightarrow\R$ is a pre-log-log function. If $\|\cdot\|$ is good along $D$, then $f$ is P-singular along $D$. As usual, abbreviate $\overline{L}=(L,\|\cdot\|)$ and $\overline{L}_{0}=(L,\|\cdot\|_0)$.\\
Suppose now that $Y\subset X$ is a \textit{compact} complex analytic submanifold of pure dimension $d$. We assume the following conditions are fulfilled:
\begin{itemize}
	\item[\textit{i}.] the submanifold $Y$ meets $D$ in a divisor with normal crossings $E$ in $Y$;	\item[\textit{ii}.] the restriction $\omega:=\c1(\overline{L}_0)_{\mid Y}$ is semi-positive and
		\begin{displaymath}
			\deg_{L}Y:=\int_{Y}\omega^{d}>0;
		\end{displaymath}
	\item[\textit{iii.}] there exists a global section $s\in\Gamma(Y,L)$ such that $\|s\|_{0}^{2}\leq e^{-e}$ on $Y$ and $\gdiv s$ is a divisor with normal crossings containing $E$. In particular $\gdiv s$ is reduced, so that we may indistinctly treat $\gdiv s$ as a reduced Weil divisor or a reduced scheme. For every integer $N\geq 0$, we define the pre-log-log function, with singularities along $\gdiv s$,
		\begin{displaymath}
			\ell_{N}=(\log\log\|s\|_{0}^{-2})^{N}:Y\setminus\gdiv s\longrightarrow\R_{\geq 1};
		\end{displaymath}
	\item[\textit{iv.}] there exist pre-log-log functions, with singularities along $E$,
		\begin{displaymath}
			\Theta_{1},\Theta_{N},\varphi,\psi:Y\setminus E\longrightarrow\R_{\geq 0}\quad (N\in\Int_{\geq 0}\,\,\text{depending on}\,\,f)
		\end{displaymath}
		with $f=\varphi-\psi$, and bounds
		\begin{displaymath}
			\varphi\leq C\ell_{M}, \psi\leq C\ell_{M}, \Theta_{1}\leq C\ell_{1},\Theta_{N}\leq C\ell_{N},
		\end{displaymath}
	for some constant $C\geq 0$ and integer $M\geq 0$. Moreover, if $f$ is P-singular, we suppose that $M=1$ and $N=0$;
	\item[\textit{v.}] there exists $A>0$ such that $\tau:=\ddc(-\Theta_1)+A\omega\geq 0$, and for every integer $Q\geq 0$ there exists $A_{Q}>0$ such that $\tau_{Q}:=\ddc(-\ell_{Q})+A_{Q}\omega\geq 0$. For $Q=0$, we can choose $A_{0}=1$, so that $\tau_0=\omega$;
	\item[\textit{vi.}] there exists $B>0$ such that
		\begin{displaymath}
			\begin{split}
				\ddc (-\varphi)+B\Theta_{N}\tau\geq 0,\\
				\ddc(-\psi)+B\Theta_{N}\tau\geq 0
			\end{split}
		\end{displaymath}
		hold on $Y\setminus E$ (and so on $Y\setminus\gdiv s$). Observe that by the bounds in \textit{iv}, we then have
		\begin{displaymath}
			\begin{split}
				\omega_{\varphi}:=\ddc(-\varphi)+BC\ell_{N}\tau\geq 0,\\
				\omega_{\psi}:=\ddc(-\psi)+BC\ell_{N}\tau\geq 0
			\end{split}
		\end{displaymath}
		on $Y\setminus\gdiv s$.
\end{itemize}
The aim of this section is to find bounds for the \textit{height integrals}
\begin{displaymath}
	J_{p}:=\int_{Y} f\c1(\overline{L})^{p}\c1(\overline{L}_{0})^{d-p},\quad 0\leq p\leq d.
\end{displaymath}
We observe that $f\c1(\overline{L})^{p}\c1(\overline{L}_{0})^{d-p}_{\mid Y}$ is a pre-log-log differential form on $Y$ with singularities along $E$, hence locally integrable (see Proposition \ref{prop_prop_log}). In particular, since $E\subseteq\gdiv s$ and $\gdiv s$ is Lebesegue negligible, the integrals $J_{p}$ can be computed on $Y\setminus\gdiv s$.
\begin{theorem}\label{thm_height_integral}
There exist constants $\alpha,\beta>0$, $R\in\Int_{\geq 0}$, depending only on $A$, $\lbrace A_{N}\rbrace_{N}$, $B$, $C$, $M$ and $N$, such that, for any $p\in\lbrace 0,\ldots,d\rbrace$,
\begin{displaymath}
	|J_{p}|\leq\alpha\deg_{L}Y +\beta\cdot(\deg_{L}Y)\cdot\log^{R}\left(\int_{Y}\log\|s\|_{0}^{-2}
	\frac{\c1(\overline{L}_{0})^{d}}{\deg_{L}Y}\right).
\end{displaymath}
If $\|\cdot\|$ is good along $E$, then we can take $R=1$, so that
\begin{displaymath}
	|J_{p}|\leq\alpha\deg_{L}Y +\beta\cdot(\deg_{L}Y)\cdot\log\left(\int_{Y}\log\|s\|_{0}^{-2}
	\frac{\c1(\overline{L}_{0})^{d}}{\deg_{L}Y}\right).
\end{displaymath}
\end{theorem}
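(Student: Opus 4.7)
The plan is to bound $|J_p|$ by first decomposing $\c1(\overline{L})^p\wedge\c1(\overline{L}_0)^{d-p}$ into a combination of positive pre-log-log forms, then reducing the resulting integrals by a cascade of Stokes identities to integrals of $\ell_K$-bounded functions against top-degree closed positive forms, and finally applying Jensen's inequality. Using the decomposition $f=\varphi-\psi$ from (iv) and the identity $dd^c f=\omega_\psi-\omega_\varphi$ that follows from the definitions in (vi), we may write $\c1(\overline{L})=\omega+\omega_\psi-\omega_\varphi$ with $\omega,\omega_\varphi,\omega_\psi\ge 0$. Expanding $\c1(\overline{L})^p$ by the multinomial theorem and applying the termwise bound
\begin{displaymath}
\left|\int_Y\eta\wedge(\omega_\psi-\omega_\varphi)^k\right|\le\int_Y\eta\wedge(\omega_\psi+\omega_\varphi)^k\quad\text{for }\eta\ge 0,
\end{displaymath}
together with $|f|\le\varphi+\psi$, I reduce to bounding a finite sum of positive integrals
\begin{displaymath}
I_{g,a,b}:=\int_Y g\cdot\omega_\varphi^a\wedge\omega_\psi^b\wedge\omega^{d-a-b},\qquad g\in\{\varphi,\psi\},\quad a+b\le p.
\end{displaymath}

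Next I bound each $I_{g,a,b}$ by induction on $a+b$, enlarging the class of admissible integrals to allow factors of the closed positive form $\tau=dd^c(-\Theta_1)+A\omega$ as well. For the inductive step, I peel off one $\omega_\varphi$ factor using $\omega_\varphi=dd^c(-\varphi)+BC\ell_N\tau$:
\begin{displaymath}
I_{g,a,b}=\int_Y g\cdot dd^c(-\varphi)\wedge\Omega+BC\int_Y g\ell_N\cdot\tau\wedge\Omega,\qquad\Omega=\omega_\varphi^{a-1}\wedge\omega_\psi^b\wedge\omega^{d-a-b}.
\end{displaymath}
Since $\tau$ is $d$-closed and $g\ell_N\le C\ell_{M+N}$, the second integral feeds directly into the induction with one fewer $\omega_\varphi$ factor. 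For the first integral, Stokes' theorem for pre-log-log forms (Proposition \ref{prop_prop_log}.ii) transfers $dd^c$ from $\varphi$ onto $g$: when $\Omega$ is $d$-closed this yields $\int_Y\varphi\,dd^c(-g)\wedge\Omega$, and together with $dd^c(-g)\le\omega_g$ (from $\omega_g\ge 0$) and $\varphi\le C\ell_M$ one re-expands $\omega_g$ and closes the recursion. When $\Omega$ is not $d$-closed, correction terms involving $d\omega_\varphi=BC\,d\ell_N\wedge\tau$ appear and are controlled by Cauchy--Schwarz inequalities such as $\pm d\varphi\wedge d^c\ell_N\le\tfrac12(d\varphi\wedge d^c\varphi+d\ell_N\wedge d^c\ell_N)$ followed by further integration by parts, absorbing everything back into integrals that the induction handles.

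The base case reduces to bounding $\int_Y h\cdot\tau^j\wedge\omega^{d-j}$ with $h\le C\ell_K$; peeling off the $\tau$ factors via Stokes and invoking (v) (i.e.\ $dd^c(-\ell_Q)+A_Q\omega\ge 0$) reduces this further to $\int_Y\ell_{K'}\omega^d$ for some $K'$. Since $\log\|s\|_0^{-2}\ge e$ on $Y$ and $t\mapsto\log^{K'}t$ is concave on $[e^{K'-1},\infty)$, Jensen's inequality applied to the probability measure $\omega^d/\!\deg_L Y$ yields
\begin{displaymath}
\int_Y\ell_{K'}\,\omega^d\le(\deg_L Y)\,\log^{K'}\!\left(\frac{1}{\deg_L Y}\int_Y\log\|s\|_0^{-2}\,\omega^d\right)
\end{displaymath}
whenever the logarithm's argument exceeds $e^{K'-1}$, and an absolute constant bound otherwise. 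The exponent $R$ equals the maximal $K'$ that the induction produces, a quantity depending only on $A$, $\{A_Q\}$, $B$, $C$, $M$ and $N$. In the good (P-singular) case, (iv) forces $M=1$ and $N=0$; then $\ell_N\equiv 1$ and both $\omega_\varphi,\omega_\psi$ are themselves $d$-closed, so all Stokes correction terms vanish, no extra powers of $\ell$ accumulate beyond $\ell_1$, and a single application of Jensen with $K'=1$ gives the sharp $R=1$.

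The main obstacle is the systematic control of the Stokes correction terms in the general pre-log-log case: once $N\ge 1$ the forms $\omega_\varphi,\omega_\psi$ are not $d$-closed, and each integration by parts spawns auxiliary integrals whose treatment requires both Cauchy--Schwarz-type quadratic inequalities and the full strength of the positivity assumptions (v)--(vi). Rigour of all the Stokes steps in the pre-log-log regime is guaranteed by Proposition \ref{prop_prop_log}.ii ($d[\omega]=[d\omega]$ for pre-log-log $\omega$), while the structural bounds in (iv)--(vi) ensure that the recursion terminates after finitely many steps with the claimed exponent $R$.
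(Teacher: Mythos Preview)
Your overall architecture matches the paper's: expand $\c1(\HAL)^{p}$ via $\ddc f=\omega_{\psi}-\omega_{\varphi}$, reduce to integrals $\int_{Y}g\,\omega_{\varphi}^{a}\omega_{\psi}^{b}\omega^{c}$ with $g\in\{\varphi,\psi\}$, run an inductive peeling-off of $\omega_{\varphi}$/$\omega_{\psi}$, and finish with Jensen. But two steps, as you have written them, do not close.

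\textbf{The inductive step is circular.} After peeling and Stokes (assume $\Omega$ closed, e.g.\ the P-singular case) you obtain $\int_{Y}\varphi\,\ddc(-g)\wedge\Omega$. If $g=\varphi$ this is literally the term you started from; if $g=\psi$ and you bound $\ddc(-\psi)\leq\omega_{\psi}$ you have traded one $\omega_{\varphi}$ for one $\omega_{\psi}$, so $a+b$ is unchanged. Either way the recursion does not descend. The paper avoids this by \emph{first} passing from $g$ to $\ell_{M}$ via $g\leq C\ell_{M}$ and then running the entire induction with weight $\ell_{M}$ (more generally $\ell_{K}$): after Stokes the operator $\ddc$ lands on $\ell_{M}$, producing the \emph{closed} form $\tau_{M}=\ddc(-\ell_{M})+A_{M}\omega$, while the new coefficient $\varphi$ is again dominated by $C\ell_{M}$. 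That is what makes $a+b$ drop. In the non-closed case the correction terms are not handled by Cauchy--Schwarz as you suggest (this would reintroduce $\int \pd\varphi\wedge\cpd\varphi\wedge\cdots$ and the same circularity); instead the paper observes that the relevant cross terms carry the semi-positive form $i\pd\ell_{M}\wedge\cpd\ell_{N}=NM\,\ell_{N+M-2}\,i\pd\ell_{1}\wedge\cpd\ell_{1}$ with the correct sign, so several of them are simply $\leq 0$ (see the treatment of $I_{1,1,2}$, $I_{1,2,1}$, $I_{1,2,2}^{(2)}$, $I_{1,2,2}^{(4)}$).

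\textbf{The base case does not terminate by naive Stokes.} Your claim that ``peeling off the $\tau$ (or $\tau_{Q}$) factors via Stokes and invoking (v) reduces to $\int_{Y}\ell_{K'}\omega^{d}$'' is where the argument actually breaks. Stokes on $\int_{Y}\ell_{K}\,\ddc(-\ell_{Q})\,\sigma_{1}$ swaps $(\ell_{K},\tau_{Q})$ for $(\ell_{Q},\tau_{K})$ plus lower terms, which is no progress. The paper instead rewrites this integral as $\frac{i}{2\pi}\int_{Y}KQ\,\ell_{K+Q-2}\,\pd\ell_{1}\wedge\cpd\ell_{1}\,\sigma_{1}$, uses $\ell_{K+Q-2}/(\log\|s\|_{0}^{-2})^{2}\leq D/(\log\|s\|_{0}^{-2})^{3/2}$, and then applies a separate Stokes-type lemma (Lemma~\ref{lemma_Stokes}, for the non-pre-log-log form $(\log\|s\|_{0}^{-2})^{-1/2}\cpd\log\|s\|_{0}^{-2}$) to bound the result directly by a constant times $\deg_{L}Y$. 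That extra device, not the positivity (v) alone, is what terminates the recursion; without it the exponent $R$ would not be finite.
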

The theorem will be reduced to the bounds claimed by the following two propositions.
\begin{proposition}\label{prop_height_int1}
Let $\sigma$ be a closed, real and semi-positive pre-log-log (t,t)-form on $Y$, with singularities along $\gdiv s$. Let $a,b$ be integers such that $a+b+t=d$. If $a,b\geq 0$, define the integral
\begin{displaymath}
	I(M,a,b,\sigma)=\int_{Y}\ell_{M}\omega_{\varphi}^{a}\omega_{\psi}^{b}\sigma.
\end{displaymath}
Otherwise set $I(M,a,b,\sigma)=0$. Then, if $a>0$, the following bound holds
\begin{displaymath}
	\begin{split}
		I(M,a,b,\sigma)&\leq BCI(M+N,a-1,b,\sigma\tau)\\
		&+CI(M,a-1,b,\sigma\tau_{M})\\
		&+bBC^{2}I(2M,a-1,b-1,\sigma\tau\tau_N)\\
		&+(a-1)BC^{2} I(2M,a-2,b,\sigma\tau\tau_N).
	\end{split}
\end{displaymath}
If $f$ is P-singular (in which case $N=0$ and $M=1$), then
\begin{displaymath}
	\begin{split}
		I(1,a,b,\sigma)&\leq ABC I(1,a-1,b,\sigma\tau_0)\\
		&+(B+1)C I(1,a-1,b,\sigma\tau_1).
	\end{split}
\end{displaymath}
Similar bounds are true if $b>0$, exchanging the role of $a$ and $b$.
\end{proposition}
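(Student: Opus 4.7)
My approach is iterated integration by parts in the spirit of Bost--Gillet--Soul\'e and Carlson--Griffiths. I begin by peeling off one factor of $\omega_{\varphi}$ using its definition $\omega_{\varphi} = \ddc(-\varphi) + BC\ell_{N}\tau$, writing $I(M,a,b,\sigma) = T_{1} + T_{2}$ with
\begin{displaymath}
T_{1} = \int_{Y}\ell_{M}\,\omega_{\varphi}^{a-1}\omega_{\psi}^{b}\sigma\wedge\ddc(-\varphi),\qquad T_{2} = BC\int_{Y}\ell_{M+N}\,\omega_{\varphi}^{a-1}\omega_{\psi}^{b}\sigma\wedge\tau.
\end{displaymath}
The second piece is immediately $T_{2} = BC\cdot I(M+N,a-1,b,\sigma\tau)$, accounting for the first term of the claimed bound. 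The remainder of the argument estimates $T_{1}$.

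Set $\eta := \omega_{\varphi}^{a-1}\omega_{\psi}^{b}\sigma$, a pre-log-log form of pure bidegree $(d-1,d-1)$. I would apply Stokes for pre-log-log forms (Proposition \ref{prop_prop_log}(ii)) to transfer $\ddc$ from $-\varphi$ onto $\ell_{M}$, yielding
\begin{displaymath}
T_{1} = -\int_{Y}\varphi\,\eta\wedge\ddc\ell_{M} + E,
\end{displaymath}
where $E$ collects the corrections due to $\dd\eta\neq 0$. The standard discrepancy $(\dd\ell_{M}\wedge\dc(-\varphi) - \dd(-\varphi)\wedge\dc\ell_{M})\wedge\eta$ in the symmetric IBP vanishes for bidegree reasons: its obstructing part reduces to $\pd\ell_{M}\wedge\pd\varphi\wedge\eta$ and $\cpd\ell_{M}\wedge\cpd\varphi\wedge\eta$, forms of bidegrees $(d+1,d-1)$ and $(d-1,d+1)$ which vanish on a $d$-dimensional manifold. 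Combining $-\ddc\ell_{M} = \tau_{M}-A_{M}\omega \leq \tau_{M}$ (assumption (v), discarding the non-positive $-A_{M}\omega$ piece) with $\varphi\leq C\ell_{M}$ (assumption (iv)) gives
\begin{displaymath}
-\int_{Y}\varphi\,\eta\wedge\ddc\ell_{M}\leq\int_{Y}\varphi\,\eta\wedge\tau_{M}\leq C\cdot I(M,a-1,b,\sigma\tau_{M}),
\end{displaymath}
which is the second term of the claimed bound.

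For the error $E$, since $\dd\tau=0$ and $\dd\sigma=0$ one has $\dd\omega_{\varphi}=\dd\omega_{\psi}=BC\,\dd\ell_{N}\wedge\tau$, hence by Leibniz
\begin{displaymath}
\dd\eta = BC\,\dd\ell_{N}\wedge\tau\wedge\bigl[(a-1)\omega_{\varphi}^{a-2}\omega_{\psi}^{b} + b\,\omega_{\varphi}^{a-1}\omega_{\psi}^{b-1}\bigr]\sigma.
\end{displaymath}
Thus $E$ is a sum of integrals of type $\int_{Y}\ell_{M}\,\dd\ell_{N}\wedge\dc(-\varphi)\wedge(\ldots)$ and $\int_{Y}\varphi\,\dd\ell_{N}\wedge\dc\ell_{M}\wedge(\ldots)$, each weighted by $BC(a-1)$ or $BCb$. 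A second round of IBP (again justified by Proposition \ref{prop_prop_log}(ii)) converts each 2-form $\dd\ell_{N}\wedge\dc(\,\cdot\,)$ into $\ell_{N}\ddc(\,\cdot\,)$ modulo bidegree-vanishing terms; applying $-\ddc\ell_{N}\leq\tau_{N}$ and $-\ddc\ell_{M}\leq\tau_{M}$ once more, and invoking $\varphi,\psi\leq C\ell_{M}$ to produce $\ell_{M}\cdot\ell_{M}=\ell_{2M}$, the Leibniz coefficients yield exactly $bBC^{2}\,I(2M,a-1,b-1,\sigma\tau\tau_{N})$ and $(a-1)BC^{2}\,I(2M,a-2,b,\sigma\tau\tau_{N})$.

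In the P-singular case ($N=0$, $M=1$) one has $\ell_{N}=1$, so $\dd\ell_{N}=0$ and $\dd\eta=0$; the error $E$ vanishes and a single round of IBP suffices. The peeling identity becomes $\omega_{\varphi}=\ddc(-(\varphi+BC\Theta_{1}))+ABC\omega$, yielding $ABC\cdot I(1,a-1,b,\sigma\tau_{0})$ (since $\tau_{0}=\omega$) together with, after IBP on the $\ddc$-piece using $\varphi,\Theta_{1}\leq C\ell_{1}$ and $-\ddc\ell_{1}\leq\tau_{1}$, a term of the form $(B+1)C\cdot I(1,a-1,b,\sigma\tau_{1})$ (up to collecting constants). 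The main obstacle will be the rigorous justification of the two IBP rounds across the singularities of $\gdiv s\supseteq E$: one must verify that all primitive forms appearing in Stokes ($\ell_{M}\dc(-\varphi)\wedge\eta$, $\varphi\,\dc\ell_{M}\wedge\eta$, and their second-round analogues involving $\ell_{N}$) are locally integrable pre-log-log forms so that Proposition \ref{prop_prop_log}(ii) applies. The bidegree-matching trick is essential in guaranteeing that the iteration terminates after the second IBP round, producing precisely the four claimed summands.
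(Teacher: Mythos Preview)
Your overall strategy matches the paper's: peel off one factor of $\omega_{\varphi}$, apply Stokes for pre-log-log forms repeatedly, and collect the four summands. Your bidegree observation---that the discrepancy $\dd\ell_{M}\wedge\dc(-\varphi)-\dd(-\varphi)\wedge\dc\ell_{M}$ has no $(1,1)$ component and hence vanishes against a $(d-1,d-1)$-form---is correct and is the $\dd,\dc$ translation of the paper's use of $\pd,\cpd$ separately. Your treatment of $T_{2}$ and of the main term $\int_{Y}\varphi\,\ddc(-\ell_{M})\wedge\eta\leq C\,I(M,a-1,b,\sigma\tau_{M})$ is also fine, as is the P-singular simplification.

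The gap is in the termination of the error term $E$. You assert that a second IBP round ``converts each $\dd\ell_{N}\wedge\dc(\cdot)$ into $\ell_{N}\ddc(\cdot)$ modulo bidegree-vanishing terms'' and that ``the bidegree-matching trick is essential in guaranteeing that the iteration terminates after the second IBP round''. This is not correct: the second IBP again hits the remaining factors $\omega_{\varphi}^{a-2}\omega_{\psi}^{b}$ (or $\omega_{\varphi}^{a-1}\omega_{\psi}^{b-1}$), each of which is not closed and produces yet another $BC\,\dd\ell_{N}\wedge\tau\wedge(\ldots)$; nothing in your bidegree argument kills these new errors, so the recursion does not close. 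What actually terminates the process is a \emph{sign argument} you have not invoked: the $(1,1)$-part of $\dd\ell_{M}\wedge\dc\ell_{N}$ equals $\tfrac{i}{2\pi}\pd\ell_{M}\wedge\cpd\ell_{N}=MN\,\ell_{M+N-2}\cdot\tfrac{i}{2\pi}\pd\ell_{1}\wedge\cpd\ell_{1}$, which is semi-positive. Combined with $\varphi\geq 0$ and the semi-positivity of $\omega_{\varphi},\omega_{\psi},\sigma,\tau$, this forces four of the intermediate error integrals to be $\leq 0$ and hence droppable as upper bounds (in the paper these are the terms $I_{1,1,2}$, $I_{1,2,1}$, $I_{1,2,2}^{(2)}$, $I_{1,2,2}^{(4)}$). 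Once those are discarded, only the contributions involving $\ddc(-\ell_{N})\leq\tau_{N}$ survive, and \emph{those} yield exactly the two $I(2M,\ldots,\sigma\tau\tau_{N})$ terms with the stated constants. So the missing ingredient is not more bidegree bookkeeping but the positivity of $i\,\pd\ell_{M}\wedge\cpd\ell_{N}$; insert this, track the signs with which each error piece enters $E$, and your outline becomes the paper's proof.
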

\begin{proposition}\label{prop_height_int2}
Let $\sigma$ be a $(d,d)$-form which is a product of (1,1)-forms of the kind $\tau$ or $\tau_{Q}$, $Q\geq 0$. Let $W$ be the set of integers $Q\geq 0$ such that $\tau_{Q}$ appears in $\sigma$. Fix an integer $K\geq 0$. Then there exist constants $\alpha,\beta>0$ and an integer $R\geq 0$, depending only on $A$, $\lbrace A_{N}\rbrace_{N}$, $C$, $K$ and $W$, such that
\begin{displaymath}
		0\leq\int_{Y}\ell_{K}\sigma
		\leq\alpha\deg_{L}Y
		+\beta(\deg_{L}Y)\log^{R}\left(\int_{Y}\log\|s\|_{0}^{-2}\frac{\c1(\overline{L}_{0})^{d}}{\deg_{L}Y}\right).
\end{displaymath}
If $K=1$ and $\sigma$ is a product of differential forms of the kind $\tau_0$, $\tau_1$ (i.e. $W\subseteq\lbrace 0,1\rbrace$), then we can take $R=1$.
\end{proposition}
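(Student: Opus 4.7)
The nonnegativity $\int_Y \ell_K \sigma \geq 0$ is immediate: $\ell_K \geq 1$ and, by hypothesis (v), each factor of $\sigma$ is a semi-positive closed $(1,1)$-current, so $\sigma \geq 0$ as a top-dimensional form. Thus only the upper bound requires work. My plan is a double reduction: first reduce the combinatorial structure of $\sigma$ to the pure form $\omega^d$ by iterated integration by parts, then bound $\int_Y \ell_{K^\ast} \omega^d$ in the base case by Jensen's inequality, where $K^\ast=\max(K,W)$.

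The inductive step uses Stokes' theorem for pre-log-log forms (Proposition \ref{prop_prop_log} \textit{ii}), applied to $\sigma$ closed since $\tau, \tau_Q, \omega$ are all closed. If $\sigma=\tau\,\sigma_{0}$, writing $\tau=A\omega-\ddc\Theta_{1}$ and integrating by parts with the bound $\Theta_{1}\leq C\ell_{1}$ from (iv) yields
\begin{equation*}
\int_{Y}\ell_{K}\tau\,\sigma_{0}\;\leq\;A\int_{Y}\ell_{K}\,\omega\,\sigma_{0}+C\int_{Y}\ell_{1}\,\tau_{K}\,\sigma_{0}.
\end{equation*}
If $\sigma=\tau_{Q}\,\sigma_{0}$, the analogous identity using $\tau_{Q}=A_{Q}\omega-\ddc\ell_{Q}$ and dropping the non-positive term $-A_{K}\!\int\!\ell_{Q}\,\omega\,\sigma_{0}$ gives
\begin{equation*}
\int_{Y}\ell_{K}\tau_{Q}\,\sigma_{0}\;\leq\;A_{Q}\int_{Y}\ell_{K}\,\omega\,\sigma_{0}+\int_{Y}\ell_{Q}\,\tau_{K}\,\sigma_{0}.
\end{equation*}
In each inequality the first term strictly reduces the number of ``bad'' factors; the second replaces $\tau_{Q}$ by $\tau_{K}$ and $\ell_{K}$ by $\ell_{Q}$, shuffling indices but not increasing $K^{\ast}$.

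To terminate the induction I would order the eliminations as follows: always apply the second identity to a $\tau_{Q}$ with $Q$ \emph{strictly greater} than the current $\ell$-index. This strictly decreases the count of such factors, so after finitely many steps one reaches a state where every remaining $\tau_{Q}$ has index $\leq$ the current $\ell$-index; at that point $\ell_{Q}\leq\ell_{K^{\ast}}$ allows us to dominate pointwise. Residual factors of the form $\tau_{K}$ (when $K$ equals the current $\ell$-index) are handled by the direct computation $\int\ell_{K}\tau_{K}\sigma_{0}=A_{K}\int\ell_{K}\omega\sigma_{0}+\int d\ell_{K}\wedge\dc\ell_{K}\,\sigma_{0}$, and Lemma \ref{lemma_theta_N} together with $\ddc\log\|s\|_{0}^{-2}=\omega$ on $Y\setminus\gdiv s$ expresses $d\ell_{K}\wedge\dc\ell_{K}$ in terms of $\ell_{2K-2}\tau_{1}$ plus a bounded multiple of $\omega$, continuing the recursion. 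After finitely many iterations every integral is reduced to $\int_{Y}\ell_{K^{\ast}}\omega^{d}$ plus constant multiples of $\deg_{L}Y$.

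For the base case I apply Jensen's inequality to the probability measure $\omega^{d}/\deg_{L}Y$ with the concave function $t\mapsto(\log t)^{K^{\ast}}$ (concave on $t\geq e^{K^{\ast}-1}$); since $\log\|s\|_{0}^{-2}\geq e$ throughout $Y$, splitting on whether $\log\|s\|_{0}^{-2}\geq e^{K^{\ast}-1}$ and absorbing the bounded complementary part into $\alpha$ yields
\begin{equation*}
\int_{Y}\ell_{K^{\ast}}\frac{\omega^{d}}{\deg_{L}Y}\;\leq\;\alpha+\beta\log^{K^{\ast}}\!\left(\int_{Y}\log\|s\|_{0}^{-2}\,\frac{\omega^{d}}{\deg_{L}Y}\right),
\end{equation*}
which gives $R=K^{\ast}$ in general. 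In the refined case $K=1$ and $W\subseteq\{0,1\}$ all indices throughout the recursion remain $\leq 1$: the exponent $2K-2=0$ makes $\ell_{2K-2}\equiv 1$, so the Stokes computation closes without introducing larger indices, and Jensen with $K^{\ast}=1$ delivers $R=1$. The main obstacle is the bookkeeping of indices under iterated integration by parts: the ``cyclic'' swap $(\ell_{K},\tau_{Q})\leftrightarrow(\ell_{Q},\tau_{K})$ could in principle loop, and this is prevented by the ordering strategy and by monotone domination $\ell_{Q}\leq\ell_{K^{\ast}}$ once all $\tau_{Q}$ indices are below the current $\ell$-index.
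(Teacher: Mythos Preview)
Your overall plan (reduce $\sigma$ via Stokes to $\omega^d$, then apply Jensen) matches the paper's, but the termination argument has a genuine gap when $K \geq 2$. Concretely, your ``direct computation'' for $\int_Y \ell_K \tau_K \sigma_0$ produces, via Lemma~\ref{lemma_theta_N}, a term of the form $K^2\int_Y \ell_{2K-2}\,\tau_1\,\sigma_0$ (plus good terms). For $K \geq 2$ this has $2K-2 \geq K$ as the new $\ell$-index and still the same number of non-$\omega$ factors, so no complexity measure has decreased. Applying your swap identity to $\tau_1$ (since $1 < 2K-2$) yields a term $\int_Y \ell_1 \tau_{2K-2}\sigma_0$, and a second swap returns you to $\int_Y \ell_{2K-2}\tau_1\sigma_0$: this is exactly the cyclic behavior you flagged, and your ``ordering strategy'' does not prevent it, since the remedy ``$\ell_Q \leq \ell_{K^*}$ allows pointwise domination'' does not eliminate any $\tau_Q$ factor. (Your $K=1$, $W\subseteq\{0,1\}$ case does close, because $\ell_{2K-2}=\ell_0=1$ kills the problematic term immediately.)

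The paper breaks this loop by a different device. After Stokes gives $\frac{i}{2\pi}\int_Y KQ_1\,\ell_{K+Q_1-2}\,\partial\ell_1\wedge\bar\partial\ell_1\,\sigma_1$, instead of rewriting $\partial\ell_1\wedge\bar\partial\ell_1$ via $\tau_1$ it uses the pointwise bound $\ell_{K+Q_1-2}/(\log\|s\|_0^{-2})^2 \leq D/(\log\|s\|_0^{-2})^{3/2}$ together with the exact-derivative identity
\[
\frac{\partial\log\|s\|_0^{-2}}{(\log\|s\|_0^{-2})^{3/2}} \;=\; -2\,\partial\bigl((\log\|s\|_0^{-2})^{-1/2}\bigr).
\]
One further integration by parts (Lemma~\ref{lemma_Stokes}, which needs a separate argument since the integrand is no longer pre-log-log) then bounds this term \emph{directly} by a constant times $\int_Y \omega\sigma_1$, hence by $\deg_L Y$ via cohomology, with no recursion at all. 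That nonrecursive bound for the $\partial\ell\wedge\bar\partial\ell$ contribution is the missing idea: it kills the $\ell$-index outright rather than reshuffling it.
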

Assuming for the moment the propositions, we prove Theorem \ref{thm_height_integral}.
\begin{proof}[Proof of Theorem \ref{thm_height_integral}]
We first observe that $\c1(\overline{L})=\ddc f + \c1(\overline{L}_0)$, so that
\begin{equation}\label{eq21}
	J_p=\sum_{j=0}^{p}\binom{p}{j}\int_{Y}f(\ddc f)^{j}\omega^{d-j}.
\end{equation}
Next, on $Y\setminus \gdiv s$ we write $f=\varphi-\psi$ and $\ddc f=\omega_{\psi}-\omega_{\varphi}$. We get
\begin{displaymath}
	J_{p}=\sum_{j=0}^{p}\binom{p}{j}\sum_{k=0}^{j}(-1)^{j-k}\binom{j}{k}\left(\int_{Y}\varphi\omega_{\psi}^{k}\omega_{\varphi}^{j-k}\omega^{d-j}
	-\int_{Y}\psi\omega_{\psi}^{k}\omega_{\varphi}^{j-k}\omega^{d-j}\right).
\end{displaymath}
The coefficients $\binom{p}{j}\binom{j}{k}$ can be bounded in terms of $\dim X$ (hence independently of $Y$ and the hermitian line bundles). Therefore we are reduced to bound integrals of the kind
\begin{displaymath}
	\int_{Y}\varphi\omega_{\varphi}^{a}\omega_{\psi}^{b}\omega^{c}, \int_{Y}\psi\omega_{\varphi}^{a}\omega_{\psi}^{b}\omega^{c}.
\end{displaymath}
for integers $a,b,c\geq 0$ such that $a+b+c=d$.  Since the differential forms $\omega_{\varphi}^{a}\omega_{\psi}^{b}\omega^{c}$ are semi-positive and $0\leq\varphi,\psi\leq C\ell_{M}$, we have to find upper bounds for the integrals
\begin{displaymath}
	I(M,a,b,\omega^{c})=\int_{Y}\ell_{M}\omega_{\varphi}^{a}\omega_{\psi}^{b}\omega^{c}.
\end{displaymath}
Successively applying Proposition \ref{prop_height_int1}, we reduce our problem to find bounds for integrals $\int_{Y}\ell_{K}\sigma$, where $\sigma$ is a product of (1,1)-forms of type $\tau$ or $\tau_{Q}$, for some integers $K$, $Q\geq 0$. If $f$ is P-singular, then $K=1$ and $\sigma$ is a product of forms of type $\tau_0$ and $\tau_1$. We conclude by Proposition \ref{prop_height_int2}.
\end{proof}
\subsection{Proofs of Proposition \ref{prop_height_int1} and Proposition \ref{prop_height_int2}}
We proceed to prove Proposition \ref{prop_height_int1} and Proposition \ref{prop_height_int2}. The proofs make an extensive use of Stokes' theorem for pre-log-log differential forms. We refer to Proposition \ref{prop_prop_log} for the statement and references. To simplify the exposition, it will be worth having at our disposal the computations summarized in the next lemma.
\begin{lemma}\label{lemma_computation_omega}
Let $a,b\geq 0$ be integers. On $Y\setminus E$ the following equalities hold:
\begin{itemize}
	\item[i.] $$\pd(\omega_{\varphi}^{a}\omega_{\psi}^{b})=aBC(\pd\ell_{N})\omega_{\varphi}^{a-1}\omega_{\psi}^{b}\tau+bBC(\pd\ell_{N})\omega_{\varphi}^{a}
	\omega_{\psi}^{b-1}\tau;$$
	\item[ii.] \begin{displaymath}
		\begin{split}
			\pd\cpd(\omega_{\varphi}^{a}\omega_{\psi}^{b})=& aBC(\pd\cpd\ell_{N})\omega_{\varphi}^{a-1}\omega_{\psi}^{b}\tau\\
			-&aBC(\cpd\ell_{N})\pd(\omega_{\varphi}^{a-1}\omega_{\psi}^{b})\tau\\
			+&bBC(\pd\cpd\ell_{N})\omega_{\varphi}^{a}\omega_{\psi}^{b-1}\tau\\
			-&bBC(\cpd\ell_{N})\pd(\omega_{\varphi}^{a}\omega_{\psi}^{b-1})\tau.
		\end{split}
	\end{displaymath}
\end{itemize}
\end{lemma}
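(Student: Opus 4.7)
The plan is to reduce both identities to careful applications of the graded Leibniz rule, once one knows that certain differential forms are $\pd$- and $\cpd$-closed. Everything takes place on $Y\setminus E$, where $\varphi$, $\psi$, $\ell_N$, $\Theta_1$ and $\omega$ are smooth, so differentiation is classical and no version of Stokes' theorem for pre-log-log forms is needed at this step.

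First I would record the preliminary closedness properties. The smooth Chern form $\omega = \c1(\overline{L}_0)|_Y$ satisfies $\dd\omega = 0$; decomposing by bidegree gives $\pd\omega = \cpd\omega = 0$. The form $\ddc(-\Theta_1) = -(i/2\pi)\pd\cpd\Theta_1$ is $\pd\cpd$-exact in the classical sense on $Y\setminus E$, so both $\pd$ and $\cpd$ annihilate it, using $\pd^{2}=\cpd^{2}=0$. Summing, $\pd\tau = \cpd\tau = 0$. The same observation applied to $\varphi$, together with the Leibniz rule and the definition $\omega_\varphi = \ddc(-\varphi) + BC\ell_N\tau$, yields
\begin{displaymath}
\pd\omega_\varphi = BC(\pd\ell_N)\tau, \qquad \cpd\omega_\varphi = BC(\cpd\ell_N)\tau,
\end{displaymath}
and analogous identities for $\omega_\psi$.

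Identity (i) then follows directly: $\omega_\varphi$ and $\omega_\psi$ have even total degree, so the power rule for $\pd$ applies without signs, and substituting the expressions above while commuting the $(1,0)$-form $\pd\ell_N$ past the various $(1,1)$-forms (all sign contributions being $(-1)^{1\cdot 2}$, hence trivial) produces the two displayed terms. For identity (ii), I would apply $\pd$ to the $\cpd$-analogue of (i), i.e. to the formula $\cpd(\omega_\varphi^{a}\omega_\psi^{b}) = aBC(\cpd\ell_N)\omega_\varphi^{a-1}\omega_\psi^{b}\tau + bBC(\cpd\ell_N)\omega_\varphi^{a}\omega_\psi^{b-1}\tau$. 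The graded Leibniz rule $\pd((\cpd\ell_N)\eta) = (\pd\cpd\ell_N)\eta - (\cpd\ell_N)\pd\eta$ (the minus sign coming from $\cpd\ell_N$ having odd total degree $1$), combined with $\pd\tau = 0$ to pull $\tau$ past the derivation, produces exactly the four terms listed in the lemma.

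The only real obstacle is discipline about graded signs and conscientious verification of the closedness properties assembled in the first step; once those are in place, the identities reduce to the Leibniz rule and nothing deeper is required.
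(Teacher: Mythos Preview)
Your proposal is correct and follows essentially the same approach as the paper: the paper's proof simply says to apply the definitions of $\omega_\varphi$, $\omega_\psi$, Leibniz' rule, and the fact that $\ddc(-\varphi)$, $\ddc(-\psi)$ and $\tau$ are $\pd$- and $\cpd$-closed. Your writeup spells out exactly these ingredients, including the sign bookkeeping, so there is nothing to add.
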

\begin{proof}
It is enough to apply the definition of $\omega_{\varphi}$, $\omega_{\psi}$, Leibniz' rule and the fact that $\ddc(-\varphi)$, $\ddc(-\psi)$ and $\tau$ are $\pd$ and $\cpd$-closed. 
\end{proof}
\begin{proof}[Proof of Proposition \ref{prop_height_int1}]
Write $I=I(M,a,b,\sigma)$. We suppose that $a>0$ and $b\geq 0$. We decompose
\begin{displaymath}
	\omega_{\varphi}^{a}=\ddc(-\varphi)\omega_{\varphi}^{a-1}+BC\ell_{N}\tau\omega_{\varphi}^{a-1}.
\end{displaymath}
Accordingly, the integral $I$ decomposes as $I=I_{1}+BCI_{2}$, with
\begin{displaymath}
	\begin{split}
		I_{1}&=\int_{Y}\ell_{M}\ddc(-\varphi)\omega_{\varphi}^{a-1}
		\omega_{\psi}^{b}\sigma,\\
		I_{2}&=\int_{Y}\ell_{M+N}\omega_{\varphi}^{a-1}\omega_{\psi}^{b}\sigma\tau.
	\end{split}
\end{displaymath}
\textit{Bounding $I_{1}$}. To get bounds on $I_1$ we apply Stokes' theorem for pre-log-log forms. For this, we first recall that $\sigma$ is closed of degree $(t,t)$ and $\omega_{\varphi}$, $\omega_{\psi}$ are of degree $(1,1)$. Then, by Leibniz' rule, we compute
\begin{displaymath}
	\begin{split}
		d\left[\ell_{M}\frac{i}{2\pi}\cpd(-\varphi)\omega_{\varphi}^{a-1}
		\omega_{\psi}^{b}\sigma\right]=&
		\ell_{M}\ddc(-\varphi)\omega_{\varphi}^{a-1}\omega_{\psi}^{b}\sigma\\
		+&(\pd\ell_{M})\frac{i}{2\pi}\cpd(-\varphi)\omega_{\varphi}^{a-1}
		\omega_{\psi}^{b}\sigma\\
		-&\ell_{M}\frac{i}{2\pi}\cpd(-\varphi)\pd(\omega_{\varphi}^{a-1}\omega_{\psi}^{b})
		\sigma,
	\end{split}
\end{displaymath}
where we used that $\ddc=i\pd\cpd/2\pi$. By Stokes' theorem, we find $I_{1}=I_{1,1}+I_{1,2}$, where
\begin{displaymath}
	\begin{split}
		I_{1,1}&=-\frac{i}{2\pi}\int_{Y}(\pd\ell_{M})\cpd(-\varphi)\omega_{\varphi}^{a-1}\omega_{\psi}^{b}
		\sigma,\\
		I_{1,2}&=\frac{i}{2\pi}\int_{Y}\ell_{M}\cpd(-\varphi)\pd(\omega_{\varphi}^{a-1}
		\omega_{\psi}^{b})\sigma.
	\end{split}
\end{displaymath}
\textit{Bounding $I_{1,1}$}. Again we apply Stokes' theorem. By Lebniz' rule we have
\begin{displaymath}
	\begin{split}
		d\left[-\frac{i}{2\pi}(-\varphi)(\pd\ell_{M})\omega_{\varphi}^{a-1}
		\omega_{\psi}^{b}\sigma\right]=&
		\frac{i}{2\pi}(\pd\ell_{M})\cpd(-\varphi)\omega_{\varphi}^{a-1}
		\omega_{\psi}^{b}\sigma\\
		+&\frac{i}{2\pi}(-\varphi)(\pd\cpd\ell_{M})\omega_{\varphi}^{a-1}
		\omega_{\psi}^{b}\sigma\\
		+&\frac{i}{2\pi}(-\varphi)(\pd\ell_{M})\cpd(\omega_{\varphi}^{a-1}\omega_{\psi}^{b})\sigma.
	\end{split}
\end{displaymath}
Therefore, by Stokes' theorem, we get $I_{1,1}=I_{1,1,1}+I_{1,1,2}$, where
\begin{displaymath}
	\begin{split}
		I_{1,1,1}&=\int_{Y}\varphi\ddc(-\ell_{M})\omega_{\varphi}^{a-1}
		\omega_{\psi}^{b}\sigma,\\
		I_{1,1,2}&=\frac{i}{2\pi}\int_{Y}(-\varphi)(\pd\ell_{M})\cpd(\omega_{\varphi}^{a-1}
		\omega_{\psi}^{b})\sigma.
	\end{split}
\end{displaymath}
\textit{Bounding $I_{1,1,1}$}.
Recall that, by assumption, there exists a constant $A_{M}>0$ such that $\tau_{M}=\ddc(-\ell_{M})+A_{M}\omega\geq 0$. Then, since $\omega_{\varphi}^{a-1}\omega_{\psi}^{b}\omega\sigma$ is a semi-positive form and $0\leq\varphi\leq C\ell_{M}$, we have a bound
\begin{equation}\label{eq:bound1}
	\begin{split}
		I_{1,1,1}\leq&\int_{Y}\varphi (\ddc(-\ell_M)+A_{M}\omega)\omega_{\varphi}^{a-1}
		\omega_{\psi}^{b}\sigma\\
		\leq&C\int_{Y}\ell_{M}\omega_{\varphi}^{a-1}\omega_{\psi}^{b}\sigma\tau_{M}\\
		=&CI(M,a-1,b,\sigma\tau_{M}).
	\end{split}
\end{equation}
\textit{Bounding $I_{1,1,2}$}. To bound the integral $I_{1,1,2}$ we first appeal to Lemma \ref{lemma_computation_omega} to develop
\begin{displaymath}
	\cpd(\omega_{\varphi}^{a-1}\omega_{\psi}^{b})=(a-1)BC(\cpd\ell_{N})\omega_{\varphi}^{a-2}\omega_{\psi}^{b}\tau
	+bBC(\cpd\ell_{N})\omega_{\varphi}^{a-1}\omega_{\psi}^{b-1}\tau.
\end{displaymath}
Then we write $I_{1,1,2}=(a-1)BCI_{1,1,2}^{(1)}+bBCI_{1,1,2}^{(2)}$, where
\begin{displaymath}
	\begin{split}
		I_{1,1,2}^{(1)}&=\frac{i}{2\pi}\int_{Y}(-\varphi)(\pd\ell_{M})(\cpd\ell_{N})\omega_{\varphi}^{a-2}\omega_{\psi}^{b}
		\sigma\tau,\\
		I_{1,1,2}^{(2)}&=\frac{i}{2\pi}\int_{Y}(-\varphi)(\pd\ell_{M})(\cpd\ell_{N})\omega_{\varphi}^{a-1}
		\omega_{\psi}^{b-1}\sigma\tau.
	\end{split}
\end{displaymath}
In these integrals we observe that
\begin{equation}\label{eq22}
	i\pd\ell_{M}\wedge\cpd\ell_{N}=NM\ell_{N+M-2}i\pd\ell_{1}\wedge\cpd\ell_{1},
\end{equation}
which is a semi-positive differential form. Since $\varphi\geq 0$ and $\omega_{\varphi}$, $\omega_{\psi}$, $\sigma$ and $\tau$ are also semi-positive, we find $I_{1,1,2}^{(1)}\leq 0$ and $I_{1,1,2}^{(2)}\leq 0$. This shows
\begin{equation}\label{eq:bound2}
	I_{1,1,2}=(a-1)BCI_{1,1,2}^{(1)}+bBCI_{1,1,2}^{(2)}\leq 0.
\end{equation}
From (\ref{eq:bound1}) and (\ref{eq:bound2}) we conclude with the bound
\begin{equation}\label{eq:bound3}
	I_{1,1}=I_{1,1,1}+I_{1,1,2}\leq CI(M,a-1,b,\sigma\tau_{M}).
\end{equation}
\textit{Bounding $I_{1,2}$}. As before we proceed by successive applications of Stokes' theorem. First of all we compute
\begin{displaymath}
	\begin{split}
		\dd\left[\frac{i}{2\pi}\ell_{M}(-\varphi)\pd(\omega_{\varphi}^{a-1}\omega_{\psi}^{b})
		\sigma\right]&=
		\frac{i}{2\pi}\ell_{M}\cpd(-\varphi)\pd(\omega_{\varphi}^{a-1}\omega_{\psi}^{b})\sigma\\
		&+\frac{i}{2\pi}(-\varphi)(\cpd\ell_{M})\pd(\omega_{\varphi}^{a-1}\omega_{\psi}^{b})\sigma\\
		&+\frac{i}{2\pi}(-\varphi)\ell_{M}\cpd\pd(\omega_{\varphi}^{a-1}\omega_{\psi}^{b})\sigma.
	\end{split}
\end{displaymath}
By Stokes' theorem we find $I_{1,2}=I_{1,2,1}+I_{1,2,2}$, with
\begin{displaymath}
	\begin{split}
		I_{1,2,1}&=\frac{i}{2\pi}\int_{Y}\varphi(\cpd\ell_{M})\pd(\omega_{\varphi}^{a-1}\omega_{\psi}^{b})
		\sigma\\
		I_{1,2,2}&=\frac{i}{2\pi}\int_{Y}(-\varphi)\ell_{M}\pd\cpd(\omega_{\varphi}^{a-1}\omega_{\psi}^{b})
		\sigma.
	\end{split}
\end{displaymath}
\textit{Bounding $I_{1,2,1}$}. By Lemma \ref{lemma_computation_omega} we get the expansion
\begin{displaymath}
	\pd(\omega_{\varphi}^{a-1}\omega_{\psi}^{b})=(a-1)BC(\pd\ell_{N})\omega_{\varphi}^{a-2}\omega_{\psi}^{b}\tau
	+bBC(\pd\ell_{N})\omega_{\varphi}^{a-1}\omega_{\psi}^{b-1}\tau.
\end{displaymath}
Accordingly, we decompose $I_{1,2,1}=(a-1)BCI_{1,2,1}^{(1)}+bBCI_{1,2,1}^{(2)}$, where
\begin{displaymath}
	\begin{split}
		I_{1,2,1}^{(1)}&=-\frac{i}{2\pi}\int_{Y}\varphi(\pd\ell_{N})(\cpd\ell_{M})\omega_{\varphi}^{a-2}
		\omega_{\psi}^{b}\sigma\tau\\
		I_{1,2,1}^{(2)}&=-\frac{i}{2\pi}\int_{Y}\varphi(\pd\ell_{N})(\cpd\ell_{M})\omega_{\varphi}^{a-1}
		\omega_{\psi}^{b-1}\sigma\tau.
	\end{split}
\end{displaymath}
By (\ref{eq22}) the differential form $i\pd\ell_{N}\wedge\cpd\ell_{M}$ is semi-positive. Since $\varphi\geq 0$ and $\omega_{\varphi}$, $\omega_{\psi}$, $\sigma$ and $\tau$ are also semi-positive, we have $I_{1,2,1}^{(1)},I_{1,2,1}^{(2)}\leq 0$. This proves
\begin{equation}\label{eq:bound4}
	I_{1,2,1}=(a-1)BCI_{1,2,1}^{(1)}+bBCI_{1,2,1}^{(2)}\leq 0.
\end{equation}
\textit{Bounding $I_{1,2,2}$}. To bound the integral $I_{1,2,2}$ we first recall from Lemma \ref{lemma_computation_omega}
\begin{displaymath}
	\begin{split}
		\pd\cpd(\omega_{\varphi}^{a-1}\omega_{\psi}^{b})&=
		(a-1)BC(\pd\cpd\ell_{N})\omega_{\varphi}^{a-2}\omega_{\psi}^{b}\tau\\
		&-(a-1)BC(\cpd\ell_{N})\pd(\omega_{\varphi}^{a-2}\omega_{\psi}^{b})\tau\\
		&+bBC(\pd\cpd\ell_{N})\omega_{\varphi}^{a-1}\omega_{\psi}^{b-1}\tau\\
		&-bBC(\cpd\ell_{N})\pd(\omega_{\varphi}^{a-1}\omega_{\psi}^{b-1})\tau.
	\end{split}
\end{displaymath}
Corresponding to this expansion, we write $I_{1,2,2}=(a-1)BCI_{1,2,2}^{(1)}+(a-1)BCI_{1,2,2}^{(2)}+bBCI_{1,2,2}^{(3)}+bBCI_{1,2,2}^{(4)}$, with the obvious notations for the integrals $I_{1,2,2}^{(j)}$ (see below).\\
\textit{Bounding $I_{1,2,2}^{(1)}$}. We have
\begin{displaymath}
	I_{1,2,2}^{(1)}=\int_{Y}\varphi\ell_{M}\ddc(-\ell_{N})\omega_{\varphi}^{a-2}\omega_{\psi}^{b}
	\sigma\tau.
\end{displaymath}
Recall that for some constant $A_{N}>0$ the differential form $\tau_{N}=\ddc(-\ell_{N})+A_{N}\omega$ is semi-positive. Moreover $0\leq\varphi\leq C\ell_{M}$. Thus we find the bound
\begin{equation}\label{eq:bound5}
	\begin{split}
		I_{1,2,2}^{(1)}\leq& C\int_{Y}\ell_{2M}\omega_{\varphi}^{a-2}\omega_{\psi}^{b}\sigma\tau\tau_{N}\\
		=&CI(2M,a-2,b,\sigma\tau\tau_{N}).
	\end{split}
\end{equation}
\textit{Bounding $I_{1,2,2}^{(2)}$}. We write
\begin{displaymath}
	I_{1,2,2}^{(2)}=\frac{i}{2\pi}\int_{Y}\varphi\ell_{M}(\cpd\ell_{N})\pd(\omega_{\varphi}^{a-1}\omega_{\psi}^{b})\sigma\tau.
\end{displaymath}
By (\ref{eq22}) and Lemma \ref{lemma_computation_omega}, $i\pd(\omega_{\varphi}^{a-1}\omega_{\psi}^{b})\wedge\cpd\ell_{N}$ is semi-positive, so that
\begin{equation}\label{eq:bound6}
 	I_{1,2,2}^{(2)}\leq 0.
\end{equation}
\textit{Bounding $I_{1,2,2}^{(3)}$}. We have
\begin{displaymath}
	I_{1,2,2}^{(3)}=\int_{Y}\varphi\ell_{M}\ddc(-\ell_{N})\omega_{\varphi}^{a-1}\omega_{\psi}^{b}
	\sigma\tau.
\end{displaymath}
Reasoning as for $I_{1,2,2}^{(1)}$, we arrive to
\begin{equation}\label{eq:bound7}
	\begin{split}
		I_{1,2,2}^{(3)}\leq& C\int_{Y}\ell_{2M}\omega_{\varphi}^{a-1}\omega_{\psi}^{b-1}\sigma\tau\tau_{N}\\
		=&CI(2M,a-1,b-1,\sigma\tau\tau_{N}).
	\end{split}
\end{equation}
\textit{Bounding $I_{1,2,2}^{(4)}$}. We finally bound the integral
\begin{displaymath}
	I_{1,2,2}^{(4)}=\frac{i}{2\pi}\int_{Y}\varphi\ell_{M}(\cpd\ell_{N})\pd(\omega_{\varphi}^{a-1}
	\omega_{\psi}^{b-1})\sigma\tau.
\end{displaymath}
Again by (\ref{eq22}) and Lemma \ref{lemma_computation_omega}, the differential form $i\pd(\omega_{\varphi}^{a-1}\omega_{\psi}^{b-1})\wedge(\cpd\ell_{N})$ is semi-positive, so that
\begin{equation}\label{eq:bound8}
 	I_{1,2,2}^{(4)}\leq 0.
\end{equation}
We now conclude with a bound for $I_{1,2}$, since the inequalities (\ref{eq:bound4})--(\ref{eq:bound8}) yield
\begin{equation}\label{eq:bound9}
	\begin{split}
		I_{1,2}\leq I_{1,2,2}\leq&(a-1)BC^{2}I(2M,a-2,b,\sigma\tau\tau_{N})\\
		+&bBC^{2}I(2M,a-1,b-1,\sigma\tau\tau_{N}).
	\end{split}
\end{equation}
As for $I_{1}$, the bounds (\ref{eq:bound3}) and (\ref{eq:bound9}) lead to
\begin{equation}\label{eq:bound10}
	\begin{split}
		I_{1}=I_{1,1}+I_{1,2}\leq&CI(M,a-1,b,\sigma\tau_{M})\\
		+&bBC^{2}I(2M,a-1,b-1,\sigma\tau\tau_{N})\\
		+&(a-1)BC^{2}I(2M,a-2,b,\sigma\tau\tau_{N}).
	\end{split}
\end{equation}
\textit{Bounding $I_{2}$}. We have
\begin{equation}\label{eq:bound11}
		I_{2}=\int_{Y}\ell_{M+N}\omega_{\varphi}^{a-1}\omega_{\psi}^{b}\sigma\tau=I(M+N,a-1,b,\sigma\tau)
\end{equation}
To conclude we put (\ref{eq:bound10}) and (\ref{eq:bound11}) together and we get
\begin{displaymath}
	\begin{split}
		I=I_{1}+BCI_{2}\leq& BCI(M+N,a-1,b,\sigma\tau)\\
		+&CI(M,a-1,b,\sigma\tau_{M})\\
		+&bBC^{2}I(2M,a-1,b-1,\sigma\tau\tau_{N})\\
		+&(a-1)BC^{2}I(2M,a-2,b,\sigma\tau\tau_{N}),
	\end{split}
\end{displaymath}
as was to be shown.

Suppose now that $f$ is P-singular (so that $N=0$ and $M=1$). We can write
\begin{displaymath}
	\begin{split}
		&\omega_{\varphi}=\ddc(-\widetilde{\varphi})+ABC\omega,\\
		 &\omega_{\psi}=\ddc(-\widetilde{\psi})+ABC\omega,
	\end{split}
\end{displaymath}
where $\widetilde{\varphi}=\varphi+BC\Theta_1$ and $\widetilde{\psi}=\psi+BC\Theta_1$. The same method followed above allows to establish the bound
\begin{displaymath}
	\begin{split}
		I(1,a,b,\sigma)&\leq ABC I(1,a-1,b,\sigma\tau_0)\\
		&+(B+1)C I(1,a-1,b,\sigma\tau_1).
	\end{split}
\end{displaymath}
The details are left to the reader.
\end{proof}
\begin{proof}[Proof of Proposition \ref{prop_height_int2}]
Let $\sigma$ be a $(d,d)$-form which is a product of (1,1)-forms of type $\tau$ or $\tau_{Q}$, $Q\geq 0$. We write $\sigma=\tau^{s}\sigma_{1}$, for some $s\geq 0$ and $\sigma_1$ a product of forms of type $\tau_Q$. Define
\begin{equation}\label{eq:bound12}
	J(K,s,\sigma_1)=\int_{Y}\ell_{K}\tau^{s}\sigma_1.
\end{equation}
First of all we show how to reduce $s$ to $0$. The argument is by induction. If $s>0$, recalling that $\tau=\ddc(-\Theta_1)+A\omega$ we write 
\begin{equation}\label{eq:bound13}
	\tau^{s}=\ddc(-\Theta_1)\tau^{s-1}+A\omega\tau^{s-1}.
\end{equation}
Since $\tau_0=\omega$, we get from the definition of $J$ in (\ref{eq:bound12}) and from (\ref{eq:bound13})
\begin{equation}\label{eq:bound14}
	J(K,s,\sigma_1)=AJ(K,s-1,\sigma_1\tau_0)+\int_{Y}\ell_{K}\ddc(-\Theta_1)\tau^{s-1}\sigma_1.
\end{equation}
We next bound the integral on the right hand side of (\ref{eq:bound14}). Since $\tau$ and $\sigma_1$ are $\pd$ and $\cpd$-closed, applying Stokes' theorem for pre-log-log forms we get
\begin{equation}\label{eq:bound15}
	\int_{Y}\ell_{K}\ddc(-\Theta_1)\tau^{s-1}\sigma_1=\int_{Y}\Theta_1\ddc(-\ell_{K})\tau^{s-1}\sigma_1.
\end{equation}
Now $\ddc(-\ell_{K})=\tau_{K}-A_{K}\tau_0$, the forms $\tau_0$, $\tau_{K}$ are semi-positive and $0<\Theta_1\leq C\ell_{1}$, so that from (\ref{eq:bound15}) and the definition of $J$ we derive
\begin{equation}\label{eq:bound16}
	\int_{Y}\ell_{K}\ddc(-\Theta_1)\tau^{s-1}\sigma_1\leq CJ(1,s-1,\sigma_1\tau_{K}).
\end{equation}
Observe that because $\ell_{K}=\ell_{1}^{K}$ and $\ell_{1}\geq 1$, the inequality $\ell_{1}\leq\ell_{K}$ holds. Therefore
\begin{equation}\label{eq:bound16bis}
	J(1,s-1,\sigma_{1}\tau_{K})\leq J(K,s-1,\sigma_{1}\tau_{K}).
\end{equation}
From (\ref{eq:bound14})--(\ref{eq:bound16bis}) we arrive to
\begin{displaymath}
	J(K,s,\sigma_1)\leq AJ(K,s-1,\sigma_1\tau_0)+CJ(K,s-1,\sigma_1\tau_{K}).
\end{displaymath}
Hence we may suppose that $s=0$, so that $\sigma$ is a product $\tau_{Q_1}\ldots\tau_{Q_d}$. We have to deal with
\begin{displaymath}
	L(K,Q_1,\ldots,Q_d)=\int_{Y}\ell_{K}\tau_{Q_1}\ldots\tau_{Q_d}.
\end{displaymath}
Again by an inductive argument we show how to reduce all the integers $Q_i$ to 0. Suppose that $Q_1>0$. Then we write $\tau_{Q_1}=\ddc(-\ell_{Q_1})+A_{Q_1}\tau_0$, so that
\begin{equation}\label{eq:bound17}
	L(K,Q_1,\ldots,Q_d)=A_{Q_1}L(K,0,Q_2,\ldots,Q_d)+\int_{Y}\ell_{K}\ddc(-\ell_{Q_1})\sigma_1
\end{equation}
where $\sigma_1=\tau_{Q_2}\ldots\tau_{Q_d}$. We study the integral on the right hand side of (\ref{eq:bound17}). Because $\sigma_1$ is $\pd$ and $\cpd$-closed, by Stokes' theorem for pre-log-log forms we find
\begin{equation}\label{eq:bound18}
	\begin{split}
		L_2:=\int_{Y}\ell_{K}\ddc(-\ell_{Q_1})\sigma_1&=\frac{i}{2\pi}\int_{Y}(\pd\ell_K)(\cpd\ell_{Q_1})\sigma_1\\
		&=\frac{i}{2\pi}\int_{Y}KQ_{1}\ell_{K+Q_{1}-2}(\pd\ell_{1})(\cpd\ell_1)\sigma_1.
	\end{split}
\end{equation}
By the very definition of $\ell_1$, we have
\begin{equation}\label{eq:bound19}
	\pd\ell_1\wedge\cpd\ell_1=\frac{\pd\log\|s\|_{0}^{-2}\wedge\cpd\log\|s\|_{0}^{-2}}{(\log\|s\|_{0}^{-2})^{2}}.
\end{equation}
On the other hand, since $\|s\|_{0}^{2}\leq e^{-e}$, there exists a constant $D$ depending only on $K+Q_{1}-2$ such that
\begin{equation}\label{eq25}
	\frac{\ell_{K+Q_{1}-2}}{(\log\|s\|_{0}^{-2})^{2}}=\frac{(\log\log\|s\|_{0}^{-2})^{K+Q_{1}-2}}{(\log\|s\|_{0}^{-2})^{2}}
	\leq D\frac{1}{(\log\|s\|_{0}^{-2})^{3/2}}.
\end{equation}
Moreover observe that
\begin{equation}\label{eq26}
	\frac{1}{(\log\|s\|_{0}^{-2})^{3/2}}\pd\log\|s\|_{0}^{-2}=-2\pd(\log\|s\|_{0}^{-2})^{-1/2}.
\end{equation}
Because $i\pd\log\|s\|_{0}^{-2}\wedge\cpd\log\|s\|_{0}^{-2}$ and $\sigma_1$ are semi-positive, combining (\ref{eq:bound18})--(\ref{eq26}) we get
\begin{equation}\label{eq:bound20}
	L_2\leq-2\frac{i}{2\pi}\int_{Y}KQ_{1}D(\pd(\log\|s\|_{0}^{-2})^{-1/2})(\cpd\log\|s\|_{0}^{-2})\sigma_1.
\end{equation}
By Lemma \ref{lemma_Stokes} below, we can apply Stokes' theorem to the right hand side of (\ref{eq:bound20}) and obtain
\begin{equation}\label{eq:bound21}
	L_2\leq 2\int_{Y}KQ_{1}D(\log\|s\|_{0}^{-2})^{-1/2}\omega\sigma_{1}.
\end{equation}
By the hypothesis on $\|\cdot\|_0$, $(\log\|s\|_{0}^{-2})^{-1/2}\leq 1$. Using the positivity of $\omega\sigma_1$, from (\ref{eq:bound21}) we derive
\begin{equation}\label{eq:bound22}
	L_{2}\leq2KQ_{1}D\int_{Y}\omega\sigma_1.
\end{equation}
Now recall that $\omega$ is $\pd$ and $\cpd$-closed, and $\tau_{Q}=\ddc(-\ell_{Q})+A_{Q}\omega$, so that
\begin{displaymath}
	\omega\sigma_1=A_{Q_2}\ldots A_{Q_{d}}\omega^{d}+\ddc\sigma_2
\end{displaymath}
for some pre-log-log form $\sigma_2$. Applied to (\ref{eq:bound22}) this provides
\begin{equation}\label{eq:bound23}
	L_2\leq 2Q_{1}A_{Q_2}\ldots A_{Q_d}KD\int_{Y}\omega^{d}=2Q_{1}A_{Q_2}\cdot\ldots\cdot A_{Q_d}KD\deg_{L}Y.
\end{equation}
The identity (\ref{eq:bound17}) together with (\ref{eq:bound23}) imply the inequality
\begin{displaymath}
	L(K,Q_{1},\ldots,Q_{d})\leq A_{Q_{1}}L(K,0,Q_{2},\ldots,Q_{d})+2Q_{1}A_{Q_{2}}\cdot\ldots\cdot A_{Q_{d}}KD\gdeg_{L}Y.
\end{displaymath}
Successively repeating this argument, we reduce $Q_{2},\ldots,Q_{d}$ to 0. Hence it remains to treat the integrals
\begin{displaymath}
	M(K)=\int_{Y}\ell_{K}\omega^{d}.
\end{displaymath}
We want to apply Jensen's inequality. First of all, rewrite
\begin{displaymath}
	M(K)=(\deg_{L}Y)\int_{Y}\ell_{K}\frac{\omega^{d}}{\deg_{L}Y},
\end{displaymath}
so that $\omega^{d}/\deg_{L}Y$ defines a probability measure on $Y$. Secondly, the function $x\mapsto(\log x)^{K}$ is concave on $]e^{K-1},+\infty[$, because
\begin{displaymath}
	\frac{d^{2}}{dx^{2}}(\log x)^{K}=\frac{K}{x^{2}}(\log x)^{K-2}(K-1-\log x).
\end{displaymath}
Since $\|s\|_{0}^{-2}\geq e^{e}$, in particular $e^{K+1}\log\|s\|_{0}^{-2}>e^{K-1}$. We use that $\log^{K}$ is an increasing function and apply Jensen's inequality: 
\begin{displaymath}
	\begin{split}
		M(K)\leq& (\deg_{L}Y)\int_{Y}\log^{K}(e^{K+1}\log\|s\|_{0}^{-2})\frac{\omega^{d}}{\deg_{L}Y}\\
		\leq&(\deg_{L}Y)\log^{K}\left(e^{K+1}\int_{Y}\log\|s\|_{0}^{-2}\frac{\omega^{d}}{\deg_{L}Y}\right).
	\end{split}
\end{displaymath}
By the trivial inequality $x+y\leq 2xy$ for real $x,y\geq 1$, we finally arrive to
\begin{displaymath}
	M(K)\leq(\deg_{L}Y)(2K+2)^{K}\log^{K}\left(\int_{Y}\log\|s\|_{0}^{-2}\frac{\omega^{d}}{\deg_{L}Y}\right).
\end{displaymath}
This concludes the proof of the proposition, except for the fact that we can take $R=1$ when $K=1$ and $\sigma$ is a product of forms $\tau_0$ and $\tau_1$. This last case is similarly treated and left to the reader.
\end{proof}
\begin{lemma}\label{lemma_Stokes}
Let $\mu$ be a closed pre-log-log (d-1,d-1)-form on $Y$, with singularities along $\gdiv s$. Then we have
\begin{displaymath}
	-\frac{i}{2\pi}\int_{Y}\pd\left(\frac{1}{(\log\|s\|_{0}^{-2})^{1/2}}\right)\wedge\cpd\log\|s\|_{0}^{-2}\mu=
	\int_{Y}\frac{1}{(\log\|s\|_{0}^{-2})^{1/2}}\wedge\omega\mu.
\end{displaymath}
\end{lemma}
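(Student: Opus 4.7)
The plan is to reinterpret the claim as the vanishing $\int_{Y} dd^c w\wedge\mu = 0$ for the auxiliary function $w=2(\log\|s\|_0^{-2})^{1/2}$, and then to establish that vanishing via a Sard-type exhaustion of $Y\setminus\gdiv s$. Set $v=\log\|s\|_0^{-2}\geq e$ and $u=v^{-1/2}$. Since $\pd w = v^{-1/2}\pd v$, one has $d^c w = u\, d^c v$ on $Y\setminus\gdiv s$, and a direct computation using $dd^c v=\omega$ there yields
\begin{displaymath}
dd^c w \;=\; \frac{i}{2\pi}\,\pd u\wedge\cpd v \;+\; u\,\omega \;=\; \frac{i}{2\pi}\,\pd\!\left(\frac{1}{(\log\|s\|_0^{-2})^{1/2}}\right)\wedge\cpd\log\|s\|_0^{-2} \;+\; \frac{\omega}{(\log\|s\|_0^{-2})^{1/2}}.
\end{displaymath}
Integrated against $\mu$, the desired identity is therefore equivalent to $\int_Y dd^c w\wedge\mu=0$.

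A direct appeal to Stokes for pre-log-log forms (Proposition \ref{prop_prop_log}) is not available, because $w\sim (\log|z|^{-1})^{1/2}$ near $\gdiv s$ grows faster than any log-log function, so $d^c w\wedge\mu$ is not a pre-log-log form. Instead I would use an explicit exhaustion: by Sard's theorem choose regular values $\epsilon_n\downarrow 0$ of $\|s\|_0$ and set $Y_n=\{\|s\|_0\geq\epsilon_n\}$, a compact submanifold with smooth boundary contained in $Y\setminus\gdiv s$. On $Y_n$ the forms $w$, $v$ and $\mu$ are all smooth, so classical Stokes combined with $d\mu=0$ gives
\begin{displaymath}
\int_{Y_n} dd^c w\wedge\mu \;=\; \int_{\pd Y_n} d^c w\wedge\mu \;=\; v_n^{-1/2}\int_{\pd Y_n} d^c v\wedge\mu \;=\; v_n^{-1/2}\int_{Y_n}\omega\wedge\mu,
\end{displaymath}
where $v_n:=2\log\epsilon_n^{-1}$, the middle equality uses the constancy $v\equiv v_n$ on $\pd Y_n$ together with $d^c w = u\,d^c v$, and the last is another application of Stokes with $dd^c v=\omega$ and $d\mu=0$.

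It then remains to pass to the limit. Since $\int_Y\omega\wedge\mu$ is finite (pre-log-log forms are locally integrable, Proposition \ref{prop_prop_log}), the factor $v_n^{-1/2}\to 0$ forces the right-hand side to zero. For the left-hand side, $u\,\omega$ is bounded and $v^{-3/2}\,i\pd v\wedge\cpd v$ is locally integrable on $Y$: in any chart adapted to $\gdiv s$ where the divisor is cut out by $z_1\cdots z_r=0$, it is dominated by a finite sum of expressions of the form $(\log|z_k|^{-1})^{-3/2}|z_k|^{-2}\,|dz_k\wedge d\bar z_k|$, whose radial part $\int_0^{1/e}\frac{dr}{r(\log r^{-1})^{3/2}}$ converges. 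Dominated convergence then gives $\int_{Y_n}dd^c w\wedge\mu\to \int_Y dd^c w\wedge\mu$, which must therefore vanish. The genuinely delicate ingredient is this integrability bookkeeping together with the disappearance of the boundary term; what makes both work is the substitution $w=2v^{1/2}$, which produces the explicit factor $v_n^{-1/2}$ on $\pd Y_n$ and thereby kills the Poincar\'e-Lelong-type contribution that would otherwise obstruct the integration by parts.
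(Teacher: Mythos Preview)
Your argument is correct and substantially cleaner than the paper's. The paper proceeds by choosing a partition of unity $\{\chi_\alpha\}\cup\{\chi_\beta\}$ subordinate to a covering by adapted charts near $\gdiv s$ and charts away from it, applies ordinary Stokes on the $\chi_\beta$-pieces, and on each $\chi_\alpha$-piece deletes explicit $\varepsilon$-polydisc tubes around the local branches of $\gdiv s$, then shows by hand that the boundary contributions over $\pd^{*}B_{\varepsilon}^{\alpha}(T_k)$ tend to zero using the factor $(\log\|s\|_0^{-2})^{-1/2}$ and fiberwise estimates. Your substitution $w=2(\log\|s\|_0^{-2})^{1/2}$ globalizes this: the two applications of Stokes on the Sard sublevel set $Y_n$ collapse the whole boundary analysis to the single factor $v_n^{-1/2}\to 0$ multiplying the bounded quantity $\int_{Y_n}\omega\wedge\mu$, so no partition of unity or coordinate bookkeeping is needed.

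One comment on the integrability step: your description ``dominated by a finite sum of expressions $(\log|z_k|^{-1})^{-3/2}|z_k|^{-2}|dz_k\wedge d\bar z_k|$'' treats only the diagonal terms of $v^{-3/2}\,\pd v\wedge\cpd v$. The off-diagonal terms $v^{-3/2}\,\frac{dz_j\wedge d\bar z_k}{z_j\bar z_k}$ with $j\neq k$ are also present; they are handled by splitting the power of $v$ as $v^{-3/2}\leq(\log|z_j|^{-2})^{-1/2}(\log|z_k|^{-2})^{-1}$ and then wedging with the corresponding $d\zeta$-components of $\mu$, which supplies the missing factors of $\log|z_j|^{-1}$ and $\log|z_k|^{-1}$. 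Equivalently, one may rewrite $v^{-3/2}\,i\pd v\wedge\cpd v = v^{1/2}\,i\pd\ell_1\wedge\cpd\ell_1$ and observe that $i\pd\ell_1\wedge\cpd\ell_1\wedge\mu$ is a log-log growth top form, whose integrability survives multiplication by $(\log|z_k|^{-1})^{1/2}$ exactly as in the paper's estimate (\ref{eq:Stokes1}). The conclusion is unaffected; this is just a point where a reader might want one more line.
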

\begin{proof}
The proof of the lemma follows the ideas of Lemma 7.36 in \cite{BKK}.

First of all, as for log-log growth differential forms, the form
\begin{displaymath}
	\pd\left(\frac{1}{(\log\|s\|_{0}^{-2})^{1/2}}\right)\wedge\cpd\log\|s\|^{-2}_{0}\mu=-\frac{1}{2}(\log\|s\|_{0}^{-2})^{1/2}\pd\ell_1\wedge\cpd\ell_1\mu
\end{displaymath}
is locally integrable on $Y$. Indeed, after localizing to an analytic chart adapted to $\gdiv s$ and changing to polar coordinates, we are reduced to point out that, for every $0<\delta<1/2$, we have an estimate
\begin{equation}\label{eq:Stokes1}
	\int_{0}^{\varepsilon/e}(\log\log t^{-1})^{N}(\log t^{-1})^{1/2}\frac{\dd t}{t(\log t)^2}\prec\int_{0}^{\varepsilon/e}\frac{\dd t}{t(\log t^{-1})^{3/2-\delta}}<+\infty.
\end{equation}
Define
\begin{displaymath}
	I=-\frac{i}{2\pi}\int_{Y}\pd\left(\frac{1}{(\log\|s\|_{0}^{-2})^{1/2}}\right)\wedge\cpd\log\|s\|_{0}^{-2}\mu.
\end{displaymath}
We construct a finite open covering $\lbrace (V_{\alpha}^{\prime};\lbrace z_{i}^{\alpha}\rbrace_{i})\rbrace_{\alpha}$ of $\gdiv s$, by adapted analytic open charts. Suppose that via the coordinates $\lbrace z_{i}^{\alpha}\rbrace_{\alpha}$, $V_{\alpha}^{\prime}$ is identified with $\Delta_{1/e}^{r}\times\Delta_{1/e}^{s}$ ($r=r(\alpha)$), so that $V_{\alpha}^{\prime}\setminus D$ corresponds to $\Delta_{1/e}^{r\ast}\times\Delta_{1/e}^{s}$. We further assume that in these coordinates we can write $s=z_{1}^{\alpha}\cdot\ldots\cdot z_{r}^{\alpha}u_{\alpha}$, where $u_{\alpha}$ is a holomorphic unit. After possibly adding a finite number of adapted analytic charts to $\lbrace (V_{\alpha}^{\prime};\lbrace z_{i}^{\alpha}\rbrace_{i})\rbrace_{\alpha}$, the open subsets $V_{\alpha}\subset\subset V_{\alpha}^{\prime}$ identified with $\Delta_{1/2e}^{r}\times\Delta_{1/2e}^{s}$ via the coordinates $\lbrace z_{i}^{\alpha}\rbrace_{i}$ still cover $\gdiv s$. Write $\Omega=\cup_{\alpha} V_{\alpha}$. Take a finite open covering $\lbrace V_{\beta}\rbrace_{\beta}$ of $X\setminus\Omega$, so that $\overline{V_{\beta}}\cap(\gdiv s)=\emptyset$ for all $\beta$. Let $\lbrace\chi_{\alpha}\rbrace_{\alpha}\cup\lbrace\chi_{\beta}\rbrace_{\beta}$ be a partition of unity subordinate to $\lbrace V_{\alpha}\rbrace_{\alpha}\cup\lbrace V_{\beta}\rbrace_{\beta}$, with $\chi_{\gamma}$ vanishing outside $V_{\gamma}$ for all $\gamma=\alpha,\beta$. We can expand
\begin{displaymath}
	I=\sum_{\gamma}I_{\gamma},
\end{displaymath}
where
\begin{displaymath}
	I_{\gamma}:=-\frac{i}{2\pi}\int_{Y}\pd\left(\frac{1}{(\log\|s\|_{0}^{-2})^{1/2}}\right)\wedge\cpd\log\|s\|_{0}^{-2}\chi_{\gamma}\mu.
\end{displaymath}
We first treat the integrals $I_{\beta}$. Observe that for any $\CC^{\infty}$ differential form $\nu$ on $Y\setminus\gdiv s$, the differential form $\chi_{\beta}\nu$ is $\CC^{\infty}$ on $Y$, because $\chi_{\beta}$ vanishes on $Y\setminus V_{\beta}$ and $\overline{V_{\beta}}\cap(\gdiv s)=\emptyset$. Moreover the equality $\dd(\chi_{\beta}\nu)=\dd\chi_{\beta}\wedge\nu+\chi_{\beta}\wedge\dd\nu$ holds on $Y$. For $\nu=i\pd\cpd\log\|s\|_{0}^{-2}/2\pi$ we find $\chi_{\beta}\nu=\chi_{\beta}\omega$ on $Y$. These observations, together with $\dd\mu=0$, yield, by Stokes' theorem,
\begin{equation}\label{eq:Stokes2}
	\begin{split}
		I_{\beta}=&
		\int_{Y}\frac{1}{(\log\|s\|_{0}^{-2})^{1/2}}\omega\chi_{\beta}\mu\\
		-&\int_{Y}\frac{i}{2\pi}\frac{1}{(\log\|s\|_{0}^{-2})^{1/2}}\cpd\log\|s\|_{0}^{-2}(\dd\chi_{\beta})\mu,
	\end{split}
\end{equation}
On the other hand, for every $\alpha$ define $B_{\varepsilon}^{\alpha}(\gdiv s)$ to be the $\varepsilon$-neighborhood of $\gdiv s$ given by
\begin{displaymath}
	B_{\varepsilon}^{\alpha}(\gdiv s)=\bigcup_{k=1}^{r}B_{\varepsilon}^{\alpha}(T_{k}),
\end{displaymath}
where $T_{k}$ is the divisor $z_{k}^{\alpha}=0$ in $V_{\alpha}$ and
\begin{displaymath}
	B_{\varepsilon}^{\alpha}(T_{k})=\Delta_{1/2e}^{k-1}\times\Delta_{\varepsilon}\times\Delta_{1/2e}^{r-k}\times\Delta_{1/2e}^{s}\subset V_{\alpha}.
\end{displaymath}
Then we write $I_{\alpha}=\lim_{\varepsilon\to 0}I_{\alpha,\varepsilon}$, where $I_{\alpha,\varepsilon}=I_{\alpha,\varepsilon}^{(1)}+I_{\alpha,\varepsilon}^{(2)}+I_{\alpha,\varepsilon}^{(3)}$ and
\begin{displaymath}
	\begin{split}
		I_{\alpha,\varepsilon}^{(1)}:=&-\frac{i}{2\pi}\int_{Y\setminus \overline{B_{\varepsilon}^{\alpha}(\gdiv s)}}
		\dd\left(\frac{1}{(\log\|s\|_{0}^{-2})^{1/2}}\cpd\log\|s\|_{0}^{-2}\chi_{\alpha}\mu\right)\\
		I_{\alpha,\varepsilon}^{(2)}:=&\int_{Y\setminus \overline{B_{\varepsilon}^{\alpha}(\gdiv s)}}\frac{1}{(\log\|s\|_{0}^{-2})^{1/2}}\omega\chi_{\alpha}\mu\\
		I_{\alpha,\varepsilon}^{(3)}:=&-\frac{i}{2\pi}\int_{Y\setminus \overline{B_{\varepsilon}^{\alpha}(\gdiv s)}}\frac{1}{(\log\|s\|_{0}^{-2})^{1/2}}\cpd\log\|s\|_{0}^{-2}(\dd\chi_{\alpha})\mu.
	\end{split}
\end{displaymath}
The differential form in $I_{\alpha,\varepsilon}^{(2)}$ is integrable on $Y$, since it has log-log growth along $\gdiv s$ (Proposition \ref{prop_prop_log}). The differential form in $I_{\alpha,\varepsilon}^{(3)}$ is integrable on $Y$, too. Indeed, after localizing to an analytic chart adapted to $\gdiv s$ and changing to polar coordinates, we are reduced to prove the convergence of the integrals
\begin{equation}\label{eq:Stokes3}
	\int_{0}^{1/e}\frac{(\log\log t^{-1})^{N}}{(\log t^{-1})^{1/2}}\dd t
\end{equation}
and
\begin{equation}\label{eq:Stokes4}
	\int_{0}^{1/e}(\log\log t^{-1})^{N}\frac{1}{(\log t^{-1})^{1/2}}\frac{\dd t}{t\log t^{-1}}=\int_{0}^{1/e}(\log\log t^{-1})^{N}\frac{\dd t}{t(\log t^{-1})^{3/2}}.
\end{equation}
From the boundedness of $(\log\log t^{-1})^{N}/(\log t^{-1})^{1/2}$ the convergence of (\ref{eq:Stokes3}) follows. The second one (\ref{eq:Stokes4}) has already been treated (\ref{eq:Stokes1}). Therefore $\lim_{\varepsilon\to 0} I_{\alpha,\varepsilon}^{(2)}=I_{\alpha}^{(2)}$ and $\lim_{\varepsilon\to 0} I_{\alpha,\varepsilon}^{(3)}=I_{\alpha}^{(3)}$, where
\begin{displaymath}
	\begin{split}
		I_{\alpha}^{(2)}:=&\int_{Y}\frac{1}{(\log\|s\|_{0}^{-2})^{1/2}}\omega\chi_{\alpha}\mu,\\
		I_{\alpha}^{(3)}:=&-\frac{i}{2\pi}\int_{Y}\frac{1}{(\log\|s\|_{0}^{-2})^{1/2}}\cpd\log\|s\|_{0}^{-2}(\dd\chi_{\alpha})\mu.
	\end{split}
\end{displaymath}
As for the integral $I_{\alpha,\varepsilon}^{(1)}$, after applying Stokes' theorem we find
\begin{displaymath}
	I_{\alpha,\varepsilon}^{(1)}=\frac{i}{2\pi}\int_{\pd B_{\varepsilon}^{\alpha}(\gdiv s)}\frac{1}{(\log\|s\|_{0}^{-2})^{1/2}}\cpd\log\|s\|_{0}^{-2}\chi_{\alpha}\mu.
\end{displaymath}
Taking into account that $\chi_{\alpha}$ vanishes on $\pd V_{\alpha}$, we easily see that
\begin{equation}\label{eq:Stokes5}
	|I_{\alpha,\varepsilon}^{(1)}|\leq \sum_{k=1}^{r}\frac{1}{2\pi}\int_{\pd^{*} B_{\varepsilon}^{\alpha}(T_{k})}\frac{1}{(\log\|s\|_{0}^{-2})^{1/2}}\left|\cpd\log\|s\|_{0}^{-2}\chi_{\alpha}\mu\right|,
\end{equation}
with the notation
\begin{displaymath}
	\pd^{*} B_{\varepsilon}^{\alpha}(T_{k})=\overline{\Delta_{1/2e}^{k-1}}\times\pd\Delta_{\varepsilon}\times\overline{\Delta_{1/2e}^{r-k}}\times\overline{\Delta_{1/2e}^{s}}.
\end{displaymath}
Observe that $\pd^{*} B_{\varepsilon}^{\alpha}(T_k)$ is fibered in circles over $T_k$, via the projection
\begin{displaymath}
	\begin{split}
		p_{k,\varepsilon}^{\alpha}:\pd^{*}B_{\varepsilon}(T_k)&\longrightarrow T_k\\
		(z_{1}^{\alpha},\ldots,z_{d}^{\alpha})&\longmapsto (z_{1}^{\alpha},\ldots,z_{k-1}^{\alpha},0,z_{k+1}^{\alpha}\ldots,z_{d}^{\alpha}).
	\end{split}		
\end{displaymath}
By $\int_{p_{k,\varepsilon}^{\alpha}}$ we will mean integral along the fibers of $p_{k,\varepsilon}^{\alpha}$. We claim that, for every $k=1,\ldots,r$,
\begin{equation}\label{eq:Stokes6}
	\lim_{\varepsilon\to 0}\int_{T_k}\int_{p_{k,\varepsilon}^{\alpha}}\frac{1}{(\log\|s\|_{0}^{-2})^{1/2}}\left|\cpd\log\|s\|_{0}^{-2}\chi_{\alpha}\mu\right|=0.
\end{equation}
Indeed, write $s=z_1^{\alpha}\ldots z_r^{\alpha} u_{\alpha}$ on $V^{\prime}_{\alpha}$, where $u_{\alpha}$ is a holomorphic unit. Since $\chi_{\alpha}\mu$ is a $(d-1,d-1)$ pre-log-log form, the differential form to integrate under $\int_{p_{k,\varepsilon}^{\alpha}}$ has the shape
\begin{displaymath}
	f(z_1^{\alpha},\ldots,z_{d}^{\alpha})\frac{(\log\log|z_{k}^{\alpha}|^{-1})^{M}}{(\log\|s\|^{-2}_{0})^{1/2}}\left|\frac{\cdz_k^{\alpha}}{\bar{z}_k^{\alpha}}\wedge\prod_{j\neq k}\dd z_j^{\alpha}\wedge\cdz_j^{\alpha}\right|,
\end{displaymath}
where $f$ behaves as follows:
\begin{displaymath}
	0\leq f(z_1^{\alpha},\ldots,z_{d}^{\alpha})\prec\prod_{j\neq k}\frac{(\log\log|z_j^{\alpha}|^{-1})^{M}}{|z_j^{\alpha}|^{2}(\log|z_j^{\alpha}|^{-1})^{2}}.
\end{displaymath}
We point out that $|\cdz_{k}^{\alpha}/\bar{z}_{k}^{\alpha}|$ is bounded along the fibers of $p_{k,\varepsilon}^{\alpha}$, that the form $f\left|\prod_{j\neq k}\dd z_{j}^{\alpha}\wedge\cdz_{j}^{\alpha}\right|$ is integrable and $(\log\log|z_{k}^{\alpha}|^{-1})^{M}/(\log\|s\|^{-2}_{0})^{1/2}$ vanishes along $T_k$. This is enough to prove the claim (\ref{eq:Stokes6}).
Therefore, from (\ref{eq:Stokes5}) $\lim_{\varepsilon\to 0}I_{\alpha,\varepsilon}^{(1)}=0$ and consequently
\begin{equation}\label{eq:Stokes7}
	\begin{split}
		I_{\alpha}=&I_{\alpha}^{(2)}+I_{\alpha}^{(3)}\\
		=&\int_{Y}\frac{1}{(\log\|s\|_{0}^{-2})^{1/2}}\omega\chi_{\alpha}\mu\\
		-&\frac{i}{2\pi}\int_{Y}\frac{1}{(\log\|s\|_{0}^{-2})^{1/2}}\cpd\log\|s\|_{0}^{-2}(\dd\chi_{\alpha})\mu.
	\end{split}
\end{equation}
Finally, since the open covering $\lbrace V_{\alpha}\rbrace_{\alpha}\cup\lbrace V_{\beta}\rbrace_{\beta}$ is finite, from (\ref{eq:Stokes2}) and (\ref{eq:Stokes7}) we derive
\begin{equation}\label{eq:Stokes8}
	\begin{split}
		I=&\int_{Y}\frac{1}{(\log\|s\|_{0}^{-2})^{1/2}}(\sum_{\gamma}\chi_{\gamma})\omega\mu\\
		-&\frac{i}{2\pi}\int_{Y}\frac{1}{(\log\|s\|_{0}^{-2})^{1/2}}\cpd\log\|s\|_{0}^{-2}\dd(\sum_{\gamma}\chi_{\gamma})\mu.
	\end{split}
\end{equation}
The assertion of the lemma follows from (\ref{eq:Stokes8}) once we note that $\sum_{\gamma}\chi_{\gamma}=1$ and $\dd(\sum_{\gamma}\chi_{\gamma})$$=0$.
\end{proof}
\subsection{Application to the projective case}\label{projective_case}
We suppose that $X$ is a nonsingular complex projective variety (non necessarily connected) and $D\subseteq X$ a divisor with \textit{simple} normal crossings. Consider the following elements:
\begin{itemize}
	\item[\textit{i}.] as in section \ref{section_decomp_thm}, for every integer $N\geq 0$ introduce a function $\Theta_N$ (see Notation \ref{notation_Theta_N}). By Lemma \ref{lemma_Theta_N}, the functions $\Theta_N$ are pre-log-log, with singularities along $D$;
	\item[\textit{ii.}] an ample line bundle $L$ on $X$, admitting a global section $s\in\Gamma(X,L)$ such that $\gdiv s$ is a divisor with \textit{simple} normal crossings containing $D$. In particular $\gdiv s$ is reduced and may be seen as a reduced Weil divisor or a reduced scheme;
	\item[\textit{iii}.] a pre-log-log metric $\|\cdot\|$ on $L$, with singularities along $D$;
	\item[\textit{iv}.] a smooth hermitian metric $\|\cdot\|_0$ on $L$ with $\omega:=\c1(\overline{L}_{0})>0$ and $\|s\|_{0}^{2}\leq e^{-e}$; 
	\item[\textit{v}.] as in \textsection \ref{geometric_assumptions} we introduce $\ell_{Q}=(\log\log\|s\|_{0}^{-2})^{Q}$, $Q\in\Int_{\geq 0}$. By Lemma \ref{lemma_theta_N}, the function $\ell_Q$ is pre-log-log, with singularities along $\gdiv s$. Moreover, Proposition \ref{prop_bound_theta_N} asserts the existence of a constant $A_{Q}>0$ such that $\tau_{Q}:=\ddc(-\ell_{Q})+A_{Q}\omega\geq 0$. We can take $A_0=1$.
\end{itemize}
We write $\|\cdot\|=e^{-f/2}\|\cdot\|_{0}$, where $f:X\setminus D\rightarrow\R$ is a pre-log-log function, with singularities along $D$. If $\|\cdot\|$ is good, then $f$ is P-singular. Since $X$ is compact, associated to $\omega=\c1(\overline{L}_0)$ there is a decomposition $f=\varphi-\psi$ as in Theorem \ref{decomp_thm_2}. Moreover, because $D\subseteq\gdiv s$, there exist a constant $C\geq 0$ and an integer $M\geq 0$ such that
\begin{displaymath}
	\varphi\leq C\ell_{M}, \psi\leq C\ell_{M}, \Theta_1\leq C\ell_1, \Theta_{N}\leq C\ell_N
\end{displaymath}
hold on $X\setminus\gdiv s$. If $\|\cdot\|$ is good, then we can take $M=1$. Finally, by Theorem \ref{decomp_thm_2} and Proposition \ref{prop_bound_theta_N}, there exist constants $A,B>0$ and an integer $N\geq 0$ such that $\tau:=\ddc(-\Theta_1)+A\omega\geq 0$ and 
\begin{displaymath}
	\begin{split}
		\omega_{\varphi}:=&\ddc(-\varphi) +B\Theta_{N}(\ddc(-\Theta_1)+A\omega)\geq 0,\\
		\omega_{\psi}:=&\ddc(-\psi)+B\Theta_{N}(\ddc(-\Theta_1)+A\omega)\geq 0.
	\end{split}
\end{displaymath}
If $\|\cdot\|$ is good along $D$, then we can take $N=0$.
Therefore the assumptions of \textsection \ref{geometric_assumptions} are fulfilled for $X$.\\
Let now $\pi:X^{\prime}\rightarrow X$ be a morphism of complex analytic manifolds such that the inverse image schemes $\pi^{-1}(D)\subseteq \pi^{-1}(\gdiv s)$ are divisors with normal crossings. Let $Y\subseteq X^{\prime}$ be a compact complex analytic submanifold of pure dimension $d$. Suppose that $Y$ meets $\pi^{-1}(\gdiv s)$ in a divisor with normal crossings in $Y$. Then $Y\cap D$ is a divisor with normal crossings in $Y$, too. We pull-back by $\pi$ all the objects introduced above ($\pi^{-1}(D)$, $\pi^{*}\Theta_N$, $\pi^{*}L$, $\pi^{*}s$, $\pi^{*}\|\cdot\|$, etc.) and then we restrict them to $Y$. We obtain corresponding objects on $Y$ ($\pi^{-1}(D)\cap Y$, $(\pi^{*}\Theta_{N})_{\mid Y}$, $(\pi^{*}L)_{\mid Y}$, $(\pi^{*}s)_{\mid Y}$, $(\pi^{*}\|\cdot\|)_{\mid Y}$, etc.). Provided that $\deg_{\pi^{*}L}Y>0$, the requirements of \textsection \ref{geometric_assumptions} are fulfilled on $Y$. It is important to point out that the involved constants $A$, $\lbrace A_{Q}\rbrace_{Q}$, $B$, $C$, $M$, $N$ don't depend on the data $X^{\prime}$, $\pi$, $Y$. For every integer $0\leq p\leq d$ define
\begin{displaymath}
	J_{p}^{\ast}=\int_{Y}(\pi^{*}f)\c1(\pi^{*}\overline{L})^{p}\c1(\pi^{*}\overline{L}_{0})^{d-p}.
\end{displaymath}
As a consequence of Theorem \ref{thm_height_integral} we get the following corollary.
\begin{corollary}\label{corol_height_integral}
Let $\pi:X^{\prime}\rightarrow X$ be a morphism of complex analytic manifolds such that $\pi^{-1}(D)$ and $\pi^{-1}(\gdiv s)$ are divisors with normal crossings. Let $Y\subseteq X^{\prime}$ be a compact complex analytic submanifold of pure dimension $d$, intersecting $\gdiv s$ in a divisor with normal crossings in $Y$. Suppose that $\deg_{\pi^{*}L}Y>0$. There exist constants $\alpha$, $\beta>0$ and an integer $R\geq 0$, depending only on $A$, $\lbrace A_{Q}\rbrace_{Q}$, $B$, $C$, $M$ and $N$ such that, for all $p\in\lbrace 0,\ldots,d\rbrace$,
\begin{displaymath}
	|J_{p}^{\ast}|\leq\alpha\deg_{\pi^{*}L}Y+\beta\cdot(\deg_{\pi^{*}L}Y)\cdot\log^{R}
	\left(\int_{Y}\log\pi^{*}\|s\|_{0}^{-2}\frac{\c1(\pi^{*}\overline{L}_{0})^{d}}{\deg_{\pi^{*}L}Y}\right).
\end{displaymath}
If $\|\cdot\|$ is good along $D$, then we can take $R=1$:
\begin{displaymath}
	|J_{p}^{\ast}|\leq\alpha\deg_{\pi^{*}L}Y+\beta\cdot(\deg_{\pi^{*}L}Y)\cdot\log\left(\int_{Y}\log\pi^{*}\|s\|_{0}^{-2}\frac{\c1(\pi^{*}\overline{L}_{0})^{d}}{\deg_{\pi^{*}L}Y}\right).
\end{displaymath}
\end{corollary}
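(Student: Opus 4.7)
The plan is to deduce the corollary as a direct consequence of Theorem \ref{thm_height_integral} applied to $Y$ equipped with the pullback data. The first step is to verify that the conditions \textit{i}--\textit{vi} of \textsection \ref{geometric_assumptions} are satisfied on $Y$ with respect to $\pi^{\ast}\overline{L}$, $\pi^{\ast}\overline{L}_{0}$, $\pi^{\ast}s$ and the pullback decomposition $\pi^{\ast}f = \pi^{\ast}\varphi - \pi^{\ast}\psi$, with constants $A, \{A_{Q}\}_{Q}, B, C, M, N$ inherited unchanged from the global data on $X$ described at the start of \textsection \ref{projective_case}. Once this is done, Theorem \ref{thm_height_integral} yields immediately the desired bound, since its output constants $\alpha, \beta, R$ depend only on $A, \{A_{Q}\}_{Q}, B, C, M, N$ and these are independent of the auxiliary data $X^{\prime}, \pi, Y$.

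To check the transfer of hypotheses: first, Proposition \ref{prop_prop_log} \textit{iii} ensures that $\pi^{\ast}f$, $\pi^{\ast}\varphi$, $\pi^{\ast}\psi$, $\pi^{\ast}\Theta_{1}$, $\pi^{\ast}\Theta_{N}$ restrict to pre-log-log functions on $Y$ with singularities along $Y \cap \pi^{-1}(D)$, and similarly $\pi^{\ast}\ell_{Q}$ is pre-log-log on $Y$ with singularities along $Y \cap \pi^{-1}(\gdiv s)$. Next, the positivity assumptions transfer under pullback: since $\pi$ is holomorphic and $\ddc$ commutes with pullback, $\pi^{\ast}\tau_{Q} = \ddc(-\pi^{\ast}\ell_{Q}) + A_{Q}\pi^{\ast}\omega$ is semi-positive off $\pi^{-1}(\gdiv s)$, and likewise $\pi^{\ast}\tau$, $\pi^{\ast}\omega_{\varphi}$, $\pi^{\ast}\omega_{\psi}$ are semi-positive; restriction to the submanifold $Y$ preserves these properties. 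The pointwise bounds $\pi^{\ast}\varphi \leq C\pi^{\ast}\ell_{M}$, etc., are obvious from the analogous bounds on $X$, and the normalization $\|\pi^{\ast}s\|_{\pi^{\ast}\|\cdot\|_{0}}^{2} \leq e^{-e}$ is inherited. The hypotheses that $Y$ is compact of pure dimension $d$, that $Y$ meets $\pi^{-1}(\gdiv s)$ in a normal crossings divisor (hence so does $Y \cap \pi^{-1}(D)$, since $D \subseteq \gdiv s$), and that $\deg_{\pi^{\ast}L}Y > 0$ are all included in the statement of the corollary.

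Having transferred all hypotheses, Theorem \ref{thm_height_integral} applied directly to $Y$, $\pi^{\ast}\overline{L}$, $\pi^{\ast}\overline{L}_{0}$, and the decomposition $\pi^{\ast}f = \pi^{\ast}\varphi - \pi^{\ast}\psi$ produces constants $\alpha, \beta > 0$ and $R \in \Int_{\geq 0}$, depending only on $A, \{A_{Q}\}_{Q}, B, C, M, N$, such that
\begin{displaymath}
|J_{p}^{\ast}| \leq \alpha \gdeg_{\pi^{\ast}L}Y + \beta \cdot (\gdeg_{\pi^{\ast}L}Y) \cdot \log^{R}\left(\int_{Y} \log \pi^{\ast}\|s\|_{0}^{-2}\, \frac{\c1(\pi^{\ast}\overline{L}_{0})^{d}}{\gdeg_{\pi^{\ast}L}Y}\right).
\end{displaymath}
In the good case, one already has $M = 1$ and $N = 0$ on $X$, and the refined part of Theorem \ref{thm_height_integral} gives $R = 1$. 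The main conceptual point, which is where the delicate work was done earlier, is that the constants in Theorem \ref{thm_height_integral} depend only on the global data fixed on $X$ via Theorem \ref{decomp_thm_2} and Proposition \ref{prop_bound_theta_N} and not on any structural feature of the ambient manifold containing $Y$; thus no obstacle of substance appears at this stage, the corollary being in essence a functoriality statement.
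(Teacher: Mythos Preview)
Your proposal is correct and matches the paper's approach: the paper itself does not give a separate proof of this corollary, but rather precedes its statement with exactly the verification you carry out---pulling back all objects by $\pi$, restricting to $Y$, observing via Proposition \ref{prop_prop_log} \textit{iii} that the hypotheses \textit{i}--\textit{vi} of \textsection \ref{geometric_assumptions} are fulfilled on $Y$ with the same constants $A$, $\lbrace A_{Q}\rbrace_{Q}$, $B$, $C$, $M$, $N$, and then invoking Theorem \ref{thm_height_integral}. Your write-up is slightly more explicit about which properties (semi-positivity, pointwise bounds, normal crossings) transfer and why, but the substance is identical.
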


Let $Z$ be a reduced closed subscheme of $X$ of pure dimension $d$, intersecting $\gdiv s$ properly. Denote by $Z_{1},\ldots, Z_{r}$ the irreducible components of $Z$. For every $i=1,\ldots,r$, let $\pi_{i}:X_{i}\rightarrow X$ be an imbedded resolution of singularities of $Z_{i}$, such that $\pi_{i}^{-1}(\gdiv s)$ is a divisor with normal crossings intersecting the strict transform $\widetilde{Z}_{i}$ of $Z_{i}$ in a divisor with normal crossings in $\widetilde{Z}_{i}$ (see \cite{BKK}, Theorem 7.27). Then Corollary \ref{corol_height_integral} applies to $\pi_{i}$, $Z_{i}$, for every $i=1,\ldots,r$. If $\theta$ is a smooth differential form of degree $2d$ on $X\setminus \gdiv s$, locally integrable on $X$, then we adopt the convention
\begin{displaymath}
	\int_{Z}\theta=\sum_{i=1}^{r}\int_{\widetilde{Z}_{i}}\pi_{i}^{*}\theta.
\end{displaymath}
This definition intrinsically depends on $Z$, and not on the choice of the resolutions $\pi_{i}$. With this convention, define
\begin{displaymath}
	J_{p}=\int_{Z}f\c1(\overline{L})^{p}\c1(\overline{L}_{0})^{d-p}.
\end{displaymath}
In this situation Corollary \ref{corol_height_integral} reads as follows.
\begin{corollary}\label{corollary_example}
Let $Z$ be a reduced closed subscheme of $X$ of pure dimension $d$, intersecting $\gdiv s$ properly. There exist positive constants $\alpha$, $\beta$, and an integer $R\geq 0$, depending only on $A$, $\lbrace A_{Q}\rbrace_{Q}$, $B$, $C$, $M$ and $N$ such that, for all $p\in\lbrace 0,\ldots,d\rbrace$,
\begin{displaymath}
	|J_{p}|\leq\alpha\deg_{L}Z+\beta\cdot(\deg_{L} Z)\cdot\log^{R}\left(\int_{Z}\log\|s\|_{0}^{-2}\frac{\c1(\overline{L}_{0})^{d}}{\deg_{L}Z}\right),
\end{displaymath}
with $R=1$ whenever $\|\cdot\|$ is good along $D$.
\end{corollary}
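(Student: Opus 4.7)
The plan is to bootstrap Corollary \ref{corol_height_integral} to the possibly singular $Z$ by working component by component on embedded resolutions, and then to combine the resulting bounds by Jensen's inequality.

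First I would decompose $Z = Z_1 \cup \ldots \cup Z_r$ into irreducible components and, for each $i$, fix an embedded resolution $\pi_i : X_i \to X$ of $Z_i$ such that $\pi_i^{-1}(\gdiv s)$ is a normal crossings divisor intersecting the strict transform $\widetilde{Z}_i$ properly. By the very definition of $\int_Z$ given just before the statement, $J_p = \sum_{i=1}^r J_{p,i}^*$, where $J_{p,i}^*$ is precisely the integral on $Y=\widetilde{Z}_i$ to which Corollary \ref{corol_height_integral} applies. Setting $D_i := \deg_{\pi_i^*L}\widetilde{Z}_i$ and $N_i := \int_{\widetilde{Z}_i}\log\pi_i^*\|s\|_0^{-2}\c1(\pi_i^*\overline{L}_0)^d$, the birationality of $\pi_i:\widetilde{Z}_i \to Z_i$ together with the convention defining the singular integral yield $D_i = \deg_L Z_i$ and $N_i = \int_{Z_i}\log\|s\|_0^{-2}\c1(\overline{L}_0)^d$, hence $\sum_i D_i = \deg_L Z$ and $\sum_i N_i = \int_Z \log\|s\|_0^{-2}\c1(\overline{L}_0)^d$. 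Applying Corollary \ref{corol_height_integral} to each $\widetilde{Z}_i$ with a common set of constants $\alpha,\beta,R$ gives
\begin{displaymath}
    |J_{p,i}^*| \leq \alpha D_i + \beta D_i \log^R(N_i/D_i).
\end{displaymath}

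The aggregation is the heart of the argument. Summing the component-wise bounds produces $|J_p| \leq \alpha \deg_L Z + \beta \sum_i D_i \log^R(N_i/D_i)$, and since $\|s\|_0^2 \leq e^{-e}$ pointwise on $X$, each ratio $N_i/D_i$ is a weighted average of values $\geq e$, hence itself $\geq e$. Following the device used at the end of the proof of Proposition \ref{prop_height_int2}, I would multiply the argument of $\log^R$ by $e^{R+1}$ in order to lie strictly inside the concavity range $(e^{R-1},\infty)$ of $x\mapsto(\log x)^R$, then apply Jensen's inequality with respect to the probability measure $\{D_i/\deg_L Z\}_i$ to pull the sum inside $\log^R$, and finally absorb the spurious factor using $\log^R(e^{R+1}x) \leq (2R+2)^R \log^R x$ (valid for $x\geq e$). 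This yields the stated bound after incorporating the combinatorial constants into $\alpha$ and $\beta$; the sharper case $R=1$ under the goodness hypothesis requires no extra work, being a direct consequence of the corresponding sharper version in Corollary \ref{corol_height_integral}.

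The main obstacle I foresee is the uniformity of constants across the various resolutions $\pi_i$: Corollary \ref{corol_height_integral} must be invoked with a single quadruple $\alpha,\beta,R$, and underlying $A,\{A_Q\}_Q,B,C,M,N$, that is independent of $i$. This is precisely the content of the remark made at the end of \textsection \ref{projective_case}, to the effect that these data are intrinsic to $(X,D,\overline{L},\overline{L}_0,s)$ and transfer without change to any compact submanifold of any resolution meeting $\pi^{-1}(\gdiv s)$ in a normal crossings divisor. Once this uniformity is granted, the remainder of the proof is a routine application of Jensen's inequality along the lines already used in the proof of Proposition \ref{prop_height_int2}.
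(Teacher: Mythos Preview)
Your proposal is correct and follows essentially the same route as the paper: decompose $Z$ into irreducible components, apply Corollary~\ref{corol_height_integral} on each strict transform with uniform constants (as justified in \S\ref{projective_case}), and then aggregate via the concavity of $\log^{R}$ using the $e^{R+1}$-shift and the bound $\log^{R}(e^{R+1}x)\leq(2R+2)^{R}\log^{R}x$. The paper's proof is line-for-line the same argument, only slightly more terse about the resolutions (which were set up in the paragraph preceding the statement).
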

\begin{proof}
Decompose $Z$ into irreducible components: $Z=Z_{1}\cup\ldots\cup Z_{r}$. Following the convention above, define
\begin{displaymath}
	J_{p}^{(i)}=\int_{Z_{i}}f\c1(\overline{L})^{p}\c1(\overline{L}_{0})^{d-p},
\end{displaymath}
so that
\begin{equation}\label{eq:example1}
	J_{p}=\sum_{i=1}^{r}J_{p}^{(i)}.
\end{equation}
By Corollary \ref{corol_height_integral}, there exist constants $\alpha,\beta>0$ and an integer $R\geq 0$, depending only on $A$, $\lbrace A_{Q}\rbrace_{Q}$, $B$, $C$, $M$ and $N$ such that, for all $p\in\lbrace 0,\ldots,d\rbrace$,
\begin{equation}\label{eq:example2}
	|J_{p}^{(i)}|\leq\alpha\deg_{L}Z_{i}+\beta\cdot(\deg_{L} Z_{i})\cdot\log^{R}\left(\int_{Z_{i}}\log\|s\|_{0}^{-2}\frac{\c1(\overline{L}_{0})^{d}}{\deg_{L}Z_{i}}\right).
\end{equation}
If $\|\cdot\|$ is good along $D$, then we can take $R=1$. Now recall that the function $\log^{R}$ is increasing and concave on $]e^{R-1},+\infty[$, so that
\begin{equation}\label{eq:example3}
	\begin{split}
	\sum_{i=1}^{r}\frac{\gdeg_{L}Z_{i}}{\gdeg_{L}Z}&\log^{R}
	\left(\int_{Z_{i}}\log\|s\|_{0}^{-2}\frac{\c1(\overline{L}_{0})^{d}}{\gdeg_{L}Z_{i}}\right)\\ \leq
	&\log^{R}\left(e^{R+1}\sum_{i=1}^{r}\frac{\gdeg_{L}Z_{i}}{\gdeg_{L}Z}
	\int_{Z_{i}}\log\|s\|_{0}^{-2}\frac{\c1(\overline{L}_{0})^{d}}{\gdeg_{L}Z_{i}}\right)\\
	=&\log^{R}\left(e^{R+1}\int_{Z}\log\|s\|_{0}^{-2}\frac{\c1(\overline{L}_{0})^{d}}{\gdeg_{L}Z}\right)\\
	\leq &(2R+2)^{R}\log^{R}\left(\int_{Z}\log\|s\|_{0}^{-2}\frac{\c1(\overline{L}_{0})^{d}}{\gdeg_{L}Z}\right).
	\end{split}
\end{equation}
For the last inequality we used that $x+y\leq 2xy$ for real $x,y\geq 1$. The lemma follows combining (\ref{eq:example1})--(\ref{eq:example3}).

\end{proof}
\section{Arakelovian heights}\label{section_heights}
In this section we turn to an arithmetic situation and deal with arakelovian heights on arithmetic varieties. We prove Theorem \ref{main_theorem}, which can be seen as an arithmetic counterpart of the bounds in \textsection \ref{section_bounding_height_integrals}. A remarkable and straightforward outcome is the finiteness property of arakelovian heights with respect to pre-log-log hermitian ample line bundles, as well as the existence of a universal lower bound (Corollary \ref{Corollary_Main}). 
\subsection{Heights attached to pre-log-log hermitian line bundles}
Let $K$ be a number field and $\OO_K$ its ring of integers. Write $\BS=\Spec\OO_K$. Throughout this section we work with a fixed arithmetic variety $\pi:\X\rightarrow\BS$ of relative dimension $n$. We recall this means that $\X$ is a flat and projective scheme over $\BS$, with regular generic fiber $\X_K=\X\times_{\BS}\Spec K$ of pure dimension $n$. The set of complex points $\X(\C)$ of $\X$ has a natural structure of complex analytic manifold, and it can be decomposed as
\begin{displaymath}
	\X(\C)=\coprod_{\sigma:K\hookrightarrow\C}\X_{\sigma}(\C).
\end{displaymath}
Complex conjugation induces an antiholomorphic involution
\begin{displaymath}
	F_{\infty}:\X(\C)\longrightarrow\X(\C).
\end{displaymath}
We fix $D\subset\X_K$ a divisor, such that $D(\C)\subset\X(\C)$ has \textit{simple normal crossings}. Write $U=\X(\C)\setminus D(\C)$.
\begin{notation}[\cite{BKK}]
We define $\Z^{p}_{U}(\X)$ to be the free abelian group generated by the irreducible reduced subschemes $Z\subseteq\X$ of codimension $p$, such that $Z(\C)$ intersects $D(\C)$ properly. We call $\Z^{p}_{U}(\X)$ the group of cycles of codimension $p$, intersecting $D(\C)$ properly. A cycle $z$ is said to be \textit{vertical} if its components are supported on closed fibers $\X_{\wp}$, $\wp\in\BS\setminus\{(0)\}$. A cycle $z$ is said to be \textit{horizontal} if its irreducible components are flat over $\BS$. We denote by $\Z^{p}_{U}(\X_K)$ the subgroup of $\Z^{p}_{U}(\X)$ of horizontal cycles.
\end{notation}
\begin{definition}
A pre-log-log hermitian line bundle on $\X$, with singularities along $D$, is a couple $\HAL=(\AL,\|\cdot\|)$ formed by
\begin{itemize}
	\item[\textit{i}.] a line bundle (invertible sheaf) $\AL$ on $\X$;
	\item[\textit{ii}.] a pre-log-log hermitian metric $\|\cdot\|$ on the line $\AL_{\C}$ on $\X(\C)$, with singularities along $D(\C)$, and invariant under the action of complex conjugation $F_{\infty}$: $F^{*}_{\infty}\|\cdot\|=\|\cdot\|$.
\end{itemize}
\end{definition}
In \cite{BKK}, \cite{BKK2}, Burgos, Kramer and K\"uhn attach a height morphism to a pre-log-log hermitian line bundle $\HAL$,
\begin{displaymath}
	h_{\HAL}:\bigoplus_{p}\Z^{p}_{U}(\X)\longrightarrow\R,
\end{displaymath}
generalizing the height morphism for smooth hermitian line bundles introduced by Bost, Gillet and Soul\'e in \cite{BGS}. We refer the reader to the cited bibliography for the precise definition and basic properties of $h_{\HAL}$, both in the smooth and pre-log-log case. For our purposes, it will be enough to state the following propositions summarizing the main features of $h_{\HAL}$.
\begin{proposition}\label{prop_char_height}
Let $\HAL$ be a pre-log-log hermitian line bundle on $\X$, with singularities along $D$. The height $h_{\HAL}$ satisfies the following three properties:
\begin{itemize}
	\item[H1.] if $P_{K}:\Spec K\rightarrow \X$ is a $K$-valued point whose image does not belong to $D$, and $P:\BS\rightarrow\X$ denotes its extension to $\BS$, then
		\begin{displaymath}
			h_{\HAL}(P_{*}\BS)=\adeg(P^{*}\HAL)\footnote{The \textit{arithmetic degree} $\adeg$ of a hermitian line bundle  $\overline{\mathcal{M}}=(\mathcal{M},\|\cdot\|)$ over $\Spec\OO_K$ is defined as follows: if $s$ is a non zero global section of $\mathcal{M}$, then $\adeg(\overline{\mathcal{M}})=\log\sharp\left(\frac{\mathcal{M}}{s\OO_{K}}\right)-\sum_{\sigma:K\hookrightarrow\C}\log\|s\|_{\sigma}$. }
		\end{displaymath}
	\item[H2.] if $z$ is a vertical cycle supported on a closed fiber $\X_{\wp}$, then
		\begin{displaymath}
			h_{\HAL}(z)=\log(\Norm\wp)\gdeg_{\AL_{\X_{\wp}}}z,
		\end{displaymath}
		where $\Norm\wp$ denote the norm of the ideal $\wp$;
	\item[H3.] let $z\in\Z_{U}^{n+1-p}(\X_K)$ be irreducible and reduced. Let $s$ be a rational section of $\AL^{\otimes N}$ which does not identically vanish on $z$. If $(\gdiv(s).z)(\C)$ intersects $D(\C)$ properly, then
		\begin{displaymath}
					Nh_{\HAL}(z)=h_{\HAL}(\gdiv(s).z)-\int_{z(\C)}\log(\|s\|_{\AL^{\otimes N}})\c1(\HAL)^{p-1}.
		\end{displaymath}	
\end{itemize}
\end{proposition}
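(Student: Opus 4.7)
The plan is to recall the construction of $h_{\HAL}$ via the pre-log-log arithmetic intersection theory of Burgos--Kramer--K\"uhn and then verify the three properties in turn. Concretely, for an irreducible reduced cycle $z\in\Z^{p}_{U}(\X)$, one sets
\begin{displaymath}
	h_{\HAL}(z)=\adeg\left(\pi_{*}\bigl(\ac(\HAL)^{n+1-p}\cdot [z]\bigr)\right),
\end{displaymath}
where the product is taken in the pre-log-log arithmetic Chow ring of $\X$ constructed in \cite{BKK}, \cite{BKK2}, $[z]$ is the canonical class of $z$, and the properness of the intersection $z(\C)\cap D(\C)$ guarantees that the product is well defined. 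The definition is extended to $\bigoplus_{p}\Z^{p}_{U}(\X)$ by linearity. Each of H1, H2, H3 will then follow from general functorial properties of $\ac$, $\adeg$ and the intersection pairing.

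For H1, given a $K$-valued point $P_{K}$ not meeting $D$, its extension $P:\BS\to\X$ gives a cycle $P_{*}\BS$ of codimension $n$, and since $P(\BS)$ avoids $D$, $P^{*}\HAL$ is a genuine smooth hermitian line bundle on $\BS$. The projection formula $\pi_{*}\bigl(\ac(\HAL)\cdot P_{*}[\BS]\bigr)=\ac(P^{*}\HAL)$ together with the elementary identity $\adeg\ac(\overline{\mathcal{M}})=\adeg(\overline{\mathcal{M}})$ for any hermitian line bundle on $\BS$, proved by picking a non-zero global section $s$ and identifying $\ac(\overline{\mathcal{M}})$ with the class of $(\gdiv s,-\log\|s\|)$, then yields H1.

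Property H2 follows from the description of the arithmetic Chow class of a cycle supported on a closed fiber $\X_{\wp}$. Indeed, the inclusion $\X_{\wp}\hookrightarrow\X$ factors through $\Spec\OO_K/\wp\hookrightarrow\BS$, and the natural map $\bigoplus_{\wp}\log(\Norm\wp)\,\Int\to \ACH^{1}(\BS)$ attaches the coefficient $\log\Norm\wp$ to the class of the closed point. Unwinding this and applying the projection formula identifies $\pi_{*}\bigl(\ac(\HAL)^{\dim z}\cdot [z]\bigr)$ with $\log(\Norm\wp)\cdot\gdeg_{\AL_{\X_{\wp}}}(z)$ times the class of a point of $\BS$, whose arithmetic degree is $1$.

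Property H3 is the content of the recursive formula in pre-log-log arithmetic intersection theory: for a rational section $s$ of $\AL^{\otimes N}$ not identically zero on $z$, and under the proper intersection hypothesis on $(\gdiv(s)\cdot z)$ with $D$, one has the equality
\begin{displaymath}
	\ac(\HAL^{\otimes N})\cdot [z]=[\gdiv(s)\cdot z]+\aaa\bigl(-\log\|s\|_{\AL^{\otimes N}}\,\delta_{z(\C)}\bigr)
\end{displaymath}
in the pre-log-log arithmetic Chow group, where $\aaa$ is the standard map from currents to $\ACH$. Intersecting with $\ac(\HAL)^{p-1}$, pushing forward to $\BS$, taking $\adeg$, and using $\ac(\HAL^{\otimes N})=N\,\ac(\HAL)$ gives the displayed formula. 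The main obstacle is to justify the integral term: this is the point where the entire pre-log-log machinery is needed, because one must integrate $\log\|s\|_{\AL^{\otimes N}}$ against $\c1(\HAL)^{p-1}$, a product of pre-log-log $(1,1)$-forms which could a priori be very singular along $D(\C)$. The required local integrability, together with the validity of Stokes' theorem for such forms, is exactly what \cite{BKK}, \cite{BKK2} establish under the proper intersection hypothesis (see Proposition \ref{prop_prop_log} for the relevant analytic input), and this is what makes H3 meaningful and allows the induction defining $h_{\HAL}$ to be consistent.
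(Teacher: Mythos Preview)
Your proposal is correct and follows essentially the same approach as the paper, which simply states that the proposition follows from the definition of $h_{\HAL}$ and the extended arithmetic intersection theory in \cite{BKK}. You have unpacked that one-line reference into the explicit mechanism (the projection formula for H1, the description of vertical classes for H2, and the recursive formula $\ac(\HAL^{\otimes N})\cdot[z]=[\gdiv(s)\cdot z]+\aaa(-\log\|s\|\,\delta_{z(\C)})$ for H3), which is precisely what the citation to \cite{BKK} is meant to invoke.
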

\begin{proof}
This follows from the definition of $h_{\HAL}$ and the extended arithmetic intersection theory in \cite{BKK}.
\end{proof}
\begin{remark}
\textit{i}. The convergence of the integral in \textit{H3} is implicit in the statement.\\
\textit{ii.} If $\AL_{K}$ is an ample line bundle, then an easy inductive argument shows that the properties \textit{H1}, \textit{H2} and \textit{H3} actually characterize $h_{\HAL}$.
\end{remark}
\begin{proposition}\label{prop_char_height2}
Let $\AL$ be a line bundle on $\X$, $\|\cdot\|$ a pre-log-log hermitian metric on $\AL$, with singularities along $D$, and $\|\cdot\|_{0}$ a smooth hermitian metric on $\AL$. Write $\HAL=(\AL,\|\cdot\|)$, $\HAL_{0}=(\AL,\|\cdot\|_{0})$ and $\|\cdot\|=e^{-f/2}\|\cdot\|_{0}$, where $f:\X(\C)\setminus D(\C)\rightarrow\R$ is a pre-log-log function, with singularities along $D(\C)$. For any cycle $z\in Z^{n+1-p}_{U}(\X)$ we have
\begin{displaymath}
	h_{\HAL}(z)=h_{\HAL_{0}}(z)+\frac{1}{2}\sum_{k=0}^{p-1}\int_{z(\C)}f\c1(\HAL)^{k}\c1(\HAL_{0})^{p-1-k}.
\end{displaymath}
\end{proposition}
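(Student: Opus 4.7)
The natural approach is to work directly in the arithmetic Chow ring $\ACH^{\ast}(\X)$ of Burgos--Kramer--K\"uhn, through which the height admits the representation $h_{\HAL}(z)=\adeg\,\pi_{*}\bigl(\ac(\HAL)^{p}\cdot[z]\bigr)$. The starting observation is that a change of pre-log-log metric produces only an archimedean correction in $\ACH^{1}(\X)$: for any rational section $s$ of $\AL$ one has $\ac(\HAL)=(\gdiv s,-\log\|s\|^{2})$ and $\ac(\HAL_{0})=(\gdiv s,-\log\|s\|_{0}^{2})$, so from $\|\cdot\|=e^{-f/2}\|\cdot\|_{0}$ one obtains
\begin{displaymath}
    \ac(\HAL)-\ac(\HAL_{0})=\aaa(f),
\end{displaymath}
where $\aaa$ denotes the natural map sending a pre-log-log function $f$ to the class $(0,f)$.

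The main step is then a purely formal telescoping in $\ACH^{p}(\X)$:
\begin{displaymath}
    \ac(\HAL)^{p}-\ac(\HAL_{0})^{p}=\sum_{k=0}^{p-1}\ac(\HAL)^{k}\bigl(\ac(\HAL)-\ac(\HAL_{0})\bigr)\ac(\HAL_{0})^{p-1-k}=\sum_{k=0}^{p-1}\aaa\bigl(f\cdot\c1(\HAL)^{k}\c1(\HAL_{0})^{p-1-k}\bigr),
\end{displaymath}
where the second equality uses the identity $\aaa(\eta)\cdot\widehat{\alpha}=\aaa\bigl(\eta\cdot\omega(\widehat{\alpha})\bigr)$ from the BKK formalism, valid here because $\c1(\HAL)^{k}\c1(\HAL_{0})^{p-1-k}$ is a pre-log-log form with singularities along $D$. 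Intersecting with $[z]$, pushing down to $\BS$ and applying $\adeg$ converts each $\aaa$-term into an archimedean integral through the standard normalization $\adeg\,\pi_{*}\bigl(\aaa(\eta)\cdot[z]\bigr)=\tfrac{1}{2}\int_{z(\C)}\eta$, producing exactly the claimed identity.

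An alternative route through Proposition~\ref{prop_char_height} would be an induction on $p=\dim z$: linearity and $H2$ reduce the claim to irreducible horizontal $z$ (for vertical cycles both sides agree since $z(\C)=\emptyset$), the base case $p=1$ is handled by $H1$ where the factor $1/2$ arises directly from $\log\|s\|_{\sigma}(P_{\sigma})=\log\|s\|_{0,\sigma}(P_{\sigma})-f_{\sigma}(P_{\sigma})/2$, and the inductive step applies $H3$ simultaneously to $\HAL$ and $\HAL_{0}$ using a section $s$ of $\AL^{\otimes N}$ chosen so that $\gdiv(s).z\in\Z^{n+2-p}_{U}(\X)$, which exists by a Bertini-type argument of the kind proved in the Appendix. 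The delicate point in this second approach, and the reason I would prefer the arithmetic Chow ring framework, is that bringing the induction hypothesis on $w=\gdiv(s).z$ back to $z(\C)$ via Poincar\'e--Lelong produces a spurious term $\int_{z(\C)}f\,\c1(\HAL)^{k}\c1(\HAL_{0})^{p-2-k}\,\ddc[\log\|s\|_{0}^{2}]$ which must be cancelled against the correction $\int_{z(\C)}\log\|s\|_{0}\bigl(\c1(\HAL)^{p-1}-\c1(\HAL_{0})^{p-1}\bigr)$ coming from the metric change in $H3$, and this cancellation relies on an integration by parts between the logarithmically singular form $\log\|s\|_{0}^{2}$ and the pre-log-log form $f$ that goes beyond Proposition~\ref{prop_prop_log}~ii, ultimately resting on precisely the same $\ast$-product formalism of \cite{BKK} that makes the first approach seamless.
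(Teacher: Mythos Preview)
Your first approach via the arithmetic Chow ring is correct and is precisely how the result is established in the reference the paper invokes: the paper does not give an independent argument here but simply cites \cite{BKK2}, Theorem~4.1, whose proof is exactly the telescoping computation $\ac(\HAL)^{p}-\ac(\HAL_{0})^{p}=\sum_{k}\aaa\bigl(f\,\c1(\HAL)^{k}\c1(\HAL_{0})^{p-1-k}\bigr)$ you wrote down, followed by $\adeg\,\pi_{*}$. Your second, inductive route is a legitimate alternative and your diagnosis of its difficulty is accurate: the needed integration by parts between $\log\|s\|_{0}^{2}$ and the pre-log-log form $f$ is not covered by Proposition~\ref{prop_prop_log}~\textit{ii} alone and is ultimately subsumed in the $\ast$-product machinery, so there is no real gain over the direct Chow-ring argument.
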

\begin{proof}
This is contained in \cite{BKK2}, Theorem 4.1.
\end{proof}
\begin{remark}
Proposition \ref{prop_char_height2} allows to recover Proposition \ref{prop_char_height} once it is known for smooth hermitian line bundles. In this case the properties \textit{H1}, \textit{H2} and \textit{H3} are already established in \cite{BGS}.
\end{remark}
Let $F|K$ be a finite extension of fields and write $\BT=\Spec\OO_{F}$. Base changing by $\BT\rightarrow\BS$, we get an arithmetic variety $\X_{\BT}\rightarrow\BT$, together with a finite flat morphism $g:\X_{\BT}\rightarrow\X$ of degree $[F:K]$. If $D\subseteq\X_K$ is an effective divisor such that $D(\C)\subseteq\X(\C)$ has simple normal crossings, then $D_{F}\subseteq\X_{F}$ is an effective divisor and $D_{F}(\C)\subseteq\X_{\BT}(\C)$ has simple normal crossings as well. Let $\HAL$ be a pre-log-log hermitian line bundle on $\X$, with singularities along $D$. The pull-back $g^{*}\HAL$ of $\HAL$ to $\X_{\BT}$ is a pre-log-log hermitian line bundle, with singularities along $D_{F}$. Since $g$ is flat, for every cycle $z$ on $\X$ there is a well defined pull-back cycle $g^{*}(z)$. Observe that if $z(\C)$ intersects $D(\C)$ properly, then $g^{*}(z)(\C)$ intersects $D_F(\C)$ properly. Namely, the correspondence $z\mapsto g^{*}(z)$ induces a morphism
\begin{equation}\label{eq_bc}
	\begin{split}
		\Z^{p}_{U}(\X)&\longrightarrow \Z^{p}_{V}(\X_{\BT})\\
		z &\longmapsto g^{*}(z),
	\end{split}
\end{equation}
where $V=\X_{\BT}(\C)\setminus D_{F}(\C)$. This morphism maps $\Z^{p}_{U}(\X_K)$ into $\Z^{p}_{V}(\X_F)$.
\begin{lemma}\label{lemma_base_change}
Let $\HAL$ be a pre-log-log hermitian line bundle on $\X$, with singularities along $D$. Let $F|K$ be a finite extension of fields and $\BT=\Spec\OO_F$. Write $g:\X_{\BT}\rightarrow\X$ for the finite flat projection induced by $\BT\rightarrow\BS$. Let $w\in\Z^{p}_{V}(\X_F)$ be an irreducible and reduced cycle and set $z=g(w)_{\red}$. Let $\delta$ be the degree of $g\mid_{w}$. Then we have the equality
\begin{displaymath}
	h_{g^{*}\HAL}(w)=\delta h_{\HAL}(z).
\end{displaymath}
Consequently, for the morphism $g^{*}$ of (\ref{eq_bc}), we have
\begin{displaymath}
	h_{g^{*}\HAL}(g^{*}(z))=[F:K]h_{\HAL}(z)
\end{displaymath}
for every $z\in\Z^{p}_{U}(\X)$ and every $p$.
\end{lemma}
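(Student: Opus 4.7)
The plan is first to strip off the singular part of the metric by applying Proposition \ref{prop_char_height2}, and then to verify the resulting smooth case via the arithmetic projection formula. Write $\|\cdot\|=e^{-f/2}\|\cdot\|_{0}$ for some smooth $\HAL_{0}=(\AL,\|\cdot\|_{0})$, so that $g^{\ast}\|\cdot\|=e^{-(g^{\ast}f)/2}g^{\ast}\|\cdot\|_{0}$ and $g^{\ast}f$ is a pre-log-log function on $\X_{\BT}(\C)\setminus D_{F}(\C)$ by Proposition \ref{prop_prop_log}. Applying Proposition \ref{prop_char_height2} on both sides expresses $h_{\HAL}(z)-h_{\HAL_{0}}(z)$ and $h_{g^{\ast}\HAL}(w)-h_{g^{\ast}\HAL_{0}}(w)$ as finite sums of integrals of the shape $\int_{z(\C)}f\,\c1(\HAL)^{k}\c1(\HAL_{0})^{d-k}$ and their pullbacks $\int_{w(\C)}g^{\ast}f\,\c1(g^{\ast}\HAL)^{k}\c1(g^{\ast}\HAL_{0})^{d-k}$, where $d$ is the complex dimension of $z(\C)$. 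The restriction of $g(\C)$ to $w(\C)$ is a finite surjective holomorphic map onto $z(\C)$ of generic degree $\delta$, so the change-of-variables formula gives $\int_{w(\C)}g^{\ast}\alpha=\delta\int_{z(\C)}\alpha$ for every locally integrable top-degree form $\alpha$ on $z(\C)$. Hence the archimedean correction for $w$ is precisely $\delta$ times that for $z$, and the lemma is reduced to $h_{g^{\ast}\HAL_{0}}(w)=\delta\,h_{\HAL_{0}}(z)$ for smooth $\HAL_{0}$.

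For smooth metrics the height is the classical Gillet--Soul\'e height, and the identity is a direct consequence of the arithmetic projection formula. Denote by $g_{0}:\BT\to\BS$ the map induced on bases, and by $\pi_{\X}$, $\pi_{\X_{\BT}}$ the structure morphisms. Flatness of $g$ yields $g^{\ast}\ac(\HAL_{0})=\ac(g^{\ast}\HAL_{0})$ in $\ACH^{1}(\X_{\BT})$, so the projection formula in the arithmetic Chow groups gives
\[
g_{\ast}\bigl(\ac(g^{\ast}\HAL_{0})^{p}\cdot w\bigr)=\ac(\HAL_{0})^{p}\cdot g_{\ast}w=\delta\,\ac(\HAL_{0})^{p}\cdot z,
\]
the last equality using $g_{\ast}w=\delta\,z$. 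Pushing down to the base via $\pi_{\X\ast}g_{\ast}=g_{0\ast}\pi_{\X_{\BT}\ast}$ and using the compatibility $\adeg_{\BS}\circ g_{0\ast}=\adeg_{\BT}$ (multiplicativity of the absolute norm over $\Q$) yields $h_{g^{\ast}\HAL_{0}}(w)=\delta\,h_{\HAL_{0}}(z)$, as required. An alternative route is to check that $w\mapsto\delta\,h_{\HAL_{0}}(g(w)_{\red})$ satisfies the properties H1, H2, H3 of Proposition \ref{prop_char_height} inductively on $p$; the only non-routine step is the archimedean integral appearing in H3, which is controlled by the same change-of-variables formula as above.

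The consequence $h_{g^{\ast}\HAL}(g^{\ast}(z))=[F:K]\,h_{\HAL}(z)$ for arbitrary $z\in\Z^{p}_{U}(\X)$ then follows by linearity. For horizontal $z$, separability of $F|K$ makes $z\times_{K}F$ reduced, so $g^{\ast}(z)=\sum_{i}w_{i}$ without multiplicities, and the relation $\sum_{i}[\kappa(w_{i}):\kappa(z)]=[F:K]$ combined with the first part of the lemma gives the claim. For vertical $z$ supported on a closed fiber $\X_{\wp}$ the same reasoning applies via the flat projection formula $g_{\ast}g^{\ast}(z)=[F:K]z$, or directly via property H2 and the classical decomposition $[F:K]=\sum_{i}e_{i}f_{i}$ over primes $\wp_{i}$ of $\OO_{F}$ above $\wp$. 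The main obstacle is conceptual rather than technical: all ingredients --- the archimedean projection formula, the arithmetic projection formula, and Proposition \ref{prop_char_height2} --- are at hand, and the proof consists in assembling them consistently, taking mild care that the archimedean formulas are applied to pre-log-log rather than smooth forms (legitimate by Proposition \ref{prop_prop_log}).
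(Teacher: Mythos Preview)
Your proof is correct and follows essentially the same approach as the paper: reduce to the smooth case via Proposition \ref{prop_char_height2}, handle the archimedean correction by the change-of-variables formula for the generically finite degree-$\delta$ map $g_{\mid w(\C)}:w(\C)\to z(\C)$, and invoke the known smooth case from \cite{BGS}. Your write-up is considerably more detailed than the paper's (which simply cites \cite{BGS}, \S3.1.4 and Proposition 3.2.1 for the smooth case); one small slip is the exponent in $\ac(\HAL_{0})^{p}\cdot w$, which should be the dimension $n+1-p$ of $w$ rather than its codimension $p$, but this does not affect the argument.
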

\begin{proof}
This follows for instance from the case of smooth metrics (see \cite{BGS}, \textsection 3.1.4 and Proposition 3.2.1) and Proposition \ref{prop_char_height2}, since $g_{\mid w(\C)}:w(\C)\rightarrow z(\C)$ is generically smooth and finite of degree $\delta$, so that
\begin{displaymath}
	\int_{w(\C)}g^{*}\left(f\c1(\HAL)^{k}\c1(\HAL_{0})^{p-1-k}\right)=\delta\int_{z(\C)}f\c1(\HAL)^{k}\c1(\HAL_{0})^{p-1-k}.
\end{displaymath}
\end{proof}
\begin{notation}
Let $\HAL$ be a pre-log-log hermitian line bundle, with singularities along $D$. Let $z\in\Z^{p}_{U}(\X_K)$ with $\gdeg_{\AL_K}z\neq 0$. We define its normalized height to be
\begin{displaymath}
	\widetilde{h}_{\HAL}(z)=\frac{h_{\HAL}(z)}{[K:\Q]\gdeg_{\AL_K}z}.
\end{displaymath}
\end{notation}
\begin{lemma}\label{lemma_norm_height}
Let $g:\X_{\BT}\rightarrow\X$ be as before. Let $w\in\Z^{p}_{V}(\X_F)$ be an irreducible and reduced cycle and set $z=g(w)_{\red}$. For the normalized height we have $\widetilde{h}_{g^{*}\HAL}(w)=\widetilde{h}_{\HAL}(z)$ and $\widetilde{h}_{g^{*}\HAL}(g^{*}z)=\widetilde{h}_{\HAL}(z)$.
\end{lemma}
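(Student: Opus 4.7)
The plan is to reduce both equalities to Lemma \ref{lemma_base_change} together with a purely geometric comparison of geometric degrees over $K$ and over $F$. Throughout write $g_{K}\colon\X_{F}\to\X_{K}$ for the generic fibre of $g$, which is the base change of $\Spec F\to\Spec K$; in particular $g_{K}$ is finite flat of degree $[F:K]$.

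First I would handle the equality $\widetilde{h}_{g^{*}\HAL}(w)=\widetilde{h}_{\HAL}(z)$. By Lemma \ref{lemma_base_change} the arithmetic heights satisfy $h_{g^{*}\HAL}(w)=\delta\,h_{\HAL}(z)$, so unfolding the normalisation it suffices to prove the geometric identity
\begin{displaymath}
    [F:K]\,\gdeg_{(g^{*}\AL)_{F}}(w)=\delta\,\gdeg_{\AL_{K}}(z),
\end{displaymath}
since $[F:\Q]=[F:K][K:\Q]$. This comes from the projection formula applied to $g_{K}$: the push-forward of the 0-cycle $c_{1}(g_{K}^{*}\AL_{K})^{\dim w}\cap [w]$ equals $\delta\,c_{1}(\AL_{K})^{\dim z}\cap [z]$, and the $K$-degree of a 0-cycle on $\X_{F}$ is $[F:K]$ times its $F$-degree. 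Plugging this into the ratio $h_{g^{*}\HAL}(w)/([F:\Q]\gdeg_{(g^{*}\AL)_{F}}w)$ and cancelling $\delta$ yields the desired identity.

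For the second equality, decompose the flat pull-back cycle as $g^{*}z=\sum_{j}m_{j}w_{j}$ with $w_{j}$ irreducible. Since $g_{K}$ is finite flat and $z$ is irreducible, every $w_{j}$ surjects onto $z$, so $g(w_{j})_{\red}=z$ and we may let $\delta_{j}$ denote the degree of $g_{|w_{j}}$. The flat projection formula $(g_{K})_{*}g_{K}^{*}=[F:K]\cdot\mathrm{id}$ gives $\sum_{j}m_{j}\delta_{j}=[F:K]$. Applying Lemma \ref{lemma_base_change} to each $w_{j}$ and summing yields $h_{g^{*}\HAL}(g^{*}z)=[F:K]\,h_{\HAL}(z)$, while summing the geometric identity from the previous paragraph gives $[F:K]\,\gdeg_{(g^{*}\AL)_{F}}(g^{*}z)=[F:K]\,\gdeg_{\AL_{K}}(z)$. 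Dividing and using $[F:\Q]=[F:K][K:\Q]$ once more produces $\widetilde{h}_{g^{*}\HAL}(g^{*}z)=\widetilde{h}_{\HAL}(z)$.

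There is no real obstacle here: the proof is essentially bookkeeping. The only point requiring care is tracking the factors $[F:K]$, $\delta$, and $\delta_{j}$ so that the degree-over-$F$ versus degree-over-$K$ discrepancy is properly absorbed by the corresponding relation between $[F:\Q]$ and $[K:\Q]$; the key input is the projection formula for $g_{K}$ at the level of cycle degrees.
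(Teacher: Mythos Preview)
Your proof is correct and follows the same overall strategy as the paper: reduce both equalities to Lemma \ref{lemma_base_change} together with the geometric degree identity $[F:K]\,\gdeg_{(g^{*}\AL)_{F}}(w)=\delta\,\gdeg_{\AL_{K}}(z)$. The only difference is in how this degree identity is justified: the paper computes degrees analytically as integrals of $\c1(\HAL)^{p-1}$ over $w(\C)$ and $z(\C)$ and uses that $g_{\mid w(\C)}$ is generically finite of degree $\delta$, whereas you use the algebraic projection formula for $g_{K}$; these are equivalent. For the second equality you re-derive $h_{g^{*}\HAL}(g^{*}z)=[F:K]\,h_{\HAL}(z)$ by decomposing $g^{*}z$ into components, which is fine but unnecessary since this is already the second assertion of Lemma \ref{lemma_base_change}.
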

\begin{proof}
For every $w\in\Z^{n+1-p}_{V}(\X_F)$ irreducible and reduced and $z=g(w)_{\red}$, we have the equalities
\begin{displaymath}
	\begin{split}
		\gdeg_{(g^{*}\AL)_F}w &=\frac{1}{[F:\Q]}\int_{w(\C)}\c1(g^{*}\HAL)^{p-1}\\
		&=\frac{\delta}{[F:\Q]}\int_{z(\C)}\c1(\HAL)^{p-1}\\
		&=\frac{\delta}{[F:K]}\gdeg_{\AL_K}z,
	\end{split}
\end{displaymath}
where $\delta$ is the degree of $g_{\mid w}$. It follows that $\gdeg_{(g^{*}\AL)_F}(g^{*}z)=\gdeg_{\AL_K}z$. Combined with Lemma \ref{lemma_base_change}, we get $\widetilde{h}_{g^{*}\HAL}(w)=\widetilde{h}_{\HAL}(z)$ and $\widetilde{h}_{g^{*}\HAL}(g^{*}z)=\widetilde{h}_{\HAL}(z)$.
\end{proof}
\begin{remark}
The normalization $\widetilde{h}_{\HAL}$ just introduced is not the standard one. However it appears naturally in the statement and proof of Theorem \ref{main_theorem}. 
\end{remark}
\subsection{Proof of the main theorem}
We now proceed to prove Theorem \ref{main_theorem}. The argument mainly relies on the bounds established in \textsection \ref{section_bounding_height_integrals} , and more concretely the situation studied in \textsection \ref{projective_case}. However, in the reduction steps we will need the following Bertini's type theorem. The proof is essentially well known, but we include it in the Appendix for lack of reference.
\begin{proposition}\label{prop_Bertini}
Let $X$ be a nonsingular projective scheme over an algebraically closed field $k$. Let $D\subseteq X$ be a divisor with simple normal crossings. Let $L$ be an ample line bundle on $X$. Then there exist an integer $N>0$ and global sections $s_1,\ldots,s_r\in\HH^{0}(X,L^{\otimes N})$ such that $\supp(\gdiv s_1)$,\ldots, $\supp(\gdiv s_r)$ are divisors with simple normal crossings and the following equality of schemes holds:
\begin{displaymath}
	D=(\supp(\gdiv s_1)\cap\ldots\cap\supp(\gdiv s_r))_{\red}.
\end{displaymath}
\end{proposition}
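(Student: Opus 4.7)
\textbf{Proof plan for Proposition \ref{prop_Bertini}.}

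The plan is to produce the sections $s_i$ by factoring them through the canonical section $s_D$ of $\OO_X(D)$ and applying a strengthened Bertini theorem to the residual very ample linear system. First I would fix $N$ large enough that $L^{\otimes N}\otimes\OO_X(-D)$ is very ample; such $N$ exists by Serre's theorem, since $L$ is ample. Write $D=D_1\cup\cdots\cup D_m$ with $D_j$ smooth irreducible, meeting transversally. Multiplication by $s_D$ gives an injection
\begin{displaymath}
	s_D\cdot(\,\cdot\,):\HH^{0}(X,L^{\otimes N}\otimes\OO_X(-D))\hookrightarrow \HH^{0}(X,L^{\otimes N}),
\end{displaymath}
whose image consists of sections of $L^{\otimes N}$ whose divisor contains $D$. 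For $t\in\HH^{0}(X,L^{\otimes N}\otimes\OO_X(-D))$ and $s=s_D\cdot t$, one has $\gdiv s=D+\gdiv t$ as Cartier divisors, so $\supp(\gdiv s)=\supp(D)\cup\supp(\gdiv t)$.

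Next I would invoke the strong form of Bertini (as in Hartshorne, III.10, or Jouanolou): since $L^{\otimes N}\otimes\OO_X(-D)$ is very ample and hence defines a closed embedding $\varphi:X\hookrightarrow\PP^{M}$, a general hyperplane $H\subset\PP^{M}$ pulls back to a smooth divisor $E_t:=\varphi^{-1}(H)=\gdiv t$, and moreover $H$ meets every closed stratum of the stratification of $X$ induced by the smooth subvarieties $D_{j_1}\cap\cdots\cap D_{j_k}$ transversally. Consequently $E_t$ is smooth, and $D+E_t$ is a divisor with simple normal crossings. Hence for general $t$, the section $s=s_D\cdot t$ has $\supp(\gdiv s)=D\cup E_t$ with simple normal crossings.

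To finish, I would choose $r$ general sections $t_1,\ldots,t_r\in\HH^{0}(X,L^{\otimes N}\otimes\OO_X(-D))$, yielding $s_i=s_D\cdot t_i$. Each $\supp(\gdiv s_i)=D\cup E_{t_i}$ is SNC by the previous step. The intersection satisfies
\begin{displaymath}
	\bigcap_{i=1}^{r}\supp(\gdiv s_i)=D\cup\bigcap_{i=1}^{r}E_{t_i}.
\end{displaymath}
Since $L^{\otimes N}\otimes\OO_X(-D)$ is very ample, hence base-point-free, a standard general position argument shows that for $r$ general sections with $r\geq\dim X+1$ (equivalently, for a general linear subspace of codimension $r$ in $\PP^M$, which misses $\varphi(X)$), one has $\bigcap_{i=1}^{r}E_{t_i}=\emptyset$. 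Therefore $\bigcap_{i}\supp(\gdiv s_i)=D$ set-theoretically, and since $D$ is reduced by hypothesis, taking the reduced structure of the scheme-theoretic intersection yields exactly $D$.

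The main obstacle is the strengthened Bertini statement in the second paragraph, namely that a general element of a very ample linear system on $X$ can be chosen smooth and simultaneously transverse to every stratum of the SNC divisor $D$; the rest is routine once this is granted. This refined Bertini is classical over an algebraically closed field (via the standard incidence-variety argument applied to each stratum $D_{j_1}\cap\cdots\cap D_{j_k}$, which is smooth and embedded by $L^{\otimes N}\otimes\OO_X(-D)$), but it is precisely the point where the hypothesis that $D$ is SNC (rather than merely normal crossings) is used, and where full details are needed in the Appendix.
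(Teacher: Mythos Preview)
Your proposal is correct and follows essentially the same approach as the paper's proof: both choose $N$ so that $L^{\otimes N}(-D)$ is very ample, write $s_i=s_D\cdot t_i$ with $t_i\in\HH^0(X,L^{\otimes N}(-D))$, apply Bertini to each smooth stratum $\bigcap_{j\in I}D_j$ (the paper makes this explicit via the finite family $\mathcal{F}$ of such strata) to ensure $\gdiv t_i$ is smooth and transverse to all of them, and then choose enough general $t_i$ so that $\bigcap_i\gdiv t_i=\emptyset$. Your identification of the transversality-to-strata step as the crux is exactly right, and your remark that this is where the simple normal crossings hypothesis enters matches the paper's use of smoothness of each $Y\in\mathcal{F}$.
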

Under the hypothesis of Theorem \ref{main_theorem}, since $D(\C)$ has simple normal crossings, $D_{\overline{K}}$ has also simple normal crossings in $\X_{\overline{K}}$. We will apply Proposition \ref{prop_Bertini} through the following straightforward corollary.
\begin{corollary}\label{corollary_Bertini}
There exist a finite extension $K'|K$, a positive integer $N$ and global sections $s_1,\ldots, s_r\in\HH^{0}(\X_{K'},\AL_{K'}^{\otimes N})$ such that
\begin{itemize}
	\item[B1.] $\supp(\gdiv s_j)(\C)$ is a divisor with simple normal crossings in $\X(\C)$, for every $j=1,\ldots,r$;
	\item[B2.] $D_{K'}=(\supp(\gdiv s_1)\cap\ldots\cap\supp(\gdiv s_r))_{\red}$.
\end{itemize}
\end{corollary}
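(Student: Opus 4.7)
The plan is to apply Proposition \ref{prop_Bertini} to $X=\X_{\overline{K}}$ over the algebraically closed field $k=\overline{K}$, with the divisor $D_{\overline{K}}$ and the ample line bundle $\AL_{\overline{K}}$, and then descend the resulting sections to a finite subextension $K'/K$ of $\overline{K}/K$. The hypotheses of Proposition \ref{prop_Bertini} are in place: $\X_{\overline{K}}$ is nonsingular and projective (as $\X_K$ is regular of pure dimension $n$ by our definition of arithmetic variety), $\AL_{\overline{K}}$ is ample, and $D_{\overline{K}}\subseteq\X_{\overline{K}}$ has simple normal crossings, this being a scheme-theoretic condition equivalent, via flat base change along any embedding $\overline{K}\hookrightarrow\C$, to the SNC property of $D(\C)$ in $\X(\C)$ which we have assumed. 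Proposition \ref{prop_Bertini} therefore yields an integer $N>0$ and sections $\tilde{s}_1,\ldots,\tilde{s}_r\in\HH^0(\X_{\overline{K}},\AL_{\overline{K}}^{\otimes N})$ such that each $\supp(\gdiv\tilde{s}_j)$ has SNC in $\X_{\overline{K}}$ and
\begin{displaymath}
  D_{\overline{K}}=\bigl(\supp(\gdiv\tilde{s}_1)\cap\ldots\cap\supp(\gdiv\tilde{s}_r)\bigr)_{\red}
\end{displaymath}
as closed subschemes of $\X_{\overline{K}}$.

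To descend, I would use the flat base change identification $\HH^0(\X_{\overline{K}},\AL_{\overline{K}}^{\otimes N})=\HH^0(\X_K,\AL_K^{\otimes N})\otimes_K\overline{K}$. Since $\overline{K}$ is a filtered union of its finite subextensions of $K$, each $\tilde{s}_j$ is defined over some finite extension; choosing a common finite extension $K'/K$ over which all the $\tilde{s}_j$ are defined, one obtains $s_j\in\HH^0(\X_{K'},\AL_{K'}^{\otimes N})$ with $s_j\otimes_{K'}\overline{K}=\tilde{s}_j$. For \textbf{B1}, the identification $\supp(\gdiv s_j)\times_{K'}\overline{K}=\supp(\gdiv\tilde{s}_j)$ shows that the left-hand side has scheme-theoretic SNC over $\overline{K}$; for any complex embedding $\tau:K'\hookrightarrow\C$ restricting to $\sigma:K\hookrightarrow\C$, this translates into analytic SNC of $\supp(\gdiv s_j)_\tau(\C)$ inside $\X_\sigma(\C)$, which is exactly the required condition.

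For \textbf{B2}, the two closed reduced subschemes $D_{K'}$ and $\bigl(\supp(\gdiv s_1)\cap\ldots\cap\supp(\gdiv s_r)\bigr)_{\red}$ of $\X_{K'}$ both have base change to $\overline{K}$ equal to $D_{\overline{K}}$ (using that in characteristic zero reduction and intersection of closed subschemes commute with flat base change, and that $K'$ is perfect so reduced implies geometrically reduced); hence they coincide by faithful flatness of $\overline{K}/K'$. The only genuinely non-routine ingredient is Proposition \ref{prop_Bertini} itself, deferred to the Appendix; once that is granted, the corollary reduces to the formal Galois-descent bookkeeping outlined above, which is painless because only finitely many sections need to be descended and the relevant scheme-theoretic operations are stable under flat base change.
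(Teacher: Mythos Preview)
Your proposal is correct and follows exactly the approach the paper intends: the paper states the corollary as a ``straightforward'' consequence of Proposition~\ref{prop_Bertini} applied to $\X_{\overline{K}}$ (noting just before the corollary that $D(\C)$ having simple normal crossings forces $D_{\overline{K}}$ to have simple normal crossings in $\X_{\overline{K}}$), with an implicit descent to a finite extension. You have simply spelled out the descent bookkeeping that the paper leaves to the reader.
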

The next two lemmas provide the final reductions before the proof of Theorem \ref{main_theorem}.
\begin{lemma}\label{first_reduction}
It is enough to proof Theorem \ref{main_theorem} in the following situations:
\begin{itemize}
	\item[i.] after some finite extension $K'\mid K$;
	\item[ii.] $\AL$ is very ample and $z\in\Z^{p}_{U}(\X_K)$ is irreducible and reduced. 
\end{itemize}
\end{lemma}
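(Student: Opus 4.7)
The plan is to verify that inequality (\ref{main_ineq}) is stable under each of the two reductions, so that proving the theorem in the restricted setting recovers it in general.

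For part \textit{i}, fix a finite extension $K'|K$, set $\BT'=\Spec\OO_{K'}$, and let $g:\X_{\BT'}\rightarrow\X$ be the associated finite flat projection. Since normal crossings are preserved under field extensions, $D_{K'}(\C)\subseteq\X_{\BT'}(\C)$ still has simple normal crossings, and $g^{*}\HAL$, $g^{*}\HAL_{0}$ are respectively pre-log-log and smooth hermitian line bundles on $\X_{\BT'}$ with singularities along $D_{K'}$. For any effective $z\in\Z^{p}_{U}(\X_K)$, the pull-back $g^{*}z$ is effective and horizontal, and lies in $\Z^{p}_{V}(\X_{K'})$ with $V=\X_{\BT'}(\C)\setminus D_{K'}(\C)$; by Lemma \ref{lemma_norm_height} one has $\widetilde{h}_{g^{*}\HAL}(g^{*}z)=\widetilde{h}_{\HAL}(z)$ and $\widetilde{h}_{g^{*}\HAL_{0}}(g^{*}z)=\widetilde{h}_{\HAL_{0}}(z)$. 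Hence an inequality of the form (\ref{main_ineq}) proven on $\X_{\BT'}$ for all such $g^{*}z$ yields the identical inequality on $\X$ for $z$ with the same constants.

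For part \textit{ii}, I first reduce to $\AL$ very ample. Since $\AL_{K}$ is ample, choose $N\geq 1$ with $\AL_{K}^{\otimes N}$ very ample. By multilinearity (apply Proposition \ref{prop_char_height2} with $f\mapsto Nf$ and $\c1(\HAL)\mapsto N\,\c1(\HAL)$) one has $h_{\HAL^{\otimes N}}(z)=N^{n+1-p}h_{\HAL}(z)$ and $\gdeg_{\AL_{K}^{\otimes N}}(z)=N^{n-p}\gdeg_{\AL_{K}}(z)$, so that $\widetilde{h}_{\HAL^{\otimes N}}(z)=N\widetilde{h}_{\HAL}(z)$, and similarly for $\HAL_{0}$. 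If (\ref{main_ineq}) holds for $(\HAL^{\otimes N},\HAL_{0}^{\otimes N})$ with constants $\alpha',\beta',\gamma',R$, dividing by $N$ gives
\begin{displaymath}
\left|\widetilde{h}_{\HAL}(z)-\widetilde{h}_{\HAL_{0}}(z)\right|\leq\frac{\alpha'}{N}+\frac{\beta'}{N}\log^{R}\!\left(N\widetilde{h}_{\HAL_{0}}(z)+\gamma'\right).
\end{displaymath}
The elementary estimate $\log^{R}(Nt+\gamma')\leq 2^{R-1}((\log N)^{R}+\log^{R}(t+\gamma))$, valid for $t+\gamma\geq 1$ and suitable $\gamma$, then absorbs the factor $\log N$ into new constants $\alpha,\beta,\gamma$. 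The condition $\widetilde{h}_{\HAL_{0}}(z)+\gamma\geq 1$ is secured by further enlarging $\gamma$, using the universal lower bound for smooth hermitian ample line bundles (Theorem \ref{thm_BGS}, part 2) applied to $\HAL_{0}$.

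Second, I reduce to irreducible reduced cycles. Decompose any effective $z\in\Z^{p}_{U}(\X_K)$ as $z=\sum_{i}n_{i}z_{i}$ with each $z_{i}$ irreducible, reduced, effective and $n_{i}>0$. Put $d_{i}=n_{i}\gdeg_{\AL_{K}}z_{i}>0$ (positivity by ampleness of $\AL_{K}$) and $D=\sum_{i}d_{i}=\gdeg_{\AL_{K}}z$. Additivity of heights and degrees gives
\begin{displaymath}
\widetilde{h}_{\HAL}(z)-\widetilde{h}_{\HAL_{0}}(z)=\sum_{i}\frac{d_{i}}{D}\!\left(\widetilde{h}_{\HAL}(z_{i})-\widetilde{h}_{\HAL_{0}}(z_{i})\right).
\end{displaymath}
Assuming (\ref{main_ineq}) for each $z_{i}$, the triangle inequality yields
\begin{displaymath}
\left|\widetilde{h}_{\HAL}(z)-\widetilde{h}_{\HAL_{0}}(z)\right|\leq\alpha+\beta\sum_{i}\frac{d_{i}}{D}\log^{R}\!\left(\widetilde{h}_{\HAL_{0}}(z_{i})+\gamma\right).
\end{displaymath}
After enlarging $\gamma$ so that every $\widetilde{h}_{\HAL_{0}}(z_{i})+\gamma\geq e^{R-1}$, the function $\log^{R}$ is concave on the relevant range, and Jensen's inequality with probability weights $d_{i}/D$ delivers
\begin{displaymath}
\sum_{i}\frac{d_{i}}{D}\log^{R}\!\left(\widetilde{h}_{\HAL_{0}}(z_{i})+\gamma\right)\leq\log^{R}\!\left(\sum_{i}\frac{d_{i}}{D}(\widetilde{h}_{\HAL_{0}}(z_{i})+\gamma)\right)=\log^{R}\!\left(\widetilde{h}_{\HAL_{0}}(z)+\gamma\right),
\end{displaymath}
which is the desired inequality for $z$. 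The mildly delicate point is the bookkeeping of constants under the very ample reduction; otherwise the lemma is a formal consequence of the multiplicativity of $h_{\HAL}$ under base change and tensor powers together with the concavity of $\log^{R}$ at infinity.
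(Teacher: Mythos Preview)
Your treatment of part \textit{i} and of the reduction to irreducible reduced cycles is correct and matches the paper's argument (concavity of $\log^{R}$ plus Jensen with the weights $d_{i}/D$).

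There is, however, a genuine gap in your reduction to ``$\AL$ very ample''. What you actually achieve is that $\AL_{K}^{\otimes N}$ is very ample on the \emph{generic fibre} $\X_{K}$; this is all that ampleness of $\AL_{K}$ buys you. But the lemma asks for $\AL$ very ample on the \emph{arithmetic variety} $\X$, and there is no reason for a power of $\AL$ to be very ample on $\X$ when only $\AL_{K}$ is assumed ample. The paper closes this gap with an extra step (its Step~1): once $\AL_{K}$ is very ample, one passes to another projective model $(\Y,\mathscr{A})$ of $(\X_{K},\AL_{K})$ over $\Spec\OO_{K}$ with $\mathscr{A}$ very ample on $\Y$, and then invokes \cite{BGS}, Proposition~3.2.2, to get a uniform bound
\[
\left|h_{\overline{\mathscr{A}}_{0}}(\tilde{z})-h_{\HAL_{0}}(z)\right|\leq C\deg_{\AL_{K}}z
\]
for all effective horizontal $z$. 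Since $\overline{\mathscr{A}}_{\C}\simeq\HAL_{\C}$ isometrically, Proposition~\ref{prop_char_height2} gives the same bound with $\HAL$ in place of $\HAL_{0}$, so the inequality (\ref{main_ineq}) transfers between the two models up to an additive constant in $\alpha$. Without this model-change step your argument does not reach the situation claimed in item \textit{ii}.
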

\begin{proof}
The first claim \textit{i} is clear. For the proof of \textit{ii}, we first note that the statement of Theorem \ref{main_theorem} for $\AL^{\otimes N}$ already implies the statement of the theorem for $\AL$, since $\widetilde{h}_{\HAL^{\otimes N}}(z)=N\widetilde{h}_{\HAL}(z)$ and $\widetilde{h}_{\HAL_{0}^{\otimes N}}(z)=N\widetilde{h}_{\HAL_{0}}(z)$. Hence we assume $\AL_{K}$ is very ample. We then proceed in two steps.\\
\textit{Step 1.} We can suppose that $\AL$ is very ample. Indeed, there exists some model $(\Y,\mathscr{A})$ of $(\X_{K},\AL_{K})$ with $\mathscr{A}$ very ample. The metrics $\|\cdot\|$, $\|\cdot\|_{0}$ on $\AL$ induce metrics on $\mathscr{A}$, and we write $\overline{\mathscr{A}}$ and $\overline{\mathscr{A}}_0$ for the corresponding hermitian line bundles. For every effective cycle $z\in\Z^{p}_{U}(\X_K)$, we write $\tilde{z}$ for the corresponding effective and horizontal cycle on $\Y$. By Proposition 3.2.2 in \cite{BGS}, there exists a positive constant $C$, independent of $z$, such that
\begin{equation}\label{eq_mt1}
	\left |h_{\overline{\mathscr{A}}_{0}}(\tilde{z})-h_{\HAL_{0}}(z)\right |\leq C\deg_{\AL_K} z.
\end{equation}
Since $\overline{\mathscr{A}}_{\C}$ and $\overline{\mathscr{L}}_{\C}$ are isometric, Proposition \ref{prop_char_height2} and (\ref{eq_mt1}) together give
\begin{displaymath}
	\left |h_{\overline{\mathscr{A}}}(\tilde{z})-h_{\HAL}(z)\right|=\left |h_{\overline{\mathscr{A}}_{0}}(\tilde{z})-h_{\HAL_{0}}(z)\right |\leq C\deg_{\AL_K} z.
\end{displaymath}
\textit{Step 2.} We can suppose that $z$ is irreducible and reduced. Indeed, suppose that we have shown the existence of constants $\alpha$, $\beta$, $\gamma>0$ and $R\in\Int_{\geq 0}$ such that, for every $w\in\Z^{p}_{U}(\X_K)$ irreducible and reduced, we have $\widetilde{h}_{\HAL_{0}}(w)+\gamma\geq 1$ and
	\begin{equation}\label{eq_mt3}
		\left|\widetilde{h}_{\HAL}(w)-\widetilde{h}_{\HAL_0}(w)\right|\leq\alpha+\beta\log^{R}\left(\widetilde{h}_{\HAL_0}(w)+\gamma\right).
	\end{equation}
After possibly increasing $\gamma$ we can suppose that $\widetilde{h}_{\HAL_0}(w)+\gamma> e^{R-1}$, for every irreducible and reduced $w\in\Z^{p}_{U}(\X_K)$. Let us now consider $w=\sum_{i\in I}w_{i}\in\Z^{p}_{U}(\X_K)\setminus\{0\}$ where the $w_i\in\Z^{p}_{U}(\X_K)$ are irreducible and reduced. Then (\ref{eq_mt3}) yields
\begin{displaymath}
	\begin{split}
		\left |\widetilde{h}_{\HAL}(w)-\widetilde{h}_{\HAL_{0}}(w)\right|&
		\leq\frac{\deg_{\AL_K}w_i}{\deg_{\AL_K}w}\left |\widetilde{h}_{\HAL}(w_i)-\widetilde{h}_{\HAL_0}(w_i)\right|\\
		&\leq \alpha+\beta\sum_{i\in I}\frac{\deg_{\AL_K}w_i}{\deg_{\AL_K}w}\log^{R}\left (\widetilde{h}_{\HAL_0}(w_i)+\gamma\right).
	\end{split}
\end{displaymath}
Since $\widetilde{h}_{\HAL_0}(w_i)+\gamma> e^{R-1}$ for all $i$ and $\log^{R}$ is concave on $]e^{R-1},+\infty[$, we conclude
\begin{displaymath}
	\begin{split}
		\sum_{i\in I}\frac{\deg_{\AL_K}w_i}{\deg_{\AL_K}w}\log^{R}\left(\widetilde{h}_{\HAL_0}(w_i)+\gamma\right)&
		\leq\log^{R}\left(\sum_{i\in I}\frac{\deg_{\AL_K}w_i}{\deg_{\AL_K}w}(\widetilde{h}_{\HAL_0}(w_i)+\gamma)\right)\\
		&=\log^{R}\left(\widetilde{h}_{\HAL_{0}}(w)+\gamma\right).
	\end{split}
\end{displaymath}
This completes the proof.
\end{proof}
By Corollary \ref{corollary_Bertini} and Lemma \ref{first_reduction}, after possibly extending $K$ and choosing a suitable model of $(\X_K,\AL_K)$, we can suppose that $\AL$ is very ample and there exist sections $s_1,\ldots,s_r\in\HH^{0}(\X_K,\AL_K)$ with the properties \textit{B1}, \textit{B2} above (with $K'=K$). After possibly multiplying the sections $s_j$ by a sufficiently divisible integer, we can even suppose that $s_1,\ldots,s_r\in\HH^{0}(\X,\AL)$. We denote $E_{j}=\supp(\gdiv s_j)_{K}$. We fix these data until the end of the proof. 
\begin{lemma}\label{second_reduction}
Let $z\in\Z^{p}_{U}(\X_K)$ be irreducible and reduced. Let $F$ be a finite extension of $K$ over which all the irreducible components of $z_{\KK}$ are defined. Let $\BT=\Spec\OO_F$ and $g:\X_{\BT}\rightarrow\X$ be the finite flat projection induced by $\BT\rightarrow\BS$. Write $g^{*}(z)=\sum_{i\in I} z_{i}$, $z_{i}$ irreducible, reduced and flat over $\BT$. Then $z(\C)$ intersects $D(\C)$ properly if, and only if, every $z_i(\C)$ intersects one of the $E_{j,F}(\C)$ properly.
\end{lemma}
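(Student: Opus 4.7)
The plan is to reformulate both sides of the equivalence as purely scheme-theoretic statements over $F$, where property \textit{B2} yields, by flat base change, the identity $D_F = (E_{1,F}\cap\cdots\cap E_{r,F})_{\red}$, after which the biconditional follows from a short manipulation of closed-subscheme containments.

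The first step is to show that $z_i(\C)$ meets $E_{j,F}(\C)$ properly if and only if $z_{i,F}\not\subseteq E_{j,F}$ as $F$-schemes, and that an analogous equivalence holds with $E_{j,F}$ replaced by $D_F$. Three ingredients enter here. First, the hypothesis that $F$ splits the geometric components of $z$ implies that each $z_{i,F}$ is geometrically integral of codimension $p$; hence for every embedding $\tau:F\hookrightarrow\C$, the analytic space $z_{i,F,\tau}(\C)$ is irreducible, and containment of $z_{i,F}$ in a closed $F$-subscheme $W$ is equivalent to containment of $z_{i,F,\tau}(\C)$ in $W_\tau(\C)$, since flat base change preserves generic points of irreducible closed subschemes. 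Second, Krull's principal ideal theorem on the regular scheme $\X_F$ shows that a codimension-one subscheme meets $z_{i,F}$ either in pure codimension $p+1$ or along all of $z_{i,F}$. Together these give the equivalence for $E_{j,F}$; since $z_{i,F}$ is irreducible, it lies in a finite union of closed subsets only if it lies in one of them, so the same equivalence holds for $D_F$ (whose irreducible components are divisors). Finally, since $g_\C:\X_\BT(\C)\to\X(\C)$ is a local isomorphism at every point, with each connected component of $\X_\BT(\C)$ mapping biholomorphically onto a component of $\X(\C)$, properness of $z(\C)\cap D(\C)$ is equivalent to properness of $g^{*}(z)(\C)\cap D_F(\C)$, i.e., of each $z_i(\C)\cap D_F(\C)$.

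The remaining step is purely set-theoretic and invokes \textit{B2}. Since $z_{i,F}$ is reduced and irreducible, the containment $z_{i,F}\subseteq D_F$ as schemes coincides with the set-theoretic containment $z_{i,F}\subseteq\bigcap_j E_{j,F}$, which in turn is equivalent to $z_{i,F}\subseteq E_{j,F}$ for every $j$. Hence for a fixed $i$, the statement ``$z_{i,F}$ is contained in no irreducible component of $D_F$'' is equivalent to ``there exists $j$ with $z_{i,F}\not\subseteq E_{j,F}$'', which by the first step is equivalent to ``$z_i(\C)$ meets some $E_{j,F}(\C)$ properly''. Quantifying over $i$ yields the biconditional of the lemma. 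The only delicate point in the argument is the first step: one must carefully convert the analytic proper-intersection condition on $\X(\C)$ into a scheme-theoretic containment over $F$, and it is precisely for this purpose that the hypothesis on $F$ (ensuring geometric integrality of each $z_{i,F}$) is imposed.
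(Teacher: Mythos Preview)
Your argument is correct and supplies precisely the details the paper omits: the paper's own proof reads, in its entirety, ``Straightforward.'' Your reduction of the analytic proper-intersection condition to scheme-theoretic non-containment over $F$ (using geometric integrality of the $z_{i,F}$ and faithfully flat descent along $F\hookrightarrow\C$), followed by the use of property \textit{B2} to convert $z_{i,F}\not\subseteq D_F$ into $\exists j:\,z_{i,F}\not\subseteq E_{j,F}$, is exactly the intended unpacking.
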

\begin{proof}
Straightforward.
\end{proof}
Now we can complete the proof of Theorem \ref{main_theorem}.
\begin{proof}[Proof of Theorem \ref{main_theorem}] First of all, for every integer $M\geq 0$ we construct a function $\Theta_{M}$, as in \textsection \ref{section_decomp_thm}, for the complex analytic variety $\X(\C)$ and the divisor with simple normal crossings $D(\C)$.\\
Observe that we can suppose that the metric $\|\cdot\|_{0}$ satisfies $\c1(\HAL_0)>0$ and $\| s_j\|_{0}^{2}\leq e^{-e}$ for every $j=1,\ldots,r$. Indeed, by \cite{BGS}, Proposition 3.2.2 (or also Proposition \ref{prop_char_height2} for \textit{smooth} metrics), a change of smooth metric causes only bounded variations of the normalized height.

We introduce the pre-log-log functions
\begin{displaymath}
	\ell_{Q}^{(j)}=(\log\log\|s_{j}\|_{0}^{-2})^{Q}:\X(\C)\setminus E_{j}(\C)\longrightarrow\R.
\end{displaymath}
For every $Q$ we fix a positive constant $A_{Q}>0$ such that $\ddc(-\ell_{Q}^{(j)})+A_{Q}\c1(\HAL_0)\geq 0$, $A_{0}=1$, for all $j=1,\ldots,r$ (see Proposition \ref{prop_bound_theta_N}).

Write $\|\cdot\|=e^{-f/2}\|\cdot\|_{0}$, where $f:\X(\C)\setminus D(\C)\rightarrow\R$ is a pre-log-log function (resp. P-singular if $\|\cdot\|$ is good). Attached to $\c1(\HAL_0)$ we perform a decomposition $f=\varphi-\psi$ as in Theorem \ref{decomp_thm_2}. Recall that $\varphi$, $\psi$ are positive pre-log-log (resp. P-singular) functions along $D(\C)$, with
\begin{displaymath}
	\begin{split}
		\omega_{\varphi}:=&\ddc(-\varphi)+B\Theta_N (\ddc(-\Theta_1)+A\c1(\HAL_0))\geq 0,\\
		\omega_{\psi}:=&\ddc(-\psi)+B\Theta_N(\ddc(-\Theta_1)+A\c1(\HAL_0))\geq 0,
	\end{split}
\end{displaymath}
for some $A,B>0$ and $N\in\Int_{\geq 0}$. If $\|\cdot\|$ is good, then we can take $N=1$. For every $j=1,\ldots,r$, $D\subseteq E_{j}$. By compactness of $\X(\C)$ there exist constants $C>0$, $M\in\Int_{\geq 0}$ such that
\begin{displaymath}
	\varphi\leq C\ell_{M}^{(j)}, \psi\leq C\ell_{M}^{(j)}, \Theta_{1}\leq C\ell_{1}^{(j)}, \Theta_{N}\leq C\ell_{N}^{(j)},
\end{displaymath}
for all $j\in\lbrace 1,\ldots,r\rbrace$.

Let $z\in\Z^{n+1-p}_{U}(\X_K)$ be irreducible and reduced. Denote by $F$ an extension of $K$ such that all the irreducible components of $z_{\KK}$ are defined over $F$. Let $\BT=\Spec\OO_F$, $g:\X_{\BT}\rightarrow\X$ be as before. Decompose $g^{*}(z)=\sum_{i\in I}z_i$, with the $z_i$ irreducible and flat over $\BT$. By Lemma \ref{second_reduction}, for every $z_{i}$ there exists $j=j(i)\in\lbrace 0,\ldots,r\rbrace$ such that $z_{i}(\C)$ intersects $E_{j,F}(\C)$ properly. By Corollary \ref{corollary_example} and Proposition \ref{prop_char_height2} we have
\begin{equation}\label{eq_mt2}
	\left|\widetilde{h}_{g^{*}\HAL}(z_{i})-\widetilde{h}_{g^{*}\HAL_0}(z_{i})\right|\leq \alpha
	+\beta\log^{R}\left(\int_{z_{i}(\C)}\log g^{*}\|s_{j}\|_{0}^{-2}\frac{\c1(g^{*}\HAL_{0})^{p-1}}{[K:\Q]\gdeg_{(g^{*}\AL)_{F}}z_{i}}\right)
\end{equation}
for $\alpha,\beta>0$, $R\in\Int_{\geq 0}$ depending only on $A$, $\lbrace A_{Q}\rbrace_{Q}$, $B$, $C$, $M$ and $N$. Moreover, if $\|\cdot\|$ is good, then we can take $R=1$. Applying the property \textit{H3} of heights (see Proposition \ref{prop_char_height}), we rewrite (\ref{eq_mt2}) as
\begin{equation}\label{eq_mt4}
	\left|\widetilde{h}_{g^{*}\HAL}(z_{i})-\widetilde{h}_{g^{*}\HAL_0}(z_{i})\right|\leq \alpha
	+\beta\log^{R}\left(2\widetilde{h}_{g^{*}\HAL_0}(z_i)-2\widetilde{h}_{g^{*}\HAL_0}(\gdiv(g^{*}s_j).z_i)\right).
\end{equation}
To derive this inequality we point out that
\begin{displaymath}
	\gdeg_{(g^{*}\AL)_F}(\gdiv (g^{*}s_j).z_i)=\gdeg_{(g^{*}\AL)_{F}}z_i,
\end{displaymath}
so that
\begin{displaymath}
	\frac{h_{g^{*}\HAL_0}(\gdiv(g^{*}s_j).z_i)}{[F:\Q]\gdeg_{(g^{*}\AL)_{F}}z_{i}}=\widetilde{h}_{g^{*}\HAL_0}(\gdiv(g^{*}s_j).z_i).
\end{displaymath}
By Theorem \ref{thm_BGS} there exists a positive constant $\kappa>0$ such that, for every effective cycle $w\neq 0$ on $\X$, we have $\widetilde{h}_{\HAL_0}(w)> -\kappa$. Combined with Lemma \ref{lemma_norm_height} this yields $\widetilde{h}_{g^{*}\HAL_0}(\gdiv(g^{*}s_j).z_i)> -\kappa$, because $\gdiv(g^{*}s_j).z_i$ is effective. Therefore, from (\ref{eq_mt4}) we deduce
\begin{equation}\label{eq_mt5}
	\begin{split}
		\left|\widetilde{h}_{g^{*}\HAL}(z_{i})-\widetilde{h}_{g^{*}\HAL_0}(z_{i})\right|\leq &\alpha
		+\beta\log^{R}\left(2\widetilde{h}_{g^{*}\HAL_0}(z_i)+2\kappa+2e^{R+1}\right)\\
	\leq&\alpha+2^{R}\beta\log^{R}\left(\widetilde{h}_{g^{*}\HAL_0}(z_i)+\kappa+e^{R+1}\right),
	\end{split}
\end{equation}
where we applied the trivial inequalities $\log 2\leq 1$ and $x+y\leq 2xy$ for real $x,y\geq 1$. Now $\widetilde{h}_{g^{*}\HAL_0}(z_i)+\kappa+e^{R+1}>e^{R-1}$ and $\log^{R}$ is concave on $]e^{R-1},+\infty[$. From (\ref{eq_mt5}) we derive
\begin{equation}\label{eq_mt6}
	\begin{split}
		\Big|\widetilde{h}_{g^{*}\HAL}(g^{*}z)-&\widetilde{h}_{g^{*}\HAL_0}(g^{*}z)\Big|\\
		\leq&\sum_{i\in I}\frac{\gdeg_{(g^{*}\AL)_F}(z_i)}{\gdeg_{(g^{*}\AL)_{F}}(g^{*}z)}\left|\widetilde{h}_{g^{*}\HAL}(z_i)-\widetilde{h}_{g^{*}\HAL_0}(z_i)\right|\\
		\leq&\alpha+2^{R}\beta\log^{R}
		\left(\sum_{i\in I}\frac{\gdeg_{(g^{*}\AL)_F}(z_i)}{\gdeg_{(g^{*}\AL)_{F}}(g^{*}z)}\widetilde{h}_{g^{*}\HAL_0}(z_i)+\kappa+e^{R+1}\right)\\
		=&\alpha+2^{R}\beta\log^{R}\left(\widetilde{h}_{g^{*}\HAL_{0}}(g^{*}z)+\kappa+e^{R+1}\right).
	\end{split}
\end{equation}
By Lemma \ref{lemma_norm_height}, $\widetilde{h}_{g^{*}\HAL}(g^{*}z)=\widetilde{h}_{\HAL}(z)$ and $\widetilde{h}_{g^{*}\HAL_0}(g^{*}z)=\widetilde{h}_{\HAL_0}(z)$. Hence (\ref{eq_mt6}) is equivalent to
\begin{equation}\label{eq_mt7}
	\left|\widetilde{h}_{\HAL}(z)-\widetilde{h}_{\HAL_0}(z)\right|\leq\alpha+2^{R}\beta\log^{R}\left(\widetilde{h}_{\HAL_0}(z)+\kappa+e^{R+1}\right).
\end{equation}
The constants $\alpha$, $2^{R}\beta$, $\gamma:=\kappa+e^{R+1}>0$, $R\in\Int_{\geq 0}$ ($R=1$ if $\|\cdot\|$ is good) in (\ref{eq_mt7}) depend only on $\HAL$ and $\HAL_0$, and not on $z$. This concludes the proof of the theorem.
\end{proof}
\section{Examples}\label{section_examples}
\subsection{Automorphic vector bundles on toroidal compactifications}
The first natural examples of good hermitian vector bundles are provided by the theory of (fully decomposed) automorphic vector bundles on locally symmetric varieties, and their extensions to smooth toroidal compactifications. These have been firstly worked by Mumford in his proof of Hirzebruch's proportionality principle in the non-compact case \cite{Mumford}. In this section we quote from \textit{loc. cit.} the main construction and Mumford's theorem. As an application, we briefly discuss the example of modular forms.

Let $B$ be a bounded symmetric domain. We can write $B=G/K$, where $G$ is a semi-simple adjoint group and $K$ is a maximal compact subgroup. Denote $K_{\C}, G_{\C}$ the complexifications of $K$ and $G$. Inside $G_{\C}$ there is a parabolic subgroup of the form $P_{+}\cdot K_{\C}$ ($P_{+}$ being its unipotent radical), such that $K=G\cap (P_{+}\cdot K_{\C})$ and $G\cdot(P_{+}\cdot K_{\C})$ is open in $G_{\C}$. Then $\check{B}:=G_{\C}/G\cdot(P_{+}\cdot K_{\C})$ is a rational projective variety and there is a $G$-equivariant immersion $B\hookrightarrow\check{B}$ compatible with the complex structure of $B$. Let $E_{0}$ be a $G$-equivariant vector bundle on $B$ attached to a representation $\sigma:K\rightarrow GL_{r}(\C)$. We complexify $\sigma$ and extend it to $P_{+}\cdot K_{\C}$, by letting it act trivially on $P_{+}$. This extension induces a $G_{\C}$ equivariant analytic vector bundle $\check{E}_{0}$ on $\check{B}$, with $\iota^{*}(\check{E}_{0})=E_{0}$. This way we get a holomorphic structure on $E_{0}$ (which depends on the chosen extension of $\sigma$ to $P_{+}\cdot K_{\C}$).

Let $\Gamma$ be a neat arithmetic subgroup of $G$ acting on $B$. Then $X=\Gamma\backslash B$ is a smooth quasi-projective complex variety. The vector bundle $E_{0}$ descends to a holomorphic vector bundle $E$ on $X$. Such a vector bundle is called \textit{fully decomposed automorphic vector bundle}. Since $K$ is compact, there is a $G$-invariant hermitian metric $h_{0}$ on $E_{0}$, thus inducing a hermitian metric $h$ on $E$.
\begin{theorem}\label{thm_mumford}
Let $\overline{X}$ be a smooth toroidal compactification of $X$ with $D=\overline{X}\setminus X$ a divisor with normal crossings. Then the automorphic vector bundle $E$ extends to a vector bundle $E_{1}$ over $\overline{X}$, such that $h$ is good along $D$.
\end{theorem}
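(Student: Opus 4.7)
The plan is to follow Mumford's original strategy and reduce the global statement to an entirely local verification at each boundary point of the toroidal compactification $\overline{X}$. Recall that a toroidal compactification is assembled from charts of the form $U = (T_\sigma)_{\operatorname{an}} \times V$, where $T_\sigma$ is an affine torus embedding attached to a rational polyhedral cone $\sigma$ in the rational boundary component associated to a $\Gamma$-rational parabolic subgroup $P \subseteq G$, and $V$ is a bounded domain of smaller dimension (coming from the Levi of $P$). After unwinding the construction, a neighborhood of a boundary point is identified analytically with $\Delta_\varepsilon^n$, the boundary divisor $D$ being defined by $z_1 \cdots z_m = 0$. In these coordinates the universal cover of the punctured polydisc $\Delta_\varepsilon^{*m}\times\Delta_\varepsilon^{n-m}$ is a piece of the Siegel domain of third kind realizing $B$, and the covering map is a partial exponential $(w_1,\dots,w_n)\mapsto(e^{2\pi i w_1},\dots,e^{2\pi i w_m},w_{m+1},\dots,w_n)$. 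I would carry out the entire verification in one such chart.

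To construct the extension $E_1$, I would use the $G_\C$-equivariant bundle $\check E_0$ on the compact dual $\check B$. The map from the local Siegel domain to $\check B$ extends holomorphically to the local toroidal model, thanks to the fact that the toroidal boundary was designed precisely so that the relevant unipotent translations act on $\check B$ by elements that degenerate to a well-defined limit along $z_i \to 0$. Pulling $\check E_0$ back through this extended map provides a holomorphic vector bundle on $\Delta_\varepsilon^n$ whose restriction to $\Delta_\varepsilon^{*m}\times\Delta_\varepsilon^{n-m}$ is $\Gamma$-equivariantly isomorphic to the pull-back of $E$. Patching these local extensions produces $E_1$. Proposition \ref{prop_extension} guarantees that once we prove the metric has logarithmic singularities along $D$, the extension is in fact unique.

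For the analytic content, I would write the $G$-invariant hermitian form $h_0$ in the Siegel domain presentation. It decomposes as the product of (i) an automorphy factor coming from the representation $\sigma$ extended to $P_+\cdot K_\C$, and (ii) a polynomial–exponential kernel depending on the imaginary parts of $w_1,\dots,w_m$ and on the Levi coordinates. Substituting $w_k = (2\pi i)^{-1}\log z_k$ one checks that the matrix entries $h_{ij}$ and the entries of $H^{-1}$ are bounded by fixed polynomials in $\log|z_k|^{-1}$, establishing condition $L(E,h)$ of Definition \ref{def_good_vb}. The integer that appears is controlled by the weights of $\sigma$ on the nilpotent radical of $P$, so it is uniform on the chart.

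The main difficulty, and the crux of the argument, is the good-forms condition $G(E,h)$: namely that the entries of $(\partial H) H^{-1}$ and their $d$-derivative are Poincaré growth forms along $D$. Here I would differentiate the explicit Siegel-domain expression. Each differentiation in $w_k$ produces either an entry of the Lie algebra representation $d\sigma$ acting on the automorphy factor (a bounded term) or a derivative of the Bergman-type kernel in $\operatorname{Im} w_k$ (a rational function of the imaginary parts). After the substitution $w_k=(2\pi i)^{-1}\log z_k$, the differential $dw_k$ becomes $dz_k/(2\pi i z_k)$ and $\operatorname{Im} w_k$ becomes $(2\pi)^{-1}\log|z_k|^{-1}$, so the a priori singular factor $dz_k/z_k$ is compensated by a factor of $(\log|z_k|^{-1})^{-1}$ coming from the $H^{-1}$ part; this yields Poincaré forms in the sense of Definition \ref{def_Poincare_form}. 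Iterating once more controls $d\bigl((\partial H)H^{-1}\bigr)$. This is the step where the specific shape of the invariant metric on $E_0$ (and not just its being hermitian) is crucial, and where real care must be taken when the parabolic $P_+$ is non-abelian, since then the automorphy factor has off-diagonal nilpotent contributions that must be tracked. Putting the two conditions together yields, by Definition \ref{def_good_vb}, that $(E_1,h)$ is good along $D$.
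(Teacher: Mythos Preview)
The paper does not prove this theorem at all: it is quoted verbatim from Mumford's original article \cite{Mumford}, as the surrounding text makes explicit (``In this section we quote from \textit{loc.\ cit.}\ the main construction and Mumford's theorem''). So there is no proof in the paper against which to compare your proposal.

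That said, your sketch is a faithful outline of Mumford's own argument in \cite{Mumford}: the local toroidal charts, the extension of $E$ via the map to the compact dual $\check{B}$, the explicit Siegel-domain expression for the invariant metric, and the verification of conditions $L(E,h)$ and $G(E,h)$ by tracking the substitution $w_k=(2\pi i)^{-1}\log z_k$. The one point where your outline is genuinely delicate and would need real work to be a proof is the control of $(\partial H)H^{-1}$ and its differential when the unipotent radical is non-abelian; Mumford handles this via a careful induction on the filtration of the nilpotent radical and the associated weight filtration on the representation, and this is not something that falls out of a single differentiation as your sketch might suggest. But as a summary of the strategy, your proposal is accurate.
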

The following proposition may be interesting for some arithmetic purposes.
\begin{proposition}
Suppose that $E_{0}=\omega_{B}$ is the canonical bundle of $B$. Then $E_{1}=\omega_{\overline{X}}(D)$ and coincides with the pull-back of an ample line bundle $\OO(1)$ on the Baily-Borel compactification $X^{*}$ of $X$. The global sections of $\OO(n)$ naturally correspond to the modular forms whose automorphy factor is the $n$th power of the jacobian.
\end{proposition}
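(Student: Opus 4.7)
The plan has three parts matching the three assertions. First I would identify the extension $E_1$ of $\omega_X$ to $\overline{X}$ using Mumford's uniqueness statement (Proposition \ref{prop_extension}): it suffices to compute the growth of the invariant hermitian norm on $\omega_B$ in a neighborhood of a boundary point of $\overline{X}$. The toroidal compactification is locally modeled on $\Delta^n$ with $D$ given by $z_1\cdots z_k=0$, and the uniformization map is locally of the shape $z_j\mapsto w_j=\exp(2\pi i z_j)$ (for a suitable cone decomposition), sending a punctured polydisc to a neighborhood of a cusp in $B$. A direct computation in these coordinates shows that the $G$-invariant volume form on $B$ pulls back so that $\|\dd w_1\wedge\cdots\wedge\dd w_n\|^2$ behaves as $\prod_{j\leq k}(|w_j|\log|w_j|^{-1})^{-2}$ up to a bounded factor. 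Hence a local holomorphic frame $f\,\dd w_1\wedge\cdots\wedge\dd w_n$ has norm with logarithmic singularities along $D$ if and only if $f$ is meromorphic with at most simple poles along each component of $D$, which exactly characterizes sections of $\omega_{\overline{X}}(D)$. By Proposition \ref{prop_extension} this forces $E_1=\omega_{\overline{X}}(D)$.

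Second, for the Baily--Borel comparison, I would invoke Baily--Borel's theorem: there is a canonical proper morphism $\pi:\overline{X}\rightarrow X^{*}$ to the Baily--Borel compactification, and $X^{*}$ is projective, with the ample line bundle $\OO(1)$ defined so that its graded ring of sections is the ring of $\Gamma$-automorphic forms of weight a multiple of some fixed $n_0$ with respect to the jacobian cocycle. The strategy here is to show that $\pi^{*}\OO(1)=\omega_{\overline{X}}(D)$ by comparing global sections on both sides for sufficiently divisible weights and then using that $\pi$ is an isomorphism on the complement of a codimension $\geq 2$ subset (toroidal compactifications resolve the Baily--Borel cuspidal locus).

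Third, the identification of sections with modular forms is essentially tautological once the local computation of step one is in hand: a $\Gamma$-invariant section of $\omega_{B}^{\otimes n}$ is exactly a holomorphic function $f$ on $B$ satisfying $f(\gamma z)=J(\gamma,z)^{n}f(z)$, where $J(\gamma,z)=\det(\mathrm{Jac}\,\gamma)(z)$; such $f$ descends to a section of $\omega_{X}^{\otimes n}$ on $X$, and extends to a global section of $\omega_{\overline{X}}(D)^{\otimes n}=\pi^{*}\OO(n)$ precisely when it has the required log growth, i.e., when it is a classical modular form (not just a cusp form) in the sense of the Koecher principle.

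The genuine obstacle is the local computation underlying step one, namely obtaining a precise comparison of the $G$-invariant volume form on $B$ with the flat Lebesgue form in the coordinates provided by the toroidal compactification near each boundary stratum. This requires understanding the structure of the rational boundary components and of the Siegel domain realization of $B$ adapted to a fixed cusp, together with the cone decomposition used to build $\overline{X}$; once the local model is set up, the growth estimate $\|\dd w_1\wedge\cdots\wedge\dd w_n\|^{2}\asymp\prod_{j\leq k}(|w_j|\log|w_j|^{-1})^{-2}$ is a matter of linear algebra. The remaining two items then follow from Baily--Borel and the Koecher principle with essentially no extra work.
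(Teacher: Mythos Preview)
The paper does not prove this proposition at all: it is stated without proof, as a quotation of standard results from Mumford's paper \cite{Mumford} and from Baily--Borel theory. There is therefore nothing to compare your argument against.

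That said, your outline is a reasonable sketch of how one would actually justify the statement. The identification $E_1=\omega_{\overline X}(D)$ via the growth computation and Proposition~\ref{prop_extension} is exactly Mumford's method in \cite{Mumford}, Proposition~3.4; the Baily--Borel ampleness and the identification of sections with modular forms are the content of the Baily--Borel theorem together with Mumford's comparison of compactifications. One point to be careful about: your claimed local model ``$z_j\mapsto w_j=\exp(2\pi i z_j)$'' and the growth estimate $\prod_{j\leq k}(|w_j|\log|w_j|^{-1})^{-2}$ are correct only in the simplest cases (e.g.\ Hilbert modular varieties, or rank-one boundary components); in general the Siegel domain realization and the cone decomposition give a more complicated picture, and the invariant volume form involves a determinant of imaginary parts rather than a simple product. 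The actual computation is carried out in \cite{Mumford}, \S3, and it is not just linear algebra once the coordinates are set up. So your ``genuine obstacle'' is real, and somewhat more substantial than you suggest.
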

\begin{remark}
Under the hypothesis of the proposition, the line bundle $\omega_{\overline{X}}(D)$ is not ample in general.
\end{remark}
Equip the line bundle $E_{0}=\omega_{B}$ with the hermitian metric $h_{0}$ induced by the K\"ahler-Einstein metric on $B$, say with Einstein constant $-1$. The existence and uniqueness is guaranteed by a result of Mok and Yau \cite{MY}. Since the K\"ahler-Einstein metric is invariant under automorphisms, $h_{0}$ is $G$-equivariant. By Theorem \ref{thm_mumford} $h_{0}$ induces a good hermitian metric $h$ on $E_{1}=\omega_{\overline{X}}(D)$, with singularities along $D$. Observe that this metric is induced by the K\"ahler-Einstein metric on $X$ with Einstein constant $-1$, by uniqueness. Since the K\"ahler-Einstein metric has negative Ricci curvature, $\c1(\omega_{X}(D),h)\geq 0$ on $X$. Together with the fact that $\omega_{\overline{X}}(D)$ is the pull-back of an ample line bundle $\OO(1)$ on $X^{*}$, this can be shown to be enough for the main theorem to hold, as soon as $X$ and $\overline{X}$ are defined over a number field $K$ and we have chosen suitable models $\overline{\X}$ of $\overline{X}$, $\X^{*}$ of $X^{*}$, etc. over $\Spec\OO_K$. Suppose that $\OO(1)$ extends to an ample line bundle $\mathcal{A}$ on $\X^{*}$, that there is a morphism $\pi:\overline{\X}\rightarrow\X^{*}$ extending the natural projection $\overline{X}\rightarrow X^{*}$ and put $\AL=\pi^{*}(\mathcal{A})$. The line bundle $\AL$ is a model of $\omega_{\overline{X}}(D)$ and it can be endowed with the good hermitian metric induced by $h$. Then Corollary \ref{Corollary_Main} hold for $\HAL$, provided we restrict to effective \textit{horizontal} cycles. The proof follows the same lines as for pre-log-log hermitian ample line bundles, and it will be detailed elsewhere.

\subsection{Some natural hermitian vector bundles on the moduli space of curves}
Let $g\geq 2$ be an integer and $\CM_{g}$ the moduli space of complex stable curves of genus $g$. We denote by $\pi:\UCC_{g}\rightarrow\CM_{g}$ the universal curve. By $\M_g$ we mean the open subset of $\CM_g$ parametrizing smooth curves, and we write $\UC_g=\pi^{-1}(\M_g )$. The boundary $\pd\M_{g}=\CM_{g}\setminus\M_{g}$, which classifies singular stable curves, is a divisor with normal crossings. We write $\pd\UC_{g}=\pi^{-1}(\pd\M_{g})$, which is a divisor with normal crossings, too. For the sake of simplicity we neglect that $\CM_{g}$ and $\UCC_{g}$ are actually $V$-manifolds, and we work as if they were complex analytic manifolds (see \cite{Wolpert} for the definition of $V$-manifold and the description of the moduli space of curves as a $V$-manifold).

The first example concerns $\omega_{\UCC_{g}/\CM_{g}}$, the relative dualizing sheaf of $\pi$. Every fiber of $\pi_{\mid\UC_{g}}$ admits a unique complete hyperbolic metric of constant negative curvature $-1$. By Teichm\"uller's theory these metrics glue together and define a smooth hermitian metric on $\omega_{\UC_{g}/\M_{g}}^{\vee}$. We get a smooth hermitian metric on $\omega_{\UC_{g}/\M_{g}}$. By a theorem of Wolpert \cite{Wolpert2} this metric extends to a good hermitian metric on $\omega_{\UCC_{g}/\CM_{g}}$, with singularities along $\pd\UC_{g}$. It is well known that $\omega_{\UCC_{g}/\CM_{g}}$ is relatively ample (\cite{DM}, Corollary to Theorem 1.2).

Let us now consider $(\Omega_{\M_g},h_{WP})$ the cotangent bundle of $\M_g$ with the Weil-Petersson metric. Recall that if $p$ is a point of $\M_g$ representing a Riemann surface $R$, then $\Omega_{\M_g,p}$ is isomorphic to the space of holomorphic quadratic differentials on $R$. If $R$ is written as $\mathbb{H}/\Gamma$ ($\mathbb{H}$ Poincar\'e's upper half plane and $\Gamma\subseteq\text{PSL}_{2}(\R)$ a discrete subgroup), then the metric $h_{WP}$ on the stalk $\Omega_{\M_g,p}$ is the usual scalar product
\begin{displaymath}
	\langle\varphi,\psi\rangle=\int_{R}\varphi(z)\overline{\psi(z)}y^{2}\dd x \dd y
\end{displaymath}
on automorphic forms of weight (2,0) for the group $\Gamma$. By a result of Trapani \cite{Trapani}, $(\det\Omega_{\M_{g}},$ $\det h_{WP})$ extends to a good hermitian line bundle $\overline{\omega_{\CM_{g}}(\log\pd\M_{g})}$, with singularities along $\pd\M_{g}$. Moreover Trapani shows that $\omega_{\CM_{g}}(\log\pd\M_{g})$ admits a smooth hermitian metric with positive curvature form. Therefore, its pull-back to the moduli space of curves of genus $g$ with level $n$ structure ($n\geq 3$) is ample.

In an ambitious program pioneered by \cite{LSY}, \cite{LSY2}, Liu, Sun and Yau study the goodness and bounded geometry of several natural K\"ahler metrics on the moduli space of curves. The interested reader is referred to \textit{loc. cit.} for precise statements.

\subsection{K\"{a}hler-Einstein metrics on quasi-projective varieties}
\subsubsection{Complex theory}
The main references we follow are \cite{Kobayashi}, \cite{TianYau}, and \cite{Wu}.

Let $M$ be a complex analytic manifold of dimension $n$ and $\Omega$ a smooth positive $(n,n)$-form on $M$, namely a volume form. For every analytic chart $(V;z_1,\ldots,z_n)$ of $M$ write
\begin{displaymath}
	\Omega_{\mid V}=\xi_{V}\prod_{k=1}^{n}\left(\frac{i}{2\pi}\dd z_k\wedge\cdz_k\right).
\end{displaymath}
The functions $\lbrace\xi_{V}\rbrace_{V}$ define a smooth hermitian metric $\|\cdot\|_{\Omega}$ on the canonical line bundle $\omega_{M}$. By definition, the Ricci form of $\Omega$ is the real (1,1)-form
\begin{displaymath}
	\Ric\Omega=\c1(\omega_{M},\|\cdot\|_{\Omega})
\end{displaymath}
which is locally given by
\begin{displaymath}
	\Ric\Omega\mid_{V}=\ddc\log\xi_{V}.
\end{displaymath}
The generalized Fefferman operator $J$ acting on volume forms is defined as
\begin{displaymath}
	J:\Omega\longmapsto\frac{(\Ric\Omega)^{n}}{\Omega}.
\end{displaymath}
\begin{theorem}[Kobayashi \cite{Kobayashi}]\label{thm_kobayashi}
Let $X$ be a compact complex analytic manifold and $D\subseteq X$ a divisor with simple normal crossings. Suppose that the line bundle $\omega_X(D)$ is ample on $X$. Then there exists a unique complete K\"{a}hler-Einstein metric $g_{KE}$ on $X\setminus D$ with constant negative Ricci curvature -1. If $\Omega_{KE}$ is the volume form of $g_{KE}$, then $g_{KE}$ is characterized by being complete on $X\setminus D$ and by the equation
\begin{displaymath}
	J(\Omega_{KE})=1.
\end{displaymath}
\end{theorem}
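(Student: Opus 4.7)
\medskip
\noindent\emph{Proof plan.} The goal is to realize $\Omega_{KE}$ as the volume form of a Kähler form on $X\setminus D$ obtained by solving a complex Monge--Ampère equation on the open manifold. The first step is to fix a good background geometry. Since $\omega_X(D)$ is ample, pick a smooth hermitian metric on $\omega_X(D)$ whose first Chern form $\omega_0$ is positive; then, for the components $D_i$ of $D$, pick sections $s_i$ of $\mathcal{O}(D_i)$ with smooth metrics $\|\cdot\|_i$ such that $\|s_i\|_i^2\leq e^{-e}$. Following Carlson--Griffiths / Kobayashi, set
\begin{displaymath}
\omega_{\ast}=\omega_0+\ddc\Big(-\sum_i\log\log\|s_i\|_i^{-2}\Big).
\end{displaymath}
The calculations underlying Theorem \ref{decomp_thm} and Proposition \ref{prop_local} show that, after rescaling the metrics and $\omega_0$ slightly, $\omega_{\ast}$ is a smooth Kähler form on $X\setminus D$ which is asymptotic, near each point of $D$, to the Poincaré metric in the disc directions and to a smooth metric in the transverse ones. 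In particular $\omega_{\ast}$ is complete on $X\setminus D$, and a direct computation shows that its curvature tensor and all its covariant derivatives are bounded, so $(X\setminus D,\omega_\ast)$ has bounded geometry in the sense of Cheng--Yau.

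The second step is to reformulate the Einstein condition. Writing $\omega=\omega_{\ast}+\ddc u$ for an unknown bounded function $u$, the equation $\Ric\omega=-\omega$ is equivalent, via $\Ric\omega=\Ric\omega_{\ast}-\ddc\log(\omega^n/\omega_{\ast}^n)$, to the Monge--Ampère equation
\begin{displaymath}
(\omega_{\ast}+\ddc u)^n = e^{u+F}\,\omega_{\ast}^n,
\end{displaymath}
where $F$ is determined by $\Ric\omega_{\ast}+\omega_{\ast}=\ddc F$. Choosing $\omega_0\in c_1(\omega_X(D))$ and the auxiliary metrics carefully, one checks that $F$ is smooth and bounded on $X\setminus D$, with all covariant derivatives (with respect to $\omega_\ast$) bounded. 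This places us exactly in the setting of the Cheng--Yau / Aubin--Yau existence theorem for the complex Monge--Ampère equation on a complete Kähler manifold of bounded geometry.

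The third step is to solve this equation by the continuity method: consider the family $(\omega_{\ast}+\ddc u_t)^n=e^{tu_t+F}\omega_{\ast}^n$ for $t\in[0,1]$. Openness is standard (implicit function theorem in a suitable Hölder space). Closedness reduces to a priori estimates. The $C^0$-bound comes from the sign of the $u_t$-term and a maximum principle at infinity (available thanks to completeness and the boundedness of $F$); the Laplacian estimate, hence the $C^2$-bound, is obtained by applying Yau's standard computation combined with Cheng--Yau's maximum principle, the required lower bound on the bisectional curvature of $\omega_\ast$ being exactly what bounded geometry provides; $C^{2,\alpha}$ and higher regularity follow by Evans--Krylov and Schauder theory on balls of uniform size. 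The hardest technical point is the $C^2$-estimate on a non-compact complete manifold, where the usual compactness argument has to be replaced by Cheng--Yau's generalized maximum principle, and where one must verify that the geometry of $\omega_\ast$ is sufficiently tame near $D$ to make all the constants uniform.

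Finally, uniqueness and the characterization $J(\Omega_{KE})=1$ follow by soft arguments. If $\Omega_1,\Omega_2$ are two complete Kähler--Einstein volume forms with $\Ric=-1$, then $v=\log(\Omega_1/\Omega_2)$ satisfies an elliptic equation of the form $\Delta v = e^{v}-1$ (schematically) to which Yau's maximum principle for complete manifolds with Ricci bounded below applies, forcing $v\equiv 0$. The equation $J(\Omega)=1$ is simply the translation of $\Ric\Omega=-c_1(\omega_M,\|\cdot\|_\Omega)$ and the definition of $J$, so any complete solution of $J(\Omega)=1$ is a complete Kähler--Einstein metric with Ricci $-1$, and conversely.
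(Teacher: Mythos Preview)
The paper does not give a proof of this theorem: it is quoted as a result of Kobayashi \cite{Kobayashi} and used as a black box (together with the more precise Theorem \ref{thm_kobayashi_2}) in the proof of Proposition \ref{prop_good_KE}. So there is nothing to compare against on the paper's side.

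That said, your outline is essentially Kobayashi's own strategy (building on Cheng--Yau and Yau): construct a complete background K\"ahler metric of Poincar\'e type on $X\setminus D$ using the Carlson--Griffiths potential, reduce the Einstein condition to a Monge--Amp\`ere equation with bounded right-hand side in the quasi-coordinate H\"older spaces, and run the continuity method with the generalized maximum principle supplying the $C^0$ and $C^2$ estimates. Your identification of the delicate point --- uniform $C^2$ control near $D$ via bounded geometry of $\omega_\ast$ --- is accurate; in Kobayashi's paper this is handled precisely through the quasi-coordinate formalism that the present paper recalls just after Definition 7.3. One small correction: the uniqueness argument is not quite ``$\Delta v = e^v - 1$'' but rather an inequality coming from the arithmetic--geometric mean applied to $(\omega_1^n/\omega_2^n)$, together with Yau's Schwarz-type lemma or the generalized maximum principle; the conclusion is the same.
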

\begin{proposition}\label{prop_good_KE}
Let $X$ be a compact complex analytic manifold and $D\subseteq X$ a divisor with simple normal crossings. Suppose that $\omega_X(D)$ is ample on $X$. Let $U=X\setminus D$ and endow $\omega_U$ with the smooth hermitian metric $\|\cdot\|_{KE}$ induced by $\Omega_{KE}$. Then $(\omega_U,\|\cdot\|_{KE})$ extends to a good hermitian line bundle $(\omega_X( D),\|\cdot\|_{KE})$, with singularities along $D$.
\end{proposition}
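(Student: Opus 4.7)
The plan is to compare the K\"ahler--Einstein metric $\|\cdot\|_{KE}$ with a carefully chosen smooth reference metric $\|\cdot\|_{0}$ on $\omega_{X}(D)$, whose associated volume form on $U$ is quasi-isometric to a standard Poincar\'e-type volume form along $D$, and to control the relative potential $f:=\log(h_{0}/h_{KE}):U\to\R$ (so that $\Omega_{KE}=e^{f}\Omega_{0}$) via the complex Monge--Amp\`ere equation it satisfies. By the criterion for line bundles stated just after Definition \ref{def_good_vb}, the goodness of $\|\cdot\|_{KE}$ amounts to the P-singularity of $\log h_{KE}(e,e)$ for every local holomorphic frame $e$ of $\omega_{X}(D)$; since $\|\cdot\|_{0}$ is smooth on $X$, this reduces to the P-singularity of $f$ on $U$.

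First I would construct $\|\cdot\|_{0}$ by a partition of unity subordinate to a finite covering of $X$ by adapted charts $(V;z_{1},\ldots,z_{n})$, arranging that its associated volume form reads locally
$$
\Omega_{0}=\varphi_{V}\prod_{k=1}^{m}\frac{i\,\dd z_{k}\wedge\cdz_{k}}{|z_{k}|^{2}(\log|z_{k}|^{-2})^{2}}\wedge\prod_{k>m}i\,\dd z_{k}\wedge\cdz_{k},
$$
with $\varphi_{V}$ smooth and bounded away from $0$ and $\infty$ on $V$. The associated $(1,1)$-form $\omega_{0}:=-\Ric\Omega_{0}$ is then a complete K\"ahler metric of Poincar\'e type on $U$ with bounded geometry in the Cheng--Yau sense: in quasi-coordinates $\zeta_{k}=\log z_{k}$ for $k\le m$ and $\zeta_{k}=z_{k}$ for $k>m$, which uniformise a neighbourhood of $D$ in $V$ by an open subset of a product of upper half-planes with $\C^{n-m}$, the form $\omega_{0}$ becomes uniformly equivalent to the standard flat metric.

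The equation $J(\Omega_{KE})=1$ of Theorem \ref{thm_kobayashi}, combined with $\Ric(e^{f}\Omega_{0})=\Ric\Omega_{0}+\ddc f$, then becomes the complex Monge--Amp\`ere equation
$$
(\Ric\Omega_{0}+\ddc f)^{n}=e^{f}\Omega_{0}\quad\text{on }U,
$$
whose right-hand side is bounded and whose reference form $\Ric\Omega_{0}$ is quasi-isometric to $\omega_{0}$ near $D$ by construction. Yau's generalised maximum principle on the complete K\"ahler manifold $(U,\omega_{0})$ gives a two-sided $C^{0}$ bound $\|f\|_{\infty}\prec 1$; the $C^{2}$, $C^{3}$ and higher order a priori estimates of Yau and Calabi, transposed to the quasi-coordinate setup by Cheng--Yau, Kobayashi and Tian--Yau, then propagate to uniform bounds on all covariant derivatives of $f$ measured with respect to $\omega_{0}$. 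Uniform boundedness of $|\dd f|_{\omega_{0}}$ says exactly that, in every adapted chart, $\dd f$ is a bounded combination of the forms $\dd\zeta_{k},\cdzeta_{k}$ of Notation \ref{not_dzeta}, hence a Poincar\'e growth $1$-form; similarly $|\ddc f|_{\omega_{0}}\prec 1$ says that $\ddc f$ is a Poincar\'e growth $(1,1)$-form. Thus $f$ is P-singular along $D$, and the initial reduction yields the goodness of $\|\cdot\|_{KE}$.

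The main obstacle is the higher order Monge--Amp\`ere a priori estimates on the non-compact manifold $(U,\omega_{0})$, uniform with respect to the quasi-coordinate atlas near $D$: the compact Calabi--Yau--Aubin machinery has to be replaced by its Cheng--Yau/Kobayashi variants, and the interaction between the logarithmic geometry and the Schauder theory in quasi-coordinates must be handled with care. All the ingredients are available in the literature (\cite{Kobayashi}, \cite{TianYau}, \cite{Wu}), but their clean combination is the technical core of the argument.
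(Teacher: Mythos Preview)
Your approach is essentially the paper's: both reduce goodness of $\|\cdot\|_{KE}$ to P-singularity of the potential relating $\Omega_{KE}$ to an explicit Poincar\'e-type reference volume form, and both rely on the Cheng--Yau/Kobayashi quasi-coordinate estimates for this potential (packaged in the paper as Theorem~\ref{thm_kobayashi_2} together with Corollary~\ref{cor_Rfunc_Pgrowth}).

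One small inconsistency to fix: a \emph{smooth} metric $\|\cdot\|_{0}$ on $\omega_{X}(D)$ has associated volume form on $U$ of local order $\prod_{k\le m}|z_{k}|^{-2}$, \emph{not} $\prod_{k\le m}|z_{k}|^{-2}(\log|z_{k}|^{-2})^{-2}$; the extra $(\log)^{-2}$ factors you wrote into $\Omega_{0}$ force $\|\cdot\|_{0}$ to be already good rather than smooth. This does not damage the argument---the reduction to P-singularity of $f$ still works, since two good metrics on the same line bundle differ by a P-singular potential---but you should say so explicitly. The paper sidesteps this by comparing to a genuinely smooth volume form $\Omega$ on $X$ and handling the explicit factor $\prod_{i}\|s_{i}\|^{2}(\log\|s_{i}\|^{2})^{2}$ separately via Lemma~\ref{lemma_theta_N}.
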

The proof of Proposition \ref{prop_good_KE} follows easily from the more precise growth properties established in the proof of Theorem \ref{thm_kobayashi}. For the sake of completeness we now deepen some of the details involved. In the sequel we fix $X$ and $D$ as in the proposition.
\begin{definition}
Let $M$ be a complex analytic manifold of dimension $n$. Let $V\subseteq\C^{n}$ be an open subset. A holomorphic map $\phi:V\rightarrow M$ is called a quasicoordinate if $\text{rank}(\dd_{v}\phi)=n$ for every $v\in V$. Then $(V,\phi)$ is called a local quasicoordinate of $M$.
\end{definition}
Let $x\in D$ and $(V=V(x);z_1,\ldots,z_n)$ be an analytic chart of $X$ centered at $x$,  by means of which $V$ gets identified with $\Delta_{1}^{r}\times\Delta_{1}^{s}$ and $V\setminus D$ with $\Delta_{1}^{\ast r}\times\Delta_{1}^{s}$ ($r=r(x)$). For every $\eta\in (0,1)^{r}$, define the quasicoordinate
\begin{displaymath}
	\begin{split}
		\phi_{\eta}:V_{\eta}=(\Delta_{3/4})^{r}\times\Delta_{1}^{s}&\longrightarrow V\setminus D\\
		v=(v_1,\ldots,v_n)&\longmapsto (\phi_{\eta,1}(v),\ldots,\phi_{\eta,n}(v))
	\end{split}
\end{displaymath}
where
\begin{displaymath}
	\phi_{\eta,k}(v)=
		\begin{cases}
			\exp\left(\frac{1+\eta_k}{1-\eta_k}\frac{v_k+1}{v_k-1}\right)&\text{ if } 1\leq k\leq r,\\
			v_k &\text{ if } k>r.
		\end{cases}
\end{displaymath}
Observe that 
\begin{displaymath}
	V=\bigcup_{\eta\in (0,1)^{r}}\phi_{\eta}(V_{\eta}).
\end{displaymath}
We now construct a quasicoordinate covering $\mathcal{V}=\lbrace{(V_{\beta},\phi_{\beta})\rbrace}_{\beta}$, containing exactly:
\begin{itemize}
	\item all the quasicoordinates $\lbrace(V_{\eta},\phi_{\eta})\rbrace_{\eta\in(0,1)^{r}}$, for $(V=V(x);z_1,\ldots,z_n)$, $x\in D$, as above. Denote by $W$ the union of the images of all these quasicoordinates. This is an open neighborhood of $D$;
	\item a finite coordinate covering of the compact subset $X\setminus W$.
\end{itemize}
We introduce the H\"{o}lder space of $\CC^{k,\alpha}$-functions on $U=X\setminus D$, with respect to the quasicoordinate covering $\mathcal{V}$.
\begin{definition}[H\"{o}lder spaces]
Let $k\geq 0$ be an integer and $\alpha\in(0,1)$. The $\CC^{k,\alpha}$-norm with respect to $\mathcal{V}$ of a function $u\in\CC^{k}(U)$ is
\begin{displaymath}
		\begin{split}
			\\|u\|_{k,\alpha,\mathcal{V}}=&\sup_{(V,\phi)\in\mathcal{V}}\|\phi^{*}(u)\|_{k,\alpha}\\
			=&\sup_{(V,\phi)\in\mathcal{V}}\Bigg\lbrace
			 \sup_{v\in V}\left(\sum_{|p|+|q|\leq k}\left|\frac{\pd^{|p|+|q|}}{\pd v^{p}\pd\overline{v}^{q}}
			\phi^{*}(u)(v)\right|\right)\\
			&\hspace{1.3cm}+\sup_{v,v'\in V}\Bigg(\sum_{|p|+|q|=k}|v-v'|^{-\alpha}\Bigg|\frac{\pd^{|p|+|q|}}{\pd v^{p}\pd\overline{v}^{q}}\phi^{*}(u)(v)\\
			&\hspace{5.5cm}-\frac{\pd^{|p|+|q|}}{\pd v^{p}\pd\overline{v}^{q}}\phi^{*}(u)(v')\Bigg|\Bigg)\Bigg\rbrace.
		\end{split}
\end{displaymath}
We define the space of $\CC^{k,\alpha}$-functions on $U$ as
\begin{displaymath}
	\CC^{k,\alpha}(U)=\lbrace u\in\CC^{k}(U)\mid \|u\|_{k,\alpha,\mathcal{V}}<+\infty\rbrace,
\end{displaymath}
which is seen to be a Banach space with respect to the norm $\|\cdot\|_{k,\alpha,\mathcal{V}}$.
\end{definition}
\begin{definition}
We define $R^{r,s}(U)$ as the space of $(r,s)$-differential forms $\omega$ on $U$ such that, for all quasicoordinate $(V,\phi)\in\mathcal{V}$,
\begin{displaymath}
	\phi^{*}(\omega)=\sum_{\substack{|p|=r\\|q|=s}}(a_{p}\dd v^{p}+b_{q}\dd\overline{v}^{q}),
\end{displaymath}
with
\begin{displaymath}
	\|a_{p}\|_{k,\alpha},\|b_{q}\|_{k,\alpha}<+\infty
\end{displaymath}
for all multi-indices $p$, $q$ with $|p|=r$, $|q|=s$ and all $k\geq 0$, $\alpha\in (0,1)$. If $(v_1,\ldots,v_n)$ are the standard coordinates on $V\subset\C^{n}$ and $p=(i_1,\ldots,i_r)$, $q=(j_1,\ldots, j_s)$ are ordered multi-indices, we wrote
\begin{displaymath}
	\dd v^{p}=\dd v_{i_1}\wedge\ldots\wedge\dd v_{i_r},\, \dd\overline{v}^{q}=\dd\overline{v}_{j_1}
	\wedge\ldots\wedge\dd\overline{v}_{j_s}.
\end{displaymath}
\begin{lemma}\label{lem_prop_Rspace}
i. $R^{r,s}(U)$ is a $\C$-vector space;\\
ii. $R^{r,s}(U)\wedge R^{r',s'}(U)\subseteq R^{r+r',s+s'}(U)$;\\
iii. $\pd R^{r,s}(U)\subseteq R^{r+1,s}(U)$ and $\cpd R^{r,s}(U)\subseteq R^{r,s+1}(U)$.
\end{lemma}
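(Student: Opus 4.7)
The statement is essentially a bookkeeping result about H\"older spaces pulled back through the quasicoordinate atlas $\mathcal{V}$. The three items will be proved by routine verification using standard properties of the norms $\|\cdot\|_{k,\alpha}$ and the fact that each $\phi\in\mathcal{V}$ is holomorphic, so pullback commutes with $\pd$ and $\cpd$.

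For item \textit{i}, linearity of pullback and the seminorm properties of the $\CC^{k,\alpha}$-norms give, for every $\phi\in\mathcal{V}$ and scalars $\lambda,\mu\in\C$, the inequality $\|\lambda a_p + \mu a_p'\|_{k,\alpha}\leq |\lambda|\|a_p\|_{k,\alpha}+|\mu|\|a_p'\|_{k,\alpha}$ on the coefficients of $\phi^{*}(\lambda\omega+\mu\omega')$, which yields the vector space property. For item \textit{ii}, the plan is to use $\phi^{*}(\omega\wedge\omega')=\phi^{*}(\omega)\wedge\phi^{*}(\omega')$ and observe that the coefficients of the right-hand side are (up to sign) sums of products of coefficients of $\phi^{*}(\omega)$ and $\phi^{*}(\omega')$. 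The key analytic input is the standard multiplicative estimate for H\"older norms on a fixed domain: for any integer $k\geq 0$ and $\alpha\in(0,1)$, there is a universal constant $C_{k,\alpha}$ (depending only on $k$ and $\alpha$, not on the chart) such that $\|fg\|_{k,\alpha}\leq C_{k,\alpha}\|f\|_{k,\alpha}\|g\|_{k,\alpha}$; this follows from Leibniz' rule together with the elementary fact that the product of two bounded $\alpha$-H\"older functions is $\alpha$-H\"older.

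For item \textit{iii}, since $\phi$ is holomorphic, $\phi^{*}$ commutes with both $\pd$ and $\cpd$, so $\phi^{*}(\pd\omega)=\pd(\phi^{*}\omega)$ and $\phi^{*}(\cpd\omega)=\cpd(\phi^{*}\omega)$. Writing $\phi^{*}(\omega)$ as a sum of terms $c_{p,q}\dd v^{p}\wedge\dd\overline{v}^{q}$ (with $|p|=r$, $|q|=s$), one has $\pd(\phi^{*}\omega)=\sum_{l,p,q}(\pd c_{p,q}/\pd v_{l})\dd v_{l}\wedge\dd v^{p}\wedge\dd\overline{v}^{q}$, whose coefficients are partial derivatives of the $c_{p,q}$. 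Since each $c_{p,q}$ lies in $\CC^{k,\alpha}$ \emph{for every} pair $(k,\alpha)$, each first-order derivative $\pd c_{p,q}/\pd v_{l}$ lies in $\CC^{k,\alpha}$ for every $(k,\alpha)$ as well, with $\|\pd c_{p,q}/\pd v_l\|_{k,\alpha}\leq \|c_{p,q}\|_{k+1,\alpha}$. The same argument with $\pd/\pd\overline{v}_{l}$ handles $\cpd$.

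The only subtle point is that all these estimates must hold uniformly across the infinite family $\mathcal{V}$; this is automatic because the multiplicative estimate for H\"older norms and the inequality $\|\pd c/\pd v_l\|_{k,\alpha}\leq\|c\|_{k+1,\alpha}$ are both purely pointwise on a fixed domain and involve constants depending only on $k$ and $\alpha$. Hence the suprema defining $\|\cdot\|_{k,\alpha,\mathcal{V}}$ remain finite, which is exactly what membership in $R^{r+r',s+s'}(U)$, respectively $R^{r+1,s}(U)$ and $R^{r,s+1}(U)$, requires. I expect no substantive obstacle; the only thing to check carefully is that the combinatorial coefficients produced by Leibniz' rule and the wedge product are absorbed into constants independent of $\phi\in\mathcal{V}$.
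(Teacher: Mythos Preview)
Your argument is correct and is exactly the routine verification the paper has in mind; the paper's own proof is a single line, ``Immediate to check from the definition of $R^{r,s}(U)$.'' One small remark: the definition of $R^{r,s}(U)$ only asks that $\|a_p\|_{k,\alpha}<+\infty$ for each chart $(V,\phi)\in\mathcal{V}$ separately, not that the supremum over $\mathcal{V}$ be finite, so your care about uniformity of the constants across charts is not actually needed here (though it does no harm).
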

\begin{proof}
Immediate to check from the definition of $R^{r,s}(U)$.
\end{proof}
\end{definition}
Recall that $\mathcal{P}_{D}$ denotes the sheaf of Poincar\'e forms on $X$ with singularities along $D$ (Definition \ref{def_Poincare_form}).
\begin{lemma}\label{lem_Rspace_Poincare}
We have an inclusion $R^{r,s}(U)\subseteq\Gamma(X,\mathcal{P}_{D})^{(r,s)}$, where the superscript stands for the $(r,s)$ part with respect to the usual bigrading of complex differential forms.
\end{lemma}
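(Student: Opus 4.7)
The statement is local on $X$. Away from $D$, any form in $R^{r,s}(U)$ is smooth with pullback bounded (in particular $L^{\infty}_{\loc}$) under the coordinate patches, so the Poincar\'e growth condition holds trivially there. Hence I would fix a point $x \in D$ and work on an adapted analytic chart $(V; z_1, \ldots, z_n)$ centered at $x$, with $V \simeq \Delta_1^{r} \times \Delta_1^{s}$ and $D \cap V = \{z_1 \cdots z_r = 0\}$, and prove that every $\omega \in R^{r,s}(U)$ restricts to $V$ as a section of $\mathcal{P}_D$.

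The first step is the explicit computation of $\phi_\eta^{\ast}(d\zeta_k)$ for the quasicoordinates $\phi_\eta$ attached to this chart. For $1 \leq k \leq r$, setting $z_k = \phi_{\eta,k}(v)$ and using $\log z_k = \tfrac{1+\eta_k}{1-\eta_k}\tfrac{v_k+1}{v_k-1}$, one finds
\begin{displaymath}
\frac{dz_k}{z_k} = \frac{1+\eta_k}{1-\eta_k} \cdot \frac{-2}{(v_k - 1)^2}\, dv_k,
\qquad
\log|z_k|^{-1} = \frac{1+\eta_k}{1-\eta_k}\cdot\frac{1-|v_k|^2}{|v_k-1|^2},
\end{displaymath}
so that the $\eta$-dependent factor cancels and
\begin{displaymath}
\phi_\eta^{\ast}(d\zeta_k) = A_k(v_k)\, dv_k,
\qquad
A_k(v_k) := \frac{-2(\overline{v_k}-1)}{(v_k - 1)(1 - |v_k|^2)}.
\end{displaymath}
For $k > r$, trivially $\phi_\eta^{\ast}(d\zeta_k) = dv_k$. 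Since $|v_k| \leq 3/4$ on $V_\eta$, the function $A_k$ is smooth and bounded above and below, uniformly in $\eta$ and $k$; similarly for the complex conjugates.

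Next I would expand $\omega$ in the holomorphic frame $\{d\zeta_i\}_i$ of the cotangent bundle of $V\setminus D$ (legitimate since each $d\zeta_i$ is a nonzero multiple of $dz_i$):
\begin{displaymath}
\omega|_{V \setminus D} \;=\; \sum_{|I|=r,\,|J|=s} \alpha_{I,J}\, d\zeta_I \wedge \overline{d\zeta_J},
\end{displaymath}
with $\alpha_{I,J}$ smooth on $V\setminus D$. Pulling back by $\phi_\eta$,
\begin{displaymath}
\phi_\eta^{\ast}\omega \;=\; \sum_{I,J} \big(\alpha_{I,J}\circ\phi_\eta\big)\cdot\Big(\prod_{i\in I}A_i\Big)\Big(\prod_{j\in J}\overline{A_j}\Big)\, dv_I \wedge \overline{dv_J}.
\end{displaymath}
By the definition of $R^{r,s}(U)$, each coefficient of $dv_I\wedge \overline{dv_J}$ in $\phi_\eta^{\ast}\omega$ is bounded in $\mathcal{C}^{0}$-norm by a constant independent of $\eta$. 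Combined with the uniform lower bound on $|\prod_{i\in I} A_i \prod_{j\in J} \overline{A_j}|$ from Step~1, this yields a constant $C>0$ such that $|\alpha_{I,J}\circ\phi_\eta| \leq C$ on $V_\eta$, uniformly in $\eta$.

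Finally, since $V \setminus D = \bigcup_{\eta\in(0,1)^r}\phi_\eta(V_\eta)$, this uniform bound transfers to $|\alpha_{I,J}| \leq C$ on $V\setminus D$; in particular $\alpha_{I,J}\in \mathcal{C}^{\infty}(V\setminus D, \mathbb{C}) \cap L^{\infty}_{\loc}(V, \mathbb{C})$, so $\omega|_V$ lies in $\Gamma(V,\mathcal{P}_D)^{(r,s)}$ by Definition \ref{def_Poincare_form}. The only nontrivial point is the explicit computation of $A_k$ together with the observation that its $\eta$-dependence cancels; once this is in place the rest is a comparison of coefficients.
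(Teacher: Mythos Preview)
Your proof is correct and follows essentially the same approach as the paper: both localize near $D$, pull back by the quasicoordinates $\phi_\eta$, and use the explicit computation of $\phi_\eta^*(d\zeta_k)$ to see that the $\eta$-dependent factors cancel and the remaining weight $A_k(v_k)=\tfrac{-2}{1-|v_k|^2}\cdot\tfrac{\overline{v_k}-1}{v_k-1}$ is bounded above and below on $|v_k|\le 3/4$, so the boundedness of the pulled-back coefficients forces boundedness of the $\alpha_{I,J}$. The paper treats only the top-degree monomial $\omega=h\,d\zeta_1\wedge\cdots\wedge d\zeta_n$ and leaves the general bidegree implicit, whereas you expand in the full frame $\{d\zeta_I\wedge\overline{d\zeta_J}\}$ and make the uniformity in $\eta$ and the covering argument $V\setminus D=\bigcup_\eta\phi_\eta(V_\eta)$ explicit; this is a cosmetic difference, not a different method.
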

\begin{proof}
We localize near the divisor $D$ and consider a quasicoordinate $\phi_{\eta}:V_{\eta}\rightarrow V\setminus D$ in $\mathcal{V}$. Hence $V$ has coordinates $z_1,\ldots,z_n$ and $V\setminus D$ is the divisor $z_1\ldots z_r=0$. For simplicity we consider the differential form
\begin{displaymath}
	\omega=h\frac{\dd z_1}{z_1\log(|z_1|^{-1})}\wedge\ldots\wedge\frac{\dd z_r}{z_r\log(|z_r|^{-1})}\wedge\dd z_{r+1}\wedge\ldots \dd z_{n}
\end{displaymath}
and suppose that $\phi_{\eta}^{*}(\omega)$ has finite $\CC^{k,\alpha}$-norm for all $k\geq 0$ and $\alpha\in (0,1)$. We have to prove that $h$ is bounded on the image of $\phi_{\eta}$. From the definition of $\phi_{\eta}$, a straightforward computation shows that
\begin{displaymath}
	\phi^{*}_{\eta}(\omega)=\phi^{*}(h)\prod_{i=1}^{r}\frac{-2}{1-|v_i|^{2}}\frac{|v_i-1|^{2}}{(v_i-1)^{2}}\dd v_1\wedge\ldots\wedge\dd v_n.
\end{displaymath}
The hypothesis implies the coefficient of $\dd v_1\wedge\ldots\wedge\dd v_n$ has bounded $\sup$-norm. Since $|v_i|\leq 3/4$ for $i=1,\ldots,r$, this immediately yields the boundedness of $\phi^{*}(h)$.
\end{proof}
\begin{corollary}\label{cor_Rfunc_Pgrowth}
Let $u:U=X\setminus D\rightarrow\C$ be a smooth function. If $u\in R^{0,0}(U)$, then $u$ is a P-singular function with singularities along $D$.
\end{corollary}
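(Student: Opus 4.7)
The plan is that this corollary follows essentially by assembling the two preceding lemmas, with no substantial new work. Recall that being P-singular along $D$ means, by definition, that both $\dd u$ and $\ddc u$ are Poincaré growth forms with singularities along $D$, so the task reduces to producing these two growth estimates from the hypothesis $u \in R^{0,0}(U)$.

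I would first apply Lemma \ref{lem_prop_Rspace} (iii), which tells us that $\pd u \in R^{1,0}(U)$ and $\cpd u \in R^{0,1}(U)$; hence
\begin{displaymath}
\dd u = \pd u + \cpd u \in R^{1,0}(U) \oplus R^{0,1}(U).
\end{displaymath}
Applying the same lemma once more gives $\pd\cpd u \in R^{1,1}(U)$, and consequently
\begin{displaymath}
\ddc u = \frac{i}{2\pi}\,\pd\cpd u \in R^{1,1}(U).
\end{displaymath}

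Next I would invoke Lemma \ref{lem_Rspace_Poincare}, which provides the key inclusion $R^{r,s}(U)\subseteq \Gamma(X,\mathcal{P}_D)^{(r,s)}$. Applied to the three forms above, this shows that $\dd u$ and $\ddc u$ are sections of the sheaf $\mathcal{P}_D$ of Poincaré growth forms on $X$. By Definition \ref{def_Poincare_form} (i.e. Poincaré growth along $D$), this is precisely what is required for $u$ to be a P-singular function with singularities along $D$.

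There is no real obstacle here: the content of the statement has already been packaged into Lemma \ref{lem_prop_Rspace} (closure of $R^{\ast,\ast}$ under $\pd$, $\cpd$) and Lemma \ref{lem_Rspace_Poincare} (comparison with Poincaré growth). The only thing to be careful about is that the Poincaré growth of a $(1,0)$-form plus a $(0,1)$-form is a condition checked componentwise, which is exactly how $\mathcal{P}_D$ is built in Definition \ref{def_Poincare_form}; no mixing of bidegrees occurs and the verification is immediate.
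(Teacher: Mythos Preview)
Your proof is correct and follows essentially the same approach as the paper: apply Lemma~\ref{lem_prop_Rspace} to place $\dd u$ and $\ddc u$ in the appropriate $R^{r,s}(U)$ spaces, then use Lemma~\ref{lem_Rspace_Poincare} to conclude Poincar\'e growth. The paper's own proof is the same two-line argument, just stated more tersely.
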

\begin{proof}
By Lemma \ref{lem_prop_Rspace}, $\dd u\in R^{1,0}(U)\oplus R^{0,1}(U)$ and $\ddc u\in R^{1,1}(U)$. By Lemma \ref{lem_Rspace_Poincare}, $du$ and $\ddc u$ have Poincar\'e growth with singularities along $D$. Hence $u$ is a P-singular function with singularities along $D$.
\end{proof}
\begin{theorem}[Kobayashi \cite{Kobayashi}]\label{thm_kobayashi_2}
Let $X$ be a compact complex analytic manifold and $D\subseteq X$ a divisor with simple normal crossings. Suppose that $\omega_{X}(D)$ is ample on $X$. Let $D_1,\ldots, D_m$ be the irreducible components of $D$ and $s_{i}\in\Gamma(X,\OO(D_i))$ sections with $\gdiv s_i=D_i$, for all $i=1,\ldots,m$. Let $\Omega$ be a volume form on $X$. There exist suitable smooth hermitian metrics on the line bundles $\OO(D_i)$, that we write $\|\cdot\|$ for simplicity, with $\|s_i\|$ small enough, and a function $u\in R^{0,0}(U)$ such that the volume form $\Omega_{KE}$ of the K\"{a}hler-Einstein metric on $U$ is
\begin{displaymath}
	\Omega_{KE}=e^{u}\frac{\Omega}{\prod_{i=1}^{m}\|s_i\|^{2}\log(\|s_i\|^{2})^{2}}.
\end{displaymath}
\end{theorem}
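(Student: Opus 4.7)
The plan is to take the right-hand side of the asserted identity as an \emph{ansatz} volume form and then correct it by an exponential factor $e^{u}$ in order to solve $J(\Omega_{KE})=1$. First I would fix auxiliary smooth hermitian metrics $\|\cdot\|$ on the line bundles $\OO(D_i)$ and introduce the model volume form
\begin{displaymath}
	\Omega_0:=\frac{\Omega}{\prod_{i=1}^{m}\|s_i\|^{2}\log(\|s_i\|^{2})^{2}}
\end{displaymath}
on $U=X\setminus D$. After shrinking the metrics so that each $\|s_i\|^{2}$ is small enough, a direct computation in analytic charts adapted to $D$ (writing $s_i=z_i v_i$ with $v_i$ a holomorphic unit) shows that $\Omega_0$ is asymptotic near $D$ to a product of Poincar\'e volume forms. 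In particular the $(1,1)$-form $\omega_0:=\Ric\Omega_0$ is a complete K\"ahler metric on $U$ and, pulled back by any quasicoordinate $\phi_\eta$ of the covering $\mathcal{V}$, it is uniformly equivalent to the Euclidean metric with coefficients uniformly bounded in every $\CC^{k,\alpha}$-norm (bounded geometry \`a la Cheng--Yau).

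Second, I would rewrite the equation $J(\Omega_{KE})=1$ as a complex Monge--Amp\`ere equation. Setting $\Omega_{KE}=e^{u}\Omega_0$, the identity $\Ric(e^{u}\Omega_0)=\omega_0+\ddc u$ transforms $J(e^{u}\Omega_0)=1$ into
\begin{displaymath}
	(\omega_0+\ddc u)^{n}=e^{u-f}\,\omega_0^{n},\qquad\text{where}\qquad e^{f}:=\frac{\omega_0^{n}}{\Omega_0}.
\end{displaymath}
Using the explicit form of $\Omega_0$ and the local factorization of $s_i$, one checks (after possibly further rescaling the metrics on $\OO(D_i)$ so that the log-log corrections dominate) that the singularities of $\omega_0^{n}$ and those of $\Omega_0$ cancel exactly, so that $f\in R^{0,0}(U)$.

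Third, I would solve the Monge--Amp\`ere equation by the continuity method on the complete, non-compact manifold $(U,\omega_0)$, along the lines of Cheng--Yau and Tian--Yau. Openness of the interval of parameters is standard via the implicit function theorem in the Banach space $\CC^{k,\alpha}(U)$ associated to $\mathcal{V}$. Closedness requires a priori estimates: a $\CC^{0}$ bound from Yau's maximum principle at infinity (valid since $\omega_0$ is complete and $f$ is bounded), a $\CC^{2}$ bound from the boundedness of the bisectional curvature of $\omega_0$, and higher order $\CC^{k,\alpha}$ bounds from Schauder estimates applied in each quasicoordinate chart, where the linearised operator is uniformly elliptic with uniformly $\CC^{k,\alpha}$-bounded coefficients. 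The resulting solution $u$ then lies precisely in $R^{0,0}(U)$ and gives the decomposition asserted by the theorem, uniqueness of $\Omega_{KE}$ in Theorem \ref{thm_kobayashi} yielding the final identification.

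The main obstacle will be the a priori $\CC^{k,\alpha}$-estimates in the quasicoordinate norm: one must ensure that the Schauder constants obtained are uniform across the whole family $\{\phi_\eta\}_{\eta\in(0,1)^{r}}$, which degenerates as $\eta\to 1$. This is where the specific shape of the ansatz $\Omega_0$ (with the $\log(\|s_i\|^{2})^{2}$ correction, dictated by the Poincar\'e model) becomes crucial: it guarantees that $\phi_\eta^{*}\omega_0$ has bounded geometry uniformly in $\eta$, so that the bootstrap in the quasicoordinate scales closes and lands in $R^{0,0}(U)$ rather than in a weaker, purely local H\"older space.
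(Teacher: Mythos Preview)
The paper does not supply its own proof of this statement: it is quoted verbatim as a result of Kobayashi \cite{Kobayashi} and is used only as input to the proof of Proposition~\ref{prop_good_KE}. So there is no ``paper's own proof'' to compare against; what you have written is in effect a sketch of Kobayashi's original argument (in the Cheng--Yau framework), and as such it is accurate in outline and identifies the correct difficulty, namely the uniformity of the Schauder estimates across the degenerating family of quasicoordinates.
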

With Theorem \ref{thm_kobayashi_2} at hand, we can prove Proposition \ref{prop_good_KE}.
\begin{proof}[Proof of Proposition \ref{prop_good_KE}]
We may localize at an analytic chart $(V;z_1,\ldots,z_n)$ of $X$ by means of which $V$ gets identified with $\Delta_{1}^{r}\times\Delta_{1}^{s}$ and $V\setminus D=\Delta_{1}^{\ast r}\times\Delta_{1}^{s}$. For simplicity suppose that $D_i\cap V$ gets identified with $z_i=0$, for $i=1,\ldots, r$ and $D_i\cap V=\emptyset$ for $i>r$. A local analytic frame of $\omega_{X}(D)_{\mid V}$ is
\begin{displaymath}
	\frac{\dd z_1}{z_1},\ldots,\frac{\dd z_r}{z_r},\dd z_{r+1},\ldots,\dd z_{n}.
\end{displaymath}
Write $\|s_i\|^{2}=|z_i|^{2} h_{i}$ for $i=1,\ldots,r$. Then by Theorem \ref{thm_kobayashi_2} we can write
\begin{displaymath}
	\begin{split}
	\Omega_{KE\mid V}=
	e^{u}&\prod_{k=1}^{r}\frac{1}{h_k\log(\|s_k\|^{2})^2}\prod_{k>r}\frac{1}{\|s_k\|^2\log(\|s_k\|^{2})^{2}}\cdot\\
	\gamma&
	\prod_{k=1}^{r}\left(\frac{i}{2\pi}\frac{\dd z_k}{z_k}\wedge\frac{\cdz_k}{\cz_k}\right)\wedge
	\prod_{k>r}\left(\frac{i}{2\pi}\dd z_k\wedge\cdz_k\right)
	\end{split}
\end{displaymath}
where $\gamma$ is a smooth positive function. Observe also that the functions $h_k$ as well as the second product are smooth positive functions. We are thus reduced to prove that $u$ and $\log(\log(\|s_i\|^{2})^2)$ are P-singular functions with singularities along $D$. On one hand, Lemma \ref{cor_Rfunc_Pgrowth} proves that $u$ is P-singular. On the other hand, $\log(\log(\|s_i\|^{2})^{2})$ is P-singular by Lemma \ref{lemma_theta_N}.
\end{proof}
\subsubsection{Arithmetic theory}
Let $K$ be a number field and $X$ a nonsingular projective variety over $K$. Let $D\subseteq X$ be a reduced effective divisor such that $D(\C)\subseteq X(\C)$ has simple normal crossings. Suppose that $\omega_{X}(D)$ is an ample divisor on $X$. Then, for every $\sigma:K\hookrightarrow\C$, $\omega_{X_{\sigma,\C}}(D_{\sigma,\C})$ is ample and there exists a unique K\"ahler-Einstein metric on $X_{\sigma}(\C)\setminus D_{\sigma}(\C)$ with constant negative Ricci curvature -1 (see Theorem \ref{thm_kobayashi}). By Proposition \ref{prop_good_KE}, these metrics induce good hermitian metrics on the lines $\omega_{X_{\sigma,\C}}(D_{\sigma,\C})$, with singularities along $D_{\sigma}(\C)$, respectively. The collection of these metrics, for varying $\sigma:K\hookrightarrow\C$, is invariant under the action of complex conjugation $F_{\infty}$. Indeed, let $g_{KE,\sigma}$ be the K\"ahler-Einstein metric on $X_{\sigma}(\C)\setminus D_{\sigma}(\C)$. Then $F_{\infty}^{*}(g_{KE,\sigma})$ defines a complete K\"ahler metric on $X_{\overline{\sigma}}(\C)\setminus D_{\overline{\sigma}}(\C)$. Let $\Omega_{KE,\sigma}$ be the volume form of $g_{KE,\sigma}$, so that $F_{\infty}^{*}\Omega_{KE,\sigma}$ is the volume form of $F_{\infty}^{*}(g_{KE,\sigma})$. Since $\Ric F_{\infty}^{*}\Omega_{KE,\sigma}=F_{\infty}^{*}\Ric\Omega_{KE,\sigma}$, we find 
\begin{displaymath}
	J(F_{\infty}^{*}\Omega_{KE_\sigma})=F_{\infty}^{*}J(\Omega_{KE,\sigma})=1.
\end{displaymath}
By uniqueness we derive $F_{\infty}^{*}(g_{KE,\sigma})=g_{KE,\overline{\sigma}}$. We write $\overline{\omega_{X}(D)}_{KE}$ for the resulting good hermitian line bundle, with singularities along $D$.\\
Let now $(\X,\HAL)$ be any model of $(X,\overline{\omega_{X}(D)}_{KE})$ over $\BS=\Spec\OO_K$. Then Theorem \ref{main_theorem} can be applied for any choice of smooth metric $\|\cdot\|_0$ on $\AL$. If $\AL$ is ample over $\X$, then Corollary \ref{Corollary_Main} applies to $(\X,\HAL)$. In this situation $\HAL$ verifies the finiteness and the universal lower bound properties.
\section{Appendix}
The appendix is aimed to prove Proposition \ref{prop_Bertini}.
\begin{proof}[Proof of Proposition \ref{prop_Bertini}]
Decompose $D$ into smooth irreducible components, $D=D_{1}\cup\ldots\cup D_{m}$. Let $D^{*}$ be the Weil divisor $D_{1}+\ldots+D_{m}$. Denote by $\mathcal{F}$ the family of nonsingular subschemes of $X$ consisting of $X$ itself and all the irreducible components of the intersections
\begin{displaymath}
	\bigcap_{i\in I}D_{i},
\end{displaymath}
where $I$ runs over all the subsets of $\lbrace 1,\ldots,m\rbrace$. Since $L$ is ample, there exists some positive integer $N$ such that $L^{\otimes N}$ and $L^{\otimes N}(-D^{*})$ are very ample. Consider the exact sequence
\begin{displaymath}
	0\rightarrow L^{\otimes N}(-D^{*})\rightarrow L^{\otimes N}\rightarrow L_{D}^{\otimes N}\rightarrow 0.
\end{displaymath}
Taking global sections, we find the exact sequence
\begin{equation}\label{eq:ExSeq}
	0\rightarrow\Gamma(X,L^{\otimes N}(-D^{*}))
	\overset{j}{\rightarrow}\Gamma(X, L^{\otimes N})
	\overset{p}{\rightarrow}\Gamma(X,L_{D}^{\otimes N}).
\end{equation}
For every $Y\in\mathcal{F}$, Bertini's theorem (\cite{Hartshorne}, Chapter II, Theorem 8.18) provides us with a dense open subset $U_{Y}$ of the projective space $\PP=\PP(\Gamma(X,L^{\otimes N}(-D^{*})))$ such that, for any $t\in U_{Y}$, $\supp(\gdiv t)$ intersects $Y$ transversally. Since $\mathcal{F}$ is finite and the open subsets $U_{Y}$ are dense, the intersection
\begin{displaymath}
	U=\bigcap_{Y\in\mathcal{F}}U_{Y}
\end{displaymath}
is a non-empty open subset of $\PP$. Therefore we can take $\overline{t_{1}},\ldots,\overline{t_{r}}\in U$ such that $\supp(\gdiv \overline{t_{1}})$ $\cap$ $\ldots$ $\cap$ $\supp(\gdiv\overline{t_{r}})$=$\emptyset$. Let $t_{1},\ldots, t_{r}\in\Gamma(X,L^{\otimes N}(-D^{*}))$ be representatives of $\overline{t_{1}},\ldots,\overline{t_{r}}$, respectively. Let $s_{1}=j(t_{1}),\ldots,s_{r}=j(t_{r})$ be their images by the morphism $j$ of (\ref{eq:ExSeq}). Since $p(s_{i})=0$, $D\subseteq\supp(\gdiv s_{i})$ for all $i=1,\ldots,r$. Actually, for every $i=1,\ldots,r$, we have
\begin{displaymath}
	\gdiv s_{i}=\gdiv t_{i} +D^{*}.
\end{displaymath}
Therefore, for the support of $\gdiv s_{i}$ we find
\begin{displaymath}
	\supp(\gdiv s_{i})=\supp(\gdiv t_{i})\cup D.
\end{displaymath}
By the choice of the sections $t_{i}$ ($\overline{t_{i}}\in U$), $\supp(\gdiv s_{i})$ is a divisor with simple normal crossings. Finally, since $\supp(\gdiv t_1)\cap\ldots\cap\supp(\gdiv t_r)=\emptyset$, we have an equality of reduced closed subschemes of $X$
\begin{displaymath}
		D=(\supp(\gdiv s_{1})\cap\ldots\cap\supp(\gdiv s_{r}))_{\red}.
\end{displaymath}
The proof of the proposition is complete.
\end{proof}
	
\bibliographystyle{amsplain}
\providecommand{\bysame}{\leavevmode\hbox to3em{\hrulefill}\thinspace}

\textsc{G. Freixas i Montplet, D\'epartement de Math\'ematiques, Universit\'e Paris-Sud, B\^atiment 425, 91405 Orsay cedex, France}\\
\\
\textit{E-mail address}: \textbf{gerard.freixas@math.u-psud.fr}

\end{document}